\theoremstyle{plain}
\newtheorem{thm}{Theorem}[section]
\newtheorem{lem}[thm]{Lemma}
\newtheorem{pro}[thm]{Proposition}
\newtheorem{cor}[thm]{Corollary}
\newtheorem{thmABC}{Theorem}
\newtheorem{corABC}[thmABC]{Corollary}
\newtheorem*{con*}{Conjecture}
\theoremstyle{remark}
\newtheorem{rem}[thm]{Remark}
\newtheorem{qun}[thm]{Question}
\newtheorem{exm}[thm]{Example}
\newtheorem{dfn}[thm]{Definition}
\newtheorem*{acknowledgements}{Acknowledgements}
\numberwithin{equation}{section}
\numberwithin{table}{section}
\newcommand{\N}{\mathbb{N}}
\newcommand{\Z}{\mathbb{Z}}
\newcommand{\Q}{\mathbb{Q}}
\newcommand{\F}{\mathbb{F}}
\newcommand{\C}{\mathbb{C}}
\newcommand{\R}{\mathbb{R}}
\newcommand{\spl}{\mathfrak{sl}}
\newcommand{\gl}{\mathfrak{gl}}
\newcommand{\su}{\mathfrak{su}}
\newcommand{\gu}{\mathfrak{gu}}
\newcommand{\mfg}{\mathfrak{g}}
\newcommand{\mfn}{\mathfrak{n}}
\newcommand{\mfh}{\mathfrak{h}}
\newcommand{\mfp}{\mathfrak{p}}
\newcommand{\mfP}{\mathfrak{P}}
\newcommand{\diff}{\mathfrak{D}}
\newcommand{\lri}{\mathfrak{o}}
\newcommand{\Lri}{\mathfrak{O}}
\newcommand{\lfi}{\mathfrak{k}}
\newcommand{\Lfi}{\mathfrak{K}}
\newcommand{\Ldr}{\mathfrak{D}}
\newcommand{\ol}{\overline}
\newcommand{\gri}{\ensuremath{{\scriptstyle \mathcal{O}}}}
\newcommand{\smallgri}{\ensuremath{{\scriptscriptstyle \mathcal{O}}}}
\newcommand{\Gri}{\ensuremath{\mathcal{O}}}
\newcommand{\gfi}{\ensuremath{k}}
\newcommand{\Gfi}{\ensuremath{K}}
\renewcommand{\epsilon}{\varepsilon}
\renewcommand{\phi}{\varphi}
\renewcommand{\theta}{\vartheta}
\newcommand{\Qp}{\mathbb{Q}_p}
\newcommand{\bigdotcup}{\ensuremath{\mathop{\dot{\bigcup}}}}
\DeclareMathOperator{\ad}{ad}
\DeclareMathOperator{\pr}{seg}
\DeclareMathOperator{\Irr}{Irr}
\DeclareMathOperator{\Rad}{Rad}
\DeclareMathOperator{\lev}{lev}
\DeclareMathOperator{\rk}{rk}
\DeclareMathOperator{\SL}{SL}
\DeclareMathOperator{\GL}{GL}
\DeclareMathOperator{\Hom}{Hom}
\DeclareMathOperator{\Mat}{Mat}
\DeclareMathOperator{\ind}{ind}
\DeclareMathOperator{\res}{res}
\DeclareMathOperator{\Cen}{C}
\DeclareMathOperator{\SU}{SU}
\DeclareMathOperator{\GU}{GU}
\DeclareMathOperator{\Tr}{Tr}
\DeclareMathOperator{\real}{Re}
\DeclareMathOperator{\Id}{Id}
\DeclareMathOperator{\Cay}{Cay}
\DeclareMathOperator{\cay}{cay}
\DeclareMathOperator{\rank}{r}
\def \bfG {{\bf G}}
\def \bfa {{\bf a}}
\def \bfb {{\bf b}}
\def \bfm {{\bf m}}
\def \bfn {{\bf n}}
\def \bfs {{\bf s}}
\def \bfx {{\bf x}}
\def \bfy {{\bf y}}
\def \bfY {{\bf Y}}
\def \wt {\widetilde}
\def \mbbA {\ensuremath{\mathbb{A}}}
\def \mcC {\ensuremath{\mathcal{C}}}
\def \mcD {\ensuremath{\mathcal{D}}}
\def \mcL {\ensuremath{\mathcal{L}}}
\def \mcV {\ensuremath{\mathcal{V}}}
\def \mcDtilde {\ensuremath{\wt{\mathcal{D}}}}
\def \Fp {\ensuremath{\mathbb{F}_p}}
\def \mcR {\ensuremath{\mathcal{R}}}
\def \mcZ {\ensuremath{\mathcal{Z}}}
\def \mfgu {\ensuremath{\mathfrak{gu}}}
\def \mfC {\ensuremath{\mathfrak{C}}}
\def \mfI {\ensuremath{\mathfrak{I}}}
\def \Fq {\ensuremath{\mathbb{F}_q}}
\def \Zp  {\mathbb{Z}_p}
\begin{document}
\title[Representation zeta functions of groups]{Representation zeta
  functions of compact $p$-adic analytic groups and arithmetic groups}

\author{Nir Avni}\thanks{Avni was supported by NSF grant DMS-0901638.}
  \address{Department of Mathematics, Harvard University, One Oxford
  Street, Cambridge MA 02138, USA} \email{avni.nir@gmail.com}

\author{Benjamin Klopsch} \address{Department of Mathematics, Royal
  Holloway, University of London, Egham TW20 0EX, United Kingdom}
\email{Benjamin.Klopsch@rhul.ac.uk}

\author{Uri Onn} \address{Department of Mathematics, Ben Gurion
  University of the Negev, Beer-Sheva 84105 Israel}
  \email{urionn@math.bgu.ac.il}

\author{Christopher Voll} \address{School of Mathematics, University
  of Southampton, University Road, Southampton SO17 1BJ, United
  Kingdom}
\email{C.Voll.98@cantab.net}

\begin{abstract}
  We introduce new methods from $\mfp$-adic integration into the study
  of representation zeta functions associated to compact $p$-adic
  analytic groups and arithmetic groups.  They allow us to establish
  that the representation zeta functions of generic members of
  families of $p$-adic analytic pro-$p$ groups obtained from a global,
  `perfect' Lie lattice satisfy functional equations. In the case of
  `semisimple' compact $p$-adic analytic groups, we exhibit a link
  between the relevant $\mfp$-adic integrals and a natural filtration
  of the locus of irregular elements in the associated semisimple Lie
  algebra, defined by centraliser dimension.

  Based on this algebro-geometric description, we compute explicit
  formulae for the representation zeta functions of principal
  congruence subgroups of the groups $\SL_3(\lri)$, where $\lri$ is a
  compact discrete valuation ring of characteristic $0$, and of the
  groups $\SU_3(\Lri, \lri)$, where $\Lri$ is an unramified quadratic
  extension of~$\lri$.  These formulae, combined with approximative
  Clifford theory, allow us to determine the abscissae of convergence
  of representation zeta functions associated to arithmetic subgroups
  of algebraic groups of type~$A_2$.  Assuming a conjecture of Serre
  on the Congruence Subgroup Problem, we thereby prove a conjecture of
  Larsen and Lubotzky on lattices in higher-rank semisimple groups for
  algebraic groups of type $A_2$ defined over number fields.
\end{abstract}

%\thanks{This file is called \boxed{\rm{\jobname}} \hfill \textbf{Date
%    of draft version: \today}}

\keywords{Representation growth, $p$-adic analytic group, arithmetic
  group, Igusa local zeta function, $\mfp$-adic integration,
  Kirillov orbit method}

\subjclass[2000]{22E50, 22E55, 20F69, 22E40, 11M41, 20C15, 20G25}

\maketitle

% set counter to {1} to avoid listing of subsections:
% set to (3) for full details
\setcounter{tocdepth}{1}
\tableofcontents
\thispagestyle{empty}

\section{Introduction}\label{sec:introduction}

\subsection{Background and motivation}\label{subsec:background_motivation}
Let $G$ be a group and denote, for $n\in\N$, by $r_n(G)$ the number of
isomorphism classes of $n$-dimensional irreducible complex
representations of $G$; if $G$ is a topological or an algebraic group,
it is tacitly understood that representations are continuous or
rational, respectively.  We assume henceforth that $G$ is
(representation) \emph{rigid}, i.e.\ that $r_n(G)$ is finite for
all~$n \in \N$.

In finite group theory there is a fruitful tradition in studying
character degrees and conjugacy classes; e.g.\ see \cite{Hu98,Mo03}
and references therein.  Interesting new phenomena occur when one
takes an asymptotic point of view, as illustrated in \cite{LiSh05}
which focuses on character degrees of finite groups $H$ of Lie type as
$\lvert H \rvert$ tends to infinity.  Likewise, in the subject of
representation growth one studies, for an infinite group $G$, the
arithmetic properties of the sequence $r_n(G)$, $n \in \N$, and its
asymptotic behaviour as $n$ tends to infinity.  This line of
investigation draws its inspiration also from the area of subgroup
growth which, in a similar vein, is concerned with the distribution of
finite index subgroups in $G$; e.g.\ see \cite{LuSe03,dSGr06}.

The group $G$ is said to have \emph{polynomial representation growth}
(PRG) if the sequence $R_N(G):=\sum_{n=1}^N r_n(G)$, $N \in \N$, is
bounded by a polynomial in~$N$.  An important tool to study the
representation growth of a PRG group $G$ is its \emph{representation
zeta function}, viz.\ the Dirichlet series
$$
\zeta_{G}(s):=\sum_{n=1}^\infty r_n(G)n^{-s},
$$
where $s$ is a complex variable.  It is well-known that the
\emph{abscissa of convergence} $\alpha(G)$ of the series $\zeta_G(s)$,
i.e.\ the infimum of all $\alpha\in\R$ such that $\zeta_G(s)$
converges on the complex right half-plane $\{s \in \C \mid \real (s)>
\alpha \}$, gives the precise degree of polynomial growth: $\alpha(G)$
is the smallest value such that $R_N(G)=O(1 + N^{\alpha(G) +
  \epsilon})$ for every $\epsilon \in \R_{>0}$.

Key advances in describing the representation growth of arithmetic
groups were made by Larsen and Lubotzky in~\cite{LaLu08}.  At present
our general understanding of the representation theory of `semisimple'
arithmetic groups, like $\SL_n(\Z)$, rests on three major theories,
namely Margulis super-rigidity, the representation theory of simple
algebraic groups and Deligne-Lusztig theory for finite groups of Lie
type.  What is missing is a picture of the representations of groups
of Lie type over finite local rings; however, the corresponding
classification problem is considered to be `wild'.  Focusing on
degrees of characters and taking an enumerative point of view, we make
in this paper significant progress in describing the representation
growth of arithmetic groups and compact $p$-adic analytic groups which
arise as their completions.  Connections between our work and number
theory manifest themselves in classical papers of
Kloosterman~\cite{Kl46} as well as in recent developments related to
the Langlands programme.  For instance, certain representations of
groups of Lie type over compact discrete valuation rings play a
central role in the construction of super cuspidal representations of
reductive groups over local fields within the framework of the local
Langlands correspondence; cf.~\cite{BuKu93}.

More concretely, we study the representation zeta functions of two
types of groups: compact $p$-adic analytic groups and arithmetic
groups. Before we describe our main techniques and results in
Section~\ref{subsec:results}, we highlight some features of the
representation growth of these two types, and explain how they are
interwoven in the theory of representation zeta functions.

A compact $p$-adic analytic group $G$ is rigid if and only if it is
FAb, i.e.\ if every open subgroup of $G$ has finite abelianisation;
cf.~\cite[Proposition~2]{BaLuMaMo02}. In~\cite{Ja06}, Jaikin-Zapirain
proved rationality results for the representation zeta functions of
FAb compact $p$-adic analytic groups using tools from model theory. In
particular, the representation zeta function of a FAb compact $p$-adic
analytic pro-$p$ group is rational in $p^{-s}$.  (In the special case
$p=2$, the assertion is currently known to be true for uniformly
powerful groups, but conjectured to hold generally, as for odd $p$.)
Key examples of FAb compact $p$\nobreakdash-adic analytic groups are
the special linear groups $\SL_n(\lri)$ and their principal congruence
subgroups~$\SL_n^m(\lri)$, where $\lri$ is a compact discrete
valuation ring of characteristic $0$ and residue field
characteristic~$p$.

The arithmetic groups we are interested in are arithmetic subgroups of
semisimple algebraic groups defined over number fields. More
precisely, we consider groups $\Gamma$ which are commensurable to
$\mathbf{G}(\gri_S)$, where $\bfG$ is a connected, simply connected
semisimple algebraic group, defined over a number field $\gfi$ and
$\gri_S$ is the ring of $S$-integers in~$\gfi$, for a finite set $S$
of places of $\gfi$ including all the archimedean ones.  Let $\Gamma$
be of this form.  Lubotzky and Martin showed that $\Gamma$ has PRG if
and only if $\Gamma$ has the Congruence Subgroup Property (CSP);
cf.~\cite{LuMa04}.  Suppose that $\Gamma$ possesses these properties.
Then according to a result of Larsen and Lubotzky
(\cite[Proposition~1.3]{LaLu08}) the representation zeta function of
$\Gamma$ admits an Euler product decomposition. Indeed, if $\Gamma =
\mathbf{G}(\gri_S)$ and if the congruence kernel of $\Gamma$ is
trivial, this decomposition takes the form
\begin{equation}\label{equ:euler}
  \zeta_\Gamma(s)=\zeta_{\bfG(\C)}(s)^{\lvert k:\Q \rvert} \cdot
    \prod_{v\not\in S}\zeta_{\bfG(\smallgri_v)}(s),
\end{equation}
where the product extends over all places $v$ of $\gfi$ which are not
in $S$.  Here each archimedean factor $\zeta_{\bfG(\C)}(s)$ enumerates
the finite dimensional, irreducible rational representations of the
algebraic group $\bfG(\C)$; their contribution to the Euler product
reflects Margulis super-rigidity.  By $\gri_v$ we denote the ring of
integers in the completion $\gfi_v$ of $\gfi$ at the non-archimedean
place~$v$. The Euler product over the factors
$\zeta_{\bfG(\smallgri_v)}(s)$, $v\not\in S$, captures the
representations of $\Gamma$ with finite image. The groups
$\bfG(\gri_v)$ are FAb compact $p$-adic analytic groups.  An important
family of examples of arithmetic groups with the CSP are the special
linear groups $\SL_n(\Z)$, $n\geq3$, for which~\eqref{equ:euler} reads
$$\zeta_{\SL_n(\Z)}(s)=\zeta_{\SL_n(\C)}(s) \cdot
\prod_{p\text{ prime}}\zeta_{\SL_n(\Zp)}(s).$$

Several of the results of \cite{LaLu08} concern the abscissae of
convergence of the local representation zeta functions occurring as
Euler factors on the right hand side of~\eqref{equ:euler} for suitable
arithmetic groups $\Gamma$.  Of particular interest is the dependence
of these abscissae on natural invariants, such as the Lie rank of the
ambient group of $\Gamma$ or the place $v$ at which $\Gamma$ is
localised.  With regards to abscissae of convergence of the global
representation zeta functions for arithmetic groups, Avni proved
in~\cite{Av08} that, for an arithmetic group $\Gamma$ with the CSP,
the abscissa of convergence of $\zeta_\Gamma(s)$ is always a rational
number. Larsen and Lubotzky made the following conjecture, which can
be regarded as a refinement of Serre's conjecture on the Congruence
Subgroup Problem.

\begin{con*}[\mbox{Larsen and Lubotzky~\cite[Conjecture~1.5]{LaLu08}}]
Let $H$ be a higher-rank semi\-sim\-ple group. Then, for any two
irreducible lattices $\Gamma_1$ and $\Gamma_2$ in $H$,
$\alpha(\Gamma_1)=\alpha(\Gamma_2)$.
\end{con*}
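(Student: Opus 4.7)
The plan is to establish the Larsen--Lubotzky conjecture for $H$ of type $A_2$, assuming Serre's conjecture on the Congruence Subgroup Problem; the strategy is to exploit the Euler product decomposition~\eqref{equ:euler} and reduce the statement to a uniform control of the abscissae of the local factors.

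First, I would invoke Margulis arithmeticity to replace each irreducible lattice $\Gamma_i \subset H$ by a commensurable arithmetic group of the form $\bfG_i(\gri_{S_i})$, where $\bfG_i$ is an absolutely almost simple, simply connected algebraic group of type $A_2$ defined over a number field $\gfi_i$. Since $\alpha(\Gamma)$ is a commensurability invariant for PRG groups, this reduction does not change the quantity under comparison. For type $A_2$ the possible $\bfG_i$ are classified, and at almost every place they come in two flavours: inner forms, giving rise to $\SL_3(\lri_v)$, and outer forms, giving rise to $\SU_3(\Lri_v, \lri_v)$ with $\Lri_v/\lri_v$ an unramified quadratic extension. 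Serre's conjecture together with the Lubotzky--Martin criterion then guarantees that $\Gamma_i$ has PRG and that, up to a bounded factor from a finite congruence kernel, the Euler product~\eqref{equ:euler} is valid, so $\alpha(\Gamma_i)$ coincides with the abscissa of convergence of that product.

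The core of the argument is to control this abscissa. The archimedean factor $\zeta_{\bfG_i(\C)}(s)$ is a Witten-type zeta function whose abscissa of convergence for type $A_2$ is an absolute constant $\beta = 2/3$, independent of $\gfi_i$. For the non-archimedean factors, I would apply approximative Clifford theory to descend from the full compact group $\bfG_i(\lri_v)$ to its principal congruence subgroups, and then invoke the explicit closed-form formulae for $\zeta_{\SL_3^m(\lri)}(s)$ and $\zeta_{\SU_3^m(\Lri, \lri)}(s)$ derived earlier in the paper via $\mathfrak{p}$-adic integration and the Kirillov orbit method. From these I would extract, for each $v$, both an upper bound on the local abscissa that is strictly smaller than $\beta$ and a tail estimate of the form $\zeta_{\bfG_i(\lri_v)}(s) = 1 + O(q_v^{-c\real(s)})$ uniform in the residue cardinality $q_v$, ensuring absolute convergence of the Euler product on $\{s \in \C \mid \real(s) > \beta\}$.

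Combining these ingredients yields $\alpha(\Gamma_i) = \beta$ for every irreducible lattice $\Gamma_i \subset H$, and in particular $\alpha(\Gamma_1) = \alpha(\Gamma_2)$. The main obstacle is precisely this uniform control of the non-archimedean factors across all completions of all admissible number fields: without the closed-form $\mathfrak{p}$-adic integral expressions developed in the body of the paper, there would be no way to handle infinitely many local abscissae and the tail behaviour of the product simultaneously. It is this uniformity that promotes the local computations to a global arithmetic result about lattices.
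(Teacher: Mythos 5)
Your overall blueprint matches the paper's: Margulis arithmeticity to reduce to $\bfG(\gri_S)$, commensurability invariance of $\alpha$, Serre's conjecture via Lubotzky--Martin to get PRG and the Euler product, and the classification of type $A_2$ forms to reduce to local factors of the form $\zeta_{\SL_3(\lri_v)}(s)$ and $\zeta_{\SU_3(\Lri_v,\lri_v)}(s)$, which are then handled by the explicit $\mfp$-adic integral formulae plus approximative Clifford theory. This is exactly the architecture of Theorem~\ref{thmABC:lalu} and Corollary~\ref{corABC:lalu}.

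However, your final step contains a genuine and consequential error. You conclude $\alpha(\Gamma_i)=\beta=2/3$, whereas the theorem actually yields $\alpha(\Gamma_i)=1$. The slip is in the treatment of the Euler product: you tacitly assume that the abscissa of convergence of $\prod_{v}\zeta_{\bfG(\smallgri_v)}(s)$ equals the supremum of the local abscissae of the individual factors. This is false. Each local factor $\zeta_{\SL_3(\smallgri_v)}(s)$ indeed has abscissa $2/3$, but the product converges only where the deviations from $1$ are summable across all places. The dominant deviation comes from the representations that factor through the residue quotient $\SL_3(\F_{q_v})$, and the corresponding term is of the order $q_v^{1-2s}$, not $q_v^{-cs}$ for a fixed $c$ as in your proposed tail estimate. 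Thus the product behaves like $\prod_p\bigl(1+p^{1-2s}+p^{2-3s}+\dots\bigr)$, which converges precisely for $\real(s)>1$; this is the content of Propositions~\ref{pro:first_product} and~\ref{pro:first_product_SU}. The archimedean factor, having abscissa $2/3<1$, is then harmless. Concretely, at $\real(s)=2/3+\epsilon$ the exponent $1-2\real(s)=-1/3-2\epsilon$ is greater than $-1$, so $\sum_p p^{1-2s}$ diverges and your asserted absolute convergence fails. Correcting the tail estimate to the true $1+O(q_v^{1-2\real(s)})$ and re-running the comparison gives the global abscissa $1$, which is what the paper proves; the remaining (and delicate) work is to show that the non-level-zero contributions, after Clifford-theoretic bounds and the finite number of exceptional places handled via Theorem~\ref{thmABC:poles}, do not push the abscissa above $1$, which is the content of Propositions~\ref{pro:second_product} and~\ref{pro:second_product_SU}.
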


In the current paper we introduce novel methods into the study of
representation zeta functions of compact $p$-adic analytic groups and
arithmetic groups, and use them to prove both local and global
results.  The main theorems of the current paper and related results
from~\cite{AvKlOnVo_pre_a,AvKlOnVo_pre_b} were announced
in~\cite{AvKlOnVo10}.

\subsection{Main results}\label{subsec:results} Our first two main
results concern families of $p$-adic analytic pro\nobreakdash-$p$
groups which arise, via $p$-adic Lie theory, from the completions of a
global Lie lattice~$\Lambda$, defined over the ring of integers of a
number field.  Examples of specific interest are families formed by
the principal congruence subgroups of the compact $p$-adic analytic
groups $\bfG(\gri_v)$ featuring in \eqref{equ:euler}.  More generally,
let $\gri$ be the ring of integers of a number field~$\gfi$, and
$\Lambda$ an $\gri$-Lie lattice such that $k \otimes_\smallgri
\Lambda$ is a finite dimensional, perfect $\gfi$-Lie algebra.  Let
$\lri = \gri_v$ be the ring of integers of the completion $\gfi_v$ of
$\gfi$ at a non-archimedean place $v$ above the rational prime
$p$. Given a finite extension $\Lri\vert\lri$ of compact discrete
valuation rings of characteristic~$0$, let $\mfP$ denote the maximal
ideal of $\Lri$ and write $f(\Lri,\lri)$ for the degree of inertia.
Consider the $\Lri$-Lie lattice $\mathfrak{g}(\Lri) := \Lri
\otimes_\smallgri \Lambda$.  For all sufficiently large $m\in\N_0$,
the principal congruence sublattice $\mathfrak{g}^m(\Lri) := \mfP^m
\mathfrak{g}(\Lri)$ is potent and saturable; consequently it
corresponds, via $p$-adic Lie theory, to a potent and saturable FAb
pro-$p$ group $\mathsf{G}^m(\Lri) := \exp(\mathfrak{g}^m(\Lri))$; see
Section~\ref{subsec:pro-p}.  We call such $m$ \emph{permissible
for~$\mathfrak{g}(\Lri)$}.  Proposition~\ref{pro:Zlatticeprep} states
that, uniformly for any choice of the lattice $\Lambda$, all natural
numbers $m$ exceeding an explicit linear bound in terms of the
absolute ramification index~$e(\Lri,\Zp)$ are permissible for
$\mathfrak{g}(\Lri)$.  In particular, if $\Lri$ is unramified over
$\Zp$ and $p$ odd, then every $m\geq1$ is permissible for
$\mfg(\Lri)$.

Our first theorem provides a universal formula for the zeta functions
of the groups $\mathsf{G}^m(\Lri)$ and establishes the existence of
local functional equations.

\begin{thmABC} \label{thmABC:funeq} In the setup described above,
  there exist a finite set $S$ of places of $\gfi$, $r \in \N$ and a
  rational function $R(X_1,\dots,X_r,Y)\in\Q(X_1,\dots,X_r,Y)$ such
  that, for every non-archimedean place $v$ of $\gfi$ with $v \not\in
  S$, the following is true.

  There exist algebraic integers $\lambda_1 =
  \lambda_1(v),\dots,\lambda_r=\lambda_r(v)$, such that, for all
  finite extensions $\Lri$ of $\lri = \gri_v$ and for all $m\in\N_0$
  which are permissible for $\mathfrak{g}(\Lri)$ one has
  $$
  \zeta_{\mathsf{G}^m(\Lri)}(s) = q_v^{f dm}R(\lambda_1^f, \dots,
  \lambda_r^f,q_v^{-fs}),
  $$ where $q_v$ denotes the residue field cardinality of $\lri$, $f =
  f(\Lri,\lri)$ and $d = \rk_\Lri(\mathfrak{g}(\Lri)) = \dim_\gfi(\gfi
  \otimes_\smallgri \Lambda)$. Furthermore, the functional equation
  \begin{equation}\label{equ:funeq}
    \zeta_{\mathsf{G}^m(\Lri)}(s)|_{\substack{q_v\rightarrow q_v^{-1}
        \\ \lambda_j\rightarrow
        \lambda_j^{-1}}}=q_v^{fd(1-2m)}\zeta_{\mathsf{G}^m(\Lri)}(s)
  \end{equation}
holds.
\end{thmABC}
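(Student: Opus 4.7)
The plan is to combine the Kirillov orbit method with techniques from $\mfp$-adic integration. Since $m$ is permissible, the pro-$p$ group $\mathsf{G}^m(\Lri) = \exp(\mfg^m(\Lri))$ is saturable and FAb, so the Kirillov correspondence provides a bijection between its continuous irreducible complex representations and coadjoint orbits on the Pontryagin dual of $\mfg^m(\Lri)$, under which an $n$-dimensional representation corresponds to an orbit of size $n^2$. Consequently
$$
\zeta_{\mathsf{G}^m(\Lri)}(s) = \sum_{\Omega} \lvert \Omega \rvert^{-s/2},
$$
with $\Omega$ ranging over orbits. For each dual vector $\omega$ the skew-symmetric $\Lri$-bilinear form $B_\omega(x,y) := \omega([x,y])$ has elementary divisors that determine the size of the orbit through $\omega$, so stratifying the dual lattice by the elementary-divisor type of $B_\omega$ rewrites the right hand side as a $\mfp$-adic integral of Igusa type whose integrand is a product of absolute values of polynomials in the coordinates of $\omega$, built from the structure constants of the global Lie lattice~$\Lambda$.

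Using that $\mfg^m(\Lri) = \mfP^m \mfg(\Lri)$ is obtained from $\mfg(\Lri)$ by uniform scaling, the $m$-dependence can be factored out as a power of~$q_v^f$, producing an identity
$$
\zeta_{\mathsf{G}^m(\Lri)}(s) = q_v^{fdm} \cdot Z_{\Lri}(s),
$$
in which $Z_{\Lri}(s)$ is an Igusa-type integral depending on $\Lri$ but not on~$m$. Since the structure constants of $\Lambda$ are defined over~$\gri$, the polynomials in the integrand lie in $\gri[X_1,\dots,X_d]$. An application of Denef's rationality theorem, in the $\gri$-defined form developed by du~Sautoy and Grunewald, then produces a rational function $R \in \Q(X_1,\dots,X_r,Y)$ and, for each non-archimedean $v$ outside a finite exceptional set $S$, algebraic integers $\lambda_j(v)$ arising as Frobenius eigenvalues on the \'etale cohomology of a fixed smooth resolution over a common open subscheme of $\mathrm{Spec}(\gri)$, such that $Z_{\Lri}(s) = R(\lambda_1^f,\dots,\lambda_r^f, q_v^{-fs})$ uniformly in all finite extensions $\Lri/\lri$. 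This yields the first half of the theorem.

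The functional equation~\eqref{equ:funeq} will be derived from a symmetry of the Igusa integral $Z_{\Lri}(s)$ under $q_v \to q_v^{-1}$. Following the blueprint of Denef and Meuser, the chosen resolution decomposes the integral as a sum, indexed by strata, of rational functions whose numerators and denominators display an explicit involution in $q_v$ and $q_v^{-fs}$. The numbers $\lambda_j(v)$ are the Frobenius eigenvalues on the cohomology of the smooth proper components of this resolution, and Poincar\'e duality forces the multisets $\{\lambda_j(v)\}$ and $\{q_v^{f a_j}/\lambda_j(v)\}$ to coincide for the appropriate weights~$a_j$. Combined with the homogeneity of the integrand and the prefactor $q_v^{fdm}$, this produces the substitution $q_v \to q_v^{-1}$, $\lambda_j \to \lambda_j^{-1}$ together with the exponent shift $fd(1-2m)$.

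I expect the main obstacle to be the construction of a single resolution, valid simultaneously for all $v \notin S$ and all finite extensions $\Lri/\lri$, so that the exceptional set $S$ is finite and the Weil eigenvalues $\lambda_j(v)$ depend only on the place~$v$. This requires spreading the resolution out over an open subscheme of $\mathrm{Spec}(\gri)$, checking that away from finitely many primes the reduction is smooth, and verifying that the Denef-Meuser symmetry applies uniformly, in particular for ramified extensions $\Lri/\lri$ where the naive change of variables in the $\mfp$-adic integral is more delicate.
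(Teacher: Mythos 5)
Your proposal follows essentially the same route as the paper: Kirillov orbit method, reduction to a $\mfp$-adic integral of Igusa type with the $m$-dependence factored out as $q_v^{fdm}$, rationality and Frobenius eigenvalues via a resolution spread out over an open subscheme of $\mathrm{Spec}(\gri)$, and a Denef--Meuser style functional equation from the Weil conjectures and Poincar\'e duality. One small imprecision worth flagging: the integrand is built from $\mfp$-adic \emph{max-norms} of the sets $F_j(\mathbf{Y})$ of $2j\times 2j$ minors of the commutator matrix, not from absolute values of individual polynomials, so what is needed is a log-principalisation of the associated product ideal (rendering it locally monomial) rather than merely an embedded resolution of singularities; this is precisely what the paper imports from Voll's framework.
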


Our second theorem concerns the possible poles of the zeta functions
$\zeta_{\mathsf{G}^m(\Lri)}(s)$.  It also provides information about
how the abscissae of convergence of these zeta functions may change
with extensions $\Lri$ of the base ring $\lri$.

\begin{thmABC} \label{thmABC:poles} In the setup described above,
  there exists a finite set $P \subset \Q_{>0}$ such that the
  following is true.
  \begin{enumerate}
  \item \label{equ:abscissae_superset} For all non-archimedean places
    $v$ of $\gfi$, for all finite extensions $\Lri$ of $\lri = \gri_v$
    and all $m \in \N_0$ which are permissible for
    $\mathfrak{g}(\Lri)$ one has
    \begin{equation*}
      \{ \real(z) \mid z \in \C \text{ a pole of
      }\zeta_{\mathsf{G}^m(\Lri)}(s)\}\subseteq P.
    \end{equation*}
    In particular, one has $\alpha(\mathsf{G}^m(\Lri)) \leq \max P$.
  \item \label{equ:abscissa_equality} There exists a set $V$ of
    non-archimedean places of $\gfi$ of positive Dirichlet density
    with the property that for all $v\in V$, all finite extensions
    $\Lri$ of $\lri=\gri_v$ and all $m\in\N_0$ which are permissible
    for $\mfg(\Lri)$ one has
    \begin{equation*}
      \alpha(\mathsf{G}^m(\Lri)) = \max P.
    \end{equation*}
  \end{enumerate}

  Furthermore, if $v$ is any non-archimedean place of $\gfi$ and if
  $\gri_v = \lri \subseteq \Lri_1 \subseteq \Lri_2$ is a tower of
  finite ring extensions, then for every $m \in \N_0$ which is
  permissible for $\mathfrak{g}(\Lri_1)$ and $\mathfrak{g}(\Lri_2)$
  one has
  \begin{equation}\label{equ:comparison_abscissae}
    \alpha(\mathsf{G}^m(\Lri_1))
    \leq \alpha(\mathsf{G}^m(\Lri_2)).
  \end{equation}
\end{thmABC}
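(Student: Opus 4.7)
The plan is to derive Theorem~\ref{thmABC:poles} from the universal rational-function formula of Theorem~\ref{thmABC:funeq}. Writing $R = N/D$ in lowest terms over $\Q(X_1,\ldots,X_r)$ and factoring $D$ into irreducibles, one expects, in view of the Denef-type $\mfp$-adic integral representation underlying Theorem~\ref{thmABC:funeq}, each factor of $D$ to be of binomial shape $1-\mu_j(X)Y^{b_j}$ for a Laurent monomial $\mu_j$ and a positive integer $b_j$. Moreover, the algebraic integers $\lambda_i(v)$ are Frobenius eigenvalues on the cohomology of a fixed variety; hence they are Weil numbers of a fixed integer weight $w_i$ independent of $v$, so that $\lvert\lambda_i(v)\rvert=q_v^{w_i/2}$ for almost all $v$.

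\emph{Part~\ref{equ:abscissae_superset}.} Each specialised factor $1-\mu_j(\lambda^f)q_v^{-fb_js}$ vanishes exactly when $\lvert q_v^{-s}\rvert=\lvert\mu_j(\lambda)\rvert^{-1/b_j}$, so the corresponding candidate poles have real part $\sum_i e_{ij}w_i/(2b_j)$, where the $e_{ij}$ are the exponents of $X_i$ in $\mu_j$. These rationals are determined by $R$ and $\Lambda$ alone, independently of $v$, $\Lri$ and $m$. Taking $P\subset\Q_{>0}$ to be the finite set of all such rationals then yields the claimed containment, and in particular the bound $\alpha(\mathsf{G}^m(\Lri))\leq\max P$.

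\emph{Part~\ref{equ:abscissa_equality}.} The equality $\alpha(\mathsf{G}^m(\Lri))=\max P$ fails only when the candidate pole of maximal real part is cancelled by a coincident zero of the numerator after specialisation. Non-cancellation is a Zariski-open algebraic condition on the tuple $(\lambda_1(v),\ldots,\lambda_r(v))$. Since the $\lambda_i(v)$ are Frobenius eigenvalues of a fixed $\ell$-adic Galois representation, a Chebotarev density argument — or Deligne's equidistribution theorem — produces a set $V$ of places of positive Dirichlet density at which this open condition holds; if the cancellation were forced universally one instead replaces $\max P$ by the next element of $P$ and iterates the argument.

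\emph{Tower inequality~\eqref{equ:comparison_abscissae}.} Set $c=f(\Lri_2,\lri)/f(\Lri_1,\lri)\in\N$. Viewing both zeta functions as rational functions of $T=q_v^{-s}$ and writing $U=T^{f(\Lri_1,\lri)}$, the $\Lri_2$-function arises from the $\Lri_1$-function by replacing $X_i$ and $U$ by their $c$-th powers. Each binomial denominator factor $1-\mu_j(X)U^{b_j}$ thereby becomes $\prod_{\zeta^c=1}(1-\zeta\mu_j(X)U^{b_j})$, whose new zeros for $\zeta\neq 1$ share the absolute value of the original $\zeta=1$ zero; hence the set of candidate pole real parts is invariant under the tower extension. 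The main obstacle is that the numerator undergoes a parallel root-of-unity expansion, so a pole surviving for $\Lri_1$ might, a priori, be cancelled for $\Lri_2$ by a new twist of some numerator factor. The plan is to rule this out by matching denominator and numerator factors according to their $\zeta$-twist and applying a combinatorial counting argument: the $c$ candidate poles of real part $\alpha(\mathsf{G}^m(\Lri_1))$ cannot all be cancelled against twists of numerator factors without forcing a cancellation already present in the $\Lri_1$-setup, contradicting the assumption that this real part is attained by a genuine pole there. This yields at least one surviving pole of $\zeta_{\mathsf{G}^m(\Lri_2)}(s)$ of real part $\geq\alpha(\mathsf{G}^m(\Lri_1))$, hence the stated inequality.
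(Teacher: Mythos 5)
The proposal founders on the finitely many places of bad reduction, which are precisely what Theorem~\ref{thmABC:poles} is designed to handle (cf.\ the Remark after the theorem, which highlights this as the theorem's ``main thrust''). You derive everything from the rational function $R$ of Theorem~\ref{thmABC:funeq}, but that theorem is explicitly stated only for $v\notin S$; at the exceptional places it says nothing, whereas Parts~\eqref{equ:abscissae_superset} and \eqref{equ:comparison_abscissae} of Theorem~\ref{thmABC:poles} quantify over \emph{all} non-archimedean places. The paper closes this gap with a different toolkit: Proposition~\ref{pro: bad reduction} and Corollary~\ref{cor: bad reduction} give formulae valid also at $v\in S$, expressing the integral as a sum of terms $c_{U,j,(d_{\kappa\iota})}(q^f)\cdot(\text{positive factor})\cdot\Xi^N_{U,(d_{\kappa\iota}),I}(q^f,\bfs)$, and Proposition~\ref{pro:cones} shows that the set of real parts of poles of each cone zeta function $\Xi^N_{U,(\delta_{\kappa\iota}),I}(q^f,\bfa s+\bfb)$ is independent of $q$, $f$, $N$ and the shift vector $(\delta_{\kappa\iota})$. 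That independence, proved via Hilbert series of inhomogeneous diophantine systems, is the new ingredient you are missing; it is not a formal consequence of the shape of $R$'s denominator, which you in any case only ``expect'' rather than establish.

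Your treatments of Parts~\eqref{equ:abscissa_equality} and \eqref{equ:comparison_abscissae} also take a more fragile route than necessary. Because the Dirichlet coefficients of $\zeta_{\mathsf{G}^m(\Lri)}(s)$ are non-negative and the decomposition into $\sum_U c_U(q^f)(q^f-1)^{|U|}\Xi_{U,I}(q^f,\bfs)$ (and its bad-reduction analogue) is a sum of Dirichlet series with non-negative coefficients, no cancellation analysis is needed: the abscissa is simply $\max\{\alpha(\Xi_{U,I}): c_U(q^f)>0\}$. For Part~\eqref{equ:abscissa_equality} the paper then invokes a Lang--Weil estimate to get $c_U(q^f)>0$ for all $U$ on a set of places of positive density; your iterative fallback ``replace $\max P$ by the next element of $P$'' is a handwave, since the theorem asserts equality with a fixed $\max P$ and your scheme gives no reason the iteration is consistent. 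For Part~\eqref{equ:comparison_abscissae}, your plan to ``match denominator and numerator factors according to their $\zeta$-twist'' by a ``combinatorial counting argument'' is undeveloped, and again restricted to $v\notin S$; the paper instead uses that the counts $c_U(q^f)$, resp.\ $c_{U,j,(d_{\kappa\iota})}(q^f)$, are monotone non-decreasing under residue field extension (so the abscissa can only grow), combined with the totally ramified reduction of Section~\ref{subsec:arbitrary extensions} and Proposition~\ref{pro:cones}. I would rebuild the argument around this positivity-plus-monotonicity mechanism, which is both simpler and valid at all places.
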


\begin{rem}
  The proofs of Theorems~\ref{thmABC:funeq} and \ref{thmABC:poles}
  involve a description of the respective zeta functions in terms of
  $\mfp$-adic integrals generalising Igusa local zeta functions. The
  study of the latter relies on deep algebro-geometric techniques and
  results; cf.~\cite{De91}.  These include resolutions of
  singularities in characteristic $0$ and aspects of the Weil
  conjectures.

  The rational function $R(X_1,\dots,X_r,Y)$ in
  Theorem~\ref{thmABC:funeq} arises from explicit formulae which admit
  further investigation; it is remarkable that there is no dependency
  on ramification.  The main thrust of Theorem~\ref{thmABC:poles} is
  the following.  Whilst the formulae established in
  Theorem~\ref{thmABC:funeq} genuinely require finitely many
  exceptions, we show that the finite set of (real parts of)
  `candidate poles' arising from the analysis of the generic places,
  viz.\ those with `good reduction' modulo $\mfp$, yields also a
  superset of the (real parts of) poles at the finitely many
  exceptional places which have `bad reduction' modulo $\mfp$.

  Theorem~\ref{thmABC:poles} implies statements about the abscissae of
  convergence of the zeta functions of the $p$-adic analytic groups
  $\bfG(\gri_v)$ which occur as non-archimedean factors in Euler
  products like~\eqref{equ:euler}. Indeed, for sufficiently large $m$,
  Theorem~\ref{thmABC:poles} applies to the principal congruence
  subgroups $\bfG^m(\gri_v)$, and the abscissae of convergence of the
  zeta functions of commensurable groups coincide; cf.\
  \cite[Corollary~4.5]{LaLu08}.

  Theorem~\ref{thmABC:poles} also has immediate applications to Larsen
  and Lubotzky's conjecture given at the end of
  Section~\ref{subsec:background_motivation}: it shows that if almost
  all of the non-archimedean Euler factors of the zeta functions of
  two irreducible lattices $\Gamma_1$ and $\Gamma_2$ coincide, then
  $\alpha(\Gamma_1)=\alpha(\Gamma_2)$. This implies, for instance, to
  inner forms of groups of type $A_n$ defined over a number field.  In
  the current paper we apply this to groups of type~$A_2$.
\end{rem}

In our third main theorem we determine the abscissae of convergence of
the representation zeta functions of arithmetic groups of type $A_2$;
cf.~Appendix~\ref{subsec:algebraic/arithmetic_groups} for a precise
definition of this class of groups.  In the presence of the CSP, the
representation zeta function of such an arithmetic group is an Euler
product, whose non-archimedean factors are representation zeta
functions of FAb compact $p$-adic analytic groups;
see~\eqref{equ:euler}.  Using our $\mfp$-adic machinery and
approximative Clifford theory, we prove the following global result.

\begin{thmABC}
  \label{thmABC:lalu}
  Let $\Gamma$ be an arithmetic subgroup of a connected, simply
  connected simple algebraic group of type $A_2$ defined over a number
  field.  If $\Gamma$ has the CSP, then $\alpha(\Gamma) = 1$.
\end{thmABC}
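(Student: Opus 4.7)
The plan is to use the Euler product decomposition~\eqref{equ:euler} of $\zeta_\Gamma(s)$ afforded by the CSP and show that the abscissa of convergence of the full product equals~$1$. First, I would dispose of the archimedean factor. Since $\bfG$ is simply connected of type~$A_2$, the complex group $\bfG(\C)$ is essentially $\SL_3(\C)$, and $\zeta_{\bfG(\C)}(s)$ is a Witten-type Dirichlet series whose summands, by the Weyl dimension formula, take the form $\bigl((a+1)(b+1)(a+b+2)/2\bigr)^{-s}$ indexed by $(a,b) \in \N_0^2$. A straightforward two-dimensional integral comparison yields abscissa of convergence~$2/3$ for this factor. In particular the archimedean piece converges absolutely at $s = 1$ and contributes nothing to the global abscissa as soon as the latter is at least~$1$; thus $\alpha(\Gamma)$ coincides with the abscissa of convergence of the non-archimedean Euler product $\prod_{v \not\in S} \zeta_{\bfG(\gri_v)}(s)$.

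Next, I would analyse each non-archimedean factor. At every $v \not\in S$, the group $\bfG(\gri_v)$ is commensurable with $\SL_3(\gri_v)$ (at split places) or with $\SU_3(\Lri,\gri_v)$ for $\Lri|\gri_v$ the unramified quadratic extension (at non-split places). The plan is to invoke the explicit formulae for the representation zeta functions of the principal congruence subgroups $\SL_3^m(\lri)$ and $\SU_3^m(\Lri,\lri)$ promised in the abstract, and then promote each to $\zeta_{\bfG(\gri_v)}(s)$ by approximative Clifford theory applied to the finite quotient $\bfG(\gri_v)/\bfG^m(\gri_v)$. Reading off the poles of the resulting rational functions in $q_v^{-s}$ identifies $\alpha(\bfG(\gri_v)) = 1$ for each such $v$, and exposes the precise leading behaviour near $s = 1$.

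With local data in hand, I would establish the two inequalities separately. For $\alpha(\Gamma) \leq 1$: Theorem~\ref{thmABC:poles}\,(\ref{equ:abscissae_superset}) confines the real parts of all local poles to a finite set $P$ with $\max P = 1$, and the explicit leading coefficients from the previous step yield a uniform estimate of the shape $\log \zeta_{\bfG(\gri_v)}(s) = O(q_v^{-c(s-1)})$ for some $c > 0$, whence $\sum_v \log \zeta_{\bfG(\gri_v)}(s) < \infty$ whenever $s > 1$. For $\alpha(\Gamma) \geq 1$: Theorem~\ref{thmABC:poles}\,(\ref{equ:abscissa_equality}) supplies a positive-density set~$V$ of places at which the local abscissa is attained, and summing the corresponding residues at $s = 1$---a Chebotarev-density argument---one shows that the Euler product diverges as $s \to 1^+$.

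The main obstacle is the substantial content concealed behind the words ``explicit formulae for $\SL_3^m(\lri)$ and $\SU_3^m(\Lri,\lri)$''. These formulae rest on the $\mfp$-adic integration machinery of the paper---the centraliser-dimension stratification of the irregular locus of the associated semisimple Lie algebra, combined with a Kirillov-orbit analysis at the Lie-algebra level---and require delicate tracking of inner versus outer forms and of ramified versus unramified extensions, with a separate treatment of the finitely many exceptional places in~$S$. A secondary subtlety lies in the approximative Clifford step: one must verify that the passage from $\bfG^m(\gri_v)$ up to $\bfG(\gri_v)$ only redistributes finitely many representations of controlled dimension and so cannot disturb either the local abscissa or the Euler-product analysis.
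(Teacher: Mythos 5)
Your high-level architecture — Euler product, dispose of the archimedean factor, then handle the non-archimedean product — matches the paper. But the core of the argument is wrong in a way that would cause the proof to fail.

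You claim that reading off the poles of the local zeta functions identifies $\alpha(\bfG(\gri_v)) = 1$ for each place $v$, and you then derive the lower bound $\alpha(\Gamma)\geq 1$ by ``summing the corresponding residues at $s=1$.'' Neither step is correct. Theorem~\ref{thmABC:SL3} shows that the zeta functions of the principal congruence subgroups $\SL_3^m(\lri)$ and $\SU_3^m(\Lri,\lri)$ have poles only at $s=1/2$ and $s=2/3$, so their abscissa is $2/3$; by commensurability invariance the same holds for $\zeta_{\SL_3(\lri)}(s)$ and $\zeta_{\SU_3(\Lri,\lri)}(s)$. Thus every local factor converges absolutely on a neighbourhood of $s=1$ — there are no local poles or residues at $s=1$ to sum. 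Theorem~\ref{thmABC:poles}\,(\ref{equ:abscissa_equality}) tells you the local abscissae equal $\max P=2/3$ on a positive-density set of places; it does not push the local abscissa up to~$1$.

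This is precisely the interesting phenomenon you need to address: the global abscissa~$1$ is strictly larger than every local abscissa~$2/3$, and divergence at $s=1$ is a feature of the \emph{infinite product}, not of any single factor. The paper isolates this by splitting each local factor as $\zeta_{\bfG(\smallgri_v)}(s) = \bigl(1 + \zeta^{\textup{triv}}_v(s)\bigr) + \zeta^{\textup{reg}}_v(s) + \zeta^{\textup{irreg}}_v(s)$, observes that $1 + \zeta^{\textup{triv}}_v(s) = \zeta_{\SL_3(\F_{q_v})}(s)$ (resp.\ $\zeta_{\SU_3(\F_{q_v})}(s)$), and shows in Proposition~\ref{pro:first_product} that the product $\prod_v \zeta_{\SL_3(\F_{q_v})}(s)$ over the \emph{finite} groups of Lie type already has abscissa exactly~$1$, because its coefficients behave like $\sum_p (p^{1-2s}+p^{2-3s})$ via a Chebotarev argument. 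This is where your ``Chebotarev-density argument'' belongs — applied to the finite quotients, not to local residues. The reverse inequality $\alpha(\Gamma)\leq 1$ then comes from bounding the remaining product $\prod_v\bigl(1 + \zeta^{\textup{reg}}_v(s)+\zeta^{\textup{irreg}}_v(s)\bigr)$, which in the paper (Proposition~\ref{pro:second_product}) requires a term-by-term Clifford-theoretic approximation over the eight adjoint-orbit types recorded in Tables~\ref{table1}--\ref{table4}; your proposed $\log\zeta_{\bfG(\smallgri_v)}(s)=O\bigl(q_v^{-c(s-1)}\bigr)$ estimate is not justified and in fact would not hold uniformly without that case analysis. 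Finally, your description of the Clifford step as ``redistributing finitely many representations'' understates what is needed: the passage from $\bfG^m(\gri_v)$ to $\bfG(\gri_v)$ affects infinitely many characters, and the paper controls it uniformly in $v$ through Lemma~\ref{lem:translation-factor_neu}, Lemma~\ref{lem:q-power}, and the $\ll$-comparison of Dirichlet series in Lemma~\ref{lem:generating_series}, precisely because one cannot always decide whether a character of the congruence subgroup extends to its inertia group.
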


\begin{rem}
  Naturally, Theorem~\ref{thmABC:lalu} calls for a closer
  investigation of the singularity at $s=1$.  In
  \cite{AvKlOnVo_pre_a}, we use a different approach to show that the
  zeta function of the special linear group $\SL_3(\gri)$ over the
  ring of integers $\gri$ of a number field admits meromorphic
  continuation beyond $\real(s) = 1$ and that in this situation $s=1$
  is a double pole.  In fact, the assertion holds more generally for
  arithmetic groups of type ${}^1 \! A_2$ with the Congruence Subgroup
  Property.
\end{rem}

A well-known conjecture of Serre asserts that an arithmetic subgroup
of a connected, simply connected simple algebraic group has the CSP if
and only if the latter is of `higher rank'; cf.\ \cite{Se70} and
\cite[Section~9.5]{PlRa94}.  Assuming this conjecture for groups of
type~$A_2$, Theorem~\ref{thmABC:lalu} implies that Larsen and
Lubotzky's conjecture, as stated at the end of
Section~\ref{subsec:background_motivation}, holds for higher-rank
semisimple groups which are products of groups of type $A_2$.  More
precisely, one has

\begin{corABC}\label{corABC:lalu}
  Assuming Serre's conjecture, Larsen and Lubotzky's conjecture holds
  for groups $H = \prod_{i=1}^r \mathbf{G}_i(K_i)$, where each $K_i$
  is a local field of characteristic $0$ and each $\mathbf{G}_i$ is an
  absolutely almost simple $K_i$-group of type $A_2$ such that
  $\sum_{i=1}^r \rk_{K_i}(\mathbf{G}_i) \geq 2$ and none of the
  $\mathbf{G}_i(K_i)$ is compact.
\end{corABC}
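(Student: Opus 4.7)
The plan is to show that every irreducible lattice $\Gamma$ in $H$ satisfies $\alpha(\Gamma) = 1$, at which point Larsen and Lubotzky's conjecture for $H$ follows immediately. The two inputs I need are Margulis' arithmeticity theorem (to pass from the abstract lattice $\Gamma \subset H$ to a global algebraic group of type $A_2$) and Theorem~\ref{thmABC:lalu} (to compute the abscissa once the CSP is established via Serre's conjecture).

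First I would apply Margulis' arithmeticity theorem. The hypotheses ensure that $H$ has real rank at least $2$ and that all of its simple factors are non-compact, so any irreducible lattice $\Gamma \subset H$ is arithmetic. Unwinding the arithmeticity statement yields a number field $k$, a finite set $S$ of places of $k$ containing all archimedean ones, a connected, simply connected, absolutely almost simple $k$-group $\mathbf{G}$, and a continuous surjection with compact kernel $\pi \colon \prod_{v \in S} \mathbf{G}(k_v) \twoheadrightarrow H$ such that $\pi(\mathbf{G}(\gri_S))$ is commensurable with $\Gamma$ in $H$. Since each local factor $\mathbf{G}_i$ is of type $A_2$ by hypothesis, and the absolute type of $\mathbf{G}$ coincides with the type of any of its completions $\mathbf{G}_{k_v}$, the global group $\mathbf{G}$ is itself of type $A_2$.

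Next I would invoke Serre's conjecture, assumed here for groups of type $A_2$: since the $S$-rank of $\mathbf{G}$ equals $\sum_{i=1}^{r}\rk_{K_i}(\mathbf{G}_i) \geq 2$, the $S$-arithmetic group $\mathbf{G}(\gri_S)$ has the CSP. The lattice $\Gamma$, being commensurable with $\pi(\mathbf{G}(\gri_S))$ modulo the compact kernel of $\pi$, inherits the CSP. Applying Theorem~\ref{thmABC:lalu} to $\mathbf{G}(\gri_S)$ gives $\alpha(\mathbf{G}(\gri_S)) = 1$; since the abscissa of convergence is an invariant of the commensurability class (\cite[Corollary~4.5]{LaLu08}) and is unaffected by quotienting by a compact normal subgroup, we conclude $\alpha(\Gamma) = 1$. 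As this conclusion holds for any irreducible lattice in $H$, the conjectured equality of abscissae for any two such lattices is automatic.

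The principal obstacle I anticipate is the bookkeeping in the first step: starting from an abstract irreducible lattice, one must faithfully extract a Margulis datum $(\mathbf{G}, k, S)$ in which the global group $\mathbf{G}$ is of type $A_2$, and then verify that the CSP transfers cleanly from $\mathbf{G}(\gri_S)$ to $\Gamma$ along the map $\pi$. Neither point is deep, but both require some care to handle the compact kernel of $\pi$ and the interaction between global and local forms of $A_2$. With these verifications in hand, the corollary is an immediate consequence of Theorem~\ref{thmABC:lalu}.
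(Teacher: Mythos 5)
Your proof is correct and takes essentially the same route as the paper's one-sentence argument: apply Margulis' arithmeticity theorem to reduce to an arithmetic group of type $A_2$, invoke Serre's conjecture to obtain the CSP, and conclude via Theorem~\ref{thmABC:lalu} and the commensurability-invariance of the abscissa. Your version simply spells out the intermediate bookkeeping (passage from local to global type $A_2$, transfer of the CSP along $\pi$) that the paper delegates to the references \cite[Chapter~7.2]{LuSe03} and \cite[Section~2]{LaLu08}.
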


Indeed, by Margulis' arithmeticity theorem any irreducible lattice
$\Gamma$ in a semisimple group $H$ as in Corollary~\ref{corABC:lalu}
is arithmetic, and Theorem~\ref{thmABC:lalu} applies to such~$\Gamma$;
see~\cite[Chapter~7.2]{LuSe03} and \cite[Section~2]{LaLu08} for
details.

Key to our proof of Theorem~\ref{thmABC:lalu} is the following local
result, which we formulate in accordance with the notation introduced
just before Theorem \ref{thmABC:funeq}. Recall in particular the
definition of permissible $m$; cf.~Definition~\ref{def:permissible}.

\begin{thmABC} \label{thmABC:SL3} Let $\lri$ be a compact discrete
  valuation ring of characteristic~$0$ whose residue field has
  cardinality~$q$ and characteristic not equal to $3$.  Let
  $\mathfrak{g}(\lri)$ be one of the following two $\lri$-Lie lattices
  of type $A_2$:
  \begin{enumerate}
  \item [(a)] $\mathfrak{sl}_3(\lri) = \{ \mathbf{x} \in
    \mathfrak{gl}_3(\lri) \mid \Tr(\mathbf{x}) = 0 \}$,
  \item [(b)] $\mathfrak{su}_3(\Lri,\lri) = \{ \mathbf{x} \in
    \mathfrak{sl}_3(\Lri) \mid \mathbf{x}^\sigma = -
    \mathbf{x}^{\mathrm{t}} \}$, where $\Lri \vert \lri$ is an
    unramified quadratic extension with non-trivial automorphism
    $\sigma$ and $\mathbf{x}^{\mathrm{t}}$ denotes the transpose of
    $\mathbf{x}$.
  \end{enumerate}

  For $m \in \N$, let $\mathsf{G}^m(\lri)$ be the $m$-th principal
  congruence subgroup of the corresponding group $\SL_3(\lri)$ or
  $\SU_3(\Lri,\lri)$.  Then, for all $m$ which are permissible for
  $\mathfrak{g}(\lri)$, one has
  \begin{equation*}
    \zeta_{\mathsf{G}^m(\lri)}(s) = q^{8m} \frac{1 + u(q) q^{-3-2s}
      + u(q^{-1}) q^{-2-3s} + q^{-5-5s}}{(1 - q^{1-2s})(1 - q^{2-3s})},
  \end{equation*}
  where
  \begin{equation*} u(X) =
    \begin{cases}
      \phantom{-}X^3 + X^2 - X - 1 - X^{-1} & \text{ if }
      \mathfrak{g}(\lri)=
      \mathfrak{sl}_3(\lri),\\
      -X^3 + X^2 - X + 1 - X^{-1} & \text{ if }\mathfrak{g}(\lri)=
      \mathfrak{su}_3(\Lri,\lri).
    \end{cases}
  \end{equation*}
\end{thmABC}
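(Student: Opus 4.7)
The plan is to apply the Kirillov orbit method to express $\zeta_{\mathsf{G}^m(\lri)}(s)$ as a $\mfp$-adic integral over the Lie lattice $\mfg(\lri)$, stratify the integral according to the adjoint centraliser dimension, and then evaluate the contributions stratum by stratum using the Chevalley map of characteristic polynomial coefficients. Since $m$ is permissible, the pro-$p$ group $\mathsf{G}^m(\lri) = \exp(\mfg^m(\lri))$ is saturable, and the Kirillov correspondence identifies its continuous irreducible representations with co-adjoint orbits on the Pontryagin dual of $\mfg^m(\lri)$, the dimension of a representation being the square root of the size of the associated orbit. Because the residue field characteristic is not equal to $3$ (the only bad prime for type $A_2$), the trace form on $\mfg(\lri)$ is non-degenerate, so one may identify $\mfg^*$ with $\mfg$ and rewrite
$$
\zeta_{\mathsf{G}^m(\lri)}(s) = \mu(\mfg^m(\lri))^{-1}
\int_{\mfg^m(\lri)} \lvert \mathsf{G}^m(\lri) \cdot w \rvert^{-s/2} \, d\mu(w),
$$
with the orbit size of $w$ controlled by the co-rank of the bracket endomorphism $[w, \cdot]$ on $\mfg(\lri)$.

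The next step is to stratify $\mfg(\lri)$ by the centraliser dimension. For type $A_2$ the rank is $2$, so the centralisers of semisimple elements have dimension $2$ (regular), $4$ (subregular), or $8$ (central), and this trichotomy governs the orbit sizes and therefore the integrand. The stratification is detected by the Chevalley map $\mfg \to \mathbb{A}^2$ given by the pair $(c_2, c_3)$ of degree-$2$ and degree-$3$ generators of the invariant ring: the regular stratum is the complement of the discriminant hypersurface, the subregular stratum lies on the discriminant but off the zero section, and the central stratum is the fibre over the origin. The overall dilation by $\mfp^m$ sends $\mfg$ to $\mfg^m$, producing the uniform prefactor $q^{8m}$ and reducing the integration to a problem intrinsic to $\mfg(\lri)$.

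The central computation is the integration over the regular stratum: fibrewise integration against the Chevalley map weighted by the orbit-size factor yields the two geometric series $(1 - q^{1-2s})^{-1}$ and $(1 - q^{2-3s})^{-1}$, which reflect the bidegrees $(1,2)$ and $(2,3)$ that the invariants carry once paired with the orbit-dimension exponent. The subregular and central strata contribute the explicit numerator polynomial of the statement. The principal obstacle will be the explicit parametrisation of the fibres of the Chevalley map along the discriminant; it requires either a resolution of the discriminant hypersurface in $\mfg$ or a direct finite-field count of the points in each residue stratum, together with an analysis of how these counts propagate under reduction modulo higher powers of $\mfp$. The split case $\mathfrak{sl}_3(\lri)$ is more transparent, since the regular semisimple elements are classified by the splitting type of the characteristic polynomial over the residue field. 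In the twisted case $\mathfrak{su}_3(\Lri, \lri)$ the Galois action of $\sigma$ replaces the split Cartan by a non-split unramified torus, which is ultimately responsible for the sign changes in $u(X)$ between the two cases. Assembling the three strata and normalising by $\mu(\mfg^m(\lri))$ should produce the claimed closed form, which is then confirmed, as a consistency check, to satisfy the functional equation predicted by Theorem~\ref{thmABC:funeq}.
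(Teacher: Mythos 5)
Your high-level plan — Kirillov orbit method, stratify by centraliser dimension, compute the contribution of each stratum, obtain geometric-series denominators and a numerator polynomial, with the sign changes in $u$ reflecting the split vs.\ non-split torus — is the same in outline as the paper's proof, which linearises via Kirillov, reduces to the $\mfp$-adic integral~\eqref{equ:sl3 integral}, and splits the domain according to whether the reduction of $\mathbf{y}$ lies on the irregular locus $\overline{\mathcal{V}_1}$. However, the specific mechanism you propose for detecting the stratification is wrong, and the derivation of the two denominator factors from it does not stand up.

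The error is the identification of the strata with the Chevalley fibration $(c_2,c_3)\colon \mfg\to\mathbb{A}^2$. You write that ``the regular stratum is the complement of the discriminant hypersurface, the subregular stratum lies on the discriminant but off the zero section, and the central stratum is the fibre over the origin.'' This is the regular-\emph{semisimple} stratification, not the stratification by centraliser dimension. A regular nilpotent (type $1$ in Table~\ref{table1}, with minimal polynomial $X^3$) has Chevalley image $(0,0)$ — on the discriminant \emph{and} in the fibre over the origin — yet its $\SL_3$-orbit is $6$-dimensional and its centraliser is $2$-dimensional, i.e.\ it is regular. Conversely, the fibre over a generic discriminant point contains mostly regular elements (type $5$), not subregular ones. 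So the Chevalley fibres are not unions of Kirillov strata, and the proposed ``fibrewise integration'' cannot be carried out with a constant integrand per fibre. What actually controls the integrand is the irregular locus $\mathcal{V}_1 = \{x : \dim \Cen(x)\geq 4\}$, which is a \emph{codimension-$3$} subvariety of $\mfg$ (the image of rank-$\leq 1$ matrices under $\gl_3 \to \spl_3$), whereas the discriminant hypersurface in $\mfg$ is only codimension~$1$. Resolving or counting points on the discriminant hypersurface, as you suggest, is not what is required; the relevant object is $\mathcal{V}_1$, which the paper parametrises explicitly.

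Consequently, the claimed origin of the geometric series — ``bidegrees $(1,2)$ and $(2,3)$'' of the Chevalley invariants — is not what produces $(1-q^{1-2s})^{-1}(1-q^{2-3s})^{-1}$. Tracing through Corollary~\ref{cor:zeta=integral} with $(r,t)=(-s/2-1,3s-3)$, the factor $1-q^{2-3s}=1-q^{-1-t}$ comes from the integral of $\lvert x\rvert_\mfp^t$ over the level variable $x\in\mfp$ (this appears already on the regular stratum), while the factor $1-q^{1-2s}=1-q^{-4-2r-t}$ arises, on the irregular stratum, from the joint double geometric series in $(l,n)$ whose $l$-weight $q^{-3l}$ records the codimension~$3$ of $\mathcal{V}_1$ in $\mathbb{A}^8$ (the $M_l$ in Section~\ref{subsec:sl3princ}). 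The Pfaffian-squaring observation of Remark~\ref{rem:minors}, letting one replace $F_3$ locally by $\{y_1^2,y_2^2,y_3^2\}$ near a smooth point of $\mathcal{V}_1$, is the key step; neither $c_2$ nor $c_3$ enters. If you wish to salvage the Chevalley-map idea you would need to explain how the fibrewise orbit sizes — which jump inside the nilpotent fibre and along the discriminant — are to be handled, and why the resulting integral reduces to anything rational; as stated there is no argument, only an assertion that the degrees $2$ and $3$ appear in the answer.
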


\begin{rem}
  The Euler product~\eqref{equ:euler} links the local analysis in
  Theorem~\ref{thmABC:SL3} to the global results in
  Theorem~\ref{thmABC:lalu} and its Corollary~\ref{corABC:lalu} for
  arithmetic groups of type~$A_2$.  In Theorem~\ref{thmABC:SL3}, the
  close resemblance between the representation zeta functions of the
  special linear and the special unitary groups is noteworthy and
  reminiscent of the Ennola duality for the characters of the finite
  groups $\GL_n(\F_q)$ and $\GU_n(\F_{q^2},\F_q)$; cf.~\cite{Ka85}.
  In \cite{AvKlOnVo_pre_b}, we compute explicitly the representation
  zeta functions of principal congruence subgroups of $\SL_3(\lri)$,
  where $\lri$ is an unramified extension of $\Z_3$.  The resulting
  formula is uniform in the residue field size, but genuinely
  different from the formulae in Theorem~\ref{thmABC:SL3}.
\end{rem}

\subsection{Discussion of results and techniques} The core of our
technique is a $\mfp$-adic formalism for the representation zeta
functions of potent, saturable pro-$p$ groups. This approach has two
key ingredients.  Firstly, the \emph{Kirillov orbit method} for
potent, saturable pro-$p$ groups provides a way to construct the
characters of irreducible complex representations in terms of
co-adjoint orbits; see, e.g.,~\cite{Go07, Go08}. This `linearisation'
-- pioneered in~\cite{Ho77b, Ja06} -- allows us to transform the
original problem of enumerating representations by their dimension
into the task of counting co-adjoint orbits by their size.  We remark
that the Kirillov orbit method can also be applied in the context of
finitely generated nilpotent groups; see~\cite{Ho77a}.  Voll used this
method to study zeta functions which encode iso-twist classes of
irreducible characters of finitely generated nilpotent groups;
see~\cite{Vo10}. These zeta functions were first studied by Hrushovski
and Martin, who used sophisticated techniques from model theory and
did not rely on the Kirillov orbit method; see~\cite{HrMa07}.

The second main idea of our approach is to enumerate co-adjoint orbits
by means of suitable \emph{$\mfp$-adic integrals} which are closely
related to Igusa local zeta functions; cf.~\cite{De91, Ig00, VeZu08}.
A general class of such $\mfp$-adic zeta functions was introduced
in~\cite{Vo10}; it does not rely on notions from model theory such as
the concept of definable integrals utilised in~\cite{Ja06}, for which
explicit formulae are not available.  The key ingredient of our
$\mfp$-adic formalism is a description of the representation zeta
functions of groups like $\mathsf{G}^m(\Lri)$, which arise from a
global $\gri$-Lie lattice $\Lambda$ as in the setup of
Theorems~\ref{thmABC:funeq} and \ref{thmABC:poles}, in terms of
$\mfp$-adic integrals of the shape
\begin{equation} \label{equ:integral_intro}
 \mathcal{Z}_{\Lri}(r,t) = \int_{(x,\mathbf{y}) \in V(\Lri)} \lvert x
  \rvert_\mfP^t \prod_{j=1}^{\lfloor d/2 \rfloor}
  \frac{\lVert F_j(\mathbf{y}) \cup F_{j-1}(\mathbf{y})x^2
  \rVert_\mfP^r}{\lVert F_{j-1}(\mathbf{y})
  \rVert_\mfP^r} \, d\mu(x,\mathbf{y}),
\end{equation}
where $V(\Lri)\subset\Lri^{d+1}$ is a union of cosets modulo $\mfP$,
$\mu$ is the additive Haar measure on $\Lri^{d+1}$, normalised so that
$\mu(\Lri^{d+1})=1$, the $F_j(\mathbf{Y})$ are finite sets of
polynomials over~$\gri$, which may be defined in terms of the
structure constants of the $\gri$-Lie lattice~$\Lambda$ with respect
to a given $\gri$-basis, and we write $\|\cdot \|_\mfP$ for the
$\mfP$-adic maximum norm; cf.~\eqref{equ:integral_neu}.

The proof of Theorem~\ref{thmABC:funeq} relies heavily on deep
algebro-geometric techniques and results which are commonly used to
analyse integrals like~\eqref{equ:integral_intro}.  These include
principalisations of ideals for the algebraic varieties defined by the
polynomials in $F_j(\mathbf{Y})$, and aspects of the Weil conjectures
regarding the zeta functions associated to smooth projective algebraic
varieties over finite fields; see~\cite{De91, VeZu08}.
Theorem~\ref{thmABC:funeq} captures a threefold `uniformity' of the
representation zeta functions of the groups in question: regarding the
variation of the prime, ring extension and congruence level.  It is
noteworthy that, in the setting of subring and subgroup growth, the
impact of variation of all three parameters is much less understood
than in the present context.

Of particular interest are Theorems~\ref{thmABC:funeq} and
\ref{thmABC:poles} when they are applied to zeta functions associated
to principal congruence subgroups of `se\-mi\-sim\-ple' compact
$p$-adic analytic groups $\bfG(\gri_v)$, featuring on the right hand
side of the Euler product~\eqref{equ:euler}.  In this context $\bfG$
is a connected, simply connected semisimple algebraic group defined
over a number field $\gfi$, with ring of integers $\gri$.  The Euler
product links the local analysis in Theorems~\ref{thmABC:funeq},
\ref{thmABC:poles} and~\ref{thmABC:SL3} to the global result in
Theorem~\ref{thmABC:lalu} and its Corollary~\ref{corABC:lalu}.
Moreover, in the `semisimple' case, we offer a Lie-theoretic
interpretation of the varieties defined by the sets of polynomials
$F_j(\mathbf{Y})$ occurring in~\eqref{equ:integral_intro}, by
identifying them with certain algebraic subvarieties of the Lie
algebra associated to the group $\mathbf{G}$.  These varieties are
defined in terms of centraliser dimension, and yield a filtration of
the locus of irregular elements in the Lie algebra.  This makes them
amenable to tools from algebraic Lie theory, for instance the theory
of sheets; cf.~\cite{Bo81}.

Whilst Theorem~\ref{thmABC:funeq} is a local result in as much as it
concerns the representation zeta functions of pro-$p$ groups of the
form $\mathsf{G}^m(\Lri)$, we make use of the result's global
framework to exclude finitely many places which require a refined
analysis.  It is an interesting question whether the phenomenon of
(local) functional equations transcends the realm of pro-$p$ groups.
\begin{qun}
  Does a functional equation, akin to \eqref{equ:funeq}, hold for the
  (generic) non-archimedean factors $\zeta_{\bfG(\smallgri_v)}(s)$
  occurring in the Euler product~\eqref{equ:euler}, or for the
  representation zeta functions of arithmetic groups?
\end{qun}
Even in the cases where we have explicit formulae available (cf.\
\cite[Theorem~7.5]{Ja06} and also \cite{AvKlOnVo_pre_b} for
$\SL_2(\lri)$; cf.\ \cite{AvKlOnVo_pre_a} for $\SL_3(\lri)$), it is
not clear which operation should play the role of the `inversion of
the prime' in the functional equations~\eqref{equ:funeq} of
Theorem~\ref{thmABC:funeq}.

The proof of Theorem~\ref{thmABC:poles} relies on a refinement of the
formulae which we utilise to deduce the functional
equations~\eqref{equ:funeq} for `generic' places.  Indeed, the finite
set $P$ of (real parts of) `candidate poles' is obtained from the
numerical data of a principalisation of ideals associated to the
$\gri$-lattice $\Lambda$.  In the case of good reduction modulo
$\mfp$, the inclusion in \eqref{equ:abscissae_superset} follows from
an explicit formula for the $\mfp$-adic integrals describing
$\zeta_{\mathsf{G}^m(\Lri)}(s)$, which involves these numerical data.
In the proof of Theorem~\ref{thmABC:poles} we establish that $P$ is
also a superset of the real parts of the poles in the case of bad
reduction.  Whilst it is in general a difficult problem to decide
which of the `candidate real parts' in $P$ are actually real parts of
poles of a specific representation zeta function, the equality in
\eqref{equ:abscissa_equality} reflects that an irreducible
quasi-projective variety defined over $\gri$ has points modulo
$\mfp_v$ for a set of places~$v$ of positive Dirichlet density.  With
a view toward the poles of the Euler factors in~\eqref{equ:euler} we
pose the following question.

\begin{qun}
  Let $G$ be a FAb compact $p$-adic analytic group, and $H \leq G$ an
  open subgroup. Is it the case that
  $$
  \{\real(z) \mid z \text{ a pole of } \zeta_{H}(s)\}= \{\real(z) \mid
  z \text{ a pole of } \zeta_{G}(s)\}?
  $$
\end{qun}

We note that the maxima of these two finite sets coincide, because the
abscissa of convergence of the representation zeta function of a rigid
group is a commensurability invariant;
see~\cite[Corollary~4.5]{LaLu08}.

In the proof of Theorem~\ref{thmABC:lalu} we use standard results from
the theory of orders in central simple algebras over number fields to
show that almost all of the non-archimedean Euler factors of the zeta
function $\zeta_\Gamma(s)$ of the arithmetic group $\Gamma$ are of the
form $\zeta_{\SL_3(\lri)}(s)$ or $\zeta_{\SU_3(\Lri,\lri)}(s)$.  By
Theorem~\ref{thmABC:SL3}, the abscissae of convergence of these local
zeta functions are~$2/3$. Theorem~\ref{thmABC:poles} allows us to
deduce that $2/3$ is also an upper bound for the local abscissae of
convergence for the remaining finitely many exceptional places.  In a
second step we use approximative Clifford theory to `lift' our
analysis of representations of principal congruence subgroups to the
encompassing groups $\SL_3(\lri)$ and $\SU_3(\Lri,\lri)$,
respectively.  Whilst we do not produce explicit formulae for the
representation zeta functions of these groups, our method is designed
to retain enough control over suitable approximations to pin down the
global abscissa of convergence.  The formulae for the zeta functions
of the groups $\SL_3(\lri)$ given in our paper~\cite{AvKlOnVo_pre_a}
are derived using an explicit analysis of similarity classes of
$3\times3$-matrices over~$\lri$.

It is noteworthy that, in the case of arithmetic groups $\Gamma$ of
type $A_2$, the abscissa of convergence of the zeta function
$\zeta_{\Gamma}(s)$ coincides with the abscissa of convergence of the
Euler product of the zeta functions of the finite groups of Lie type
$\bfG(\gri_v/\mfp_v)$; see, specifically,
Proposition~\ref{pro:first_product}. Explicit computations of Euler
products of finite groups of Lie type suggest, however, that the
latter is a function of the Lie rank which tends to zero as the Lie
rank tends to infinity. This does not yield the abscissa of
convergence of $\zeta_\Gamma(s)$ as, by \cite[Theorem 8.1]{LaLu08},
the abscissae of convergence of almost all of the local factors
$\zeta_{\bfG(\smallgri_v)}(s)$ are bounded away from zero.

The proof of Theorem~\ref{thmABC:SL3} follows from a concrete
application of our $\mfp$-adic formalism and a description of the
irregular locus in the complex Lie algebra $\mathfrak{sl}_3(\C)$.  The
readiness with which our method can be applied in this concrete
situation is remarkable; but, of course, it will be much more
challenging to understand the algebraic varieties arising from higher
dimensional groups.  We remark that Theorem~\ref{thmABC:SL3} also
provides concrete examples of the general assertions made in
Theorem~\ref{thmABC:funeq} and illustrates some of the statements of
Theorem~\ref{thmABC:poles}.

A precursor and source of inspiration for the study of representation
zeta functions is the area of subgroup growth; see \cite{LuSe03}.
Methods and tools from one subject area can be transferred to the
other, and there is significant common ground, in particular regarding
the class of `semisimple' compact $p$-adic analytic groups.
Theorems~\ref{thmABC:funeq}, \ref{thmABC:poles} and \ref{thmABC:SL3}
should be contrasted with our extremely limited knowledge regarding
the subgroup zeta functions of `semisimple' compact $p$-adic analytic
pro-$p$ groups.  For example, the subgroup zeta function of
$\SL^1_3(\Z_p)$, or even just its pole spectrum, remains widely
unknown; cf.~\cite[p.~431]{LuSe03}.

%As further applications of our method we establish bounds for the
%abscissae of convergence of representation zeta functions of potent,
%saturable pro-$p$ groups; cf.\
%Proposition~\ref{proABC:abscissa_estimate},
%p.\ \pageref{proABC:abscissa_estimate}. As corollaries, we obtain
%simple proofs for estimates for semisimple compact $p$-adic analytic
%groups which were first proved in~\cite{LaLu08} (cf.\
%Corollary~\ref{corABC:abscissa_semisimple}, p.\
%\pageref{corABC:abscissa_semisimple}) and for the abscissae of
%convergence for norm-$1$ groups in simple division algebras over $\Qp$
%of prime degree (cf.\ Corollary~\ref{corABC:abscissa_skew},
%p.\ \pageref{corABC:abscissa_skew}).
%
%We also derive explicit formulae for representation zeta functions
%associated to groups related to $\SL_2(\lri)$
%(Proposition~\ref{proABC:SL2}, p.\ \pageref{proABC:SL2}), and norm-1
%groups in non-split quaternion algebras
%(Proposition~\ref{proABC:quaternions}, p.\
%\pageref{proABC:quaternions}).

\subsection{Organisation and notation}
\subsubsection{Organisation of the paper}
In the first part of the paper, we develop methods from $\mfp$-adic
integration to study representation zeta functions of certain compact
$p$-adic analytic groups. One of the key tools facilitating this is
the Kirillov orbit method.  In Section~\ref{sec:kirillov}, we review
this method for potent and saturable pro-$p$ groups, together with
some of the necessary background from $p$-adic Lie theory. In
Section~\ref{sec:poincare} we exhibit how the Kirillov orbit method
allows us to transform the problem of computing representation zeta
functions of the relevant groups into the problem of computing certain
Poincar\'e series. These in turn are amenable to tools from the theory
of $\mfp$-adic integration developed in the study of Igusa local zeta
functions and generalisations thereof.  We utilise this approach to
prove, in Section~\ref{sec:funeq}, Theorems~\ref{thmABC:funeq} and
\ref{thmABC:poles}.  In Section~\ref{sec:semisimple} we explain a link
between representation zeta functions of `semisimple' $p$-adic
analytic groups and a filtration of their associated Lie algebras,
refining the loci of irregular elements.

In the second part of the paper we apply the $\mfp$-adic formalism
developed in the first part to the study of representation zeta
functions of arithmetic groups of type~$A_2$, and related pro-$p$
groups.  We start, in Section~\ref{sec:explicit_cong_sub}, with the
computation of the zeta functions of principal congruence subgroups of
the compact $p$-adic analytic groups $\SL_3(\lri)$ and
$\SU_3(\Lri,\lri)$, thereby proving Theorem~\ref{thmABC:SL3}.  The
proof of Theorem~\ref{thmABC:lalu} is given in
Section~\ref{sec:abscissae}. The theorem is proved for groups of type
${}^1 \! A_2$ (inner forms) in Section~\ref{subsec:abscissa_inner} and
for groups of type ${}^2 \!  A_2$ (outer forms) in
Section~\ref{subsec:abscissa_outer}. We provide some background on
Dirichlet generating functions in Section~\ref{subsec:dirichlet} and
from Clifford theory in Section~\ref{subsec:clifford}.

\subsubsection{Notation}
Throughout the paper, $p$ is a rational prime.  We denote by $\gfi$ a
number field, with ring of integers $\gri$.  For any non-archimedean
place $v$ of $k$, we denote by $\gfi_v$ the completion of $\gfi$ with
respect to $v$ and by $\gri_v$ the corresponding complete valuation
ring, with maximal ideal $\mfp_v$ and residue field $\F_{q_v} = \gri_v
/ \mfp_v$ of size $q_v$.  For any finite set $S$ of places of $k$,
including all the archimedean ones, we write $\gri_S := \{ x \in k
\mid \forall v \not \in S: x \in \gri_v \}$ for the ring of
$S$-integers in $\gfi$.  We denote by $\Lambda$ an $\gri$-Lie lattice.
A finite extension field of $k$ is typically denoted by $K$, its ring
of integers $\Gri$, etc.

The field of $p$-adic numbers is denoted by $\Q_p$, the ring of
$p$-adic integers by $\Z_p$.  More generally, $\lfi$ denotes a
$\mfp$-adic field, i.e.\ a non-archimedean local field of
characteristic $0$.  By $\lri$ we denote a compact discrete valuation
ring of characteristic $0$ and residue field characteristic $p$;
typically $\lri$ is the ring of integers of the $\mfp$-adic field
$\lfi$.  The maximal ideal of $\lri$ is denoted by $\mfp = \pi \lri$,
where $\pi$ is a chosen uniformiser.  We write $\F_q$ for the residue
field $\lri/\mfp$ of cardinality $q = p^f$, where $f = f(\lfi,\Qp) =
f(\lri,\Zp)$ is the (absolute) degree of inertia.

For $x \in \lfi \setminus \{0\}$, we write $\lvert x \rvert_\mfp$ to
denote the $\mfp$-adic absolute value of $x$, normalised so that
$\lvert \pi \rvert_\mfp = q^{-1}$.  For a finite set $X \subseteq
\lfi$ we set $\| X \|_\mfp := \max \{ \lvert x \rvert _\mfp \mid x \in
X \}$.  We write $\mu$ for the additive Haar measure on spaces like
$\lri^n$, normalised so that $\mu(\lri^n)=1$.  A finite extension
field of $\lfi$ is denoted by $\Lfi$, its ring of integers by $\Lri$
with maximal ideal $\mfP$.  We write $f(\Lfi,\lfi) = f(\Lri,\lri)$ for
the (relative) degree of inertia and $e(\Lfi,\lfi) = e(\Lri,\lri)$ for
the (relative) ramification index.

We give a short pictorial summary of the global and local notation.
$$
\begin{matrix}
  \Gri & \subset & \Gfi & \phantom{x} \hookrightarrow \phantom{x} &
  \Lfi &  \supset & \Lri & \triangleright & \mfP \\
  \vert & & \vert &  & \vert & & \vert & & \vert\\
  \gri & \subset & \gfi & \hookrightarrow & k_v = \lfi & \supset &
  \gri_v = \lri &  \triangleright & \mfp_v = \mfp \\
  \vert & & \vert  & & \vert & & \vert & & \vert \\
  \Z & \subset & \Q & \hookrightarrow & \Q_p & \supset & \Z_p &
  \triangleright
  & p\Z_p
\end{matrix}
\qquad\qquad
\begin{matrix}
  \F_{q^f}, & f = f(\Lfi,\lfi) \\ \vert & \\ \F_q & \\ \vert & \\ \F_p &
\end{matrix}
$$

Typically, we use the usual notation, like $\N^n$, $\lri^n$, $\F_q^n$
etc., to denote Cartesian powers.  However, in cases where this
notation could be misunderstood we make a small modification.  For
instance, we write $\mfp^n$ to denote the $n$-th power of the maximal
ideal $\mfp$, and $\mfp^{(n)}$ for the $n$-fold Cartesian power
$\times_{i=1}^n \mfp$.  The transpose of a matrix $M$ is denoted
by~$M^{\textup{t}}$.  The non-trivial Galois automorphism of a
quadratic field extension $\Lfi \vert \lfi$ is denoted by $\sigma$,
the resulting standard involution on the matrix algebra $\Mat_n(\Lfi)$
by $\circ$, i.e.\ $\bfx^\circ := (\bfx^\sigma)^\textup{t}$.

We write $F^*$ to denote the multiplicative group of a field $F$ and
extend this notation as follows.  For any non-trivial $\lri$-module
$M$ we write $M^* := M \setminus \mfp M$.  For the trivial
$\lri$-module $\{0\}$ we define $\{0\}^* := \{0\}$.  The Pontryagin
dual of a compact abelian group $\mathfrak{a}$, e.g.\ the additive
group of an $\lri$-module, is $\widehat{\mathfrak{a}} :=
\Irr(\mathfrak{a}) := \Hom_\Z^{\textup{cont}}(\mathfrak{a},\C^*)$.
More generally, we write $\widehat{G} = \Irr(G)$ for the collection of
continuous, irreducible complex characters of a profinite group $G$.
If $N \trianglelefteq G$, the inertia group of $\theta \in \Irr(N)$ in
$G$ is denoted by $I_G(\theta)$.  The following list collects further
key notation, together with the number of the section where it is
introduced.

\smallskip

\begin{center}
\begin{tabular}{r||l|l}
  $\alpha(\Gamma)$ & abscissa of convergence &
  \ref{subsec:background_motivation} \\ $\Irr_n(\mathfrak{h})$ &
  characters of level $n$ & \ref{subsec:kirillov} \\
  $\mathcal{R}_{\mathfrak{g},\mathbf{b}}(\mathbf{Y})$ & commutator
  matrix &\ref{subsec:poincare} \\ $W(\lri) = (\lri^d)^*$ & region of
  integration & \ref{subsec:poincare} \\
  $\mathcal{P}_{\mathcal{R},\lri}(s)$ & Poincar\'e series &
  \ref{subsec:poincare} \\ $\mathcal{Z}_\lri(r,t)$ & $\mfp$-adic
  integral &\ref{subsec:integration} \\ $\rho$ & maximal rank
  &\ref{subsec:integration} \\ $\mathcal{V}_i$, $\mathcal{W}_i$ &
  varieties in stratifications & \ref{sec:semisimple} \\ $\kappa_0$ &
  normalised Killing form &\ref{sec:semisimple}
\end{tabular}
\end{center}

%%%%%%%%%
%%%%%%%%%

\part{$\mfp$-Adic formalism}\label{part:formalism}

%%%%%

\section{$p$-Adic analytic pro-$p$ groups and the Kirillov orbit
  method}\label{sec:kirillov} Much of the theory of compact $p$-adic
analytic pro-$p$ groups can be developed satisfactorily by using the
concept of uniformly powerful pro-$p$ groups; see \cite{DiDuMaSe99}.
However, for a more complete picture of $p$-adic Lie theory it is
advantageous to work with saturable pro-$p$ groups; see
\cite{Go07,Kl05}.

\subsection{} \label{subsec:pro-p} The notion of saturability goes
back to Lazard and is based on valuation maps; see \cite{La65, Kl05}.
In \cite{Go07}, Gonz\'alez-S\'anchez has given the following useful
characterisation.  A finitely generated pro-$p$ group $G$ is
\emph{saturable} if and only if it is torsion-free and admits a potent
filtration, i.e.\ a descending series $G_i$, $i \in \N$, of normal
subgroups of $G$ such that (i) $G = G_1$, (ii) $\bigcap_{i \in \N} G_i
= 1$, (iii) $[G_i,G] \subseteq G_{i+1}$ and $[G_i,_{p-1} G] \subseteq
G_{i+1}^p$ for all $i \in \N$.  (Here $[G_i,_{p-1} G]$ denotes the
left-normed iterated commutator with one occurrence of $G_i$ and $p-1$
occurrences of $G$, and $G_{i+1}^p$ stands for the group generated by
$p$-th powers of elements of~$G_{i+1}$.)  Uniformly powerful pro-$p$
groups and, more generally, torsion-free finitely generated pro-$p$
groups $G$ with $\gamma_p(G) \subseteq \Phi(G)^p$ are saturable.
In~\cite{GoKl09} it is shown that every torsion-free $p$-adic analytic
pro-$p$ group of dimension less than $p$ is saturable.  Klopsch proved
that every insoluble maximal $p$-adic analytic just-infinite
pro\nobreakdash-$p$ group of dimension less than $p-1$ is saturable;
see~\cite{Kl05}.  These groups occur naturally as maximal open pro-$p$
subgroups of automorphism groups of semisimple $p$\nobreakdash-adic
Lie algebras and they provide a rich class of groups whose
representation zeta functions are of considerable interest.

To a saturable pro-$p$ group $G$ one associates a saturable $\Z_p$-Lie
lattice $\mfg=\log(G)$, which coincides with $G$ as a topological
space, and there is a tight Lie correspondence between $G$ and~$\mfg$;
see \cite{GoKl09}.  The simplest way to recover the group $G$ from its
$\Z_p$-Lie lattice $\mfg$ is by defining a group multiplication on
$\mfg$ via the Hausdorff series.  When we start from a saturable
$\Z_p$-Lie lattice $\mfg$, we denote this group by $\exp(\mfg)$.  The
following straightforward proposition characterises saturable pro-$p$
groups which are FAb, i.e.\ which have the property that every open
subgroup has finite abelianisation.

\begin{pro} \label{FAb-perfect} Let $G$ be a saturable pro-$p$ group
  and $\mfg=\log(G)$ the associated saturable $\Z_p$-Lie lattice.
  Then the following are equivalent.
  \begin{enumerate}
  \item $G$ is FAb.
  \item $G$ has finite abelianisation $G/[G,G]$.
  \item $\mfg$ has finite abelianisation
    $\mfg/[\mfg,\mfg]$.
  \item $\Q_p \otimes \mfg$ is a perfect $\Q_p$-Lie algebra.
  \end{enumerate}
\end{pro}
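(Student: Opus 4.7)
The plan is to establish the cycle $(1)\Rightarrow(2)\Rightarrow(3)\Leftrightarrow(4)\Rightarrow(1)$. The implication $(1)\Rightarrow(2)$ is trivial since $G$ itself is an open subgroup. For $(3)\Leftrightarrow(4)$, I observe that $\mfg$, being the $\Z_p$-Lie lattice associated with a compact $p$-adic analytic pro-$p$ group, is finitely generated as a $\Z_p$-module; hence so is $\mfg/[\mfg,\mfg]$, and this quotient is finite if and only if its $\Q_p$-rank vanishes, equivalently $\Q_p\otimes\mfg=[\Q_p\otimes\mfg,\Q_p\otimes\mfg]$.

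For $(2)\Leftrightarrow(3)$ I invoke the tight Lie correspondence recalled in Section~\ref{subsec:pro-p}: as topological spaces $G=\mfg$, and the Hausdorff formula expresses the group commutator in the form $[g,h]_G=[g,h]+(\text{higher nested Lie brackets})$. Since the higher-order contributions lie in deeper terms of the lower central series, the closed subgroup $\overline{[G,G]}$ of $G$ and the Lie ideal $[\mfg,\mfg]$ span the same $\Q_p$-subspace of $\Q_p\otimes\mfg$. Consequently both $G/\overline{[G,G]}$ and $\mfg/[\mfg,\mfg]$ are finitely generated $\Z_p$-modules of equal $\Z_p$-rank, so one is finite precisely when the other is.

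To close the loop with $(4)\Rightarrow(1)$, let $H\leq G$ be an arbitrary open subgroup. The sublattices $p^n\mfg$ are saturable for every $n\in\N$, and the corresponding open subgroups $\exp(p^n\mfg)$ form a neighbourhood base of $1$; thus there exists a saturable open subgroup $U\leq H$. Its Lie lattice $\log(U)$ is an open $\Z_p$-sublattice of $\mfg$, so $\Q_p\otimes\log(U)=\Q_p\otimes\mfg$ is perfect by hypothesis. Applying the equivalences $(4)\Rightarrow(3)\Rightarrow(2)$ to $U$ yields that $U/[U,U]$ is finite; since $[U,U]\leq[H,H]$ and $[H:U]<\infty$, this forces $H/[H,H]$ to be finite as well.

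The only substantive step is the identification of the $\Q_p$-linear spans of $\overline{[G,G]}$ and $[\mfg,\mfg]$ underlying $(2)\Leftrightarrow(3)$, which rests on the Hausdorff series; the remaining implications are formal, using only the finite generation of $\mfg$ as a $\Z_p$-module and the existence of a base of open saturable subgroups of $G$ coming from the sublattices $p^n\mfg$.
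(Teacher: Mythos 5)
Your argument is correct and reaches the same conclusion, but it takes a genuinely different route for the crucial step. The paper's entire proof is a citation: by~\cite[Corollary~4.7]{Go07}, the terms of the derived series of $G$ and of $\mfg$ \emph{coincide as sets}, so in particular $[G,G]=[\mfg,\mfg]$, and all four equivalences drop out immediately. You instead prove from scratch, via the Hausdorff commutator expansion, the weaker statement that $\overline{[G,G]}$ and $[\mfg,\mfg]$ have the \emph{same $\Q_p$-span} inside $\Q_p\otimes\mfg$; this is enough to equate the $\Z_p$-ranks of the two abelianisations and hence to get the finiteness equivalence $(2)\Leftrightarrow(3)$. The set-equality the paper invokes is stronger and makes the argument a one-liner; your span-equality argument is more elementary and self-contained, but it silently relies on a bit of $p$-adic dimension theory (that the $\Z_p$-rank of $G/\overline{[G,G]}$ equals $\dim(G)-\dim(\overline{[G,G]})$, and that the analytic dimension of the closed subgroup $\overline{[G,G]}$ equals the $\Q_p$-dimension of its span) which you should flag if you want the argument to be airtight. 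On the other hand, you spell out $(4)\Rightarrow(1)$ carefully by passing to a saturable open subgroup of the form $\exp(p^n\mfg)$ inside an arbitrary open $H\le G$ and pushing the conclusion up the finite index; the paper leaves this reduction implicit in the phrase ``everything follows.'' One small inaccuracy: $p^n\mfg$ need not be saturable for \emph{every} $n\in\N$ (e.g.\ $n=1$ and $p=2$ is borderline; cf.\ Proposition~\ref{pro:Zlatticeprep}), but it is for all sufficiently large $n$, which is all you use.
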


\begin{proof}
  Everything follows from the observation that the terms of the
  derived series of the group $G$ and of the Lie lattice
  $\mfg$ coincide as sets; see~\cite[Corollary~4.7]{Go07}.
\end{proof}

\noindent We remark that a compact $p$-adic analytic group is FAb if
and only if it has an open FAb saturable pro-$p$ subgroup.

In applying the Kirillov orbit method, we will be dealing with potent
$\Z_p$-Lie lattices and pro-$p$ groups.  We recall that a $\Z_p$-Lie
lattice $\mfg$ is \emph{potent} if $\gamma_{p-1}(\mfg) \subseteq
p\mfg$ for $p>2$ and $\gamma_2(\mfg) \subseteq 4\mfg$ for $p=2$;
similarly, a pro-$p$ group $G$ is potent if $\gamma_{p-1}(G) \subseteq
G^p$ for $p>2$ and $\gamma_2(G) \subseteq G^4$ for $p=2$.  If $G$ is a
saturable pro-$p$ group and $\mfg$ the associated $\Z_p$-Lie lattice,
then $G$ is potent if and only if $\mfg$ is potent.  Saturable pro-$p$
groups of dimension less than $p$ are potent.  In particular, the main
theorem in \cite{GoKl09} shows that every torsion-free $p$-adic
analytic pro-$p$ group of dimension less than $p$ is saturable and
potent.

\begin{dfn} \label{def:permissible} Let $\lri$ be a compact discrete
  valuation ring of characteristic $0$ and residue field
  characteristic $p$, and let $\mfg$ be an $\lri$-Lie
  lattice. For $m \in \N_0$, let $\mfg^m := \mfp^m \mfg$,
  where $\mfp$ denotes the maximal ideal of $\lri$.  We call $m \in
  \N_0$ \emph{permissible for $\mfg$} if $\mfg^m$ is potent
  and saturable as a $\Z_p$-Lie lattice.
\end{dfn}

The following effective result shows that, for a given $\lri$-lattice
$\mfg$, almost all nonnegative integers are permissible.

\begin{pro}\label{pro:Zlatticeprep}
  Let $\lri$ be a compact discrete valuation ring of characteristic
  $0$ and residue field characteristic $p$, and let $\mfg$ be an
  $\lri$-Lie lattice.  Let $m \in \N_0$ and let $e := e(\lri, \Z_p)$
  be the absolute ramification index of $\lri$.

  If $m > e(p-1)^{-1}$, then $\mfg^m$ is saturable.  Moreover, if
  $p>2$ and $m \geq e(p-2)^{-1}$, then $\mfg^m$ is potent.  If
  $p=2$ and $m \geq 2e$, then $\mfg^m$ is potent.
\end{pro}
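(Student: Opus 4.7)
The plan is to verify potency and saturability of $\mfg^m = \mfp^m\mfg$ separately, using in both cases the $\lri$-bilinearity of the bracket together with the identity $p\lri = \mfp^e$.

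For potency, the lower central series satisfies $\gamma_j(\mfp^m\mfg) \subseteq \mfp^{jm}\gamma_j(\mfg) \subseteq \mfp^{jm}\mfg$ by $\lri$-bilinearity. The inclusion $\mfp^{jm}\mfg \subseteq p^k \mfp^m\mfg = \mfp^{ek+m}\mfg$ then reduces to the elementary inequality $m(j-1) \geq ek$. For $p > 2$, the potency condition $\gamma_{p-1}(\mfg^m) \subseteq p\mfg^m$ becomes $m(p-2) \geq e$, giving $m \geq e/(p-2)$. For $p = 2$, the condition $\gamma_2(\mfg^m) \subseteq 4\mfg^m$ becomes $m \geq 2e$.

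For saturability, the plan is to invoke the Lie-algebra analogue of Gonz\'alez-S\'anchez's characterization quoted in Section~\ref{subsec:pro-p}: a torsion-free finitely generated $\Z_p$-Lie lattice $L$ is saturable if and only if it admits a potent filtration, namely a descending chain $L = L_1 \supseteq L_2 \supseteq \ldots$ of $\Z_p$-ideals with $\bigcap_i L_i = 0$, $[L_i, L_j] \subseteq L_{i+j}$ and $[L_i,_{p-1} L_1] \subseteq p L_{i+1}$. On $\mfg^m$ we propose the explicit filtration $L_i := \mfp^{m+i-1}\mfg$ for $i \geq 1$. All conditions up to the $p$-potency relation hold without constraint on $m$; the $p$-potency bound $[L_i,_{p-1} L_1] \subseteq \mfp^{mp+i-1}\mfg$ lies in $pL_{i+1} = \mfp^{e+m+i}\mfg$ precisely when $m(p-1) \geq e+1$, equivalently $m > e/(p-1)$.

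The main delicate point will be selecting and verifying the correct form of the Lie-lattice saturability criterion, particularly in handling the $p=2$ case uniformly with $p > 2$; once this is settled, both parts of the proposition reduce to the two elementary inequalities identified above. An essentially equivalent route would be to equip $\mfg \otimes_{\Z_p} \Q_p$ with the valuation $\omega(x) := v_\mfp(x)/e$ and directly check Lazard's axioms (with $\omega(p) = 1$, $\omega([x,y]) \geq \omega(x) + \omega(y)$, and $\omega \geq m/e$ on $\mfg^m$), in which case the saturability threshold $\omega > 1/(p-1)$ produces the same bound $m > e/(p-1)$.
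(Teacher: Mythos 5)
Your proof is correct and arrives at the same numerical bounds by the same essential mechanism: everything reduces, via the identity $p\lri = \mfp^e$ and $\lri$-bilinearity of the bracket, to the observation that $[\mfp^a\mfg,\mfp^b\mfg]\subseteq\mfp^{a+b}\mfg$. The potency argument is essentially identical to the paper's. For saturability your primary route is genuinely different in presentation: you exhibit the explicit potent filtration $L_i := \mfp^{m+i-1}\mfg$ and invoke a Lie-lattice version of the Gonz\'alez-S\'anchez criterion, whereas the paper directly checks Lazard's valuation-map axioms for $w(x) := \frac{1}{e}\max\{n : x\in\mfg^n\}$. The two are closely related (the potent filtration encodes precisely the integer level sets of this valuation), and you correctly identify the valuation route as your ``alternative,'' which is what the paper actually does. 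One small caution: the paper states Gonz\'alez-S\'anchez's characterization only for pro-$p$ groups and cites \cite{Go07}; the Lie-lattice analogue you invoke is true but would need its own citation or short verification if you wanted your primary route to stand on its own. The valuation-map route avoids this entirely, which is presumably why the paper takes it. Also note your filtration condition $[L_i,L_j]\subseteq L_{i+j}$ is stronger than the literal analogue $[L_i,L_1]\subseteq L_{i+1}$ of the group axiom; both hold here (the former needs $m\geq 1$, automatic under $m > e(p-1)^{-1}$), so no harm done, but worth flagging.
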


\begin{proof}
  Suppose that $m > e(p-1)^{-1}$.  For $x \in \mfg^m \setminus \{ 0\}$
  define $w(x) := \frac{1}{e}\max \{ n \in \N \mid x \in \mfg^n \}$, and put
  $w(0) := \infty$.  Then the map $w: \mfg^m \rightarrow \R_{>0} \cup
  \{\infty\}$ satisfies Lazard's condition for a valuation map,
  summarised in \cite[Section~2]{Kl05}, and $\mfg^m$ is saturable.

  Suppose that $p>2$ and $m \geq e(p-2)^{-1}$.  Then
  $\gamma_{p-1}(\mfg^m) \subseteq \mfg^{m(p-1)}\subseteq
  \mfg^{e+m} = p \mfg^m$, hence $\mfg^m$ is potent.

  Suppose that $p=2$ and $m \geq 2e$.  Then
  $[\mfg^m,\mfg^m] \subseteq \mfg^{2m} \subseteq
  \mfg^{2e+m} = 4 \mfg^m$, hence $\mfg^m$ is
  potent.
\end{proof}

Note in particular that, if $e(\lri,\Zp)=1$, then for $p>2$ every $m
\geq 1$ is permissible for every $\lri$-Lie lattice $\mfg$, and,
similarly, for $p=2$ every $m \geq 2$ is permissible.

\subsection{} \label{subsec:kirillov} The Kirillov orbit method in the
context of compact $p$-adic analytic groups was first developed by
Howe in~\cite{Ho77b}.  Jaikin-Zapirain applied the theory to study
representation zeta functions of FAb compact $p$-adic analytic groups
and extended Howe's work; see~\cite{Ja06}.  A useful description of
the Kirillov orbit method for potent saturable pro-$p$ groups can be
found in~\cite{Go08}.

Let $G$ be a FAb potent saturable pro-$p$ group and $\mfg$ the
associated potent saturable $\Z_p$\nobreakdash-Lie lattice.  We denote
by $\widehat{\mfg} := \Irr(\mfg)$ the Pontryagin dual of the compact
abelian group $(\mfg,+)$, viz.\ the group
$\Hom_\Z^{\textup{cont}}(\mfg,\mathbb{C}^*) =
\Hom_\Z^{\textup{cont}}(\mfg,\mu_{p^\infty})$ of continuous
irreducible complex characters of the additive group $\mfg$, where
$\mu_{p^\infty} \cong \Q_p / \Z_p$ stands for the group of complex
roots of unity of $p$-power order.  To $\omega \in \widehat{\mfg}$ we
associate the bi-additive and antisymmetric form
$$
b_\omega: \mfg \times \mfg \rightarrow \mu_{p^\infty}, \quad
b_\omega(x,y) := \omega([x,y]).
$$
The radical of the form $b_\omega$ is
$$
\Rad(\omega) := \Rad(b_\omega) = \{ x \in \mfg \mid \forall y
\in \mfg : b_\omega(x,y) = 1 \}.
$$
According to \cite[Corollary~2.13]{Ja06} and \cite[Theorem~5.2]{Go08}
we have
\begin{equation} \label{Andrei's_formula}
  \zeta_G(s) =
  \sum_{\omega \in \Irr(\mfg)} \lvert
  \mfg:\Rad(\omega) \rvert^{-(s+2)/2}.
\end{equation}

Next suppose that $\mfg$ is an $\lri$-Lie lattice, where $\lri$ is a
compact discrete valuation ring of characteristic $0$ and residue
field characteristic $p$.  Suppose that $m \in \N_0$ is permissible
for $\mfg$.  In order to make effective use of the Kirillov orbit
method in studying the representation zeta function of the $p$-adic
analytic pro-$p$ group $\mathsf{G}^m = \exp(\mfg^m)$, we recall a
basic description of the dual $\widehat{\mfg^m}$ of $\mfg^m$, viewed
as a $\Z_p$-module, in terms of $\lri$-functionals of $\mfg^m$, viewed
as an $\lri$-module.

\begin{lem}\label{lem:dual}
  The dual $\widehat{\mfh}$ of an $\lri$-Lie lattice $\mfh$ can be
  written as a disjoint union
  $$
  \widehat{\mfh} = \bigdotcup\nolimits_{n \in \N_0} \Irr_n(\mfh),
  \qquad \text{where } \Irr_n(\mfh) \cong
  \Hom_{\lri}(\mfh,\lri/\mfp^n)^*.
  $$
\end{lem}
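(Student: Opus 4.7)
The plan is to realise $\widehat{\mfh}$ as the $\lri$-module $\Hom_\lri(\mfh, \lfi/\lri)$, where $\lfi = \Frac(\lri)$, and then to partition the latter by conductor. First, I would fix a continuous additive character $\psi : \lfi \to \C^*$ of conductor $\lri$---trivial on $\lri$ but non-trivial on $\mfp^{-1}$---for instance by taking $\psi := \psi_0 \circ \Tr_{\lfi/\Qp}$, where $\psi_0$ is the standard character of $\Qp$ with kernel $\Zp$. The Lie lattice $\mfh$ is a finitely generated torsion-free $\lri$-module and hence free of some finite rank $d$. Fixing an $\lri$-basis, the assignment
\[
\Phi \colon \Hom_\lri(\mfh, \lfi/\lri) \longrightarrow \widehat{\mfh}, \qquad \phi \longmapsto \psi \circ \widetilde{\phi}
\]
(where $\widetilde{\phi}$ is any lift of $\phi$ to $\lfi$; well-definedness follows from $\psi|_\lri = 1$) decomposes on this basis into $d$ copies of the classical local Pontryagin self-duality $\lfi/\lri \xrightarrow{\sim} \widehat{\lri}$, $[a] \mapsto \psi(a \, \cdot\,)$. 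From this I would conclude that $\Phi$ is an isomorphism of $\lri$-modules.

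Next I would use that $\lfi/\lri = \bigcup_{n \geq 0} \pi^{-n}\lri/\lri$ is a discrete torsion $\lri$-module: since $\mfh$ is finitely generated, the image of every $\phi \in \Hom_\lri(\mfh, \lfi/\lri)$ lies in $\pi^{-n}\lri/\lri$ for a smallest $n = n(\phi) \in \N_0$, which I call the level of $\phi$. Partitioning by level and transporting through $\Phi$ produces the required disjoint decomposition
\[
\widehat{\mfh} = \bigdotcup\nolimits_{n \in \N_0} \Irr_n(\mfh).
\]

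Finally, for each $n$ the $\lri$-module isomorphism $\pi^{-n}\lri/\lri \xrightarrow{\sim} \lri/\mfp^n$ given by multiplication by $\pi^n$ sends $\pi^{-(n-1)}\lri/\lri$ onto $\mfp/\mfp^n$. Composing with this identifies level-$n$ homomorphisms with those $\widetilde{\phi} \in \Hom_\lri(\mfh, \lri/\mfp^n)$ whose image is not contained in $\mfp/\mfp^n$. Because $\mfh$ is free over $\lri$, this condition coincides with $\widetilde{\phi} \notin \mfp \Hom_\lri(\mfh, \lri/\mfp^n)$, i.e.\ $\widetilde{\phi} \in \Hom_\lri(\mfh, \lri/\mfp^n)^*$. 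The case $n=0$ recovers only the trivial character, consistently with the convention $\{0\}^* = \{0\}$. The only technical point requiring care is the verification that $\Phi$ is an isomorphism of \emph{$\lri$-modules} (not merely of abstract abelian groups); everything else is bookkeeping around the filtration of $\lfi/\lri$.
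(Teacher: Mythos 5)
Your proof is correct and follows essentially the same route as the paper: both exploit the self-duality of $\lfi$ under the trace pairing, package the dual of $\mfh$ as $\Hom_\lri(\mfh,\lfi/\lri)$ (the paper does this level by level, via the chain $\widehat{\mfh/\mfp^n\mfh}\cong\mfp^{-n}\mfh/\mfh\cong\mfh/\mfp^n\mfh\cong\Hom_\lri(\mfh,\lri/\mfp^n)$, and then takes the disjoint union), and stratify by conductor. One slip in your write-up deserves attention: the Tate character $\psi_0\circ\Tr_{\lfi\vert\Q_p}$ does \emph{not} have conductor $\lri$ when $\lfi\vert\Q_p$ is ramified. The largest fractional ideal on which it is trivial is the inverse different $\diff_{\lfi\vert\Q_p}^{-1}=\mfp^{-\delta}$, which strictly contains $\lri$ as soon as $\delta>0$, and the statement is for an arbitrary compact discrete valuation ring of characteristic $0$, so ramification cannot be ignored. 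The paper handles this precisely by twisting: it defines the pairing $(x+\mfp^n,y+\lri)\mapsto\chi(\pi^{-\delta}xy)$ with $\chi$ the Tate character and $\mfp^{\delta}$ the different. Your argument only needs the \emph{existence} of a character $\psi$ of conductor exactly $\lri$, which the twist $\psi(x):=\psi_0(\Tr_{\lfi\vert\Q_p}(\pi^{-\delta}x))$ supplies, so the overall logic is sound --- but the concrete example you offered would fail for ramified $\lri$. Everything after that (the rank-$d$ freeness giving $\Phi$ an isomorphism, the filtration of $\lfi/\lri$ by level, multiplication by $\pi^n$ identifying $\pi^{-n}\lri/\lri$ with $\lri/\mfp^n$, and the equivalence of ``image not contained in $\mfp/\mfp^n$'' with $\tilde{\phi}\notin\mfp\Hom_\lri(\mfh,\lri/\mfp^n)$ for free $\mfh$) is correct and matches the paper.
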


\begin{proof}
  Let $\lfi$ denote the field of fractions of $\lri$, which is a
  finite extension of $\Q_p$.  The trace map induces a non-degenerate
  $\Q_p$-bilinear form $\lfi \times \lfi \rightarrow \Q_p$, $(x,y)
  \mapsto \Tr_{\lfi \vert \Q_p}(xy)$.  The codifferent of $\lfi \vert
  \Q_p$ is the fractional ideal consisting of all $x \in \lfi$ such
  that $\Tr_{\lfi \vert \Q_p}(xy) \in \Z_p$ for all $y \in \lri$.  The
  different $\diff_{\lfi \vert \Q_p}$ of the extension $\lfi \vert
  \Q_p$ is the inverse of the codifferent and thus an ideal of $\lri$,
  say $\mfp^\delta$.  Regarding the parameter $\delta = \delta(\lfi)
  \in \N_0$, it is known that $\delta \geq e(\lfi,\Z_p) - 1$ with
  equality if and only if $e(\lfi,\Q_p)$ is prime to $p$; see
  \cite[Chapter~III]{Se79}.  The Pontryagin dual of $(\lfi,+)$ is
  $\widehat{\lfi} := \Hom_\Z^\textup{cont}(\lfi,\C^*) =
  \Hom_\Z^\textup{cont}(\lfi,\mu_{p^\infty})$.  Via the Tate character
  $$
  \chi: \lfi \overset{\mathrm{Tr}_{\lfi \vert \Q_p}}{\longrightarrow}
  \Q_p \longrightarrow \Q_p/\Z_p \overset{\cong}{\longrightarrow}
  \mu_{p^\infty}
  $$
  one obtains an isomorphism
  $$
  \lfi \rightarrow \widehat{\lfi}, \quad x \mapsto \chi_x, \qquad
  \text{where $\chi_x : \lfi \rightarrow \mu_{p^\infty}, \, y \mapsto
    \chi(xy)$,}
  $$ showing that $\lfi$ is self-dual; cf.~\cite[Chapter~XV,
    Section~2.2]{CaFr67}.  Let $\mfp = \pi \lri$.  From $\diff_{\lfi
    \vert \Q_p} = \mfp^\delta = \pi^\delta \lri$ one obtains, for
  each $n \in \N_0$, a (non-canonical) pairing of finite abelian
  groups
  $$
  (\lri / \mfp^n) \times (\mfp^{-n} / \lri) \rightarrow
  \mu_{p^\infty}, \quad (x + \mfp^n,y + \lri) \mapsto
  \chi(\pi^{-\delta} xy).
  $$
  As $\mfh$ is a free $\lri$-module of finite rank, this translates
  readily into an isomorphism of the finite abelian groups
  $\widehat{\mfh/\mfp^n \mfh}$ and $\mfp^{-n} \mfh / \mfh$.  On the
  other hand there is, of course, for each $n \in \N_0$ the
  (non-canonical) isomorphism of $\lri$-modules $\mfp^{-n} \mfh / \mfh
  \rightarrow \mfh / \mfp^n \mfh$, induced by multiplication by
  $\pi^n$, and a natural pairing of $\lri$-modules
  $$
  (\mfh/\mfp^n\mfh) \times \Hom_{\lri}(\mfh/\mfp^n\mfh,\lri/\mfp^n)
  \rightarrow \lri/\mfp^n
  $$
  showing that $\mfh/\mfp^n\mfh \cong \Hom_{\lri}(\mfh,\lri/\mfp^n)$.
  We thus obtain
  $$
  \widehat{\mfh} = \bigcup_{n \in \N_0} \widehat{\mfh / \mfp^n \mfh}
  \cong \bigdotcup_{n\in\N_0} (\mfp^{-n} \mfh / \mfh)^* \cong
  \bigdotcup_{n\in\N_0} (\mfh / \mfp^n \mfh)^* \cong
  \bigdotcup_{n\in\N_0}\Hom_{\lri}(\mfh,\lri/\mfp^n)^*.
  $$
\end{proof}

From the proof of the lemma we observe that, in the case $\mfh =
\mfg^m$, the description of $\widehat{\mfg^m}$ in terms of
$\lri$-functionals is $\mathsf{G}^m$-equivariant so that one can
accurately describe the co-adjoint orbits in this model.  The
\emph{level} of $\omega \in \widehat{\mfg^m}$ is defined so that
$\lev_\lri(\omega) = n$ if $\omega \in \Irr_n(\mfg^m)$.  If $\lri$ is
unramified, the level of any $\omega \in \widehat{\mfg^m}$ is simply
the logarithm to base $p$ of its order, viz.\ $\min \{ n \in \N_0 \mid
\omega(p^n \mfg^m) = 1 \}$.

%%%%%

\section{Poincar\'e series and $\mfp$-adic
  formalism} \label{sec:poincare}

Let $\lri$ be a compact discrete valuation ring of characteristic $0$,
with maximal ideal $\mfp = \pi \lri$, field of fractions~$\lfi$ and
residue field $\lri/\mfp$ of cardinality $q$ and characteristic~$p$.
Let $\mfg$ be an $\lri$-Lie lattice such that $\lfi \otimes_{\lri}
\mfg$ is perfect, with $\dim_\lfi(\lfi \otimes_\lri \mfg)=d$, say.
According to Sections~\ref{subsec:pro-p} and \ref{subsec:kirillov},
for every sufficiently large $m \in \N_0$, the $m$-th principal
congruence $\lri$-Lie sublattice $\mfg^m := \mfp^m \mfg$ of $\mfg$
corresponds to a FAb $p$-adic analytic pro-$p$ group $\mathsf{G}^m =
\exp(\mfg^m)$, whose irreducible complex characters can be captured by
means of the Kirillov orbit method.  Our aim in this section is to
express the zeta functions $\zeta_{\mathsf{G}^m}(s)$ in terms of a
suitable Poincar\'e series and to derive an integral formula,
connecting $\zeta_{\mathsf{G}^m}(s)$ to Igusa local zeta functions.
Our treatment makes use of the underlying $\lri$-module structure of
the Lie lattice $\mfg$ and reveals that the dependence of
$\zeta_{\mathsf{G}^m}(s)$ on $m$ can be captured in a simple
factor~$q^{dm}$.

\subsection{Poincar\'e series} \label{subsec:poincare} In this
section we set up the Poincar\'e series which helps us to
establish, for each permissible $m$, a link between the representation
zeta function of the group $\mathsf{G}^m$ and a suitable generalised
Igusa local zeta function; see Proposition~\ref{pro:zeta=poincare}.

We fix an $\lri$-basis $\mathbf{b} := (b_1,\ldots,b_d)$ for the
$\lri$-Lie lattice $\mfg$.  The structure constants $\lambda_{ij}^h$
of the $\lri$-Lie lattice $\mfg$ with respect to the basis
$\mathbf{b}$ are encoded in the commutator matrix
\begin{equation} \label{equ:commutator_matrix} \mathcal{R}(\mathbf{Y})
  := \mathcal{R}_{\mfg,\mathbf{b}}(\mathbf{Y}) = \left( \sum_{h=1}^d
    \lambda_{ij}^h Y_h \right)_{ij} \in \Mat_d(\lri[\mathbf{Y}] )
\end{equation}
whose entries are linear forms in independent variables
$Y_1,\ldots,Y_d$. Clearly, for any $m \in \N_0$, the commutator matrix
of the $\lri$-Lie lattice $\mfg^m$ with respect to the shifted basis
$\pi^m \mathbf{b}$ is $\pi^{m} \mathcal{R}(\mathbf{Y})$.

We write $W(\lri) := \left( \lri^d \right)^*$ and consider the
finite affine cones
\begin{equation} \label{equ:affine_cones} W_n(\lri) := \left( W(\lri)
    + (\mfp^n)^{(d)} \right) / (\mfp^n)^{(d)} = \left( (\lri /
    \mfp^n)^d \right)^*, \quad n \in \N_0.
\end{equation}
Let $\mathbf{y} \in W(\lri)$.  Note that $\mathcal{R}(\mathbf{y})$ is
an antisymmetric $d \times d$ matrix over $\lri$.  Therefore its
elementary divisors are of the form $\mfp^a$ with $a \in \N_0 \cup \{
\infty \}$.  Moreover, if $d$ is even, they can be arranged in pairs.
If $d$ is odd, they also come in pairs, except for a single extra
divisor equal to $\mfp^\infty = \{0\}$.  Define
$$
\nu(\mathcal{R}(\mathbf{y})) := (a_1,\ldots,a_{\lfloor d/2 \rfloor})
\in (\N_0 \cup \{\infty\})^{\lfloor d/2 \rfloor},
$$ where $0 \leq a_1 \leq \ldots \leq a_{\lfloor d/2 \rfloor}$ and
$\mathcal{R}(\mathbf{y})$ is congruent to the block-diagonal matrix
comprised of $2 \times 2$ blocks $\left( \begin{smallmatrix} 0 &
    \pi^{a_i} \\ -\pi^{a_i} & 0
  \end{smallmatrix} \right)$, $i \in \{1,\dots,\lfloor d/2 \rfloor\}$,
and a single $1 \times 1$ block $\begin{pmatrix} 0 \end{pmatrix}$ if
$d$ is odd. (Two matrices $A$ and $B$ over a ring $R$ are said to be
congruent if there exist invertible matrices $S$ and $T$ over $R$ such
that $SAT = B$.  If $A,B$ are antisymmetric, one may take $T =
S^\textup{t}$.)

Now let $n \in \N_0$ and let $\overline{\mathbf{y}}$ denote the image
of $\mathbf{y}$ in $W_n(\lri)$.  Then
$\mathcal{R}(\overline{\mathbf{y}})$ is an antisymmetric $d \times d$
matrix over $\lri/\mfp^n$ and the valuations of its elementary
divisors are encoded by the tuple
$$
\nu(\mathcal{R}(\overline{\mathbf{y}})) :=
\nu_n(\mathcal{R}(\mathbf{y})) := (\min \{a_i,n\})_{i \in
  \{1,\dots,\lfloor d/2 \rfloor\}} \in \{0,1,\ldots,n\}^{\lfloor d/2
  \rfloor}.
$$
The central counting function entering into the picture is
$$
\mathcal{N}_{n,\mathbf{a}}^\lri := \# \big\{ \overline{\mathbf{y}} \in
W_n(\lri) \mid \nu(\mathcal{R}(\overline{\mathbf{y}})) = \mathbf{a}
\big\}, \quad (n,\mathbf{a}) \in \N_0 \times \N_0^{\lfloor d/2
  \rfloor}.
$$
It gives rise to the associated Poincar\'e series
\begin{equation} \label{equ:Poincare_new}
  \mathcal{P}_{\mathcal{R},\lri} (s) := \sum_{\substack{n \in \N_0, \;
      \mathbf{a} \in \N_0^{\lfloor d/2 \rfloor}}}
  \mathcal{N}_{n,\mathbf{a}}^\lri \, q^{ - \left( \sum_{i=1}^{\lfloor d/2
        \rfloor} (n-a_i) \right)s}.
\end{equation}
We note that $\mathcal{N}_{n,\mathbf{a}}^\lri \neq 0$ implies
$0 \leq a_1 \leq \ldots \leq a_{\lfloor d/2 \rfloor} \leq n$.

The principal aim of this section is to establish a link between the
representation zeta functions of the groups $\mathsf{G}^m$ associated
to the Lie lattices $\mfg^m$ for permissible $m$ and the Poincar\'e
series defined in \eqref{equ:Poincare_new}.

\begin{pro}\label{pro:zeta=poincare}
  Let $\mfg$ be as defined at the beginning of
  Section~\ref{sec:poincare}.  Then for all $m \in \N_0$ which are
  permissible for $\mfg$ we have
  $$
  \zeta_{\mathsf{G}^m}(s) = q^{dm}
  \mathcal{P}_{\mathcal{R},\lri}(s+2).
  $$
\end{pro}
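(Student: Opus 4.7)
\smallskip

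\noindent\textbf{Plan.} The approach is to combine the Kirillov formula \eqref{Andrei's_formula}, applied to $\mathsf{G}^m$ and $\mfg^m$, with the $\lri$-functional description of $\widehat{\mfg^m}$ provided by Lemma~\ref{lem:dual}. Under that identification an element $\omega \in \Irr_n(\mfg^m)$ is parametrised by a primitive vector $\bar{\mathbf{y}} \in W_n(\lri)$ via $\bar{y}_h = \tilde\omega(\pi^m b_h)$, where $\tilde\omega \colon \mfg^m \to \lri/\mfp^n$ is the $\lri$-linear representative of $\omega$. The first key step is to rewrite $|\mfg^m : \Rad(\omega)|$ in terms of the matrix $\mathcal{R}(\bar{\mathbf{y}})$. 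Using the identity $[\pi^m b_i, \pi^m b_j] = \pi^m \sum_h \lambda_{ij}^h (\pi^m b_h)$, a short computation gives
$$
\tilde\omega([x,z]) \equiv \pi^m\, \mathbf{x}^{\mathrm{t}}\, \mathcal{R}(\bar{\mathbf{y}})\, \mathbf{z} \pmod{\mfp^n}
$$
for coordinate vectors $\mathbf{x},\mathbf{z} \in (\lri/\mfp^n)^d$ of $x, z \bmod \mfp^n\mfg^m$. This shows $\mfp^n\mfg^m \subseteq \Rad(\omega)$ and that $\Rad(\omega)/\mfp^n\mfg^m$ is the kernel of $\pi^m\mathcal{R}(\bar{\mathbf{y}})$ acting on $(\lri/\mfp^n)^d$, so $|\mfg^m:\Rad(\omega)|$ equals the image size of this map.

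I would then evaluate this image size via the skew-symmetric Smith normal form of $\mathcal{R}(\mathbf{y})$ over $\lri$: its paired invariant-factor valuations are $(c_1,\dots,c_{\lfloor d/2\rfloor}) = \nu(\mathcal{R}(\mathbf{y}))$ (with an extra $\infty$ when $d$ is odd, contributing trivially). Shifting by $\pi^m$ and reducing modulo $\mfp^n$ yields
$$
|\mfg^m : \Rad(\omega)| = \prod_{i=1}^{\lfloor d/2 \rfloor} q^{2\max(n-m-c_i,\,0)}.
$$
Since $\max(n-m-c_i,0) = \max(n-m-a_i,0)$ whenever $a_i = \min(c_i,n)$, the right-hand side depends only on the truncated tuple $\nu_n(\mathcal{R}(\bar{\mathbf{y}}))$, matching the data in~\eqref{equ:Poincare_new}.

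The closing step is to split the Kirillov sum by the level $n$ of $\omega$ and to reindex $N := n - m$. For $n \leq m$ every summand equals $1$, contributing $\sum_{n=0}^{m}|W_n(\lri)| = q^{dm}$ by a telescoping argument (using the convention $|W_0(\lri)|=1$), and this agrees precisely with the $N=0$ term of $q^{dm}\,\mathcal{P}_{\mathcal{R},\lri}(s+2)$. For $N \geq 1$ the reduction map $W_{m+N}(\lri) \twoheadrightarrow W_N(\lri)$ is surjective with uniform fibre size $q^{dm}$, and by linearity of $\mathcal{R}$ the invariant $\nu_N(\mathcal{R}(\cdot))$ factors through this reduction. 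Each summand $\prod_i q^{-(s+2)\max(N-a_i,0)}$ then equals $\prod_i q^{-(s+2)(N-a_i)}$ since $a_i \leq N$, and grouping over the fibres yields exactly $q^{dm}$ times the $N$-th layer of $\mathcal{P}_{\mathcal{R},\lri}(s+2)$. Summing over $N \geq 0$ produces the asserted identity.

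The main obstacle is the first translation: one must compatibly marry the Pontryagin-dual picture of Lemma~\ref{lem:dual} -- in which $\omega$ is a $\mu_{p^\infty}$-valued character -- with the $\lri/\mfp^n$-valued skew form $b_\omega$, tracking the effect of the $\pi^m$-rescaling of the basis on the commutator matrix and ensuring the descended pairing has the claimed matrix representation. The subsequent elementary-divisor bookkeeping and the fibre-counting from $W_{m+N}(\lri)$ down to $W_N(\lri)$, together with the convention $W_0(\lri)=\{0\}$ that allows the $n\leq m$ contribution to merge seamlessly with the $N=0$ term of the Poincar\'e series, are then essentially routine.
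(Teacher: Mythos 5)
Your proposal is correct and follows essentially the same route as the paper: both apply the Kirillov formula \eqref{Andrei's_formula} together with the parametrisation of $\widehat{\mfg^m}$ from Lemma~\ref{lem:dual}, compute $|\mfg^m:\Rad(\omega)|$ via the elementary divisors of $\pi^m\mathcal{R}(\underline{w})$ reduced modulo $\mfp^n$ (the content of Lemmas~\ref{central_lem_neu} and~\ref{lem:eldiv}), and then reindex the level sum by $N=n-m$, handling $n\leq m$ via the telescoping $\sum_{n=0}^m|W_n(\lri)|=q^{dm}$ and counting the uniform fibres of $W_{m+N}(\lri)\twoheadrightarrow W_N(\lri)$ for $N\geq 1$ exactly as the paper's third auxiliary lemma does.
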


As an immediate consequence, we observe

\begin{cor}\label{cor:powers_of_q}
  For every permissible $m \in \N_0$, the representation zeta function
  $\zeta_{\mathsf{G}^m}(s)$ is a power series in $q^{-s}$, i.e.\ the
  character degrees of $\mathsf{G}^m$ are powers of $q$.
\end{cor}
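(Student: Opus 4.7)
The plan is to deduce the corollary directly from Proposition~\ref{pro:zeta=poincare} together with the explicit shape of the Poincar\'e series in \eqref{equ:Poincare_new} and the uniqueness of Dirichlet expansions.

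First I would substitute the definition \eqref{equ:Poincare_new} into the identity $\zeta_{\mathsf{G}^m}(s) = q^{dm}\mathcal{P}_{\mathcal{R},\lri}(s+2)$ supplied by Proposition~\ref{pro:zeta=poincare}. Writing $k(n,\mathbf{a}) := \sum_{i=1}^{\lfloor d/2\rfloor}(n-a_i)$, this yields
$$
\zeta_{\mathsf{G}^m}(s) \;=\; q^{dm} \sum_{n,\mathbf{a}} \mathcal{N}_{n,\mathbf{a}}^\lri \, q^{-2 k(n,\mathbf{a})}\, q^{-k(n,\mathbf{a})\, s}.
$$
The crucial observation, already noted after \eqref{equ:Poincare_new}, is that $\mathcal{N}_{n,\mathbf{a}}^\lri \neq 0$ forces $0 \leq a_i \leq n$ for every $i$, so $k(n,\mathbf{a}) \in \N_0$. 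Regrouping terms according to the value of $k \in \N_0$ then produces a presentation
$$
\zeta_{\mathsf{G}^m}(s) \;=\; \sum_{k \in \N_0} c_k\, q^{-ks} \qquad \text{with } c_k \in \Q_{\geq 0}.
$$

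Second, I would compare this to the defining Dirichlet expansion $\zeta_{\mathsf{G}^m}(s) = \sum_{n \geq 1} r_n(\mathsf{G}^m)\, n^{-s}$. Rewriting $q^{-ks} = (q^k)^{-s}$ in the previous display and invoking the uniqueness of the coefficients of an (absolutely convergent) Dirichlet series, one concludes that $r_n(\mathsf{G}^m) = 0$ unless $n = q^k$ for some $k \in \N_0$, in which case $r_{q^k}(\mathsf{G}^m) = c_k$. This is exactly the assertion that every character degree of $\mathsf{G}^m$ is a power of~$q$.

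There is essentially no obstacle: all substantive content is packaged in Proposition~\ref{pro:zeta=poincare}, and what remains is a bookkeeping step exploiting that the exponent $\sum_i(n-a_i)$ in \eqref{equ:Poincare_new} is a nonnegative integer. The only point worth stating carefully is the uniqueness of the Dirichlet coefficients, which is classical.
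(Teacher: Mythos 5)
Your proposal is correct and matches the paper's intent: the paper presents the corollary as an immediate consequence of Proposition~\ref{pro:zeta=poincare} without a written proof, and your unpacking is precisely the expected argument — substitute the definition~\eqref{equ:Poincare_new}, note that the constraint $0 \leq a_1 \leq \cdots \leq a_{\lfloor d/2\rfloor} \leq n$ makes the exponent a nonnegative integer, regroup, and invoke uniqueness of Dirichlet coefficients. One could also argue even more directly from \eqref{Andrei's_formula} and Lemma~\ref{lem:eldiv} that each character degree equals $\lvert \mfg^m : \Rad(\omega)\rvert^{1/2} = q^{\sum_i(n-m-\min\{a_i,n-m\})}$, bypassing uniqueness of Dirichlet series entirely, but since the corollary is explicitly positioned as following from the Poincar\'e-series identity, your route is the one the authors have in mind.
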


The proof of Proposition~\ref{pro:zeta=poincare} requires some
preparation.  In order to apply the Kirillov orbit method we consider
the space $\widehat{\mfg^m}$ which, according to
Section~\ref{subsec:kirillov}, decomposes as $\widehat{\mfg^m} =
\bigdotcup_{n \in \N_0} \Irr_n(\mfg^m)$ where $\Irr_n(\mfg^m) = \{
\omega \in \widehat{\mfg^m} \mid \lev_\lri(\omega) = n \} \cong
\Hom_{\lri}(\mfg^m,\lri/\mfp^n)^*$.  Moreover, this description is
$\mathsf{G}^m$-equivariant.
% Recall that $\left( \mfg^m \right)_\lri^\vee = \Hom_\lri(\mfg,\lri)$
% and n
Note that for every $n \in \N$ there is a natural surjection
$$
\Hom_\lri(\mfg^m,\lri)^* \rightarrow
\Hom_{\lri}(\mfg^m,\lri/\mfp^n)^*
\cong \Irr_n(\mfg^m).
$$
We say that $w \in \Hom_\lri(\mfg^m,\lri)^*$ is a
representative of $\omega \in \widehat{\mfg^m}$ if the image
of $w$ under this surjection corresponds to $\omega$.

The chosen $\lri$-basis $\mathbf{b}$ for $\mfg$ yields
a shifted basis $\pi^m \mathbf{b}$ for $\mfg^m$ and thus an
explicit co-ordinate system
$$
\mfg^m \xrightarrow{\, \simeq \,} \lri^d, \quad z = \sum_{i=1}^d
z_i (\pi^m b_i) \mapsto \underline{z} = (z_1,\ldots,z_d).
$$
Likewise, the dual $\lri$-basis $\mathbf{b}^\vee$ for
% $\mfg_\lri^\vee =
$\Hom_\lri(\mfg,\lri)$ and its shift $\pi^{-m} \mathbf{b}^\vee$ for
% $(\mfg^m)_\lri^\vee$
$\Hom_\lri(\mfg^m,\lri)$ give a co-ordinate system
$$
\Hom_\lri(\mfg^m,\lri)^* \xrightarrow{\, \simeq \,}
W(\lri), \quad w = \sum_{i=1}^d w_i (\pi^{-m} b_i^\vee) \mapsto
\underline{w} = (w_1,\ldots,w_d).
$$
The duality of the two bases means that $w(z) = \underline{z} \cdot
\underline{w} := \sum_{i=1}^d z_i w_i$ for $z \in \mfg^m$ and
$w \in \Hom_\lri(\mfg^m,\lri)^*$.

\begin{lem}\label{central_lem_neu}
  Let $w \in \Hom_\lri(\mfg^m,\lri)^*$ and let $n \in \N_0$.  Consider
  the element $\omega \in \Irr_n(\mfg^m)$ which is represented by $w$.
  Then for every $z \in \mfg^m$ we have
  $$
  z \in \Rad(\omega) \quad \Longleftrightarrow \quad \underline{z}
  \cdot \pi^m \mathcal{R}(\underline{w}) \equiv_{\mfp^n} \underline{0}.
  $$
\end{lem}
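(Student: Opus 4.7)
The plan is to unwind the identification of $\omega$ with the functional $w$ via the Tate character, so that the condition $\omega([z,y])=1$ for all $y\in\mfg^m$ becomes a congruence condition on $w([z,y])$, and then to compute $w([z,y])$ explicitly in terms of the structure constants. First I would fix the explicit formula coming out of the proof of Lemma~\ref{lem:dual}: the isomorphism $\Hom_\lri(\mfg^m,\lri/\mfp^n)^{*}\cong\Irr_n(\mfg^m)$ sends the class of $w$ to the character
\[
\omega\colon z\longmapsto \chi\bigl(\pi^{-n-\delta}w(z)\bigr),
\]
where $\chi$ is the Tate character and $\delta$ is the valuation of the different $\diff_{\lfi\vert\Q_p}$. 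Since $\ker\chi=\mfp^{-\delta}$, one reads off $\omega(z)=1 \Leftrightarrow w(z)\in\mfp^n$. Hence
\[
z\in\Rad(\omega) \iff \forall\,y\in\mfg^m:\ w\bigl([z,y]\bigr)\equiv 0\pmod{\mfp^n}.
\]

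Next I would translate $w([z,y])$ into matrix form using the shifted basis $\pi^m\mathbf{b}$. Writing $z=\sum_i z_i(\pi^m b_i)$, $y=\sum_j y_j(\pi^m b_j)$ and $[b_i,b_j]=\sum_h\lambda_{ij}^h b_h$, one expands
\[
[z,y]\;=\;\sum_{h}\Big(\pi^m\sum_{i,j}z_i y_j\lambda_{ij}^h\Big)(\pi^m b_h).
\]
Applying $w=\sum_h w_h(\pi^{-m}b_h^{\vee})$ and using the duality $w(\pi^m b_h)=w_h$ gives
\[
w\bigl([z,y]\bigr)\;=\;\pi^m\sum_{i,j,h}z_i y_j\lambda_{ij}^h w_h \;=\;\underline{z}\cdot\pi^m\mathcal{R}(\underline{w})\cdot\underline{y}^{\mathrm{t}},
\]
since $\mathcal{R}(\underline{w})_{ij}=\sum_h\lambda_{ij}^h w_h$ by definition~\eqref{equ:commutator_matrix}.

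Finally, the condition $z\in\Rad(\omega)$ becomes
\[
\underline{z}\cdot\pi^m\mathcal{R}(\underline{w})\cdot\underline{y}^{\mathrm{t}}\equiv 0\pmod{\mfp^n}\quad\text{for all }\underline{y}\in\lri^d,
\]
which is equivalent to the vector congruence $\underline{z}\cdot\pi^m\mathcal{R}(\underline{w})\equiv\underline{0}\pmod{\mfp^n}$, as claimed. The only delicate step is the first one: tracking the normalising factors $\pi^{-n-\delta}$ coming from the (non-canonical) chain of identifications in Lemma~\ref{lem:dual} and using that $\chi$ annihilates precisely $\mfp^{-\delta}$ to convert "$\omega$ vanishes" into the clean condition "$w$ lands in $\mfp^n$". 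Once this is pinned down, the rest is routine bilinear algebra.
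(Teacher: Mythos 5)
Your proof is correct and takes essentially the same route as the paper, which compresses the argument to a single sentence (``Observe that $x$ belongs to $\Rad(\omega)$ if and only if $\omega([x,\pi^m b_i]) = 1$ for all $i$, and express this condition in co-ordinates''). You simply spell out the two steps the paper leaves implicit: tracking the normalising factor $\pi^{-n-\delta}$ through the identifications of Lemma~\ref{lem:dual} so that $\omega(\cdot)=1$ becomes $w(\cdot)\in\mfp^{n}$, and then the routine expansion of $w([z,y])$ in the shifted basis to produce $\pi^{m}\,\underline{z}\cdot\mathcal{R}(\underline{w})\cdot\underline{y}^{\mathrm{t}}$.
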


\begin{proof}
  Observe that $x$ belongs to $\Rad(\omega)$ if and only if
  $\omega([x,\pi^m b_i]) = 0$ for all $i \in \{1,\ldots,d\}$, and express
  this condition in co-ordinates.
\end{proof}

\begin{lem}\label{lem:eldiv}
  Let $w \in \Hom_\lri(\mfg^m,\lri)^*$ and let $n \in \N_0$.  Consider
  the element $\omega \in \Irr_n(\mfg^m)$ which is represented by $w$.
  Let $\mathbf{a} := \nu(\mathcal{R}(\underline{w}))$.  Then
  $$
  \lvert \mfg^m : \Rad(\omega) \rvert = q^{2
    \sum_{i=1}^{\lfloor d/2 \rfloor} (n - m - \min\{a_i,n-m\})}.
  $$
\end{lem}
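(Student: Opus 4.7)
The plan is to convert the criterion of Lemma~\ref{central_lem_neu} into a linear-algebra index computation and then to diagonalise $\mcR(\underline{w})$ by an $\lri$-basis change. Working in the coordinates induced by the shifted basis $\pi^m \mathbf{b}$, I identify $\mfg^m$ with $\lri^d$ and $\Rad(\omega)$ with a sublattice thereof. Set $k := n - m$. If $k \leq 0$, then $\pi^m \mcR(\underline{w}) \equiv 0 \pmod{\mfp^n}$, so Lemma~\ref{central_lem_neu} forces $\Rad(\omega) = \mfg^m$ and $\lvert \mfg^m : \Rad(\omega) \rvert = 1$, in agreement with the right-hand side (each summand vanishes, as $\min\{a_i, n-m\} = n - m$). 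Hence I may assume $k \geq 1$, in which case the criterion, after dividing through by $\pi^m$, reads
$$
z \in \Rad(\omega) \iff \underline{z} \cdot \mcR(\underline{w}) \equiv \underline{0} \pmod{\mfp^k}.
$$

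Since $\mcR(\underline{w})$ is antisymmetric over $\lri$ with elementary-divisor data $\mathbf{a} = \nu(\mcR(\underline{w}))$, the definition of $\nu$ supplies $S \in \GL_d(\lri)$ with $S \, \mcR(\underline{w}) \, S^{\textup{t}} = D$, where $D$ is block-diagonal with $2 \times 2$ blocks $\left(\begin{smallmatrix} 0 & \pi^{a_i} \\ -\pi^{a_i} & 0 \end{smallmatrix}\right)$ (together with a trailing $(0)$ block if $d$ is odd; blocks with $a_i = \infty$ are understood as the zero block). The substitution $\underline{z}' := \underline{z} \, S^{-1}$ is an $\lri$-module automorphism of $\lri^d$ and hence preserves $\lvert \mfg^m : \Rad(\omega) \rvert$; it transforms the above congruence into $\underline{z}' D \equiv \underline{0} \pmod{\mfp^k}$.

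Finally, the resulting condition decouples across the blocks of $D$: for the $i$-th $2 \times 2$ block the requirement on $(z_{2i-1}', z_{2i}')$ is that both coordinates lie in $\mfp^{\max\{k - a_i,\, 0\}} = \mfp^{k - \min\{a_i, k\}}$, contributing a factor $q^{2(k - \min\{a_i, k\})}$ to the index; any trailing coordinate in the odd case is unconstrained and contributes $1$. Multiplying over $i$ and substituting $k = n - m$ yields the stated formula. The only non-routine point is that, although elementary-divisor theory over $\lri$ normally produces a Smith-type form via two independent matrices $S$ and $T$, in the antisymmetric case one may in fact take $T = S^{\textup{t}}$, which is precisely the normalisation recorded when $\nu$ is introduced in Section~\ref{subsec:poincare}.
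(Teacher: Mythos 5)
Your proposal is correct and follows essentially the same route as the paper's proof: reduce via Lemma~\ref{central_lem_neu} to a congruence condition, shift by $\pi^m$, and evaluate the index by passing to the antisymmetric normal form of $\mcR(\underline{w})$ over $\lri$. The paper's proof is terser, suppressing the change-of-basis by $S$ and writing the index computation directly, but the underlying argument is the same.
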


\begin{proof}
  We use Lemma~\ref{central_lem_neu}.  If $n \leq m$, then both
  expressions are equal to $1$.  Now suppose that $\tilde n := n-m >
  0$.  Then the claim follows from $q = \lvert \lri : \mfp \rvert$ and
  \begin{align*}
    \lvert \mfg^m : \Rad(\omega) \rvert & = \lvert \lri^{2 \lfloor d/2
      \rfloor} : \bigoplus\nolimits_{i=1}^{\lfloor d/2 \rfloor}
      (\mfp^{\tilde n - \min \{a_i,\tilde n\}} )^{(2)} \rvert \\ & =
      \lvert \lri : \mfp \rvert^{2\sum_{i=1}^{\lfloor d/2 \rfloor}
      (\tilde n - \min \{a_i,\tilde n\})}.
  \end{align*}
\end{proof}

For $\omega \in \widehat{\mfg}$ and $n \in \N_0$ with
$\lev_\lri(\omega) \geq n$ we define $\nu_n(\omega) :=
\nu_n(\mathcal{R}(\underline{w}))$ where $w \in W(\lri)$ is a
representative of $\omega$.  Clearly this definition does not depend
on the particular choice of $w$.

\begin{lem}
  Let $n \in \N_0$ with $n \geq m$ and $\mathbf{a} \in \N_0^{\lfloor
    d/2 \rfloor}$.  Then
  $$ q^{dm} \mathcal{N}_{n-m,\mathbf{a}}^\lri = \# \{ \omega \in
  \Irr_n(\mfg^m) \mid \nu_{n-m}(\omega) = \mathbf{a} \}.
  $$
\end{lem}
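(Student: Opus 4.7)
The plan is to parameterise $\omega \in \Irr_n(\mfg^m)$ explicitly and reduce the counting to the natural surjection $W_n(\lri) \twoheadrightarrow W_{n-m}(\lri)$. By Lemma~\ref{lem:dual} applied to $\mfh = \mfg^m$ together with the shifted basis $\pi^m \mathbf{b}$, one obtains an identification $\Irr_n(\mfg^m) \cong ((\lri/\mfp^n)^d)^* = W_n(\lri)$, under which an irreducible character $\omega$ corresponds to $\mathbf{y}_\omega := \underline{w} \bmod \mfp^n$ for any representative $w \in \Hom_\lri(\mfg^m,\lri)^* \cong W(\lri)$.

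Next, I would show that the invariant $\nu_{n-m}(\omega)$ is already determined by the coarser reduction $\bar{\mathbf{y}}_\omega := \underline{w} \bmod \mfp^{n-m}$ in $W_{n-m}(\lri)$, and in fact equals $\nu(\mathcal{R}(\bar{\mathbf{y}}_\omega))$ computed as the elementary divisor tuple over $\lri/\mfp^{n-m}$. The key observation is that the entries of $\mathcal{R}(\mathbf{Y})$ are \emph{linear} forms in $\mathbf{Y}$, so $\mathcal{R}(\underline{w}) \bmod \mfp^{n-m} = \mathcal{R}(\underline{w} \bmod \mfp^{n-m})$. The same observation confirms that the definition $\nu_{n-m}(\omega) := \nu_{n-m}(\mathcal{R}(\underline{w}))$ depends only on $\omega$, not on the chosen representative $w$.

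Finally, I would analyse the fibres of the natural reduction $\varphi \colon W_n(\lri) \twoheadrightarrow W_{n-m}(\lri)$. This map is well defined and surjective because a coordinate that is a unit mod $\mfp^n$ remains a unit mod $\mfp^{n-m}$. Its kernel at each point is $(\mfp^{n-m}/\mfp^n)^{(d)}$, which has cardinality $q^{dm}$, so every fibre has size exactly $q^{dm}$. Combining the preceding steps, the set counted on the right-hand side of the lemma equals $\varphi^{-1}\bigl(\{\bar{\mathbf{y}} \in W_{n-m}(\lri) \mid \nu(\mathcal{R}(\bar{\mathbf{y}})) = \mathbf{a}\}\bigr)$, which has cardinality $q^{dm} \cdot \mathcal{N}^{\lri}_{n-m,\mathbf{a}}$, as claimed.

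The only real obstacle is notational bookkeeping: one has to juggle the two coordinate systems arising from the basis $\pi^m \mathbf{b}$ on $\mfg^m$ and the dual basis $\pi^{-m} \mathbf{b}^\vee$ on $\Hom_\lri(\mfg^m,\lri)$, and to track the starred condition through the successive reductions mod $\mfp^n$ and mod $\mfp^{n-m}$. Once this is done, the counting reduces to elementary $\lri$-module theory.
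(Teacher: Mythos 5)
Your proof matches the paper's: both parametrise $\Irr_n(\mfg^m)$ by $W_n(\lri)$ via Lemma~\ref{lem:dual}, observe that $\nu_{n-m}(\omega)$ factors through the natural surjection $W_n(\lri)\to W_{n-m}(\lri)$ (using the linearity of $\mathcal{R}(\mathbf{Y})$), and count its fibres, each of cardinality $q^{dm}$. One small remark applying equally to you and to the paper: the fibre-size claim really needs $n>m$, since for $n=m\ge 1$ the fibre of $W_m(\lri)\to W_0(\lri)=\{0\}$ has $q^{dm}-q^{d(m-1)}$ elements rather than $q^{dm}$; this is harmless because the lemma is only invoked for $n>m$ in the proof of Proposition~\ref{pro:zeta=poincare}.
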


\begin{proof}
  Consider the natural projection $\Irr_n(\mfg^m) \rightarrow
  \Irr_{n-m}(\mfg^m)$.  In our parametrisation, this corresponds to
  the natural projection $W_n(\lri) \rightarrow W_{n-m}(\lri)$, and
  each fibre has cardinality $q^{dm}$.
\end{proof}

\begin{proof}[Proof of Proposition~\ref{pro:zeta=poincare}]
  According to \eqref{Andrei's_formula}, \eqref{equ:Poincare_new} and
  the previous three lemmata we have
  \begin{align*}
    \zeta_{\mathsf{G}^m}(s) & = \sum_{n=0}^m \, \sum_{\omega \in
      \Irr_n(\mfg^m)} \lvert \mfg^m:\Rad(\omega)
    \rvert^{-(s+2)/2} \\
    & \quad + \sum_{n=m+1}^\infty \, \sum_{\omega \in
      \Irr_n(\mfg^m)} \lvert \mfg^m:\Rad(\omega)
    \rvert^{-(s+2)/2} \\
    & = q^{dm} + \sum_{n=1}^\infty \, \sum_{\mathbf{a} \in
      \N_0^{\lfloor d/2 \rfloor}} q^{dm}
    \mathcal{N}_{n,\mathbf{a}}^\lri q^{-\left(
        \sum_{i=1}^{\lfloor
          d/2 \rfloor} (n - a_i) \right) (s+2)} \\
    & = q^{dm} \mathcal{P}_{\mathcal{R},\lri}(s+2).
  \end{align*}
\end{proof}

\subsection{Integral formula}\label{subsec:integration}
As explained in \cite[Section~2.2]{Vo10} (in a more general setting
where the underlying matrix need not be antisymmetric), the Poincar\'e
series defined in \eqref{equ:Poincare_new} can be expressed by a
$\mfp$-adic integral generalising Igusa local zeta functions.  We
first state the relevant formulae and then explain all the notation.
In accordance with \cite[Section~2.2]{Vo10}, we can write
\begin{equation} \label{equ:Poincare_Igusa_neu}
  \mathcal{P}_{\mathcal{R},\lri}(s) = 1 + (1 - q^{-1})^{-1}
  \mathcal{Z}_\lri(-s/2, \rho s-d-1)
\end{equation}
with
\begin{equation} \label{equ:integral_neu} \mathcal{Z}_\lri(r,t) =
  \int_{(x,\mathbf{y}) \in \mfp \times W(\lri)}
  \lvert x \rvert_\mfp^t \prod_{j=1}^\rho
  \frac{\lVert F_j(\mathbf{y}) \cup F_{j-1}(\mathbf{y}) x^2
    \rVert_\mfp^r}{\lVert F_{j-1}(\mathbf{y})
    \rVert_\mfp^r} \, d\mu(x,\mathbf{y}),
\end{equation}
where $\mfp \times W(\lri) \subseteq \lri^{d+1}$ and the additive Haar
measure $\mu$ on $\lri^{d+1}$ is normalised so that~ $\mu(\lri^{d+1}) =
1$.  The individual terms occurring in the formulae
\eqref{equ:Poincare_Igusa_neu} and \eqref{equ:integral_neu} have the
following meaning:
\begin{align}
  2 \rho & = \max \{ \rk_{\lfi}(\mathcal{R}(\mathbf{y})) \mid
  \mathbf{y}
  \in \lri^{d} \}, \label{def:rho}\\
  F_j(\mathbf{Y}) & = \{ f \mid f=f(\mathbf{Y}) \text{ a $2j
    \times 2j$ minor of $\mathcal{R}(\mathbf{Y})$} \}, \label{def:F_j}\\
  \lVert F(\mathbf{y}) \rVert_\mfp & = \max \{ \lvert f(\mathbf{y})
  \rvert_\mfp \mid f \in F \}.\nonumber
\end{align}

We note that in accordance with this definition $F_0(\mathbf{Y}) =
\{1\}$, as the empty matrix has determinant $1$.

\begin{rem}\label{rem:minors}
  It is worth noting that the sets $F_j(\mathbf{Y})$ may be replaced
  by (smaller) sets of polynomials defining the same polynomial
  ideals. The ensuing reduction in the number of polynomials may be of
  relevance for a computational approach toward understanding the
  associated varieties.  Specifically, in the proof of
  Theorem~\ref{thmABC:SL3} we make use of the fact that one can
  replace $F_j(\mathbf{Y})$ by the set of all \emph{principal} $2j
  \times 2j$ minors of~$\mathcal{R}(\bfY)$, noting that they are all
  squares in~$\lri[\bfY]$. Indeed, an antisymmetric $d \times
  d$-matrix $M$ over a field has rank less than $2j$ if and only if
  all principal $2j \times 2j$ minors vanish. This is evident if $M$
  is in `Witt normal form'
$$M = \left( \begin{matrix} 0&\Id_r&\\-\Id_r&0\\&&0 \end{matrix}
\right)$$ for some $r\in\{0,\dots,\lfloor d/2\rfloor\}$. For the
general case it suffices to observe that $M$ can be brought into Witt
normal form by simultaneous elementary row and column operations, and
that the set of matrices with the desired property is closed under
these operations. The latter follows from an easy matrix computation,
using the fact that antisymmetric matrices have even ranks.

In fact, it is known that every minor of an antisymmetric matrix is a
quadratic polynomial in Pfaffians (i.e.\ square roots of principal
minors); cf.~\cite{Br03, He69}.
\end{rem}

Equations \eqref{equ:Poincare_Igusa_neu} and \eqref{equ:integral_neu}
show that the Poincar\'e series we are interested in is essentially a
generalised Igusa local zeta function, which can therefore be studied
by a number of known tools; cf.~\cite{VeZu08} and
Section~\ref{sec:funeq}.  The main ingredient in deriving
\eqref{equ:Poincare_Igusa_neu} is that the numbers
$\mathcal{N}_{n,\mathbf{a}}^\lri$ which enter into the definition of
the Poincar\'e series encode precisely the measures $\mu(S)$ of
subsets $S \subseteq \mfp \times W(\lri)$ on which the integrand in
\eqref{equ:integral_neu} is constant.  Indeed, a short indication of
how to derive \eqref{equ:Poincare_Igusa_neu} is as follows.
Temporarily, set
$$
P(x,\mathbf{y}) := \prod_{j=1}^{\lfloor d/2 \rfloor} \frac{\lVert
  F_j(\mathbf{y}) \cup F_{j-1}(\mathbf{y})x^2 \rVert_\mfp}{\lVert
  F_{j-1}(\mathbf{y}) \rVert_\mfp}
$$
for convenience of notation.  Then we have
\begin{align*}
  \mathcal{P}_{\mathcal{R},\lri} (s) - 1& = \sum_{n \in \N}
  \sum_{\mathbf{a} \in \N_0^{\lfloor d/2 \rfloor}}
  \mathcal{N}_{n,\mathbf{a}}^\lri \, q^{ - \left( \sum_{i=1}^{\lfloor
  d/2 \rfloor} (n-a_i) \right)s} \\ & = \sum_{n \in \N}
  \sum_{\substack{\overline{\mathbf{y}} \in W_n(\lri) \\ \mathbf{a} :=
  \nu(\overline{\mathbf{y}})}} \, q^{ - \left( \sum_{i=1}^{\lfloor d/2
  \rfloor} (n-a_i) \right)s} \\ & = \int_{x \in \mfp}
  \frac{1}{(1-q^{-1}) \lvert x \rvert_\mfp} \int_{\mathbf{y} \in
  W(\lri)} \lvert x \rvert_\mfp^{-d} \left( \lvert x \rvert_\mfp^{-2
  \lfloor d/2 \rfloor} P(x,\mathbf{y}) \right)^{-s/2}
  d\mu(x,\mathbf{y}) \\ & = (1-q^{-1})^{-1} \int_{(x,\mathbf{y}) \in
  \mfp \times W(\lri)} \lvert x \rvert_\mfp^{\lfloor d/2 \rfloor s - d
  - 1} P(x,\mathbf{y})^{-s/2} d\mu(x,\mathbf{y}).
\end{align*}
The penultimate equality can be justified as follows.  The region of
integration $\mfp$ for $x$ is divided into disjoint subsets $X_n :=
\mfp^n \setminus \mfp^{n+1} = \{ x \in \lri \mid \lvert x \rvert_\mfp
= q^{-n} \}$, $n \in \N$, and the singleton $\{0\}$, which is
negligible as it has measure $0$.  Fix $n \in \N$.  Evaluating the
integral on $X_n$, the term $((1-q^{-1}) \lvert x \rvert_\mfp)^{-1}$
cancels with the measure of the set $X_n$.  For any $x \in X_n$ and
$\mathbf{y} \in W(\lri)$, the term $\lvert x \rvert_\mfp^{-2 \lfloor
d/2 \rfloor} P(x,\mathbf{y})$ is equal to $q^{2\sum_i (n - \min
\{a_i,n\})}$, where $\mathbf{a} = \nu_n(\mathbf{y})$.  Finally, the
term $\lvert x \rvert_\mfp^{-d}$ is the size of the set $(\lri /
\mfp^n)^{(d)}$ and compensates for the fact that we use integration
over $\mathbf{y} \in W(\lri)$ to represent summation over
$\overline{\mathbf{y}} \in W_n(\lri)$.  A short thought reveals that
the expression $\lfloor d/2 \rfloor$ in the exponent of $\lvert x
\rvert_\mfp$ may be replaced by the invariant $\rho$ without altering
the value of the integral.

We collect the results of this section as a formal corollary to
Proposition~\ref{pro:zeta=poincare}.

\begin{cor}\label{cor:zeta=integral}
  Let $\mfg$ be as defined at the beginning of
  Section~\ref{sec:poincare}.  Then for all $m \in \N_0$ which are
  permissible for $\mfg$ we have
  $$
  \zeta_{\mathsf{G}^m}(s) = q^{dm}
  \left(1+(1-q^{-1})^{-1} \mcZ_\lri(-s/2-1,\rho(s+2)-d-1) \right),
  $$
  where $\mcZ_\lri(r,t)$ is the $\mfp$-adic integral defined
  in~\eqref{equ:integral_neu}.
\end{cor}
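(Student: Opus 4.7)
The plan is to combine Proposition~\ref{pro:zeta=poincare} with the integral representation of the Poincaré series in \eqref{equ:Poincare_Igusa_neu}. By Proposition~\ref{pro:zeta=poincare}, for every $m \in \N_0$ which is permissible for $\mfg$ one has $\zeta_{\mathsf{G}^m}(s) = q^{dm} \mcP_{\mcR,\lri}(s+2)$, so the only remaining step is to substitute the argument $s+2$ into the identity \eqref{equ:Poincare_Igusa_neu}, yielding
\[
\mcP_{\mcR,\lri}(s+2) = 1 + (1-q^{-1})^{-1}\, \mcZ_\lri\!\left(-(s+2)/2,\;\rho(s+2)-d-1\right),
\]
and then to observe that $-(s+2)/2 = -s/2-1$. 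Multiplication by $q^{dm}$ then gives the displayed formula.

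The only content beyond bookkeeping lies in the identity \eqref{equ:Poincare_Igusa_neu} itself, which is justified in the paragraph immediately preceding the corollary and which I would include (for completeness) as part of the proof. The essential point is that the integrand
\[
P(x,\mathbf{y}) = \prod_{j=1}^{\lfloor d/2 \rfloor} \frac{\lVert F_j(\mathbf{y}) \cup F_{j-1}(\mathbf{y}) x^2 \rVert_\mfp}{\lVert F_{j-1}(\mathbf{y}) \rVert_\mfp}
\]
is locally constant on cosets of $(\mfp^n)^{(d)}$ inside $W(\lri)$ whenever $|x|_\mfp = q^{-n}$, and that on the corresponding stratum $\{\mathbf{y}\in W(\lri) : \nu_n(\mcR(\mathbf{y})) = \mathbf{a}\}$ it takes the value $q^{2\sum_i (n - \min\{a_i,n\})}/|x|_\mfp^{2\lfloor d/2\rfloor}$. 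Partitioning $\mfp$ into the level sets $X_n = \mfp^n \setminus \mfp^{n+1}$, each of measure $(1-q^{-1})q^{-n}$, and partitioning $W(\lri)$ according to the value of $\nu_n(\mcR(\mathbf{y}))$, turns the integral into precisely the defining sum of $\mcP_{\mcR,\lri}(s) - 1$, once one accounts for the factor $|x|_\mfp^{-d} = q^{nd}$ that converts measures on $W(\lri)$ into counts on $W_n(\lri)$.

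The only subtle point worth highlighting is the replacement of $\lfloor d/2 \rfloor$ by $\rho$ in the exponent of $|x|_\mfp$: by the definition~\eqref{def:rho} of $\rho$, the minors of size $2j$ with $\rho < j \leq \lfloor d/2\rfloor$ vanish identically on $W(\lri)$, so the corresponding factors of $P(x,\mathbf{y})$ simplify to $x^2$ (viewed via the maximum norm) and the two substitutions produce the same integral. After this identification, \eqref{equ:Poincare_Igusa_neu} is immediate, and the corollary follows by direct substitution. I do not foresee any genuine obstacle; the statement is essentially notational repackaging of the work already carried out in Sections~\ref{subsec:poincare} and~\ref{subsec:integration}.
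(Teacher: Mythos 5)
Your proposal is correct and follows exactly the route the paper intends: the corollary is a formal consequence of Proposition~\ref{pro:zeta=poincare} combined with the identity~\eqref{equ:Poincare_Igusa_neu} evaluated at $s+2$, and your treatment of the replacement of $\lfloor d/2 \rfloor$ by $\rho$ is the right explanation for why the product in~\eqref{equ:integral_neu} can be truncated at $\rho$. One small slip in your paraphrase of the supporting computation: you wrote that $P(x,\mathbf{y})$ equals $q^{2\sum_i (n - \min\{a_i,n\})}/\lvert x \rvert_\mfp^{2\lfloor d/2\rfloor}$, but since the paper asserts $\lvert x \rvert_\mfp^{-2\lfloor d/2\rfloor} P(x,\mathbf{y}) = q^{2\sum_i(n-\min\{a_i,n\})}$, the correct value is $P(x,\mathbf{y}) = q^{2\sum_i(n-\min\{a_i,n\})}\cdot\lvert x \rvert_\mfp^{2\lfloor d/2\rfloor} = q^{-2\sum_i \min\{a_i,n\}}$; this does not affect the validity of your argument, since it is only a typographical inversion in your restatement of the already-established identity~\eqref{equ:Poincare_Igusa_neu}.
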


\section{Functional equations and poles}\label{sec:funeq}

In this section we prove Theorems~\ref{thmABC:funeq}
and~\ref{thmABC:poles}.  As in Section~\ref{subsec:results} we denote
by $\Lambda$ a Lie lattice over the ring of integers $\gri$ in a
number field~$\gfi$. We assume that $\Lambda$ is rationally perfect,
i.e.\ that $k\otimes_\lri\Lambda$ is a perfect $\gfi$-Lie algebra, of
$\gfi$-dimension~$d$. Given a non-archimedean place $v$ of $\gfi$
above the rational prime~$p$, we denote by $\lri=\gri_v$ the ring of
integers of the completion $k_v$ of $\gfi$ at $v$, with residue field
cardinality~$q_v$. Given a finite extension $\Lri$ of $\lri$, with
maximal ideal $\mfP$, we define the $\Lri$-Lie lattice
$\mfg(\Lri):=\Lri \otimes_\smallgri\Lambda$. We denote by $e =
e(\Lri,\lri)$ the ramification index and by $f = f(\Lri,\lri)$ the
degree of inertia of the given extension. For every $m\in\N_0$ which
is permissible for $\mfg(\Lri)$ (cf. Definition~\ref{def:permissible})
the principal congruence sublattice $\mfg^m(\Lri):=\mfP^m\mfg(\Lri)$
corresponds to a potent and saturable FAb pro-$p$ group
$\mathsf{G}^m(\Lri) = \exp(\mfg^m(\Lri))$. We assume throughout this
section that $m$ is permissible for the relevant Lie lattices.

The $\mfp$-adic formalism developed in Section~\ref{sec:poincare}
applies to the representation zeta
functions~$\zeta_{\mathsf{G}^m(\Lri)}(s)$. However, whereas the setup
in Section~\ref{sec:poincare} is a local one, the results in the
current section exploit the fact that the groups $\mathsf{G}^m(\Lri)$
are all derived from one global object defined over~$\gri$, viz.\ the
Lie lattice~$\Lambda$. This fact has an important consequence for the
integrands of the $\mfp$-adic integrals which we set up to describe
the relevant representation zeta functions. Indeed, with the integral
$\mcZ_{\Lri}(r,t)$ defined as in~\eqref{equ:integral_neu} we have, by
Corollary~\ref{cor:zeta=integral},
$$ \zeta_{\mathsf{G}^m(\Lri)}(s) = q^{fdm} \left(1 + (1-q^{-f})^{-1}
\mcZ_{\Lri}(-s/2-1,\rho(s+2)-d-1)\right).
$$ Recall that the parameter $\rho$ was defined in~\eqref{def:rho}.
The polynomials $F_j(\mathbf{Y})$ occurring in the integrand of
$\mcZ_{\Lri}(r,t)$ are defined over $\gri$, as they are minors of the
commutator matrix $\mcR(\bfY)$ of $\Lambda$ with respect to a fixed
$\gri$-basis. Thus, essentially only the integral's domain of
integration varies with the ring~$\Lri$.

\subsection{}\label{subsec:explicit_formulae}
We prove Theorems~\ref{thmABC:funeq} and \ref{thmABC:poles} by
describing explicit formulae for the relevant $\mfp$-adic integrals,
generalising expressions given in~\cite{Vo10}. We recall from there what
we require to make the present paper reasonably self-contained, and
adapt the notation from~\cite{Vo10} to the conventions of the present
context.

\subsubsection{}\label{new subsubsec}
Fix $n,b,l\in\N$, let $I\subseteq\{1,\dots,n-1\}$, and
$\mathcal{W}(\lri)\subseteq\lri^{b}$ be a union of cosets
modulo~$\mfp^{(b)}$. Section~2 of \cite{Vo10} develops, in a more
general context, formulae for multivariate $\mfp$-adic integrals of
the form

\begin{equation}\label{equ:def_Z}
  Z_{\mathcal{W}(\lri),I}(\bfs):=\int_{\mfp^{(\lvert I \rvert)}\times
    \mathcal{W}(\lri)}\prod_{\kappa=1}^l \left\| \bigcup_{\iota\in
    J_\kappa}\left(\prod_{i\in I}x_i^{e_{i\,\kappa\iota}}\right)
    F_{\kappa\iota}(\bfy) \right\|_\mfp^{s_\kappa}d\mu(\bfx,\bfy)
\end{equation}
 with a view toward proving functional equations under `inversion of
the parameter $q$' for generating functions which are expressible in
terms of such integrals. Here $\bfs=(s_1,\dots,s_l)$ is a vector of
complex variables, the sets $J_\kappa$, $\kappa\in\{1,\dots,l\}$, are
finite index sets, $\bfx=(x_1,\dots,x_{n-1})$ and
$\bfy=(y_1,\dots,y_b)$ are independent variables, $e_{i\,\kappa\iota}$
are nonnegative integers and $F_{\kappa\iota}(\bfY)$ are finite sets
of polynomials over~$\gri$.

A general idea in the study of a $\mfp$-adic integral
like~\eqref{equ:def_Z} is to study its transformation under a
(log-)principalisation of the ideals defined by the polynomials
occurring in the integrand. A principalisation of ideals not only
provides an embedded resolution of singularities for the relevant
varieties, but also renders them locally principal.  The existence of
principalisations of ideals in characteristic~$0$ lies as deep as
Hironaka's theorem on resolutions of singularities in
characteristic~$0$; cf.~\cite{VeZu08} and references therein. The
domain of the transformed integral may be partitioned into finitely
many ($\mfp$-adically open) co-ordinate neighbourhoods on which the
evaluation of the integral is reduced to the computation of zeta
functions enumerating integral points of systems of polyhedral
cones. The multiplicities to which each of these systems occur are
given by the numbers of rational points over finite fields of
constructible algebraic sets. Explicit formulae resulting from this
kind of analysis yield, in many cases, supersets of the set of real
parts of the poles of the integral, and can be used to explain
phenomena like local functional equations. In the present setting, we
obtain principalisations over various local fields from a fixed
principalisation defined over~$\gfi$ by base extension. This allows us
to derive formulae which are `uniform' both with respect to the
variation of the place and under ring extensions.

Concretely, we fix a principalisation $(Y,h)$ over $\gfi$ of the ideal
$\mathcal{I}:=\prod_{\kappa,\,\iota}(F_{\kappa\iota}(\bfY))\subseteq
k[\bfY]$, with numerical data $(N_{t\,
\kappa\iota},\nu_t)_{t\,\kappa\iota}$, where $t\in T,
\kappa\in\{1,\dots,l\}, \iota\in J_\kappa$. Here $T$ is a finite index
set for the irreducible components $E_t$ of the pre-image under $h$ of
the variety defined by~$\mathcal{I}$. For $t, \kappa,\iota$ as above,
the number $\nu_t-1$ denotes the multiplicity of $E_t$ in the divisor
of $h^*(d\mu(\bfy))$, and $N_{t\, \kappa\iota}$ denotes the
multiplicity of $E_t$ in the pre-image under $h$ of the variety
defined by the ideal
$\mathcal{I}_{\kappa\iota}:=(F_{\kappa\iota}(\bfY))\subseteq
\gfi[\bfY]$; cf.~\cite[Section~2.1]{Vo10}.  Theorem~2.2 and
Corollary~2.1 of \cite{Vo10} provide explicit formulae for the
$\mfp$-adic integrals $Z_{\mathcal{W}(\lri),I}(\bfs)$ under additional
invariance and regularity conditions -- specified just before
\cite[Theorem 2.2]{Vo10} -- on the ideals $\mathcal{I}_{\kappa\iota}$
and on $(Y,h)$.  In our applications to representation zeta functions,
the invariance conditions will always be satisfied for all places of
$\gfi$, and are henceforth assumed to hold. They include, in
particular, the condition that $b=n^2$. The regularity conditions
comprise the condition that $(Y_\lfi,h_\lfi)$, the principalisation
over $\lfi$ obtained from $(Y,h)$ by base extension, has good
reduction modulo~$\mfp$, and that none of the ideals
$\mathcal{I}_{\kappa\iota}$ is zero modulo~$\mfp$. We denote by $S$
the finite set of places of~$\gfi$ for which these conditions fail to
hold.

The general strategy outlined above is illustrated in the current
context by \cite[Theorem~2.1]{Vo10} which states that, under the stated
conditions,
\begin{equation}\label{equ:Z good reduction}
  Z_{\mathcal{W}(\lri),I}(\bfs)= (1-q^{-1})^{\lvert I \rvert} q^{-n^2}
    \, \sum_{U\subseteq T} c_{U}(q) \, (q-1)^{\lvert U \rvert} \,
    \Xi_{U,I}(q,\bfs).
\end{equation}
Here, for any $U\subseteq T$, the coefficient
\begin{equation*}
  c_U(q) := c_{U,\mathcal{W}(\lri)}(q) = \lvert \{a \in Y(\lri/\mfp)
  \mid (a\in E_u(\lri/\mfp) \Leftrightarrow u\in U) \text{ and
  }\ol{h_\lfi}(a)\in \ol{\mathcal{W}(\lri)}\} \rvert,
\end{equation*}
where $\overline{\phantom{x}}$ denotes reduction modulo $\mfp$, is the
number of $\Fq$-rational points of the (reduction modulo $\mfp$ of
the) constructible set determined by the boolean combination of the
varieties $E_u$ prescribed by~$U$. Furthermore we set, for $U\subseteq
T$, $\bfm \in \N^{U}, \bfn \in \N^{I}$,
\begin{align*}
 L(\bfm,\bfn)&:=-\sum_{i\in I}n_i-\sum_{u\in U}\nu_u m_u,\\
  L_{\kappa\iota}(\bfm,\bfn)&:=\sum_{i\in
  I}e_{i\,\kappa\iota}n_i+\sum_{u\in U}N_{u\,\kappa\iota}m_u,
  \quad\text{ for } \kappa\in \{1,\ldots,l\}, \iota \in J_\kappa,
\end{align*}
and
\begin{equation}\label{def Xi}
  \Xi_{U,I} (q,\bfs) := \sum_{\substack{\bfm
      \in\N^{U},\;\bfn\in\N^{I}}} q^{L(\bfm,\bfn) - \sum_{\kappa=1}^l
      s_\kappa\min\{L_{\kappa\iota}(\bfm,\bfn) \mid \iota \in
      J_\kappa\}}.
\end{equation}

In order to analyse the functions $\Xi_{U,I}(q,\bfs)$ for fixed $U$
and $I$, we may cover the open orthant $\R_{>0}^{\lvert U \rvert +
\lvert I \rvert}$ by a family $\mfC$ of (not necessarily disjoint)
rational polyhedral cones such that, on each cone $\mcC$ in $\mfC$,
the value of each of the `$\min$'-terms in~\eqref{def Xi} is attained
by a single linear functional~$L_{\kappa\iota}$. Indeed, for
$g=(g_\kappa)\in\prod_{\kappa =1}^l J_\kappa$, let $\mcC_g$ be the
cone in $\R_{>0}^{\lvert U \rvert + \lvert I \rvert}$ defined by the
condition
$$ \forall \kappa \in \{1,\ldots,l\}\,\forall \iota\in
J_\kappa:\,L_{\kappa g_\kappa}\leq L_{\kappa\iota}.
$$ Then the family $\mathfrak{C}:=(\mcC_g)_{g\in\prod_\kappa
J_\kappa}$ of closed polyhedral cones clearly has the desired
property. For each $g\in\prod_\kappa J_\kappa$, the restriction of the
summation in~\eqref{def Xi} to $\mcC_g\cap\N^{\lvert U \rvert + \lvert
I \rvert}$ may be interpreted in terms of a multivariate Hilbert
series associated to the monoid of solutions of a homogeneous system
$A_g\bfx={\bf 0}$ of linear diophantine equations
(cf.~\cite[I.3]{St96}). Here $A_g$ is a matrix with integer
coefficients. It is well-known that these Hilbert series are rational
functions in variables $\xi_1,\dots,\xi_{\lvert U \rvert + \lvert I
\rvert}$, say, and that their denominators are finite products of the
form $\prod_i(1-\prod_{j=1}^{\lvert U \rvert + \lvert I
\rvert}\xi_j^{n_{ij}})$ for integers $n_{ij}$ depending on $\mcC_g$. A
formula for the restriction of the summation in~\eqref{def Xi} to
$\mcC_g\cap\N^{\lvert U \rvert + \lvert I \rvert}$ can be obtained by
substituting for these variables suitable monomials in $q$ and
$q^{-s_\kappa}$, $\kappa \in \{1,\ldots,l\}$.

In the case that $v\not\in S$, \cite[Corollary~2.1]{Vo10} translates
\eqref{equ:Z good reduction} into an explicit formula for the
normalised integrals
$$ \widetilde{Z_{\mathcal{W}(\lri),I}}(\bfs) :=
Z_{\mathcal{W}(\lri),I}(\bfs)\left(\left(1-q^{-1}\right)^{\lvert I
\rvert} \prod_{i=1}^n\left(1-q^{-i}\right)\right)^{-1}.
$$
More precisely, we have that
\begin{equation}\label{equ:tilde good reduction}
 \widetilde{Z_{\mathcal{W}(\lri),I}}(\bfs) =
 \left(\prod_{i=1}^n\frac{q-1}{q^i-1}\right) \sum_{U\subseteq T}
 b_U(q) \sum_{V\subseteq U}(-1)^{\vert U \setminus V\vert} \,
 (q-1)^{\vert V\vert} \, \Xi_{V,I}(q,\bfs),
\end{equation} where, for each $U$, we denote by $b_U(q)$
the number of $\Fq$-rational points of the reduction modulo $\mfp$ of
the smooth projective algebraic variety $E_U:=\cap_{u\in U}
E_u$. These are related to the numbers $c_{U,\mathcal{W}(\lri)}(q)$ in
a simple way by the inclusion-exclusion principle. Crucially for our
application to the proof of Theorem~\ref{thmABC:funeq}, Corollary~2.4
of \cite{Vo10} establishes, for all $i\in\{1,\dots,n-1\}$, a functional
equation of the form
\begin{multline}\label{equ:funeq_zeta}
  \left(\wt{Z_{\mathcal{W}(\lri),\varnothing}}(\bfs) +
    (1-q^{-n})\wt{Z_{\mathcal{W}(\lri),\{i\}}}(\bfs)\right)|_{\substack{q\rightarrow
    q^{-1}\\\lambda_i\rightarrow
    \lambda_i^{-1}}}=\\q^{-n}\left(\wt{Z_{\mathcal{W}(\lri),\varnothing}}(\bfs)
    + (1-q^{-n})\wt{Z_{\mathcal{W}(\lri),\{i\}}}(\bfs)\right).
\end{multline}
Definitions of the terms $\lambda_i$ and a precise explanation of the
operation $q\rightarrow q^{-1}$, $\lambda_i\rightarrow \lambda_i^{-1}$
are given in Section~\ref{subsec:good reduction}.

\subsubsection{}
Our aim for the remainder of this section is fourfold: in
Section~\ref{subsec:connection} we explain how the formalism developed
in Section~\ref{new subsubsec} applies to the study of representation
zeta functions of groups of the form $\mathsf{G}^m(\Lri)$ featuring in
Theorems~\ref{thmABC:funeq} and \ref{thmABC:poles}. In
Section~\ref{subsec:good reduction} we show that, in the case
$v\not\in S$, the formula~\eqref{equ:tilde good reduction} and the
functional equations~\eqref{equ:funeq_zeta} hold uniformly for all
extensions $\Lri\vert\lri$ (possibly ramified), upon substitution of
$q^f$ for $q$, where $f = f(\Lri,\lri)$; the precise meaning of this
substitution is explained in Section~\ref{subsec:good reduction}. This
uniformity phenomenon under field extensions is in analogy with the
situation for Igusa local zeta functions;
cf.~\cite[Theorem~3]{DeMe91}. In Section~\ref{subsec:greenberg} we
give a formula for $Z_{\mathcal{W}(\lri),I}(\bfs)$,
generalising~\eqref{equ:Z good reduction}, which is also valid for
$v\in S$. The main idea there is to write the integral as a sum over
suitably chosen $\mfp$-adic co-ordinate neighbourhoods, indexed by the
cosets of $Y(\lri)$ modulo $\mfp^N$ for some $N\in\N$, on which the
integrand is `monomial'. This generalises the case of good reduction,
where $N=1$ is sufficient. We use the Greenberg transform of level $N$
to identify the points of $Y(\lri/\mfp^N)$ for which the
($\mfp$-adically) `local' integrals are identical with rational points
of constructible subsets of a variety defined over~$\Fq$. (In explicit
co-ordinates this amounts to a description in terms of Witt vectors.)
In Section~\ref{subsec:unramified} we exploit the functoriality of the
Greenberg transform to obtain a formula which holds uniformly for all
unramified extensions of $\lri$. In Section \ref{subsec:arbitrary
extensions} we study totally ramified extensions
$\Lri\vert\lri$. Dealing with these special cases of ramification is
sufficient for our purposes, as every extension $\Lri\vert\lri$ has a
unique maximal unramified subextension $\Lri^{\text{ur}}\vert\lri$
such that $\Lri^{\text{ur}}\vert\Lri$ is totally
ramified. Section~\ref{subsec:cones} contains a key result on poles of
zeta functions of cones. The proofs of Theorems \ref{thmABC:funeq} and
\ref{thmABC:poles} are given in Section~\ref{subsec:proof_thm_poles}.

\subsubsection{}\label{subsec:connection}
The link between the formulae given in Section~\ref{new subsubsec} and
representation zeta functions of groups is as follows. How, in
general, Poincar\'e series enumerating elementary divisors of matrices
of forms may be expressed in terms of integrals of the
form~\eqref{equ:def_Z} is explained in~\cite[Section~2.2]{Vo10}. In this
application of the general $\mfp$-adic integration framework developed
in~\cite{Vo10}, the set $\mathcal{W}(\lri)$ is $\GL_d(\lri)$, but in
fact the integrand only depends on the entries in the first rows, say,
of matrices $\bfy \in \GL_d(\lri)$.  This connects $\mathcal{W}(\lri)$
with the set $W(\lri) = (\lri^d)^*$ introduced in
Section~\ref{subsec:poincare} of the present paper.

In our context, setting $n:=d$, $b:=d^2$ and
$\mathcal{W}(\Lri):=\GL_d(\Lri)$, it is easy to deduce
from~\eqref{equ:integral_neu} that, for suitable data of $l$,
$J_\kappa$, $F_{\kappa\iota}(\bfy)$ in \eqref{equ:def_Z}, and vectors
$\bfa,\bfb\in\Z^l$, $I\in\{\varnothing,\{1\}\}$,
\begin{align}
   \wt{Z_{\mathcal{W}(\Lri),\varnothing}}(\bfa s +
  \bfb)&= 1 ,\nonumber\\
  (1-q^{-fd})\wt{Z_{\mathcal{W}(\Lri),\{1\}}}(\bfa s +
  \bfb)&= (1-q^{-f})^{-1}\mcZ_{\Lri}(-s/2-1,\rho (s+2)-d-1).\label{equ:Z_0 = tilde}
\end{align}
Here we write $\bfa s + \bfb$ for
$(a_1s+b_1,\dots,a_ls+b_l)$. Invoking
Corollary~\ref{cor:zeta=integral} we thus have
\begin{align}\nonumber
  \zeta_{\mathsf{G}^m(\Lri)}(s) &= q^{fdm} \left(1 +
    (1-q^{-f})^{-1}\mcZ_{\Lri}(-s/2-1,\rho (s+2)-d-1)\right)\\&=
    q^{fdm} \left(\wt{Z_{\mathcal{W}(\Lri),\varnothing}}(\bfa s +
    \bfb) + (1-q^{-fd})\wt{Z_{\mathcal{W}(\Lri),\{1\}}}(\bfa s +
    \bfb)\right)\label{equ:zeta=integral II}.
\end{align}

The sets $F_{\kappa\iota}(\bfY)$ featuring in the integrand of the
integral $\mcZ_\Lri(r,t)$ are given by sets of minors of the
commutator matrix $\mcR(\bfY)$ associated to the lattice $\Lambda$;
cf.\ Section~\ref{subsec:integration}.

\subsection{Places with good reduction}\label{subsec:good reduction}
Recall from Section~\ref{new subsubsec} that $(Y,h)$ is a
principalisation of ideals for the ideal $\mathcal{I}\subseteq
\gfi[\bfY]$ defined over the number field $\gfi$. We obtain, by base
extension, a principalisation $(Y_\Lfi,h_\Lfi)$ over every extension
$\Lfi\vert\lfi$, where $\lfi=k_v$ for a non-archimedean place $v$
of~$\gfi$. Throughout Section~\ref{subsec:good reduction} we assume
that $v\not\in S$. For such $v$, all of the principalisations
$(Y_\Lfi,h_\Lfi)$ have good reduction modulo~$\mfP$;
cf.~\cite[Proposition~2.3 and Theorem~2.4]{De87}. The formulae in
\cite[Theorem~2.2 and Corollary~2.1]{Vo10} extend to `uniform' formulae
for
$$ \widetilde{Z_{\mathcal{W}(\Lri),I}}(\bfs) :=
Z_{\mathcal{W}(\Lri),I}(\bfs)\left(\left(1-q^{-f}\right)^{\lvert I \rvert}
\prod_{i=1}^n\left(1-q^{-if}\right)\right)^{-1},
$$ where $\Lri$ is any finite extension of $\lri$ with $f =
f(\Lri,\lri)$: as in~\cite[Theorem~3]{DeMe91} one can argue that an
expression for the integrals $\wt{Z_{\mathcal{W}(\Lri),I}}(s)$ is
obtained by replacing $q$ by $q^f$ on the right hand side
of~\eqref{equ:tilde good reduction}. This description also allows us
to explain functional equations like~\eqref{equ:funeq_zeta}. Indeed,
the numbers $b_U(q^f)$ in~\eqref{equ:tilde good reduction} may, by the
Weil conjectures, be written as alternating sums of powers of
Frobenius eigenvalues, i.e.\
$$
b_U(q^f) = \sum_{i=0}^{2\left(\binom{n}{2} - \lvert U \rvert
  \right)}(-1)^i\sum_{j=1}^{t_{U,i}}\alpha_{U,i,j}^f
$$ for suitable nonnegative integers $t_{U,i}$ and non-zero
complex numbers $\alpha_{U,i,j}$; cf.\ \cite[Section 2.1]{Vo10} and
\cite[Section 2]{DeMe91}.  The numbers $\alpha_{U,i,j}$ have the
property that
$$ b_U(q^{-f}):=\sum_{i=0}^{2\left(\binom{n}{2} - \lvert U \rvert
\right)} (-1)^i \sum_{j=1}^{t_{U,i}}
\alpha_{U,i,j}^{-f}=q^{-f\left(\binom{d}{2} - \lvert U \rvert \right)}
b_U(q^f).
$$ We note that the numbers $\alpha_{U,i,j}$ are in fact algebraic
integers, and relabel them to
$\lambda_1=\lambda_1(v),\dots,\lambda_r=\lambda_r(v)$. The functions
$\Xi_{U,I}(q,\bfs)$ are rational functions in monomials in $q$ and
$q^{-s_\kappa}$, $\kappa\in\{1,\dots,l\}$, and the effect of inverting
these terms is clear. The proof of \cite[Corollary 2.4]{Vo10} may thus
be extended to show that, for $i\in\{1,\dots,n-1\}$,
\begin{multline}\label{equ:funeq good reduction}
  \left(\wt{Z_{\mathcal{W}(\Lri),\varnothing}}(\bfs) +
    (1-q^{-fn})\wt{Z_{\mathcal{W}(\Lri),\{i\}}}(\bfs)\right)|_{\substack{q\rightarrow
    q^{-1}\\\lambda_j\rightarrow \lambda_j^{-1}}}=\\q^{-f
    n}\left(\wt{Z_{\mathcal{W}(\Lri),\varnothing}}(\bfs) +
    (1-q^{-fn})\wt{Z_{\mathcal{W}(\Lri),\{i\}}}(\bfs)\right).
\end{multline}

\subsection{Places with bad reduction}\label{subsec:greenberg}
We now drop the assumption that $v\not\in S$. For every
non-archimedean place $v$ of $\gfi$ there exists $N\in\N$, such that
$Y(\lri)$ can be covered disjointly by cosets modulo $\mfp^N$, which
are homeomorphic to $(\mfp^{N})^{(b)}$ and indexed by the elements of
$Y(\lri/\mfp^N)$, on which the terms in the integrand are monomial,
possibly allowing for a factor with locally constant $\mfp$-adic
valuation, and on which the Haar measure transforms in a similar
way. More precisely, we can choose $N\in\N$ such that, on the cosets
$\{\bfy\in Y(\lri) \mid \bfy \equiv \bfa \mod\mfp^N\}$, indexed by the
elements $\bfa\in Y(\lri/\mfp^N)$, the following holds: there exist
$U=U(\bfa)\subseteq T$, $j=j(\bfa)\in\N_0$ and, for
$\kappa\in\{1,\dots,l\}, \iota\in J_\kappa$, integers
$d_{\kappa\iota}=d_{\kappa\iota}(\bfa)\in\N_0$ such that
\begin{equation*}
 \|F_{\kappa\iota}\circ h\|_\mfp = q^{-d_{\kappa\iota}} \prod_{u\in
 U}\lvert \gamma_u \rvert_\mfp^{N_{u\,\kappa\iota}} \quad \text{ and }
 \quad h^*(d\mu(\bfy))= q^{-j} \prod_{u\in U}\lvert \gamma_u
 \rvert_\mfp^{\nu_u-1} d\mu(\boldsymbol{\gamma}).
\end{equation*}
for co-ordinate functions $\gamma_u, u\in U$.  This follows as
$(Y_\lfi, h_\lfi)$ still yields a principalisation of ideals for the
$\lfi$-manifold defined by $\mathcal{I}$. In fact, we may choose $N$
such that the above properties hold uniformly for all unramified
extensions of $\lri$; cf.~\cite[Theorem~2]{Me86}.  In the case that
$v\not\in S$, one may choose $N=1$ and all of the integers $j(\bfa)$,
$d_{\kappa\iota}(\bfa)$ are zero for all $\bfa\in Y(\lri/\mfp)$. As
the set $S$ is finite, we may thus choose $N$ which is good for all
places $v$ of~$\gfi$. For given $U\subseteq T$,
$(d_{\kappa\iota})\in\N_0^{\prod_{\kappa}J_{\kappa}}$ and
$I\subseteq\{1,\dots,n-1\}$ we set
$$
\Xi^N_{U,(d_{\kappa\iota}),I}(q,\bfs) := \sum_{\substack{\bfm
\in\N_{\geq N}^{U},\; \bfn \in\N^{I}}}
q^{L(\bfm,\bfn)-\sum_{\kappa=1}^l
s_\kappa\min\{L_{\kappa\iota}(\bfm,\bfn)-d_{\kappa\iota} \mid \iota
\in J_\kappa\}}.
$$
\begin{pro}\label{pro: bad reduction}
  For $I\subseteq\{1,\dots,n-1\}$ we have
\begin{equation}\label{equ:formula Z bad reduction}
  Z_{\mathcal{W}(\lri),I}(\bfs) =
      \frac{(1-q^{-1})^{|I|}}{q^{Nn^2}}\sum_{\substack{\bfa\in
      Y(\lri/\mfp^N)\\\ol{h_\lfi}(\bfa)\in \mathcal{W}(\lri/\mfp^N)}}
      (q^N-q^{N-1})^{|U(\bfa)|} q^{-j(\bfa)} \;
      \Xi^N_{U(\bfa),(d_{\kappa\iota}(\bfa)),I}(q,\bfs),
\end{equation}
where $\ol{\phantom{x}}$ denotes reduction modulo~$\mfp^N$.
\end{pro}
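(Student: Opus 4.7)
The proof extends the argument of \cite[Theorem~2.2]{Vo10} from the good-reduction case to allow for places of bad reduction. The plan has three steps: pull the $\bfy$-part of the integral back to $Y(\lri)$ via the principalisation $h$, decompose $Y(\lri)$ into cosets modulo $\mfp^N$, and evaluate the resulting local integrals by polar coordinates. Since $h$ is proper and birational, the pullback is legitimate up to sets of measure zero. We then write $Y(\lri)=\bigsqcup_{\bfa\in Y(\lri/\mfp^N)}C_\bfa$ with $C_\bfa=\{\bfy\in Y(\lri)\mid \bfy\equiv\bfa \text{ mod }\mfp^N\}$. This decomposition interacts nicely with the condition $h(\bfy)\in \mathcal{W}(\lri)$: because $\mathcal{W}(\lri)$ is a union of cosets modulo $\mfp^{(b)}$, hence modulo $\mfp^{N(b)}$, membership depends only on $\bfa$, yielding the indicator condition $\ol{h_\lfi}(\bfa)\in \ol{\mathcal{W}(\lri)}$.

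On each coset $C_\bfa$, the smoothness of $Y$ supplies local coordinates $(\gamma_1,\dots,\gamma_b)$ centred at $\bfa$, in which $C_\bfa$ is identified with $\mfp^{N(b)}$ and (after relabelling) $\gamma_u$ with $u\in U(\bfa)$ is a local uniformiser for the component $E_u$ through $\bfa$. By the choice of $N$ the integrand is then monomial: $\|F_{\kappa\iota}\circ h\|_\mfp=q^{-d_{\kappa\iota}(\bfa)}\prod_{u\in U(\bfa)}|\gamma_u|_\mfp^{N_{u\,\kappa\iota}}$ and $h^*(d\mu(\bfy))=q^{-j(\bfa)}\prod_{u\in U(\bfa)}|\gamma_u|_\mfp^{\nu_u-1}\,d\mu(\boldsymbol{\gamma})$. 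The free coordinates $\gamma_{u'}$ with $u'\notin U(\bfa)$ do not enter the integrand and contribute the measure factor $q^{-N(b-|U(\bfa)|)}$. On the remaining variables we pass to polar coordinates: $|x_i|_\mfp=q^{-n_i}$ with $n_i\in\N$ and $|\gamma_u|_\mfp=q^{-m_u}$ with $m_u\in\N_{\geq N}$. The norm factors transform to $q^{-s_\kappa \min_\iota\{d_{\kappa\iota}(\bfa)+L_{\kappa\iota}(\bfm,\bfn)\}}$, the level sets contribute Haar-measure factors $(1-q^{-1})q^{-n_i}$ and $(1-q^{-1})q^{-m_u}$, and summation over $(\bfm,\bfn)$ gives precisely $\Xi^N_{U(\bfa),(d_{\kappa\iota}(\bfa)),I}(q,\bfs)$.

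Assembling the pieces on a single coset yields the contribution
\begin{equation*}
  (1-q^{-1})^{|I|+|U(\bfa)|}\,q^{-j(\bfa)-N(b-|U(\bfa)|)}\,\Xi^N_{U(\bfa),(d_{\kappa\iota}(\bfa)),I}(q,\bfs),
\end{equation*}
and the algebraic identity $q^{-N(b-|U|)}(1-q^{-1})^{|U|}=q^{-Nb}(q^N-q^{N-1})^{|U|}$, combined with $b=n^2$, recasts this in the exact shape appearing in \eqref{equ:formula Z bad reduction}; summing over admissible $\bfa$ concludes the argument. The one genuinely nontrivial point is the existence of a single $N$ that simultaneously monomialises the norms and the pulled-back measure on every coset, with locally constant data $U(\bfa)$, $j(\bfa)$, $d_{\kappa\iota}(\bfa)$. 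This is where the hard input enters: $(Y_\lfi,h_\lfi)$ is still a principalisation of ideals by base extension, and a compactness-based uniformity argument of Meuser~\cite{Me86} provides a uniform $N$; both facts have been recorded in the paragraph preceding the proposition.
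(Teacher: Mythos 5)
Your proof is correct in substance and takes exactly the route the paper has in mind; the paper itself is terse here, stating only that the argument is analogous to \cite[Theorem~2.1]{Vo10}. The decomposition of $Y(\lri)$ into $\mfp^N$-cosets on which the pulled-back norms and pulled-back measure are monomial, the polar-coordinate summation over the level sets of $x_i$ and $\gamma_u$, and the final bookkeeping via the identity $(1-q^{-1})^{|U|}q^{-N(b-|U|)}=q^{-Nn^2}(q^N-q^{N-1})^{|U|}$ (with $b=n^2$) are all precisely the intended steps.

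One sign you should reconcile explicitly. Your valuation computation (correctly) yields the shifted linear form $L_{\kappa\iota}(\bfm,\bfn)+d_{\kappa\iota}(\bfa)$ inside the minimum: the excess $\mfp$-order $q^{-d_{\kappa\iota}(\bfa)}$ of $F_{\kappa\iota}\circ h$ on the coset $C_\bfa$ \emph{adds} to the valuation. The paper's displayed definition of $\Xi^N_{U,(d_{\kappa\iota}),I}$, however, has $L_{\kappa\iota}(\bfm,\bfn)-d_{\kappa\iota}$. These disagree, and yours is the internally consistent sign; the $-d_{\kappa\iota}$ in the paper's definition of $\Xi^N$ is a sign slip. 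As written, your claim that the sum over $(\bfm,\bfn)$ ``gives precisely'' $\Xi^N_{U(\bfa),(d_{\kappa\iota}(\bfa)),I}(q,\bfs)$ does not match the displayed definition; you should either record the sum with the shift you actually obtained, or flag the discrepancy and state that the subscript must be read as the negative of $d_{\kappa\iota}(\bfa)$ under the paper's convention.
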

\noindent Note that \eqref{equ:formula Z bad reduction} generalises
\eqref{equ:Z good reduction}. The proof of Proposition~\ref{pro: bad
reduction} is analogous to the proof of~\cite[Theorem~2.1]{Vo10}.

We may rewrite \eqref{equ:formula Z bad reduction} by collecting the
points $\bfa\in Y(\lri/\mfp^N)$ for which the summands coincide.  More
precisely we consider, for $U\subseteq T$, $j\in\N_0$,
$(d_{\kappa\iota})\in \N_0^{\prod_\kappa J_\kappa}$, the sets
\begin{multline}
  C_{U,j,(d_{\kappa\iota})}(\lri/\mfp^N)
  :=C_{U,j,(d_{\kappa\iota}),\mathcal{W}(\lri)}(\lri/\mfp^N) :=\\ \{
  \bfa\in Y(\lri/\mfp^N) \mid U(\bfa)=U,\, j(\bfa)=j,\,
  (d_{\kappa\iota}(\bfa))=(d_{\kappa\iota}),\; \ol{h_\lfi}(\bfa)\in
  \mathcal{W}(\lri/\mfp^N)\}.\label{def:c_UJ}
\end{multline}
By the theory of the Greenberg transform (see, e.g., \cite{Gr61}),
there exists a variety $\mathcal{Y}_N$ over $\Fq$ such that
$$Y(\lri/\mfp^N) = \mathcal{Y}_N(\Fq).$$ The sets
$C_{U,j,(d_{\kappa\iota})}(\lri/\mfp^N)$ correspond to the
$\Fq$-rational points of constructible subsets
$\mathcal{C}_{U,j,(d_{\kappa\iota})}$ of $\mathcal{Y}_N$ defined over
$\Fq$, and
$$
c_{U,j,(d_{\kappa\iota})}(q) := \lvert
\mathcal{C}_{U,j,(d_{\kappa\iota})}(\Fq) \rvert = \lvert
C_{U,j,(d_{\kappa\iota})}(\lri/\mfp^N) \rvert.
$$
Thus we obtain
\begin{cor}\label{cor: bad reduction} For $I\subseteq\{1,\dots,n-1\}$
  we have
\begin{equation}
  Z_{\mathcal{W}(\lri),I}(\bfs) = \frac{(1-q^{-1})^{|I|}}{q^{Nn^2}}
  \sum_{U,j,(d_{\kappa\iota})} c_{U,j,(d_{\kappa\iota})}(q) \,
  (q^N-q^{N-1})^{\vert U \vert}q^{-j} \;
  \Xi^N_{U,(d_{\kappa\iota}),I}(q,\bfs).\label{equ:final formula Z bad
  reduction}
\end{equation}
\end{cor}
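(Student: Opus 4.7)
The plan is to derive the corollary as a direct consequence of Proposition 4.1 by grouping the summands in \eqref{equ:formula Z bad reduction} according to the discrete data $(U(\bfa),j(\bfa),(d_{\kappa\iota}(\bfa)))$ attached to each coset representative $\bfa \in Y(\lri/\mfp^N)$, and then reinterpreting these groupings geometrically via the Greenberg transform.

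First I would verify the key observation that the summand appearing in \eqref{equ:formula Z bad reduction}, namely
\[
  (q^N-q^{N-1})^{|U(\bfa)|}\,q^{-j(\bfa)}\,\Xi^N_{U(\bfa),(d_{\kappa\iota}(\bfa)),I}(q,\bfs),
\]
depends on $\bfa$ only through the triple $(U(\bfa),j(\bfa),(d_{\kappa\iota}(\bfa)))$ and through the condition $\ol{h_\lfi}(\bfa)\in \mathcal{W}(\lri/\mfp^N)$. This is immediate from the definitions, since $U$, $j$ and the $d_{\kappa\iota}$ were introduced precisely as the discrete numerical data characterising how the integrand on the coset indexed by $\bfa$ is monomial in the chosen local coordinates $\gamma_u$, and since $\Xi^N_{U,(d_{\kappa\iota}),I}(q,\bfs)$ depends only on this data. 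Hence partitioning the sum over $\bfa\in Y(\lri/\mfp^N)$ according to these discrete invariants and whether $\ol{h_\lfi}(\bfa)$ lies in $\mathcal{W}(\lri/\mfp^N)$ collects identical summands.

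Next I would invoke the theory of the Greenberg transform of level $N$; cf.~\cite{Gr61}. This produces an $\F_q$-variety $\mathcal{Y}_N$ together with a bijection $Y(\lri/\mfp^N)=\mathcal{Y}_N(\F_q)$ which is functorial in finite residue-field-preserving extensions, and which transports polynomial congruence conditions on $Y(\lri)$ modulo $\mfp^N$ to $\F_q$-algebraic conditions on $\mathcal{Y}_N$. I would then argue that each of the defining conditions of the level set $C_{U,j,(d_{\kappa\iota})}(\lri/\mfp^N)$ in \eqref{def:c_UJ} is of this form: the membership condition for $U$ translates into vanishing/non-vanishing conditions for the coordinates $\gamma_u$ reduced modulo~$\mfp^N$; the integers $j$ and $d_{\kappa\iota}$ are determined by $\mfP$-adic orders of polynomial expressions in~$\bfa$ truncated at level~$N$; and the condition $\ol{h_\lfi}(\bfa)\in \mathcal{W}(\lri/\mfp^N)$ is polynomial because $\mathcal{W}(\lri)$ is, by assumption, a union of cosets modulo $\mfp^{(b)}$ (hence modulo~$(\mfp^N)^{(b)}$), together with $h_\lfi$ being algebraic. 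Consequently the sets $C_{U,j,(d_{\kappa\iota})}(\lri/\mfp^N)$ arise as the $\F_q$-points of constructible subvarieties $\mathcal{C}_{U,j,(d_{\kappa\iota})}\subseteq \mathcal{Y}_N$ defined over~$\F_q$, and $c_{U,j,(d_{\kappa\iota})}(q)=|\mathcal{C}_{U,j,(d_{\kappa\iota})}(\F_q)|$ counts the members of the corresponding class.

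Substituting the grouped sum into \eqref{equ:formula Z bad reduction} then yields \eqref{equ:final formula Z bad reduction}. The main obstacle in executing this plan rigorously is the verification in the previous paragraph that each of the conditions cutting out $C_{U,j,(d_{\kappa\iota})}(\lri/\mfp^N)$ is genuinely algebraic at the finite level $\mfp^N$; this hinges on the choice of $N$ made before Proposition 4.1, which was taken large enough so that the monomial description of the integrand stabilises on cosets modulo $\mfp^N$ and so that $\mathcal{W}(\lri)$ is resolved at this level. Granting this, the argument is a routine rearrangement of the finite sum, and the Greenberg transform supplies the algebro-geometric interpretation of the coefficients $c_{U,j,(d_{\kappa\iota})}(q)$ needed for later applications, notably for obtaining uniform statements across unramified extensions in Section~\ref{subsec:unramified}.
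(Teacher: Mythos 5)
Your proposal matches the paper's own derivation exactly: group the summands of \eqref{equ:formula Z bad reduction} according to the discrete data $(U(\bfa),j(\bfa),(d_{\kappa\iota}(\bfa)))$ and the membership condition on $\ol{h_\lfi}(\bfa)$, then use the Greenberg transform to realise each level set $C_{U,j,(d_{\kappa\iota})}(\lri/\mfp^N)$ as the $\F_q$-points of a constructible subset of $\mathcal{Y}_N$, so that $c_{U,j,(d_{\kappa\iota})}(q)$ counts these points. Your additional remark about checking that the defining conditions are genuinely algebraic at level $\mfp^N$ (which the paper leaves implicit in the choice of $N$) is a sensible precaution, but the argument is the same routine rearrangement the paper uses.
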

This generalises the formula given in
\cite[Theorem~2.1]{Vo10}. Following arguments analogous to those in
\cite[Theorem~2.2]{Vo10}, we obtain formulae for the normalised
integrals $\widetilde{Z_{\mathcal{W}(\lri),I}}(\bfs)$ and thus, via
\eqref{equ:Z_0 = tilde} and Corollary~\ref{cor:zeta=integral},
formulae for the respective representation zeta functions.

\begin{rem}
  We do not expect in general an analogue of \cite[Corollary~2.1]{Vo10}
  to hold, as it is not clear how the numbers
  $c_{U,j,(d_{\kappa\iota})}(q)$ may be expressed in terms of the
  numbers of rational points of \emph{smooth} projective
  varieties. Local functional equations may therefore not hold for
  non-archimedean places $v\in S$. This phenomenon is well-known in
  the theory of Igusa local zeta functions and is, in our context,
  illustrated by formulae for the congruence subgroups of
  $\SL_3(\lri)$ in the case of residue field characteristic~$3$;
  cf.~\cite{AvKlOnVo_pre_b}.
\end{rem}

\subsubsection{Unramified extensions}\label{subsec:unramified}
Consider now extensions $\Lri\vert\lri$ with $e(\Lri,\lri)=1$ and $f =
f(\Lri,\lri)$ arbitrary. Recall from the beginning of
Section~\ref{subsec:greenberg} that we may choose $N$ such that we can
write $Z_{\mathcal{W}(\Lri),I}(\bfs)$ as a sum over cosets modulo
$\mfP^N$, with the same `local numerical data' $(U(\bfa), j(\bfa),
(d_{\kappa\iota}(\bfa)))$ as in the case $\Lri=\lri$, uniformly for
all~$f$. By functoriality of the Greenberg transform we have
$$Y(\Lri/\mfP^N) = \mathcal{Y}_N(\mathbb{F}_{q^f})$$ and the sets
$C_{U,j,(d_{\kappa\iota})}(\Lri/\mfP^N)$, defined in analogy
to~\eqref{def:c_UJ}, have the property that
$$
c_{U,j,(d_{\kappa\iota})}(q^f) := \lvert
\mathcal{C}_{U,j,(d_{\kappa\iota})}(\mathbb{F}_{q^f}) \rvert = \lvert
C_{U,j,(d_{\kappa\iota})}(\Lri/\mfP^N) \rvert.
$$
Thus we obtain
\begin{pro} \label{pro:unramified}
 For $I\subseteq\{1,\dots,n-1\}$ and $\Lri\vert\lri$ unramified, i.e.\
 $e(\Lri,\lri)=1$, we have
\begin{align*}
 Z_{\mathcal{W}(\Lri),I}(\bfs) &=
      \frac{(1-q^{-f})^{|I|}}{q^{fNn^2}}\sum_{\substack{\bfa\in
      Y(\Lri/\mfP^N) \\ \ol{h_\Lfi}(\bfa)\in
      \mathcal{W}(\Lri/\mfP^{N})}}
      (q^{fN}-q^{f(N-1)})^{|U(\bfa)|}q^{-fj(\bfa)}
      \Xi^N_{U(\bfa),(d_{\kappa\iota}(\bfa)),I}(q^f,\bfs)\\ & =
      \frac{(1-q^{-f})^{|I|}}{q^{fNn^2}}
      \sum_{U,j,(d_{\kappa\iota})}c_{U,j,(d_{\kappa\iota})}(q^f)(q^{fN}-q^{f(N-1)})^{\vert
      U \vert}q^{-fj} \; \Xi^N_{U,(d_{\kappa\iota}),I}(q^f,\bfs),
\end{align*}
where $\ol{\phantom{x}}$ denotes reduction modulo~$\mfP^N$.
\end{pro}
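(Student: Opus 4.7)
The plan is to adapt the proof of Corollary~\ref{cor: bad reduction} to the unramified extension $\Lri\vert\lri$, exploiting two features that are special to this setting: a uniformiser of $\lri$ remains a uniformiser of $\Lri$, so $\lvert\pi\rvert_\mfP = q^{-f}$, and the Greenberg transform is functorial with respect to unramified base change. I would start by checking that the integer $N$ fixed in Section~\ref{subsec:greenberg} for $\lri$ simultaneously works for every unramified $\Lri$; this is exactly the uniformity statement invoked after Proposition~\ref{pro: bad reduction} with reference to~\cite{Me86}. The base-changed principalisation $(Y_\Lfi, h_\Lfi)$ is a principalisation of $\mcI\cdot\Lri[\bfY]$, and the local numerical data $(U(\bfa), j(\bfa), (d_{\kappa\iota}(\bfa)))$ attached to a coset modulo $\mfP^N$ are read off from the exceptional divisors of $Y_\Lfi$ (equivalently, of $Y_\lfi$) and are therefore independent of~$f$.

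Next I would compute the integral on each coset $\bfa + \mfP^{(n^2)}$ with $\bfa\in Y(\Lri/\mfP^N)$ satisfying $\ol{h_\Lfi}(\bfa)\in \mcW(\Lri/\mfP^N)$. Each such coset has additive Haar measure $q^{-fNn^2}$. On it, the pulled-back integrand is, up to the prescribed locally constant valuations $q^{-fd_{\kappa\iota}(\bfa)}$ and $q^{-fj(\bfa)}$, monomial in the exceptional co-ordinates $\gamma_u$ $(u\in U(\bfa))$ and in the variables $x_i$ $(i\in I)$, so the integral splits into geometric sums over $\bfn\in\N^I$ and $\bfm\in\N_{\geq N}^{U(\bfa)}$ whose exponents are prescribed by the linear forms $L$ and $L_{\kappa\iota}$ of Section~\ref{subsec:explicit_formulae}. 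Summing them using $\lvert\pi\rvert_\mfP = q^{-f}$ produces the factor $(1-q^{-f})^{|I|}\,\Xi^N_{U(\bfa),(d_{\kappa\iota}(\bfa)),I}(q^f,\bfs)$, together with a combinatorial factor $(q^{fN}-q^{f(N-1)})^{|U(\bfa)|}$ for the directions transverse to the exceptional divisors. Multiplying through by the coset measure $q^{-fNn^2}$ gives the first of the two displayed formulae. The second follows by grouping cosets with coinciding data $(U,j,(d_{\kappa\iota}))$; functoriality of the Greenberg transform identifies $Y(\Lri/\mfP^N)$ with $\mathcal{Y}_N(\mathbb{F}_{q^f})$, and the sets $C_{U,j,(d_{\kappa\iota})}(\Lri/\mfP^N)$ correspond precisely to the $\mathbb{F}_{q^f}$-points of the same constructible subvarieties $\mathcal{C}_{U,j,(d_{\kappa\iota})}\subseteq\mathcal{Y}_N$ used in the $\lri$-case, so their cardinality is $c_{U,j,(d_{\kappa\iota})}(q^f)$.

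I do not anticipate a substantial obstacle, since the whole argument is a base-change replay of the proof of Proposition~\ref{pro: bad reduction} and Corollary~\ref{cor: bad reduction}. The only point requiring care is bookkeeping of the exponents: one must verify that passing from $\lvert\cdot\rvert_\mfp$ to $\lvert\cdot\rvert_\mfP$ uniformly scales every exponent by the factor~$f$, so that exactly $q \mapsto q^f$ results, and that the numerical data $(U(\bfa), j(\bfa), (d_{\kappa\iota}(\bfa)))$ genuinely do not depend on~$f$. Both are consequences of $e(\Lri,\lri) = 1$ together with the fact that the principalisation $(Y_\Lfi,h_\Lfi)$ is obtained from $(Y,h)$ purely by base change, with no new blow-ups required.
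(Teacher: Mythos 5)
Your proposal is correct and follows essentially the same route as the paper: the paper's justification is precisely the combination of (i) choosing $N$ once so that the local monomialisation data are uniform over all unramified extensions (citing~\cite{Me86}), and (ii) functoriality of the Greenberg transform to identify $Y(\Lri/\mfP^N)$ with $\mathcal{Y}_N(\F_{q^f})$ and the sets $C_{U,j,(d_{\kappa\iota})}(\Lri/\mfP^N)$ with $\mathcal{C}_{U,j,(d_{\kappa\iota})}(\F_{q^f})$, after which the formula is the base-changed version of Proposition~\ref{pro: bad reduction} and Corollary~\ref{cor: bad reduction}. Your additional remarks on the coset measure $q^{-fNn^2}$ and the uniform rescaling $q\mapsto q^f$ via $\lvert\pi\rvert_\mfP=q^{-f}$ correctly spell out the bookkeeping that the paper leaves implicit.
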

This generalises Proposition~\ref{pro: bad reduction} and its
Corollary~\ref{cor: bad reduction}.

\subsubsection{Totally ramified extensions}\label{subsec:arbitrary extensions}
We now consider totally ramified extensions $\Lri$ of $\lri$.  In
contrast to Section~\ref{subsec:unramified} where we provided, in
Proposition~\ref{pro:unramified}, a uniform formula for all unramified
extensions of $\lri$, we derive here a formula for the integral over
$\lri$ by restriction of a formula for the integral over a fixed
totally ramified extension $\Lri$ of $\lri$. In fact, we do not know
whether a general `uniform' formula akin to
Proposition~\ref{pro:unramified} exists for totally ramified
extensions.

Let $\Lri\vert\lri$ be an extension with $f(\Lri,\lri)=1$ and
$e=e(\Lri,\lri)$ arbitrary. As explained in
Section~\ref{subsec:greenberg} there exists $N\in\N$ such that we may
cover $Y(\Lri)$ disjointly by cosets modulo $\mfP^{eN}$, which are
homeomorphic to $(\mfP^{eN})^{(b)}$, so that on each coset the
integrand is `monomial', with local numerical data $(U,
j,(d_{\kappa\iota}))$. Proposition~\ref{pro: bad reduction} yields a
formula for $Z_{W(\Lri),I}(\bfs)$:

$$Z_{\mathcal{W}(\Lri),I}(\bfs) = \frac{(1-q^{-1})^{|I|}}{q^{eNn^2}}
\sum_{\substack{\bfa\in Y(\Lri/\mfP^{eN}) \\ \ol{h_\Lfi}(\bfa)\in
\mathcal{W}(\Lri/\mfP^{eN})}} (q^{eN}- q^{eN-1})^{|U(\bfa)|}
q^{-j(\bfa)} \, \Xi^{eN}_{U(\bfa), (d_{\kappa\iota}(\bfa)),
I}(q,\bfs).$$ Each summand in this sum is associated to an integral
over
$$ \mfP^{(|I|)} \times \{ \bfy \in Y(\Lri) \mid \bfy \equiv \bfa \mod
(\mfP^{eN}) \}.$$ We consider the set $Y(\lri/\mfp^N)$ as a subset of
$Y(\Lri/\mfP^{eN})$ and obtain a formula for the integral
$Z_{\mathcal{W}(\lri),I}(\bfs)$ by `restricting' these integrals to
\begin{equation}\label{equ:restriction}
 \mfp^{(|I|)} \times \{ \bfy \in Y(\lri) \mid \bfy \equiv \bfa \mod
 (\mfp^{N}) \},
\end{equation}
taking into account the different normalisations of the Haar measure
on $(\mfP^{eN})^{(b)}$ and $(\mfp^N)^{(b)}$, respectively. (Recall
that, for a uniformiser $\pi$ of $\lri$, we have $\vert \pi
\vert_{\mfp} = q^{-1}$, whereas $\vert \pi \vert_{\mfP} = q^{-e}$.) We
note that the sets~\eqref{equ:restriction} are non-empty only if $\bfa
\in Y(\lri/\mfp^N)$. In this case, $j(\bfa) \equiv
d_{\kappa\iota}(\bfa) \equiv 0 \mod (e)$, as all the polynomials in
$F_{\kappa\iota}(\bfY)$ and the principalisation $h$ are defined
over~$\gfi$. Thus
$$Z_{\mathcal{W}(\lri),I}(\bfs) = \frac{(1-q^{-1})^{|I|}}{q^{Nn^2}}
\sum_{\substack{\bfa\in Y(\lri/\mfp^N) \\ \ol{h_\lfi}(\bfa)\in
\mathcal{W}(\lri/\mfp^N)}} (q^N - q^{N-1})^{|U(\bfa)|}
q^{-j(\bfa)/e}\; \Xi^{N}_{U(\bfa), d_{\kappa\iota}(\bfa)/e,
I}(q,\bfs).$$

\subsection{Poles of zeta functions of polyhedral cones}\label{subsec:cones}
The various explicit formulae for the functions
$Z_{\mathcal{W}(\Lri),I}(\bfs)$ show that central to a study of the
poles of these zeta integrals is an analysis of the poles of certain
zeta functions of polyhedral cones.

\begin{pro}\label{pro:cones}
Let $q,f,N\in\N$, let $(\delta_{\kappa\iota})_{\kappa\in\{1,\dots,l\},
\iota\in J_\kappa}$ be a family of integers, $\bfa,\bfb\in\Z^l$,
$U\subseteq T$, $I\subseteq\{1,\dots,n-1\}$ and let $L(\bfm,\bfn)$ and
$(L_{\kappa\iota}(\bfm,\bfn))$ be linear forms as defined in
Section~\ref{new subsubsec}. Then the set of real parts of the poles
of
$$\Xi^N_{U,(\delta_{\kappa\iota}),I}(q^f,\bfa
s+\bfb):=\sum_{\bfm\in\N^U_{\geq N},\,\bfn \in
  \N^I}(q^f)^{L(\bfm,\bfn)-\sum_{\kappa=1}^l (a_\kappa s + b_\kappa)
  \min\{L_{\kappa\iota}(\bfm,\bfn)-\delta_{\kappa\iota} \mid \iota \in
  J_\kappa\}}$$ is independent of $q$, $f$, $N$ and
$(\delta_{\kappa\iota})$.
\end{pro}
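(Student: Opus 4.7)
The plan is to decompose $\Xi^N_{U,(\delta_{\kappa\iota}),I}(q^f,\bfa s + \bfb)$ into a finite signed combination of multivariate Hilbert series of rational polyhedra, and then to read the real parts of the poles off the resulting rational-function description.

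Following the partition of the positive orthant introduced after~\eqref{def Xi}, for each $g = (g_\kappa) \in \prod_{\kappa=1}^l J_\kappa$ I would consider the polyhedron
\begin{equation*}
\mcC_g := \bigl\{(\bfm,\bfn) \in \R_{\geq 0}^{\lvert U \rvert + \lvert I \rvert} \mid L_{\kappa g_\kappa}(\bfm,\bfn) - \delta_{\kappa g_\kappa} \leq L_{\kappa\iota}(\bfm,\bfn) - \delta_{\kappa\iota}\ \forall\,\kappa,\iota\bigr\},
\end{equation*}
on which $\iota = g_\kappa$ achieves the $\kappa$-th minimum in the exponent of $\Xi$. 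Inclusion-exclusion along the lower-dimensional faces where several $\iota$ tie for the minimum expresses $\Xi^N_{U,(\delta_{\kappa\iota}),I}(q^f,\bfa s + \bfb)$ as a finite signed sum of lattice-point generating functions of the sets $\mcC_g \cap (\N_{\geq N}^{\lvert U \rvert} \times \N^{\lvert I \rvert})$, graded by the bilinear form $A_g(\bfm,\bfn) + s\, B_g(\bfm,\bfn)$ with $A_g := L - \sum_\kappa b_\kappa L_{\kappa g_\kappa}$ and $B_g := -\sum_\kappa a_\kappa L_{\kappa g_\kappa}$; the shifts $\delta_{\kappa g_\kappa}$ are absorbed into a scalar prefactor $(q^f)^{\sum_\kappa(a_\kappa s + b_\kappa)\delta_{\kappa g_\kappa}}$ which is pole-free.

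Next, by the standard theory of Hilbert series of rational polyhedra (cf.~\cite[I.3]{St96}), each summand is a rational function in $q^f$ and $q^{fs}$ whose denominator is a product of factors of the form $1 - (q^f)^{\alpha + \beta s}$, with $(\alpha,\beta) \in \Z \times \Z$ obtained by evaluating $A_g$ and $B_g$ at the primitive integer generator of an extreme ray of the recession cone
\begin{equation*}
\mathrm{rec}(\mcC_g) = \bigl\{(\bfm,\bfn) \in \R_{\geq 0}^{\lvert U \rvert + \lvert I \rvert} \mid L_{\kappa g_\kappa}(\bfm,\bfn) \leq L_{\kappa\iota}(\bfm,\bfn)\ \forall\,\kappa,\iota\bigr\}.
\end{equation*}
The recession cone and the forms $A_g, B_g$ depend only on the data $L$, $L_{\kappa\iota}$, $\bfa$, $\bfb$; the shifts $\delta_{\kappa\iota}$ affect only the affine translate of $\mcC_g$ and hence only the polynomial numerator, the cut-off $N$ only the initial offset of the lattice, and $q^f$ does not enter the description of the pairs $(\alpha,\beta)$. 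Since each factor $1 - (q^f)^{\alpha + \beta s}$ vanishes on a discrete union of vertical lines of real part $-\alpha/\beta$ (for $\beta \neq 0$), the real parts of the poles lie in a fixed finite subset of $\Q$ determined solely by the linear data.

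The main obstacle is ensuring that this containment is actually an equality, that is, ruling out accidental cancellations in the signed inclusion-exclusion sum which could remove certain candidate poles for some parameter values but not others. I would address this by selecting, for each candidate real part $-\alpha/\beta$, a single `leading' term in the decomposition whose order of pole on the slab $\real(s) = -\alpha/\beta$ strictly exceeds the combined orders of all competing terms -- concretely, picking an extreme ray of some $\mathrm{rec}(\mcC_g)$ whose $(\alpha,\beta)$-weight appears with maximal multiplicity, a purely combinatorial datum. Because this selection depends only on the recession cones and the linear forms $A_g, B_g$, and hence only on $L, L_{\kappa\iota}, \bfa, \bfb$, the set of real parts of poles that actually occur is the same for every choice of $q$, $f$, $N$ and $(\delta_{\kappa\iota})$.
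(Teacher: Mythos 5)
Your main steps -- decompose over the polyhedra $\mcC_g$ where a fixed $L_{\kappa g_\kappa} - \delta_{\kappa g_\kappa}$ attains the $\kappa$-th minimum, pull the $\delta_{\kappa g_\kappa}$-dependence into an entire prefactor, and control the denominators of the lattice-point generating functions by the recession cones, which depend only on $L$, $(L_{\kappa\iota})$, $\bfa$, $\bfb$ -- coincide with the paper's, which packages the same content slightly differently: it first reduces to $N=1$ (a shift of the summation variables modifies the $\delta_{\kappa\iota}$ and extracts an entire prefactor), and it then cites Stanley's result that the Hilbert series of an inhomogeneous system $A_g\bfx = \boldsymbol\beta$ is either zero or has the \emph{same} denominator as the homogeneous one, which is sharper than your ``divides a fixed product''. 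To that extent your first two paragraphs are sound, and in particular establish that the real parts of poles always lie in the fixed set $P = \bigcup_U\{A_i/B_i\}$ determined by $L$, $(L_{\kappa\iota})$, $\bfa$, $\bfb$.

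Your final paragraph, however, overshoots. To show that the set of real parts of poles is \emph{exactly}, and not merely contained in, a fixed set, you posit a ``leading'' term whose pole order on each slab $\real(s) = -\alpha/\beta$ strictly dominates the combined orders of all the others, and assert that choosing it is ``a purely combinatorial datum'' of the recession cones. It is not: if several $\mcC_g$ share an extreme ray of the same $(\alpha,\beta)$-weight and multiplicity, their leading contributions can cancel after the signed inclusion-exclusion and after the Hilbert-series variables are specialised to monomials in $q$ and $q^{-s}$; and even a unique dominant cone only helps if one also knows that the numerator of its specialised Hilbert series does not vanish on that slab, which you do not address. The paper does not attempt any such dominance argument: its proof only produces the candidate set $P$, and in the proof of Theorem~\ref{thmABC:poles} only the containment of the real parts of poles in $P$ is ever used. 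That containment is what your first two paragraphs deliver; the stronger exact-independence claim of your last paragraph would require a genuine positivity input that is missing.
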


\begin{proof}
We first observe that
$$\Xi^N_{U,(\delta_{\kappa\iota}),I}(q^f,\bfa s + \bfb) =
q^{\sum_{u}\nu_u(1-N)} \;
\Xi^1_{U,(\delta_{\kappa\iota}+\sum_uN_{u\,\kappa\iota}(N-1)),I}(q^f,\bfa
s + \bfb),$$ so we may assume $N=1$, and drop the superscript.

For $(\delta_{\kappa\iota})=(0)$, recall the decomposition of
$\mathbb{R}_{>0}^{\lvert U \rvert + \lvert I \rvert}$ by the family
$\mathfrak{C}$ of rational polyhedral cones described in
Section~\ref{new subsubsec}. It implies that the denominators of the
generating functions $\Xi_{U,I}(q^f,\bfa s + \bfb)$ divide products of
the form $\prod_{i\in\mfI(U)}(1-(q^f)^{A_i-B_is})$, for some finite
index sets $\mfI(U)$ and nonnegative integers $A_i,B_i$, which only
depend on $L(\bfm,\bfn)$, $(L_{\kappa\iota}(\bfm,\bfn))$, $\bfa$ and
$\bfb$ (but not, for instance, on $q^f$). Note that the fact that the
union of the closed cones $\mcC_g$ in Section~\ref{new subsubsec} is
not disjoint is insubstantial: by construction, if two such cones do
not coincide then they overlap only on components of their boundaries,
and the summation over these boundary components does not introduce
new denominators. The latter follows from the fact that, for each of
the polyhedral cones $\mcC_g$, the denominator of the generating
function associated to $\mcC_g$ may be described in terms of the
primitive vectors on the extremal rays of $\mcC_g$;
cf.~\cite[Proposition~4.6.10, Theorem~4.6.11]{St97}. We set $P :=
\bigcup_{U\subseteq T}\left\{{A_i}/{B_i} \mid i \in \mfI(U) \right\}$.

In the case of general $(\delta_{\kappa\iota})$ we consider the affine
translates
$$ L_{\kappa\iota}'(\bfm,\bfn) := L_{\kappa\iota}(\bfm,\bfn) -
\delta_{\kappa\iota} = \sum_{i\in I}e_{i\,\kappa\iota}n_i + \sum_{u\in
U}N_{u\,\kappa\iota}m_u - \delta_{\kappa\iota}$$ of the linear forms
$L_{\kappa\iota}(\bfm,\bfn)$ defined in
Section~\ref{subsec:explicit_formulae}. For
$g=(g_\kappa)\in\prod_{\kappa}J_\kappa$, let $\mcC'_g$ be the cone in
$\R_{>0}^{\lvert U \rvert + \lvert I \rvert}$ defined by the
condition
$$ \forall \kappa\in\{1,\dots,l\}\,\forall \iota\in
J_\kappa:\,L'_{\kappa g_\kappa}\leq L'_{\kappa\iota}.
$$ Each cone $\mcC'_g$ in the family
$\mathfrak{C}':=(\mcC'_g)_{g\in\prod_\kappa J_\kappa}$ is an affine
translate of a cone in~$\mfC$. In continuation of our discussion in
Section~\ref{new subsubsec}, the restriction of the summation to each
$\mcC'_g\cap\N^{\lvert U \rvert + \lvert I \rvert}$ can be interpreted
in terms of the Hilbert series associated to the solutions of an
\emph{in}homogeneous system $A_g\bfx = \boldsymbol{\beta}$, whilst the
summation over the corresponding cone $\mcC_g\cap\N^{\lvert U \rvert +
  \lvert I \rvert}$ is interpretable in terms of the Hilbert series
associated to the solutions of $A_g\bfx={\bf 0}$. However, it is known
(cf.\ \cite[I.3]{St96}) that the Hilbert series associated to the
inhomogeneous system $A_g\bfx = \boldsymbol{\beta}$ is either $0$ or
else has the same denominator as the Hilbert series associated to
$A_g\bfx=0$. The former clearly does not occur in the current context.
\end{proof}

\begin{rem}
A formula for Igusa local zeta functions for good reduction which is
uniform under field extensions is given in~\cite{DeMe91}. Igusa-type
zeta integrals at places with bad reduction have been studied
in~\cite{dSGr00}. We note that \cite[Proposition 3.3]{dSGr00} seems to
be incorrect; the integrals $\int_{\Zp} \lvert p \rvert_p^{-s}
d\mu(x)$ and $\int_{\{(x,y)\in\Zp^2 \mid x \vert py\}} \lvert xy
\rvert_p^{-s}d\mu(x,y)$ provide counterexamples. These
notwithstanding, \cite[Corollary 3.4]{dSGr00} remains valid, e.g.\ by
arguments akin to the ones given in our proof of
Theorem~\ref{thmABC:poles}.
\end{rem}

\subsection{Proofs of Theorem~\ref{thmABC:funeq} and \ref{thmABC:poles}}\label{subsec:proof_thm_poles}
Theorem~\ref{thmABC:funeq} holds, with $S$ and
$\lambda_1,\dots,\lambda_r$ defined in
Sections~\ref{subsec:explicit_formulae} and~\ref{subsec:good
reduction}, because of \eqref{equ:funeq good reduction} in conjunction
with~\eqref{equ:zeta=integral II}.

The first claim of Theorem~\ref{thmABC:poles} follows, with $P$
defined as in the proof of Proposition~\ref{pro:cones}, from the
formulae we gave for the integrals $Z_{\mathcal{W}(\Lri),I}(\bfs)$ in
Sections \ref{subsec:explicit_formulae} to~\ref{subsec:greenberg},
together with Proposition~\ref{pro:cones}.

For the second claim of Theorem~\ref{thmABC:poles} we note that it
suffices to consider places~$v\not\in S$. The claim follows from the
fact that, for each $U\subseteq T$, the numbers $c_U(q^f)$ are
non-zero for a set of places $v$ of~$\gfi$ of positive density. This
in turn is a consequence of the fact that the varieties $E_U$ are
smooth, quasi-projective varieties defined over $\gfi$ which are
irreducible over $\gfi$. This implies that, for every $U\subseteq T$,
the numbers $b_U(q)$ are non-zero for a set of places of positive
density. In the special case $k=\Q$ this follows essentially from the
Lang-Weil estimate; see e.g.\ \cite[Lemma~4.7]{dSGr00}, which is
easily generalised to arbitrary number fields.

The last claim of Theorem~\ref{thmABC:poles} follows (i) in the case
that $v\not\in S$ from the monotonicity of the numbers $c_U(q^f)$
under residue field extensions and (ii) in the case that $v\in S$ from
the monotonicity of the numbers $c_{U,j,(d_{\kappa\iota})}(q^f)$ in
the case of unramified extensions, and from the discussion in
Section~\ref{subsec:arbitrary extensions} and
Proposition~\ref{pro:cones} in the totally ramified case.

\section{Semisimple groups} \label{sec:semisimple}

In this section we establish a link between the representation zeta
functions of `se\-mi\-sim\-ple' $p$-adic analytic pro-$p$ groups and
stratifications of the associated semisimple Lie algebras by algebraic
varieties defined in terms of centraliser dimension.

Let $\gri$ be the ring of integers of a number field $\gfi$, and
$\Lambda$ an $\gri$-Lie lattice such that $\mathcal{L} := k
\otimes_\smallgri \Lambda$ is a finite dimensional, perfect $\gfi$-Lie
algebra.  Put $d := \dim_\gfi(\mcL)$.  We choose an $\gri$-basis
$\mathbf{b} = (b_1,\ldots,b_d)$ of $\Lambda$, and we define the
commutator matrix $\mathcal{R}(\mathbf{Y})$ of $\Lambda$ with respect
to the basis $\mathbf{b}$ as in \eqref{equ:commutator_matrix}.

For any non-archimedean place $v$ of $\gfi$, the completion $\lri =
\gri_v$ is a compact discrete valuation ring of characteristic $0$,
with maximal ideal $\mfp$ and residue field $\lri/\mfp$ of cardinality
$q$ and characteristic~$p$, say.  The $\lri$-Lie lattice $\mfg := \lri
\otimes_\smallgri \Lambda$ gives rise to a family of FAb $p$-adic
analytic pro-$p$ groups $\mathsf{G}^m(\lri)$, $m \in \N_0$ permissible
for $\mfg$.  Equation~\eqref{equ:Poincare_Igusa_neu} expresses the
relevant Poincar\'e series $\mathcal{P}_{\mathcal{R},\lri}(s)$ in
terms of the generalised Igusa local zeta function
$\mathcal{Z}_\lri(r,t)$.  If $\mathcal{L}$ is semisimple, then the
parameter $\rho$ defined in \eqref{def:rho} has a natural
interpretation in terms of the absolute root system and the absolute
rank of the Lie algebra $\mathcal{L}$.  These are the root system
$\Phi(\mathcal{L}_\C)$ and the rank $\rank(\mathcal{L}_\C)$ of the
complex semisimple Lie algebra $\mcL_\C := \C \otimes_\gfi \mcL$.
Indeed, we have $2 \rho = d - \rank(\mcL_\C) = \lvert
\Phi(\mathcal{L}_\C) \rvert$.

Looking in more detail at the integrand in \eqref{equ:integral_neu},
we observe that the following stra\-ti\-fi\-ca\-tion of
$d$-dimensional affine space $\mathbb{A}^d$ plays a significant role:
$$
\mathbb{A}^d = \mathcal{U}_0 \supseteq \mathcal{U}_1 \supseteq \ldots
\supseteq \mathcal{U}_\rho = \{ 0 \},
$$ where for $i \in \{0,\ldots,\rho\}$ the subvariety $\mathcal{U}_i$
is defined by the set of polynomials $F_{\rho+1-i}(\mathbf{Y})$, over
$\gri$.  We note that for each $i$ in this range
\begin{align*}
  \mathcal{U}_i(\gfi) = & \{ \underline{x} \in \gfi^d \mid \forall f
  \in F_{\rho+1-i}(\mathbf{Y}) : f(\underline{x}) = 0 \} \\ = & \{
  \underline{x} \in \gfi^d \mid \rk_\gfi(\mathcal{R}(\underline{x}))
  \leq d - \rank(\mcL_\C) - 2i \}.
\end{align*}

We now make the assumption that the Lie algebra $\mcL$ is semisimple.
It is remarkable that in this important case the stratification above
can be given a natural algebraic interpretation, as we shall now
explain.  (Our description makes use of properties of the Killing
form, namely that it is non-degenerate and invariant; more generally,
it applies to any Lie lattice such that $\mcL$ admits some
non-degenerate invariant symmetric bilinear form.)

By a choice of co-ordinates, we identify the $\gfi$-vector spaces
$\mcL$ and and its dual $\mcL^\vee$ with the $\gfi$-points of
$d$-dimensional affine space $\mathbb{A}^d$.  Indeed, the $\gri$-basis
$\mathbf{b}$ of $\mcL$ and the dual $\gri$-basis $\mathbf{b}^\vee$ of
$\mcL^\vee$ induce co-ordinate maps
\begin{align*}
 \mcL \xrightarrow{\, \simeq \,} \gfi^d, & \quad x =
   \sum\nolimits_{i=1}^d x_i b_i \mapsto \underline{x} =
   (x_1,\ldots,x_d), \\ \mcL^\vee \xrightarrow{\, \simeq \,} \gfi^d, &
   \quad x = \sum\nolimits_{i=1}^d x_i b_i^\vee \mapsto \underline{x}
   = (x_1,\ldots,x_d),
\end{align*}
defined over $\gri$.  The image $\omega_{\underline{x}} = \sum_{i=1}^d
x_i b_i^\vee$ of $\underline{x}$ under the inverse of the second
isomorphism is uniquely defined by the requirements
$$
\omega_{\underline{x}}(b_i) = x_i \quad \text{for $i \in \{1,\ldots,d\}$.}
$$ This allows us to interpret the stratification of $\mathbb{A}^d$ in
terms of $\mathcal{U}_i$, $i \in \{0,\dots,\rho\}$, by a
stratification $\mathcal{W}_i$, $i \in \{0,\dots,\rho\}$, of
$\mathbb{A}^d$ which is also defined over $\gri$ and satisfies
$$
\mathcal{W}_i(\gfi) = \{ \omega_{\underline{x}} \in \mcL^\vee \mid
\rk_\gfi(\mathcal{R}(\underline{x})) \leq d - \rank(\mcL_\C) - 2i \}.
$$

As $\mcL$ is semisimple, there is also a canonical vector space
isomorphism from $\mcL$ to its dual $\mcL^\vee$ via the Killing form.
It is well-known that the Killing form
$$
\kappa: \mcL \times \mcL \longrightarrow k, \quad
\kappa(x,y) := \Tr(\ad(x) \ad(y))
$$
of the semisimple Lie algebra $\mcL$ is non-degenerate and thus
induces a natural isomorphism of vector spaces
$$
\iota: \mcL \longrightarrow \mcL^\vee, \quad x \mapsto
\kappa(x,\cdot).
$$
We note that, while $\iota$ is defined over $\gri$, its inverse is
typically not defined over $\gri$.  This issue will be discussed
more closely below.

In any case, we claim that $\mathcal{W}_i$, $i \in \{0,\dots,\rho\}$, is
the image under $\iota$ of the stratification
$$
\mathbb{A}^d = \mathcal{V}_0 \supseteq \mathcal{V}_1 \supseteq \ldots
\supseteq \mathcal{V}_\rho = \{ 0 \},
$$
where
$$
\mathcal{V}_i(\gfi) = \{ x \in \mcL \mid \dim_\gfi \Cen_{\mcL}(x) \geq
\rank(\mcL_\C) + 2i \}.
$$

The non-degeneracy and invariance of the Killing form $\kappa$ show
that for $x \in \mcL$ the radical $\Rad(\kappa(x,\cdot))$ is equal to
the centraliser $\Cen_\mcL(x)$.  Indeed, we have
\begin{align*}
  \Rad(\kappa(x,\cdot)) & = \{ y \in \mcL \mid \forall z \in
  \mcL : \kappa(x,[y,z]) = 0 \} \\
  & = \{ y \in \mcL \mid \forall z \in \mcL :
  \kappa([x,y],z) =0 \} \\
  & = \{ y \in \mcL \mid [x,y] = 0 \} \\
  & = \Cen_\mcL(x).
\end{align*}
This implies that, for $i \in \{0,\ldots,\rho\}$,
\begin{equation} \label{equ:zweiter_streich}
  \begin{split}
    \iota \mathcal{V}_i(\gfi) & = \{ \kappa(x,\cdot) \in \mcL^\vee
    \mid \dim_\gfi \Rad(\kappa(x,\cdot)) \geq \rank(\mcL_\C) + 2i \}
    \\ & = \{ \omega_{\underline{x}} \in \mcL^\vee \mid \dim_\gfi
    \Rad(\omega_{\underline{x}}) \geq \rank(\mcL_\C) + 2i \}.
  \end{split}
\end{equation}
Now let $\omega_{\underline{x}} \in \mcL^\vee$ with co-ordinates
$\underline{x}$.  Note that for $y, z \in \mcL$,
$$
\omega_{\underline{x}}([y,z]) = \underline{y} \cdot \mathcal{R}(\underline{x})
\cdot \underline{z}^{\textup{t}}.
$$ Therefore the radical $\Rad(\omega_{\underline{x}}) = \{ y \in \mcL
\mid \forall z \in \mcL : \omega_{\underline{x}}([y,z]) = 0 \}$ maps
under the co-ordinate isomorphism $\mcL \simeq \gfi^d$, $y \mapsto
\underline{y}$ onto $\ker \mathcal{R}(\underline{x})$.  From the fact
that $d = \dim_\gfi \Rad(\omega_{\underline{x}}) + \rk_\gfi
(\mathcal{R}(\underline{x}))$ we deduce that, for $i \in
\{0,\ldots,\rho\}$,
\begin{equation} \label{equ:erster_streich}
 \mathcal{W}_i(\gfi) = \{ \omega_{\underline{x}} \in \mcL^\vee \mid
  \dim_\gfi \Rad(\omega_{\underline{x}}) \geq \rank(\mcL_\C) + 2i \}.
\end{equation}

From \eqref{equ:zweiter_streich} and \eqref{equ:erster_streich} we
deduce by extension of scalars that the stratification
$\mathcal{V}_i$, $i \in \{0,\dots,\rho\}$, is mapped to $\mathcal{W}_i$,
$i \in \{0,\dots,\rho\}$, under the linear isomorphism $\iota$.

In practice, the translation between the varieties needs to be carried
out locally, over the ring $\lri = \gri_v$.  Instead of the Killing
form we use -- in the case of a simple Lie algebra of Chevalley type
-- the normalised Killing form $\kappa_0$ which is related to the
ordinary Killing form $\kappa$ by the equation $2 h^\vee \kappa_0 =
\kappa$ where $h^\vee$ is the dual Coxeter number.  For instance, for
$\spl_n$ the dual Coxeter number is $h^\vee = n$.

A good understanding of the representation zeta functions of
`semisimple' $p$-adic analytic pro-$p$ groups would depend on detailed
knowledge of algebro-geometric and arithmetic properties of the
projective subvarieties of $\mathbb{P}(\mathcal{L})\cong
\mathbb{P}^{d-1}(\C)$ defined by the affine varieties $\mathcal{V}_i$,
$i \in \{0,\ldots,\rho\}$. Indeed, the discussion in
Section~\ref{sec:funeq} shows that explicit formulae for the relevant
zeta functions may be obtained from a principalisation of ideals
defining the terms of this filtration. The theory of sheets may be
relevant in this context; cf.~\cite{Bo81}.  In the first place we
would like to understand the geometry of this stratification over $\C$
(or any algebraically closed field above $\gfi$), and in particular
its singular loci. In a second step one would need to describe the
varieties over completions $\lri=\gri_v$ of $\gri$.  We put forward
the following concrete question.

\begin{qun}
  Let $\mcL_\C$ be a complex semisimple Lie algebra of dimension $d$,
  representing $\mathbb{A}^d(\C)$.  Consider the stratification $
  \mathbb{A}^d = \mathcal{V}_0 \supseteq \mathcal{V}_1 \supseteq
  \ldots \supseteq \mathcal{V}_\rho = \{ 0 \}$, where $2 \rho = d -
  \rank(\mcL_\C) = \lvert \Phi(\mathcal{L}_\C) \rvert$ and
  $$
  \mathcal{V}_i(\C) = \{ x \in \mcL_\C \mid \dim_\C \Cen_{\mcL_\C}(x)
  \geq \rank(\mcL_\C) + 2i \} \qquad \text{for $i \in \{0,\dots,\rho\}.$}
  $$
  Is it true that the singular locus of any non-smooth term
  $\mathcal{V}_i$ in this stratification is given by the first
  successor term $\mathcal{V}_j$, $j>i$, which is properly contained
  in $\mathcal{V}_i$?
\end{qun}

Already for $\mathcal{L}_\C=\spl_4(\C)$, the singularities of the
varieties $\mathcal{V}_i$ are challenging. The case of Lie algebras of
type $A_2$, however, is understood.

\begin{exm}\label{exa:sl3}
 Let $\mathcal{L}_\C=\spl_3(\C)$. Hence $d=8$, and it is not hard to
 verify that~$\rho=3$. Consider the map
 \begin{equation*}
  \beta_\C: \gl_3(\C) \rightarrow \spl_3(\C), \quad X \mapsto X-
  \frac{\Tr(X)}{3} \Id _3,
 \end{equation*}
 and set $\mathcal{D}_1(\C):= \{ X \in \gl_3(\C) \mid \rk_\C(X) \leq
 1\}$.  Regarding the filtration $\mathbb{A}^8 = \mathcal{V}_0
 \supseteq \mathcal{V}_1 \supseteq \mathcal{V}_2 \supseteq
 \mathcal{V}_3 = \{ 0 \}$ it is known that $\mathcal{V}_2(\C) = \{0\}$
 and that $\mathcal{V}_1(\C)=\beta_\C(\mathcal{D}_1(\C))$;
 cf.\ \cite[Example 9.6]{Br98}. The variety $\mathcal{V}_1$ is a
 $5$-dimensional subvariety of $\mathbb{A}^8$, defined over $\Z$,
 which is smooth away from the origin.  The projective variety
 $\mathbb{P}(\mathcal{V}_1 \setminus \{0\})$ is isomorphic to
 $\mathbb{P}^2\times \mathbb{P}^2$.
\end{exm}

The straightforward geometric setup of Example~\ref{exa:sl3} holds the
key to our analysis of Lie algebras of type $A_2$ in
Section~\ref{sec:explicit_cong_sub}, enabling us to compute explicit
formulae for the corresponding zeta functions.

%%%%%%%%%
%%%%%%%%%

\part{Applications}

%%%%%

\section{Formulae for principal congruence subgroups of $\SL_3(\lri)$ and
  $\SU_3(\Lri,\lri)$}\label{sec:explicit_cong_sub}

In this section we use the methods developed in
Part~\ref{part:formalism} to prove Theorem~\ref{thmABC:SL3}. We first
fix some notation, mainly for the unitary case.  Let $\lri$ denote a
compact discrete valuation ring of characteristic $0$ with field of
fractions $\lfi$, and let $\Lfi$ be an unramified quadratic extension
of~$\lfi$, with ring of integers~$\Lri$.  We write $\mfp$ and $\mfP$
for the maximal ideals of $\lri$ and $\Lri$; the residue field
characteristic and cardinality of $\lri$ are denoted by $p$ and $q$.
We write $\F_q$ for the residue field $\lri/\mfp$.

Let $\sigma: \Lfi \rightarrow \Lfi$ denote the non-trivial Galois
automorphism of $\Lfi \vert \lfi$.  For $n \in \N$, consider on $V =
\Lfi^n$ the standard non-degenerate Hermitian sesquilinear form $V
\times V \rightarrow \Lfi$, $(\mathbf{v},\mathbf{w}) \mapsto
\sum_{i=1}^n v_i^\sigma w_i$.  The structure matrix of this form with
respect to the standard basis is simply the identity matrix.  The
standard involution ${}^\circ$ on the matrix algebra $\Mat_n(\Lfi)$ is
given by $ \bfx^\circ := (\bfx^\sigma)^\textup{t}$, i.e.\ conjugate
transpose.  The standard unitary group $\GU_n(\Lfi,\lfi)$ and its
subgroups, e.g.\ the standard special unitary group
$\SU_n(\Lfi,\lfi)$, can be realised as subgroups of $\GL_n(\Lfi)$ as
follows:
$$ \GU_n(\Lfi,\lfi) = \{ g \in \GL_n(\Lfi) \mid g^\circ \, g = 1 \}.
$$ Similarly, the standard unitary Lie algebra $\gu_n(\Lfi,\lfi)$ and
its subalgebras, e.g.\ the standard special unitary Lie algebra
$\su_n(\Lfi,\lfi)$, can be realised as subalgebras of the $\lfi$-Lie
algebra $\gl_n(\Lfi)$:
$$
\gu_n(\Lfi,\lfi) = \{ \mathbf{x} \in \gu_n(\Lfi) \mid \mathbf{x}^\circ
+ \mathbf{x} = 0 \}.
$$
We define the compact $p$-adic analytic group
\begin{equation}\label{def:SU3}
 \SU_n(\Lri,\lri) := \SU_n(\Lfi,\lfi) \cap \GL_n(\Lri)
\end{equation}
and the $\lri$-Lie lattice
$$
\su_n(\Lri,\lri) := \su_n(\Lfi,\lfi) \cap \gl_n(\Lri).
$$

Throughout the remainder of this section, we will assume that the
residue field characteristic of $\lri$ is not equal to $3$, i.e.\ that
$p \not = 3$.  Under this restriction, we compute explicit formulae
for the representation zeta functions of the principal congruence
subgroups $\SL_3^m(\lri)$, where $m$ is permissible for
$\spl_3(\lri)$, in Section~\ref{subsec:sl3princ} and of the principal
congruence subgroups $\SU_3^m(\Lri,\lri)$, where $m$ is permissible
for $\su_3^m(\Lri,\lri)$, in Section~\ref{subsec:su3princ}.
Calculations of zeta functions associated to principle congruence
subgroups of $\SL_3(\lri)$, where $\lri$ is an unramified extension of
$\Z_3$, can be found in~\cite{AvKlOnVo_pre_b}.

\subsection{Principal congruence subgroups of
  $\SL_3(\lri)$} \label{subsec:sl3princ} In this section we prove the
first part of Theorem~\ref{thmABC:SL3}: we compute explicit formulae
for the representation zeta functions of the principal congruence
subgroups $\SL_3^m(\lri)$, where $m$ is permissible for
$\spl_3(\lri)$.  For this we compute the
integral~\eqref{equ:integral_neu} over $\mfp \times W(\lri)$, where
$W(\lri) = \big( \lri^8 \big)^* \cong \Hom_{\lri}(\spl_3(\lri),\lri)^*
$.  The explicit computation of this integral is feasible because of
two facts.  Firstly, only the family $F_3(\mathbf{Y})$ gives a
non-trivial contribution in the integrand and, secondly, the
projective subvariety of $\mathbb{P}^7$ determined by the ideal
generated by $F_3(\mathbf{Y})$ is smooth.  To be concrete, one may
form the commutator matrix $\mathcal{R}(\mathbf{Y})$ and the sets of
polynomials $F_j(\mathbf{Y})$, $j \in \{0,\dots,\rho\}$, with respect
to the $\lri$-basis $\bfb$ for $\spl_3(\lri)$ comprising the elements
\begin{gather}
  \mathbf{h}_{12} = \left(
  \begin{smallmatrix}
    1 & & \\
    & -1 & \\
    & & 0
  \end{smallmatrix} \right), \quad \mathbf{h}_{23} = \left(
  \begin{smallmatrix}
    0 & & \\
    & 1 & \\
    & & -1
  \end{smallmatrix} \right), \nonumber \\
  \mathbf{e}_{12} = \left(
  \begin{smallmatrix}
    0 & 1 & \\
    & 0 & \\
    & & 0
  \end{smallmatrix} \right), \quad \mathbf{e}_{23} = \left(
  \begin{smallmatrix}
    0 &  & \\
    & 0 & 1 \\
    & & 0
  \end{smallmatrix} \right), \quad \mathbf{e}_{13} = \left(
  \begin{smallmatrix}
    0 & 0 & 1\\
    & 0 & 0 \\
    & & 0
  \end{smallmatrix} \right), \nonumber\\
  \mathbf{f}_{21} = \left(
  \begin{smallmatrix}
    0 & & \\
    1 & 0 & \\
    & & 0
  \end{smallmatrix} \right), \quad
  \mathbf{f}_{23} = \left(
  \begin{smallmatrix}
    0 &  & \\
    & 0 & \\
    & 1 & 0
  \end{smallmatrix} \right), \quad
  \mathbf{f}_{13} = \left(
  \begin{smallmatrix}
    0 & & \\
    0 & 0 & \\
    1 & 0 & 0
  \end{smallmatrix} \right). \nonumber
\end{gather}
In fact, the argument we carry out below is
co-ordinate free.

Recall the definition~\eqref{def:rho} of the parameter $\rho$. It
takes a short computation to verify that, in the current context,
$\rho = 3$ and that $\max \{ \lvert f(\mathbf{y}) \rvert_\mfp \mid f
\in F_2(\mathbf{Y})\} = 1$ for all $\mathbf{y} \in W(\lri) = \big(
\lri^{8}\big)^*$; cf.\ Example~\ref{exa:sl3}.  It therefore suffices
to compute the integral
\begin{equation}\label{equ:sl3 integral}
  \mathcal{Z}_\lri(r,t) = \int_{(x,\mathbf{y}) \in \mfp \times W(\lri)}
  \lvert x \rvert_\mfp^t \lVert F_3(\mathbf{y}) \cup  \{x^2\}
  \rVert_\mfp^r \,  d\mu(x,\mathbf{y}).
\end{equation}
In accordance with Section~\ref{sec:semisimple}, the affine variety
$\mathcal{W}_1$ defined by $F_3(\mathbf{Y})$ corresponds to the
subvariety $\mathcal{V}_1$ of $\spl_3$ whose $\lfi$-points coincide
with the irregular elements in the $8$-dimensional Lie algebra
$\spl_3(\lfi)$.  The translation is carried out by means of the
normalised Killing form $\kappa_0(\cdot,\cdot) = 6^{-1}
\kappa(\cdot,\cdot)$ which can be presented, with respect to the basis
$\bfb$, by the non-degenerate symmetric matrix
$$
[\kappa_0(\cdot,\cdot)]_\bfb =
\left(
\begin{smallmatrix}
  2 & -1 & & & & & & \\
  -1 & 2 & & & & & & \\
  & & & & & 1 & & \\
  & & & & & & 1 & \\
  & & & & & & & 1 \\
  & & 1 & & & & & \\
  & & & 1 & & & & \\
  & & & & 1 & & &
\end{smallmatrix}
\right).
$$
Since $p \not = 3$, we conclude that $\kappa_0$ induces an isomorphism
of $\lri$-modules between $\spl_3(\lri)^*$ and
$\Hom_{\lri}(\spl_3(\lri),\lri)^*$.
% $(\spl_3(\lri)^\vee)^*$.

It is known (cf. Example~\ref{exa:sl3}) that the irregular locus
$\mathcal{V}_1$ of $\spl_3$ may be realised as the image of the
variety $\mathcal{D}_1$ of matrices of rank at most~$1$ under the
projection
$$
\beta: \mathfrak{gl}_3 \rightarrow \spl_3,
\quad \mathbf{x} \mapsto \mathbf{x} - \frac{\Tr(\mathbf{x})}{3} \Id_3.
$$ In the language of sheets (cf.\ \cite{Bo81}), the image
$\beta(\mathcal{D}_1)$ is the union of the sheets associated to the
partitions $(1,1,1)$ and $(2,1)$ of~$3$, that is the null-sheet and
the unique subregular sheet.  One verifies that $\beta
\vert_{\mathcal{D}_1}$ is injective, and $\beta$ induces a morphism
$\beta_\lri : \mathfrak{gl}_3(\lri) \rightarrow \spl_3(\lri)$ whose
restriction to $\mathcal{D}_1(\lri)$ has good reduction modulo~$\mfp$.
Moreover, one easily checks that $\beta_\mfp$ maps
$\mathcal{D}_1(\lri)$ onto $\mathcal{V}_1(\lri)$.  Thus
$\mathcal{V}_1(\lri) = \beta_\mfp(\mathcal{D}_1(\lri))$ is an affine
cone which is smooth away from the origin.  The number of
$\F_q$-rational points of~$\overline{\mathcal{V}_1}$, viz.\ the
reduction of $\mathcal{V}_1$ modulo $\mfp$, is
\begin{equation}\label{equ:V1_sl3}
\lvert \overline{\mathcal{V}_1}(\F_q) \rvert = \lvert
\overline{\mathcal{D}_1}(\F_q) \rvert = \lvert (\mathbb{P}^2 \times
\mathbb{P}^2)(\F_q) \rvert (q-1) + 1 = (q^2 + q + 1)^2 (q-1) + 1.
\end{equation}

To evaluate the integral~\eqref{equ:sl3 integral}, we proceed in the
spirit of the proof of~\cite[Theorem~3.1]{De87}.  The factor $W(\lri)$
in the domain of integration may be written as a disjoint union of
domains of fixed residue modulo~$\mfp$: for each $\mathbf{a} \in \big(
\F_q^{8} \big)^*$, we set $W_\mathbf{a}(\lri) := \{ \mathbf{y} \in
W(\lri) \mid \mathbf{y} \equiv_\mfp \mathbf{a} \}$ and
\begin{align*}
  \mathcal{Z}_{\lri,\mathbf{a}}(r,t) & := \int_{(x,\mathbf{y}) \in
    \mfp \times W_\mathbf{a}(\lri)} \lvert x \rvert_\mfp^t \lVert
  F_3(\mathbf{y}) \cup \{x^2\} \rVert_\mfp^r \, d\mu(x,\mathbf{y}) \\
  & = \int_{(x,\widetilde{\mathbf{y}}) \in \mfp^{(9)}} \lvert x
  \rvert_\mfp^t \lVert F_3(\mathbf{a} + \widetilde{\mathbf{y}}) \cup
  \{x^2\} \rVert_\mfp^r \, d\mu(x,\widetilde{\mathbf{y}})
\end{align*}
so that $W(\lri) = \bigdotcup_{\mathbf{a} \in (\F_q^{8})^*}
W_\mathbf{a}(\lri)$ implies
$$
\mathcal{Z}_\lri(r,t) = \sum_{\mathbf{a} \in (\F_q^{8})^*}
\mathcal{Z}_{\lri,\mathbf{a}}(r,t).
$$

We call a point $\mathbf{a} \in \big( \F_q^8 \big)^*$ and any
$\mathbf{y} \in W_\mathbf{a}(\lri)$ \emph{regular} if $\mathbf{a}$ is
not an $\F_q$-rational point of~$\overline{\mathcal{V}_1}$.  A
functional $w \in \Hom_{\lri}(\spl_3(\lri),\lri)^*$ and the
representations associated to the Kirillov orbits of the images of $w$
in $\Hom_\lri(\spl_3(\lri),\lri/\mfp^n)^*$, $n \in \N$, are said to be
\emph{regular} if the co-ordinate vector $\mathbf{y} \in W(\lri)$
corresponding to $w$ is regular.  The computation of
$\mathcal{Z}_{\lri,\mathbf{a}}(r,t)$ is particularly straightforward
if $\mathbf{a} \in \big( \F_q^{8} \big)^*$ is regular: in this case
$\mathcal{Z}_{\lri,\mathbf{a}}(r,t) = \mathcal{Z}_\lri^{[0]}(r,t),$
where
$$ \mathcal{Z}_\lri^{[0]}(r,t) := \int_{(x,\widetilde{\mathbf{y}}) \in
  \mfp^{(9)} } \lvert x \rvert_\mfp^t \,
d\mu(x,\widetilde{\mathbf{y}}) = \frac{q^{-9-t}(1 - q^{-1})}{1 -
  q^{-1-t}}.
$$

Next we consider the more complex situation where $\mathbf{a} \in
\big( \F_q^{8} \big)^*$ is an $\F_q$-rational point
of~$\overline{\mathcal{V}_1}$.  In this case the point $\mathbf{a}$,
any co-ordinate vector $\mathbf{y} \in W_\mathbf{a}(\lri)$, the
functionals $w \in \Hom_{\lri}(\spl_3(\lri),\lri)^*$
% $(\spl_3(\lri)^\vee)^*$
represented by such $\mathbf{y}$ and the representations associated to
the Kirillov orbits of the images of such $w$ in
$\Hom_\lri(\spl_3(\lri),\lri/\mfp^n)^*$, $n \in \N$, are said to be
\emph{irregular}.

We observe that by a suitable transformation of the co-ordinate
functions $\mathbf{y}$ we may simplify the integral defining
$\mathcal{Z}_{\lri,\mathbf{a}}(r,t)$ significantly.  Indeed, recall
that, by Remark~\ref{rem:minors}, we may
replace $F_3(\mathbf{Y})$ by the set of principal $6\times6$-minors of
the commutator matrix~$\mcR(\mathbf{Y})$. These minors are
squares. Near a smooth point (in the case under consideration any
point away from the origin; cf.\ Example~\ref{exa:sl3}) we may thus
replace the set of functions $F_3(\mathbf{Y})$ defining the
$5$-dimensional variety $\mathcal{V}_1$ by the simpler set $\{
y_1^2,y_2^2,y_3^2 \}$ consisting of the squares of the first three
co-ordinate functions, say.  This transformation yields that
$\mathcal{Z}_{\lri,\mathbf{a}}(r,t) = \mathcal{Z}_\lri^{[1]}(r,t)$,
where
\begin{align*}
  \mathcal{Z}_\lri^{[1]}(r,t) & := \int_{(x,\widetilde{\mathbf{y}})
    \in \mfp^{(9)}} \lvert x \rvert_\mfp^t \lVert \{\widetilde{y}_1,
  \widetilde{y}_2, \widetilde{y}_3,x\} \rVert_\mfp^{2r} \,
  d\mu(x,\widetilde{\mathbf{y}}) \\ & = \sum_{(l,n) \in \N^2} (1 -
  q^{-1}) q^{-n} M_l \, q^{-nt - 2 \min \{l,n\} r}
\end{align*}
with
$$ M_l = \mu \left( \left\{ \widetilde{\mathbf{y}} \in \mfp^{(8)} \mid
    \max \{ \lvert \widetilde{y}_1 \rvert_\mfp, \lvert \widetilde{y}_2
    \rvert_\mfp, \lvert \widetilde{y}_3 \rvert_\mfp \} = q^{-l}
    \right\} \right) = (1 - q^{-3}) q^{-3l-5}.
$$
This gives
$$ \mathcal{Z}_\lri^{[1]}(r,t) = (1 - q^{-1}) (1 - q^{-3}) q^{-5}
\sum_{(l,n) \in \N^2} q^{(-1-t) n - 3l - 2 r \min\{l,n\}}.
$$
Using the fact that
\begin{equation}\label{equ:geometric_series}
\sum_{(l,n) \in \N^2} X_1^l X_2^n X_3^{\min\{l,n\}} = \frac{X_1 X_2
  X_3 (1 - X_1 X_2)}{(1 - X_1 X_2 X_3)(1 - X_1)(1 - X_2)}
\end{equation}
we obtain further that
$$ \mathcal{Z}_\lri^{[1]}(r,t) = \frac{q^{-9-2r-t} (1 - q^{-4-t})(1 -
  q^{-1})}{(1 - q^{-4-2r-t})(1 - q^{-1-t})}.
$$

By \eqref{equ:V1_sl3} we have
\begin{align*}
 \mathcal{Z}_\lri(r,t) &= (q^8 - 1 - (|\ol{\mathcal{V}_1}(\Fp)|-1))
 \mathcal{Z}_\lri^{[0]}(r,t) + (|\ol{\mathcal{V}_1}(\Fp)|-1)
 \mathcal{Z}_\lri^{[1]}(r,t)\\ &= (q^8 - 1 - (q^2+q+1)^2 (q-1))
 \mathcal{Z}_\lri^{[0]}(r,t) + (q^2+q+1)^2 (q-1)
 \mathcal{Z}_\lri^{[1]}(r,t),
\end{align*}
and a straightforward computation, together with
equation~\eqref{equ:Poincare_Igusa_neu}, reveals that
\begin{multline*}
  \mathcal{P}_{\mathcal{R},\lri} (s+2) = 1 + (1-q^{-1})^{-1}
  \mathcal{Z}_\lri(-s/2-1,3s-3) \\ = \frac{q^5 + (-q-q^2-q^3+q^4+q^5)
    q^{-2s} + (1+q-q^2-q^3-q^4) q^{-3s} + q^{-5s}}{q^5(1 - q^{1-2s})(1
    - q^{2-3s})}.
\end{multline*}
Multiplying the last expression by $q^{8m}$ we obtain, by
Proposition~\ref{pro:zeta=poincare}, an explicit formula for
$\zeta_{\SL_3^m(\lri)}(s)$, as stated in Theorem~\ref{thmABC:SL3}.
Note that in this particular instance the functional equation
established in Theorem~\ref{thmABC:funeq} follows from the fact that
$$
\mathcal{P}_{\mathcal{R},\lri} (s+2) \vert_{q \rightarrow
  q^{-1}} = q^8 \mathcal{P}_{\mathcal{R},\lri} (s+2),
$$ which can be easily verified directly.  The explicit formulae for
the zeta functions of groups of the form $\SL^m_3(\lri)$, $\lri$ an
unramified extension of $\Z_3$, which are provided
in~\cite{AvKlOnVo_pre_b} show that such functional equations are not
satisfied for these groups.

Anticipating our computations in Section~\ref{subsec:abscissa_inner}, we
record an alternative formula for $\zeta_{\SL_3^1(\lri)}(s)$, in the
case where $m=1$ is permissible for $\spl_3(\lri)$.  Recalling the
notion of regular and irregular representations introduced above, we
have
\begin{equation} \label{eq:starting_point} \zeta_{\SL_3^1(\lri)} (s) =
  1 + \zeta_{\SL_3^1(\lri)}^{\textup{reg}}(s) +
  \zeta_{\SL_3^1(\lri)}^{\textup{irreg}}(s),
\end{equation}
where the three summands $1$, $\zeta_{\SL_3^1(\lri)}^{\textup{reg}}(s)$
and $\zeta_{\SL_3^1(\lri)}^{\textup{irreg}}(s)$ enumerate the trivial,
the regular and the irregular representations of $\SL_3^1(\lri)$
respectively.  Our computations above yield the following formulae for
these summands:
\begin{equation}\label{equ:series-factor}
\begin{split}
  \zeta_{\SL_3^1(\lri)}^{\textup{reg}}(s) & = \left( q^8 - 1 -
    (q^2+q+1)^2(q-1) \right) \left( 1 + q^8 (1-q^{-1})^{-1}
    \mathcal{Z}_\lri^{[0]}(-(s+2)/2,3s-3) \right) \\
  & = \left( q^8 - 1 - (q^2+q+1)^2(q-1) \right) \; \frac{1}{1- q^{2-3s}}, \\
  \zeta_{\SL_3^1(\lri)}^{\textup{irreg}}(s) & = (q^2+q+1)^2(q-1) \;
  \left( 1 + q^8 (1-q^{-1})^{-1} \mathcal{Z}_\lri^{[1]}(-(s+2)/2,3s-3)
  \right) \\
  & = (q^2+q+1)^2(q-1) \; \frac{1 - q^{1-2s} - q^{2-3s} +
    q^{4-2s}}{(1 - q^{1-2s})(1 - q^{2-3s})}.
\end{split}
\end{equation}
In Section~\ref{subsec:abscissa_inner} the right-most factors on the
right hand side will be referred to as series factors.

% Of course, a natural next step is to use Clifford theory to obtain an
%explicit formula for the representation zeta function of
%$\SL_3(\Z_p)$.  Comparing with our successful treatment of
%$\SL_2(\Z_p)$, we remark that the extra technical difficulty which
%arises in degree $3$ is that, in two of seven cases, the quotient
%$I_{\SL_3(\Z_p)}(\theta)/ \SL_3^1(\Z_p)$ of the inertia subgroup does
%not satisfy any of the standard criteria which guarantee that $\theta$
%can be extended to an irreducible character $\hat \theta$ of
%$I_{\SL_3(\Z_p)}(\theta)$.  Therefore a more detailed study of these
%cases seems to be necessary.

\subsection{Principal congruence subgroups of $\SU_3(\Lri,\lri)$}\label{subsec:su3princ}

Recall that $\Lri \vert \lri$ denotes an unramified quadratic
extension with non-trivial automorphism $\sigma$, and that we assume
$p\not=3$.  In this section we prove the second part of
Theorem~\ref{thmABC:SL3}: we compute explicit formulae for the
representation zeta functions of the principal congruence subgroups
$\SU_3^m(\Lri,\lri)$, where $m$ is permissible for
$\su_3(\Lri,\lri)$. We need to compute the
integral~\eqref{equ:integral_neu} over $\mfp \times W(\lri)$, where
$W(\lri) = \big( \lri^8 \big)^* \cong
\Hom_{\lri}(\su_3(\Lri,\lri),\lri)^*$.  We make use of our
computations for the principal congruence subgroups of $\SL_3(\Lri)$
in Section~\ref{subsec:sl3princ}.  %For a
%smooth exposition, we first
%deal with the case $p>3$ and then explain briefly how to modify the
%argument to cover also the case $p=2$.
%Since $\Lri$ is unramified over $\lri$, we may, for $p>3$, write $\Lri
%= \lri(\sqrt{\delta}) = \lri + \lri \sqrt{\delta}$, where $\delta \in
%\lri$ is not a square modulo $\mfp$.
Our argument is based on the commutative diagram
$$
\begin{CD}
  \gl_3(\Lfi) \cong \Lfi \otimes_\lfi \gu_3(\Lfi,\lfi)
  @>{\widetilde{\beta}}>> \spl_3(\Lfi)
  \cong \Lfi \otimes_\lfi \su_3(\Lfi,\lfi) \\
  @AA{\text{inclusion}}A @AA{\text{inclusion}}A \\
  \gu_3(\Lfi,\lfi) @>{\beta}>> \su_3(\Lfi,\lfi)
\end{CD}
$$ where the map $\widetilde{\beta}$ and its restriction $\beta$ map
an element $\mathbf{x}$ to $\widetilde{\beta}(\mathbf{x}) = \mathbf{x}
- \Tr(\mathbf{x})/3 \cdot \Id_3$.  Note that the trace of an element
$\mathbf{x} \in \gu_3(\Lfi,\lfi)$ lies in $\mfgu_1(\Lfi,\lfi)$ so that
$\beta(\mathbf{x})$ lies in $\su_3(\Lfi,\lfi)$, as indicated.  Since
$\spl_3(\Lfi) \cong \Lfi \otimes_\lfi \su_3(\Lfi,\lfi)$, we have for
every $\mathbf{x} \in \su_3(\Lfi,\lfi)$,
\begin{equation}\label{equ:cent_dim}
  \dim_\lfi \Cen_{\su_3(\Lfi,\lfi)}(\mathbf{x}) = \dim_\Lfi
  \Cen_{\spl_3(\Lfi)}(1 \otimes \mathbf{x}).
\end{equation}

We now fix an $\lri$-basis for $\Lri$. For $p>3$ we may, for instance,
choose $(1,\sqrt{\delta})$, where $\delta\in\lri$ is not a square
modulo $\mfp$. For any $\Lfi$-variety $\mathcal{V}$ let
$\res_{\Lfi\vert\lfi}(\mathcal{V})$ denote the $\lfi$-variety obtained
from $\mathcal{V}$ by restriction of scalars with respect to this
basis, considered as a $\lfi$-basis for $\Lfi$.  Thus the set of
$\Lri$-points $\mathcal{V}(\Lri)$ is in natural correspondence with
the set of $\lri$-points
$\res_{\Lfi\vert\lfi}(\mathcal{V})(\lri)$. Given $\bfx \in
\mathcal{V}(\Lri)$ or $\bfx \in \mathcal{V}(\Lfi)$, we write
$\res_{\Lfi\vert\lfi}(\bfx)$ to denote the corresponding point in
$\res_{\Lfi\vert\lfi}(\mathcal{V})(\lri)$ or
$\res_{\Lfi\vert\lfi}(\mathcal{V})(\lfi)$, respectively.

With this notation, the inclusions $\gu_3(\Lfi,\lfi) \subseteq
\gl_3(\Lfi)$ and $\su_3(\Lfi,\lfi) \subseteq \spl_3(\Lfi)$ admit
natural interpretations at the level of algebraic varieties.  Indeed,
$\gu_3(\Lfi,\lfi)$ can be regarded as a $9$-dimensional $\lfi$-linear
subspace of the set of $\lfi$-points $\res_{\Lfi \vert
\lfi}(\gl_3)(\lfi)$ of the $\lfi$-variety $\res_{\Lfi \vert
\lfi}(\gl_3) \cong \mathbb{A}^{18}$.  We denote, in the sequel, by
$\gu_3(\Lfi,\lfi)$ both the $\lfi$-variety and its $\lfi$-rational
points; similar remarks apply to $\su_3(\Lfi,\lfi)$.

After these preparations, we consider the $\lfi$-variety
$\mathcal{V}_1$ of irregular elements in $\su_3(\Lfi,\lfi)$.  Let
$\widetilde{\mathcal{V}}_1$ denote the $\Lfi$-variety of irregular
elements in the $\Lfi$-variety $\spl_3$.
Equation~\eqref{equ:cent_dim} says that
$$
\mathcal{V}_1(\lfi) =
\res_{\Lfi\vert\lfi}(\widetilde{\mathcal{V}}_1)(\lfi) \cap
\su_3(\Lfi,\lfi) = \{ \mathbf{x} \in \widetilde{\mathcal{V}}_1(\Lfi)
\mid \mathbf{x}^\circ + \mathbf{x} = 0 \}.
$$
We know, from the discussion in Section~\ref{subsec:sl3princ}, that
$\widetilde{\mathcal{V}}_1$ may be realised as the image of the
$\Lfi$-variety $\widetilde{\mathcal{D}}_1$ of matrices of rank at most
$1$ under the projection map $\widetilde{\beta}$ and that the
restricted map $\widetilde{\beta} \vert_{\widetilde{\mcD}_1}$ is
injective.  The next lemma shows that $\mathcal{V}_1(\lfi)$ is the
image under $\beta$ of the $\lfi$-points of the $\lfi$-variety
$$ \mathcal{D}_1 := \res_{\Lfi\vert\lfi}(\widetilde{\mathcal{D}}_1)
\cap \gu_3(\Lfi,\lfi). %= \{ \mathbf{x} \in \widetilde{\mathcal{D}}_1\mid
%{}^* \mathbf{x} + \mathbf{x} = 0 \}.
$$

\begin{lem}\label{lem:preimage_unitary}
  Let $\mathbf{x} \in \widetilde{\mcD}_1(\Lfi)$ such that
  $\widetilde{\beta}(\mathbf{x}) \in \su_3(\Lfi,\lfi)$.  Then
  $\mathbf{x} \in \gu_3(\Lfi,\lfi)$.
\end{lem}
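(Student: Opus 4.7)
The proof should be a short direct computation. The first step is to translate the hypothesis $\widetilde{\beta}(\mathbf{x}) \in \su_3(\Lfi,\lfi)$, i.e.\ the condition $\widetilde{\beta}(\mathbf{x})^\circ + \widetilde{\beta}(\mathbf{x}) = 0$, into an equation for $\mathbf{x}$ itself. Since the involution $\circ$ fixes $\Id_3$ and satisfies $\Tr(\mathbf{x}^\circ) = \Tr(\mathbf{x})^\sigma$, expanding $\widetilde{\beta}(\mathbf{x}) = \mathbf{x} - \tfrac{\Tr(\mathbf{x})}{3}\Id_3$ yields
\begin{equation*}
  \mathbf{x}^\circ + \mathbf{x} = c \cdot \Id_3, \qquad \text{where } c := \tfrac{\Tr(\mathbf{x}) + \Tr(\mathbf{x})^\sigma}{3}.
\end{equation*}
Note that $c \in \lfi$ since it is $\sigma$-fixed, and that $c$ is well-defined thanks to the standing hypothesis $p \neq 3$. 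The task is thereby reduced to proving $c = 0$.

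For the second step I would exploit the rank constraint $\mathbf{x} \in \widetilde{\mcD}_1(\Lfi)$. If $\mathbf{x} = 0$ the claim is immediate; otherwise write $\mathbf{x} = \mathbf{u}\mathbf{v}^{\textup{t}}$ for some non-zero vectors $\mathbf{u}, \mathbf{v} \in \Lfi^3$. A direct calculation then gives $\mathbf{x}^\circ = \mathbf{v}^\sigma(\mathbf{u}^\sigma)^{\textup{t}}$, so $\mathbf{x}^\circ + \mathbf{x}$ is a sum of two rank-one matrices and has rank at most~$2$. Since $c\Id_3$ has rank~$3$ whenever $c \neq 0$, it follows that $c = 0$ and hence $\mathbf{x}^\circ + \mathbf{x} = 0$, i.e.\ $\mathbf{x} \in \gu_3(\Lfi,\lfi)$.

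No real obstacle is anticipated here; the argument relies only on elementary linear algebra together with the standard compatibility between the Galois action $\sigma$ and the trace map. The sole point requiring mild care is that $\Tr(\mathbf{x})$ itself need not lie in $\lfi$, so the scalar $c$ must be interpreted as a $\sigma$-symmetrisation of the trace, divided by $3$, which is invertible in~$\lri$ by the residue characteristic assumption.
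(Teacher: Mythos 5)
Your proof is correct, and it takes a slightly different (and arguably more self-contained) route than the paper's.  The paper's argument observes that $\widetilde{\beta}$ commutes with $\circ$ and with negation, so both $\mathbf{x}^\circ$ and $-\mathbf{x}$ lie in $\widetilde{\mcD}_1$ and have the same image under $\widetilde{\beta}$, whence $\mathbf{x}^\circ = -\mathbf{x}$ by the injectivity of $\widetilde{\beta}\vert_{\widetilde{\mcD}_1}$ and the $\circ$-invariance of $\widetilde{\mcD}_1$.  Your argument instead unwinds the hypothesis to the identity $\mathbf{x}^\circ + \mathbf{x} = c\,\Id_3$ with $c = \bigl(\Tr(\mathbf{x}) + \Tr(\mathbf{x})^\sigma\bigr)/3 \in \lfi$, and then uses the rank constraint directly: for $\mathbf{x} = \mathbf{u}\mathbf{v}^{\textup{t}}$ of rank $\leq 1$, the matrix $\mathbf{x}^\circ + \mathbf{x}$ has rank at most $2$, so it cannot equal $c\,\Id_3$ unless $c = 0$.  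What the paper's route buys is re-use of the injectivity of $\widetilde{\beta}$ on $\widetilde{\mcD}_1$, a fact it has already recorded in the $\SL_3$ case and needs elsewhere; what your route buys is an argument that requires no such auxiliary statement and isolates the mechanism -- a rank-$\leq 2$ matrix cannot be a nonzero scalar matrix in dimension $3$ -- cleanly.  Both are fine; yours is perfectly valid as written, including the care taken over $3$ being invertible and over $c$ being $\sigma$-fixed.
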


\begin{proof}
  We need to show that $\mathbf{x}^\circ = - \mathbf{x}$.  Because
  $\widetilde{\beta}$ is injective on $\widetilde{\mcD}_1$, the claim
  follows from the observation that $\widetilde{\mcD}_1$ is invariant
  under the operation $\bfx \mapsto \bfx^\circ$ and that
  $$
  \widetilde{\beta}(\bfx^\circ) = \widetilde{\beta}(\mathbf{x})^\circ
  = - \widetilde{\beta}(\mathbf{x})^\circ =
  \widetilde{\beta}(-\mathbf{x}^\circ).
  $$
\end{proof}

We may consider $\beta$ and, by restriction of scalars,
$\widetilde{\beta}$ as morphisms of $\lfi$-varieties.  As $p \not =
3$, the morphism
$\widetilde{\beta}:\gl_3(\Lri)\rightarrow\spl_3(\Lri)$ induces a
morphism $\widetilde{\beta}_\mfp: \res_{\Lfi\vert\lfi}(\gl_3)(\lri)
\rightarrow \res_{\Lfi\vert\lfi}(\spl_3)(\lri)$ with good reduction
modulo~$\mfp$.  As we noted in Section~\ref{subsec:sl3princ}, the map
$\widetilde{\beta} \vert_{\widetilde{\mcD}_1(\Lri)} :
\widetilde{\mcD}_1(\Lri) \rightarrow \widetilde{\mcV}_1(\Lri)$ is a
bijection. Using Lemma~\ref{lem:preimage_unitary} one sees that
${\beta}_\mfp \vert_{\mcD_1(\lri)} : \mcD_1(\lri) \rightarrow
\mcV_1(\lri)$ is a bijection.

\begin{pro}\label{pro:mcD_1}
  The $\lfi$-variety ${\mcD}_1 \setminus \{\mathbf{0}\}$ is smooth and
  $5$-di\-men\-sio\-nal.  It has good reduction modulo $\mfp$, which
  has $(q^4+q^2+1)(q-1)$ $\Fq$-rational points.
\end{pro}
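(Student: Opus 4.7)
The plan is to parametrise $\mcD_1\setminus\{\mathbf{0}\}$ explicitly as a free quotient of a smooth $7$-dimensional auxiliary $\lfi$-variety under the action of a $2$-dimensional algebraic group, and then pass to the residue field to obtain the point count.

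First I would derive a normal form for elements of $\mcD_1\setminus\{\mathbf{0}\}$. Any nonzero rank-$1$ matrix in $\mathfrak{gl}_3(\Lfi)$ factors as $\mathbf{x}=\mathbf{u}\mathbf{v}^{\mathrm{t}}$ for $\mathbf{u},\mathbf{v}\in\Lfi^3\setminus\{\mathbf{0}\}$, uniquely up to $(\mathbf{u},\mathbf{v})\mapsto(c\mathbf{u},c^{-1}\mathbf{v})$, $c\in\Lfi^*$. The skew-Hermitian condition $\mathbf{v}^\sigma\mathbf{u}^{\sigma\mathrm{t}}=-\mathbf{u}\mathbf{v}^{\mathrm{t}}$ then forces $\mathbf{v}=\mu\mathbf{u}^\sigma$ for some $\mu\in\Lfi^*$, and since $\mathbf{u}\mathbf{u}^{\sigma\mathrm{t}}$ is Hermitian substitution reduces the condition to $\mu^\sigma=-\mu$. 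Writing $T:=\{z\in\Lfi\mid z^\sigma=-z\}$ for the $1$-dimensional $\lfi$-subspace of trace-zero elements of $\Lfi$, I conclude that every point of $\mcD_1\setminus\{\mathbf{0}\}$ takes the form $\mu\mathbf{u}\mathbf{u}^{\sigma\mathrm{t}}$ with $(\mathbf{u},\mu)\in(\Lfi^3\setminus\{\mathbf{0}\})\times T^*$, unique up to the action $(\mathbf{u},\mu)\mapsto(c\mathbf{u},\mu\,N_{\Lfi\vert\lfi}(c)^{-1})$ of $c\in\Lfi^*$.

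Next I would translate this normal form into the statement that the morphism of $\lfi$-varieties
\begin{equation*}
\Phi\colon \res_{\Lfi\vert\lfi}\!\left(\mathbb{A}^3_\Lfi\setminus\{0\}\right)\times T^*\longrightarrow\mcD_1\setminus\{\mathbf{0}\},\qquad(\mathbf{u},\mu)\mapsto\mu\mathbf{u}\mathbf{u}^{\sigma\mathrm{t}},
\end{equation*}
is a geometric quotient by the free action of $\res_{\Lfi\vert\lfi}(\mathbb{G}_{m,\Lfi})$ described above. Freeness is immediate because $c\mathbf{u}=\mathbf{u}$ with $\mathbf{u}\neq\mathbf{0}$ forces $c=1$. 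As the source has $\lfi$-dimension $7$ and the acting group $\lfi$-dimension $2$, it follows that $\mcD_1\setminus\{\mathbf{0}\}$ is smooth of dimension $5$. The entire construction is defined over $\gri$ (the lattice $T$ arises as the $(-1)$-eigenspace of $\sigma$ on $\Lri$, the norm restricts to $N_{\Lri\vert\lri}$, and no division by $p$ occurs), which gives good reduction modulo $\mfp$.

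Finally I would count. The $\Fq$-points of the source of the reduction $\overline{\Phi}$ form a set of size $(q^6-1)(q-1)$, and each fibre over $\overline{\mcD_1}\setminus\{\mathbf{0}\}$ is an $\F_{q^2}^*$-orbit of size $q^2-1$. The normal form from the first step shows that every $\Fq$-rational point of $\overline{\mcD_1}\setminus\{\mathbf{0}\}$ lifts to an $\F_{q^2}$-rational pair $(\mathbf{u},\mu)$, using surjectivity of the norm $\F_{q^2}^*\to\Fq^*$; hence no Galois descent obstruction arises and
\begin{equation*}
\lvert(\overline{\mcD_1}\setminus\{\mathbf{0}\})(\Fq)\rvert=\frac{(q^6-1)(q-1)}{q^2-1}=(q^4+q^2+1)(q-1).
\end{equation*}
The main technical hurdle is precisely this Galois descent at the quotient step, resolved uniformly by the explicit normal form together with surjectivity of the norm.
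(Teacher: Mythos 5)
Your parametrisation of $\mcD_1\setminus\{\mathbf{0}\}$ is the same as the paper's: they too use the Segre factorisation, show that the skew-Hermitian condition forces $\bfb=\lambda\bfa^\sigma$ with $\lambda^\sigma=-\lambda$ (their $\lambda$ is your $\mu$), and arrive at the identical fibre count $(q^6-1)(q-1)/(q^2-1)=(q^4+q^2+1)(q-1)$.

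The gap is in the smoothness step. Knowing that $\Phi$ is a morphism, surjective on geometric points, whose fibres coincide with the orbits of a free $\res_{\Lfi\vert\lfi}(\mathbb{G}_m)$-action does \emph{not} by itself imply that $\mcD_1\setminus\{\mathbf{0}\}$ carries the scheme structure of the geometric quotient, nor that it is smooth: a bijective morphism from a smooth source can perfectly well have a singular target, the normalisation of a cuspidal curve being the standard example. Recall that $\mcD_1$ is \emph{defined} as the intersection $\res_{\Lfi\vert\lfi}(\widetilde{\mcD}_1)\cap\gu_3(\Lfi,\lfi)$; a priori this intersection could fail to be transverse, and $\mcD_1\setminus\{\mathbf{0}\}$ could be singular or non-reduced without your argument detecting it, since you only match points, not tangent spaces or local rings. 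To close this you would need either to verify that $\Phi$ is a smooth morphism (a Jacobian computation at each point), or to produce Zariski-local trivialisations of the fibration --- the latter is exactly what the paper does, covering $\mcD_1\setminus\{\mathbf{0}\}$ by three explicit charts $\mcD_1^{[i]}$, each shown to be isomorphic to $\res_{\Lfi\vert\lfi}(\mathbb{A}^2)\times(\mathbb{A}^1)^*$ by writing down $\phi$ and $\phi^{-1}$ in co-ordinates, treating $p\neq 2$ and $p=2$ separately. Your descent argument for the point count is correct in substance (it rests on the vanishing of $H^1(\F_q,\res_{\F_{q^2}\vert\F_q}(\mathbb{G}_m))$ via Shapiro's lemma and Hilbert~90, rather than literally on surjectivity of the norm), but it too presupposes the torsor structure that the chart computations actually establish.
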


\begin{proof}
  We consider the surjective `Segre map' of
  $\Lfi$-varieties
  \begin{equation*}
    \pr : \mbbA^3 \times \mbbA^3 \rightarrow\mcDtilde_1,\quad
    (\bfa,\bfb) \mapsto \bfa^\textup{t} \cdot \bfb
  \end{equation*}
  and note that, for all $\bfa,\bfb\in \mathbb{A}^3(\Lfi)$ with
  $\mathbf{x} := \pr(\bfa,\bfb) \in \mcDtilde_1(\Lfi) \setminus
  \{\mathbf{0}\}$, we have $(\pr^{-1}(\mathbf{x}))(\Lfi) = \{ (\mu\bfa
  , \mu^{-1}\bfb) \mid \mu \in \Lfi^* \} \cong
  (\mathbb{A}^1)^*(\Lfi)$.  Recalling that $\sigma$ denotes the
  non-trivial Galois automorphism of $\Lfi$ over $\lfi$, we first
  establish
  \begin{equation}\label{equ:D1K_neu}
    \mcD_1(\lfi) = \left\{
      \res_{\Lfi\vert\lfi}(\pr)(\bfa,\lambda\bfa^\sigma) \mid \bfa \in
      \mathbb{A}^3(\Lfi) \setminus\{\mathbf{0}\}, \lambda \in
      \mfgu_1(\Lfi,\lfi) \right\}.
  \end{equation}
  The inclusion `$\supseteq$' is clear. To prove the reverse inclusion
  `$\subseteq$', let $\bfx=\bfa^\textup{t}\cdot\bfb\in
  \widetilde{\mcD}_1(\Lfi)\setminus\{\mathbf{0}\}$ such that
  $\res_{\Lfi\vert\lfi}(\mathbf{x})\in\mfgu_3(\Lfi,\lfi)$. Then
  $\bfx^\circ=(\bfb^\sigma)^\textup{t}\cdot\bfa^\sigma$. Since $\bfx$
  and $\bfx^\circ$ have the same row span, the vectors $\bfb$ and
  $\bfa^\sigma$ are proportional, say $\bfb=\lambda\bfa^\sigma$, where
  $\lambda\in\Lfi^*$. The equation $\bfx^\circ+\bfx=0$ implies that
  $\lambda^\sigma+\lambda=0$, i.e. that
  $\lambda\in\mfgu_1(\Lfi,\lfi)^*$. This finishes the proof
  of~\eqref{equ:D1K_neu}.  The same argument shows that
  \begin{equation}\label{equ:D1o}
  \mcD_1(\lri)^* := \{ \bfx \in \mcD_1(\lri) \mid \bfx \not\equiv_\mfp
      \mathbf{0} \} = \left\{
      \res_{\Lfi\vert\lfi}(\pr)(\bfa,\lambda\bfa^\sigma) \mid \bfa \in
      (\Lri^3)^*, \lambda \in \mfgu_1(\Lri,\lri)^* \right\}.
  \end{equation}

% Nir on SU3: \nir{This shows that $\mcD_1\setminus\{0\}$ is the quotient of
%  $\res_{\Lfi\vert\lfi}(\mathbb{A}^3\setminus\{0\})\times\mathbb{G}m$
%  by the group $\res_{\Lfi\vert\lfi}(\mathbb{G}m)$ acting by
%  $\alpha(\bfa,\lambda)=(\alpha\bfa,\alpha\alpha^\sigma\lambda)$. Hence
%  $\mcD_1\setminus\{0\}$ is a $\mathbb{G}m$ torsor over
%  $\res{\Lfi\vert\lfi}(\mathbb{P}^2$, so it is smooth and has
%  $q^4+q^2+1$ $\lfi$-rational points.}\footnote{CV implemented first
%  part of NA's simplification. NA to provide further details?}

  In particular, \eqref{equ:D1o} shows that the number of
  $\F_q$-rational points of the reduction modulo $\mfp$ of the
  pre-image $\left(\res_{\Lfi \vert
  \lfi}(\pr)\right)^{-1}(\mathcal{D}_1(\lri)^*)$ in
  $\res_{\Lfi\vert\lfi}(\mathbb{A}^3 \times \mathbb{A}^3)$ is equal to
  $$
  \lvert (\mathbb{A}^3)^*(\F_{q^2}) \rvert \cdot \lvert
  \mfgu_1(\F_{q^2},\F_q) \setminus \{0\} \rvert = (q^6-1)(q-1).
  $$ Since the fibres of the projection $\pr$ are -- with the
  exception of the fibre above zero -- all isomorphic to
  $(\mbbA^1)^*$, this implies that the number of $\F_q$-rational
  points of the reduction modulo $\mfp$ of $\mcD_1
  \setminus\{\mathbf{0}\}$ is equal to $(q^4+q^2+1)(q-1)$.

  To show that $\mcD_1 \setminus \{\mathbf{0}\}$ is a smooth
  $\lfi$-variety, we show how it can be covered by open charts which
  are each isomorphic to the $\lfi$-variety $\mathbb{A}^4 \times
  (\mathbb{A}^1)^* \cong \res_{\Lfi\vert\lfi}(\mathbb{A}^2) \times
  (\mathbb{A}^1)^*$.  For $i \in \{1,2,3\}$ set
  $$
  \mcD_1^{[i]} := \{ \res_{\Lfi\vert\lfi}(X) \mid X \in
  \widetilde{\mcD}_1 \text{ such that } \res_{\Lfi\vert\lfi}(X) \in
  \mcD \text{ and } X_{ii} \not = 0 \}.
  $$
  Clearly, these three charts cover $\mcD_1 \setminus \{\mathbf{0}\}$.
  It remains to show that each of the charts is isomorphic to
  $\res_{\Lfi\vert\lfi}(\mathbb{A}^2) \times (\mathbb{A}^1)^*$.
  Without loss of generality we consider only $\mcD_1^{[1]}$.  For
  ease of notation we will set up an isomorphism between
  $\lfi$-rational points of varieties, which can easily be extended to
  an isomorphism of varieties.

We first deal with the case that $p\not=2$. Since $\Lri$ is unramified
  over $\lri$, we may write $\Lri = \lri(\sqrt{\delta}) = \lri + \lri
  \sqrt{\delta}$, where $\delta \in \lri$ is not a square
  modulo~$\mfp$. Writing $\xi := \sqrt{\delta}$, we have
  $\mfgu_1(\Lfi,\lfi) \setminus \{0\} = \lfi^*\xi$.  Hence $U :=
  \Lfi^2 \times (\mfgu_1(\Lfi,\lfi) \setminus \{0\}) \cong
  (\res_{\Lfi\vert\lfi}(\mathbb{A}^2) \times (\mathbb{A}^1)^*)(\lfi)$.
  Consider the morphism
  $$
  \phi: U \rightarrow \mcD_1^{[1]}(\lfi), \quad
  \phi((a_2,a_3,\lambda)) \mapsto
  \res_{\Lfi\vert\lfi}(\pr((1,a_2,a_3), \lambda
  (1,a_2^\sigma,a_3^\sigma))).
  $$ Writing $a_2 = a_{21} + a_{22} \xi$, $a_3 = a_{31} + a_{32} \xi$
  and $\lambda = c \xi$, where $a_{21}, \ldots, a_{32}, c \in \lfi$
  with $c \not = 0$, we can describe the effect of $\phi$ implicitly
  in co-ordinates over $\lfi$:
  \begin{equation*}
    \phi((a_2,a_3,\lambda)) = \res_{\Lfi\vert\lfi}(\mathbf{x}) \quad
    \text{where} \quad  \mathbf{x} =
    c \xi
    \begin{pmatrix}
      1 & a_{21}-a_{22}\xi & a_{31}-a_{32}\xi \\
      a_{21}+a_{22}\xi & * & * \\
      a_{31}+a_{32}\xi & * & *
    \end{pmatrix}.
  \end{equation*}
  From this description we can deduce that $\phi$ constitutes the
  desired isomorphism by extracting explicitly the inverse $\phi^{-1}$
  as a morphism over $\lfi$,
  $$
  \phi^{-1} : \mcD_1^{[1]}(\lfi) \rightarrow U, \quad
  \res_{\Lfi\vert\lfi}(\mathbf{x}) \mapsto
  \left(\frac{x_{12,2}}{x_{11,2}} -
    \frac{x_{12,1}}{x_{11,2}\delta}\xi,\frac{x_{13,2}}{x_{11,2}} -
    \frac{x_{13,1}}{x_{11,2}\delta}\xi, x_{11,2} \xi \right)
  $$
  where we write $x_{ij} = x_{ij,1} + x_{ij,2} \xi$.

Suppose now that $\lri$ has residue characteristic $p=2$.  In
accordance with Artin-Schreier theory, we can write $\Lri =
\lri(\omega) = \lri + \lri \omega$, where $\omega = (1 +
\sqrt{\delta})/2$ with $\delta \in 1 + 4 \lri^*$, such that the
reduction of $X^2 - X - (\delta - 1)/4 \in \lri[X]$ modulo $\mfp$ is
irreducible over $\F_q = \lri/\mfp$.  As above, let $\sigma$ denote
the non-trivial Galois automorphism of $\Lfi$ over $\lfi$, where
$\Lfi$ and $\lfi$ denote the fields of fraction of $\Lri$ and $\lri$.
Then $(a + b \omega)^\sigma = (a+b) - b\omega$ for $a,b \in \lfi$, and
hence $\gu_1(\Lfi,\lfi) = \{ a+b\omega \mid 2a+b = 0 \}$.

%One checks that the entire argument given above carries through until
%one reaches the second part of the proof of
%Proposition~\ref{pro:mcD_1}.  Here the explicit calculation needs to
%be modified slightly.
To show that the chart $\mcD_1^{[1]}$ is isomorphic to
$\res_{\Lfi\vert\lfi}(\mathbb{A}^2) \times (\mathbb{A}^1)^*$, one
defines as before the morphism $\phi: U \rightarrow
\mcD_1^{[1]}(\lfi)$.  Writing $a_2 = a_{21} + a_{22} \omega$, $a_3 =
a_{31} + a_{32} \omega$ and $\lambda = c (1-2\omega)$, where $a_{21},
\ldots, a_{32}, c \in \lfi$ with $c \not = 0$, one describes $\phi$ in
co-ordinates and extracts explicitly the inverse $\phi^{-1}$ as a
morphism over $\lfi$.  For instance, one recovers $a_2 = a_{21} +
a_{22} \omega$ by means of the formulae
%\footnote{BK on SU3: Someone should double-check these independently.}
$$
a_{22} = \frac{x_{12,1} - x_{21,1}}{x_{11,1} \delta} \qquad \text{and}
\qquad a_{21} = \frac{x_{21,1}}{x_{11,1}} + \frac{(\delta-1)
  a_{22}}{2} ,
$$
where we write $x_{ij} = x_{ij,1} + x_{ij,2} \omega$.
\end{proof}

Proposition~\ref{pro:mcD_1} allows us to compute the representation
zeta functions of the principal congruence subgroups
$\SU_3^m(\Lri,\lri)$, analogously to our work in
Section~\ref{subsec:sl3princ}.  Indeed, for every permissible $m$ we
obtain formulae analogous to \eqref{eq:starting_point} and
\eqref{equ:series-factor}, with $(q^2+q+1)^2$ replaced by $q^4 + q^2 +
1$.
%\begin{equation} \label{equ:starting_point_su}
%  \begin{split}
%    \zeta_{\SU_3^m(\Lri,\lri)}
%    & (s) = q^{8(m-1)} \cdot \left( 1
%      \vphantom{\mathcal{Z}_{\lri}^{[1]}} \right. \\
%    & + \left( q^8 - 1 - (q^4+q^2+1)(q-1) \right) \left( 1 + q^8
%      (1-q^{-1})^{-1} \mathcal{Z}_{\lri}^{[0]}(-(s+2)/2),3s-3 \right) \\
%    & \left. + (q^4+q^2+1)(q-1) \left( 1 + q^8 (1-q^{-1})^{-1}
%        \mathcal{Z}_{\lri}^{[1]}(-(s+2)/2,3s-3) \right) \right).
%  \end{split}
%\end{equation}
A short computation yields the explicit formulae stated in
Theorem~\ref{thmABC:SL3}.

%%%%%%%%%%%%%%%%

\section{Abscissae of convergence for arithmetic groups of type $A_2$}
\label{sec:abscissae}

In this section we prove Theorem~\ref{thmABC:lalu}.  Prerequisites
about the structure of algebraic groups of type $A_2$ and their
arithmetic subgroups are collected in
Appendix~\ref{subsec:algebraic/arithmetic_groups}.  Our general
strategy is as follows.  In Section~\ref{subsec:abscissa_inner} we
deal in detail with arithmetic groups of type ${}^1 \! A_2$ (inner
forms) and in Section~\ref{subsec:abscissa_outer} we describe the
necessary modifications for treating groups of type ${}^2 \! A_2$
(outer forms).  We show that, in fact, it suffices to study infinite
Euler products of representation zeta functions associated to compact
$p$-adic analytic groups of the form $\SL_3(\lri)$ and, in the case of
outer forms, also $\SU_3(\Lri,\lri)$.  In order to prove that the
abscissae of convergence of such Euler products are equal to~$1$, we
produce via Clifford theory suitable approximations of the local Euler
factors.  These approximations allow us to control the analytic
properties of the original products.  The relevant group theoretic
tools for the `approximative Clifford theory' are prepared in
Section~\ref{subsec:clifford}.  The approximations of the local Euler
factors are designed to remove the need to decide whether or not
characters on principal congruence subgroups -- which we dealt with in
Section~\ref{sec:explicit_cong_sub} -- are extendable to their
respective inertia groups.  We manufacture the approximative Dirichlet
series according to a finite case distinction, reflecting aspects of
the adjoint actions of the finite groups $\GL_3(\mathbb{F}_q)$ and
$\GU_3(\mathbb{F}_{q^2},\mathbb{F}_{q})$ on the finite Lie algebras
$\mathfrak{sl}_3(\mathbb{F}_q)$ and
$\su_3(\mathbb{F}_{q^2},\mathbb{F}_q)$, respectively.  Relevant data
of these finite actions, including centralisers and the numbers and
sizes of orbits of various types, are collected in Tables~\ref{table1}
and \ref{table2} (for~$\mathfrak{sl}_3(\mathbb{F}_q)$) and
Tables~\ref{table3} and~\ref{table4}
(for~$\su_3(\mathbb{F}_{q^2},\mathbb{F}_q)$).  The tables are derived
in Appendices~\ref{sec:aux_sl3} and \ref{sec:aux_su3}, respectively.
Section~\ref{subsec:dirichlet} contains a few elementary facts and
definitions which we need for our approximations of Dirichlet series.

\begin{rem}\label{rem:other_paper}
  In \cite{AvKlOnVo_pre_a}, we give explicit formulae for the
  representation zeta functions of groups of the form
  $\SL_3(\lri)$. This allows for an alternative proof of
  Theorem~\ref{thmABC:lalu} for groups of type ${}^1 \! A_2$ and
  stronger analytic results in this case. %; see \cite{AvKlOnVo_pre_a}.
  %AKOV3: analytic continuation
We also show in \cite{AvKlOnVo_pre_a}
  that irreducible characters of the principal congruence subgroups
  $\SL_3^1(\lri)$ \emph{are} extendable to their inertia groups in
  $\SL_3(\lri)$.  We anticipate that the arguments in the present
  paper will serve as a template for algebraic groups where this may
  either not be the case, not effectively decidable or where it is
  simply not practical to produce an explicit analysis.
\end{rem}

\subsection{Dirichlet generating functions}\label{subsec:dirichlet}
Consider Dirichlet generating functions $\xi(s) = \sum_{n \in \N} a_n
n^{-s}$ and $\eta(s) = \sum_{n \in \N} b_n n^{-s}$, encoding sequences
$(a_n)_{n\in\N}$ and $(b_n)_{n\in\N}$ of nonnegative integers.  We
write $\xi(s) \ll \eta(s)$ if $\sum_{n = 1}^N a_n \leq \sum_{n = 1}^N
b_n$ for all $N \in \N$.  Recall that, if the sequence $(a_n)$
contains infinitely many non-zero terms, the abscissa of convergence
of $\xi(s)$ is equal to $\limsup_{N \to \infty} \log \left( \sum_{n =
  1}^N a_n \right) / \log(N)$.  Thus we observe that $\xi(s) \ll
\eta(s)$ implies that the abscissa of convergence of $\xi(s)$ is less
than or equal to the abscissa of convergence of $\eta(s)$.  We require
the following straightforward lemma.

\begin{lem} \label{lem:generating_series}
  Let $\xi(s)$ and $\eta(s)$ be Dirichlet generating functions
  which admit product decompositions $\xi(s) = \prod_{i \in I}
  \xi_i(s)$ and $\eta(s) = \prod_{i \in I} \eta_i(s)$ over a
  countable index set $I$ in the ring of all Dirichlet generating
  functions.  Suppose that $\xi_i(s) \ll \eta_i(s)$ for each $i
  \in I$.

  Then $\xi(s) \ll \eta(s)$; in particular the abscissa of
  convergence of $\xi(s)$ is less than or equal to the abscissa of
  convergence of $\eta(s)$.
\end{lem}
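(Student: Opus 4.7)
The plan is to reduce the claim to the case of a two-fold product, and then pass to countable products. The statement about abscissae of convergence will follow immediately from $\xi(s) \ll \eta(s)$, since, for a Dirichlet series with non-negative coefficients, the abscissa equals $\limsup_{N\to\infty} \log A(N)/\log N$ with $A(N) := \sum_{n\leq N} a_n$, as already recalled in Section~\ref{subsec:dirichlet}. So the real content is to establish the dominance relation.

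The core step will be to show that $\xi \ll \eta$ and $\xi' \ll \eta'$ imply $\xi\xi' \ll \eta\eta'$. Denoting the partial sums of coefficients of $\xi, \xi', \eta, \eta'$ by $A(N), A'(N), B(N), B'(N)$ respectively, Dirichlet convolution gives
\[
  \sum_{n \leq N} (\xi\xi')_n \;=\; \sum_{dd' \leq N} a_d a'_{d'} \;=\; \sum_{d \leq N} a_d \, A'(\lfloor N/d \rfloor).
\]
The function $d \mapsto A'(\lfloor N/d \rfloor)$ is non-increasing in $d$; combining this monotonicity with the assumption $A(d) \leq B(d)$ via the summation-by-parts identity
\[
  \sum_{d=1}^{N} a_d f(d) \;=\; A(N) f(N) + \sum_{d=1}^{N-1} A(d)\bigl(f(d)-f(d+1)\bigr)
\]
gives $\sum_{d\leq N} a_d A'(\lfloor N/d\rfloor) \leq \sum_{d\leq N} b_d A'(\lfloor N/d\rfloor)$. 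Rewriting the latter as $\sum_{d'\leq N} a'_{d'} B(\lfloor N/d'\rfloor)$ and repeating the same Abel-type argument, this time on the sequence $(a'_{d'})$ against $B'(\lfloor N/d'\rfloor)$, will bound it above by $\sum_{d'\leq N} b'_{d'} B(\lfloor N/d'\rfloor) = \sum_{n\leq N} (\eta\eta')_n$, yielding the two-factor case.

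A straightforward induction on the number of factors then extends the dominance relation to every finite subproduct. To pass to the countable case, I would invoke the assumption that the products $\prod_{i\in I} \xi_i(s)$ and $\prod_{i\in I} \eta_i(s)$ exist in the ring of Dirichlet series: by the formal convergence of such a product, for each $N \in \N$ there is a finite subset $I_N \subseteq I$ such that $\xi(s)$ and $\prod_{i\in I_N}\xi_i(s)$ have the same coefficient at $n^{-s}$ for every $n\leq N$, and similarly for $\eta(s)$. Applying the finite-product inequality to $I_N$ then yields $A(N) \leq B(N)$ for all $N$, as required. The only delicate point is the double Abel summation in the two-factor step; the passage to countable products is a routine invocation of the formal convergence of the Euler-type products in question.
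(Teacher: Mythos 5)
Your proposal is correct and takes a genuinely different route from the paper's proof. Both reduce to the two-factor case by induction and then pass to the countable product by a stabilisation argument. The difference is in the two-factor step. The paper reformulates each $\xi_i(s) = \sum_{x\in X_i}\deg(x)^{-s}$ with a degree map on a countable set, observes that $\xi_i \ll \eta_i$ is equivalent to the existence of an injection $\iota_i\colon X_i\to Y_i$ satisfying $\deg(\iota_i(x))\leq\deg(x)$ (the paper writes the inequality the other way round, which appears to be a typo --- with the inequality as printed the characterisation fails already for $\xi(s)=1$, $\eta(s)=2^{-s}$), and then takes the product injection $\iota_1\times\iota_2$ on $X_1\times X_2$, which transparently preserves the dominance relation for the product series. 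Your argument instead works directly with the coefficient sequences: you write the partial sum of the convolution as $\sum_{d\leq N}a_d A'(\lfloor N/d\rfloor)$, use that $d\mapsto A'(\lfloor N/d\rfloor)$ is non-increasing, and apply Abel summation with the hypothesis $A(d)\leq B(d)$ to bound it by $\sum_{d\leq N}b_d A'(\lfloor N/d\rfloor)$; then you swap the roles of the two factors and repeat. I checked the summation-by-parts step: with $f(d)=A'(\lfloor N/d\rfloor)$, the boundary term $A(N)f(N)$ and the differences $f(d)-f(d+1)\geq 0$ all carry the right sign, so the comparison goes through. Your approach is the more classical analytic-number-theory one and is entirely self-contained; the paper's approach has the advantage that multiplicativity of the dominance relation becomes obvious from the product of injections and it avoids the nested Abel summations. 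Both the paper and you treat the passage to the countable product essentially the same way --- noting that the coefficient at each $n^{-s}$ stabilises along finite subproducts, which is exactly what it means for the countable product to exist as a Dirichlet generating function with non-negative coefficients.
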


\begin{proof}
  If $\prod_{j \in J} \xi_j(s) \ll \prod_{j \in J} \eta_j(s)$ for
  every finite subset $J \subseteq I$, then taking limits yields
  $\xi(s) \ll \eta(s)$.  Hence we may assume that $I$ is finite,
  and by induction on $\lvert I \rvert$ it is enough to consider the
  case $I = \{1,2\}$.

  It is convenient to introduce notation which puts the given
  generating functions in a more general setting.  For $i \in \{1,2\}$,
  we write $\xi_i(s) = \sum_{x \in X_i} \deg(x)^{-s}$ and $\eta_i(s) =
  \sum_{y \in Y_i} \deg(y)^{-s}$, where $X_i$ and $Y_i$ are countable
  sets and the degree maps $\deg: X_i \rightarrow \N$ and $\deg : Y_i
  \rightarrow \N$ have finite fibres.  Put $\mathbf{X} := X_1 \times
  X_2$ and define $\deg(\mathbf{x}) := \deg(x_1) \deg(x_2)$ for
  $\mathbf{x} = (x_1,x_2) \in \mathbf{X}$.  Similarly, we put
  $\mathbf{Y} := Y_1 \times Y_2$ and $\deg(\mathbf{y}) := \deg(y_1)
  \deg(y_2)$ for $\mathbf{y} = (y_1,y_2) \in \mathbf{Y}$.  Then
  \begin{equation}\label{equ:Dirichlet}
    \xi(s) = \xi_1(s) \xi_2(s) = \sum_{x_1 \in X_1}
    \deg(x_1)^{-s} \sum_{x_2 \in X_2}
    \deg(x_2)^{-s} = \sum_{\mathbf{x} \in \mathbf{X}}
    \deg(\mathbf{x})^{-s}
  \end{equation}
  and, similarly, $\eta(s) = \sum_{\mathbf{y} \in \mathbf{Y}}
  \deg(\mathbf{y})^{-s}$.

  For $i \in \{1,2\}$, the condition $\xi_i(s) \ll \eta_i(s)$ is
  equivalent to the existence of an injective map $\iota_i : X_i
  \rightarrow Y_i$ such that $\deg(x) \leq \deg(\iota_i(x))$ for all
  $x \in X_i$.  Clearly, the product map $\iota: X_1 \times X_2
  \rightarrow Y_1 \times Y_2$, $(x_1,x_2) \to (\iota_1(x_1),
  \iota_2(x_2))$ is injective and $\deg(\mathbf{x}) \leq
  \deg(\iota(\mathbf{x}))$ for all $\mathbf{x} \in X_1 \times X_2$.
  In conjunction with \eqref{equ:Dirichlet} and the analogous
  description of $\eta(s)$, this implies that $\xi(s) \ll \eta(s)$.
 \end{proof}

%%%%%

\subsection{Clifford theory} \label{subsec:clifford}

In this section we explain how Clifford theory can be used to compute
precisely or approximatively the representation zeta function of a FAb
compact $p$-adic analytic group $G$ from the representation zeta
function of a saturable normal pro-$p$ subgroup $N$, whose irreducible
characters can be described by the Kirillov orbit method.  For
instance, in Section~\ref{subsec:abscissa_inner} the theory will be
applied to the concrete groups $G = \SL_3(\lri)$ and $N =
\SL_3^1(\lri)$.  A general and accessible reference for Clifford
theory is \cite[\S~19--22]{Hu98}.

\subsubsection{}
We begin in a more general setting and specialise to our specific
cases of interest during the course of the discussion. Let $G$ be
group and $N \unlhd G$ a normal subgroup of finite index. Clifford
theory provides a connection between the irreducible characters of $G$
and those of $N$.  To start with, there is a natural action of $G$ on
$\widehat{N} = \Irr(N)$.  The stabiliser of $\theta \in \widehat{N}$
under this action is the inertia group $I_G(\theta)$.  Let $\Irr(G ,
\theta)$ denote the set of all irreducible characters $\rho$ of $G$
such that $\theta$ occurs as an irreducible constituent of the
restricted character $\res_N^G(\rho)$.  Then one has
$$
\Irr(G , \theta) = \{ \ind_{I_G(\theta)}^G(\psi) \mid \psi \in
\Irr(I_G(\theta) , \theta) \}.
$$ For any $\rho \in \Irr(G , \theta)$, the irreducible components of
$\res_N^G(\rho)$ are precisely the $G$-conjugates of $\theta$ and, in
particular, their number is $\lvert G : I_G(\theta) \rvert$.  We write
$$
\zeta_{G, \theta}(s) := \theta(1)^s \lvert G : I_G(\theta)
\rvert^s \sum_{\rho \in \Irr(G , \theta)} \rho(1)^{-s}
= \theta(1)^s \sum_{\psi \in
  \Irr(I_G(\theta) , \theta)} \psi (1)^{-s}.$$  If $N$
admits only finitely many irreducible characters of any given degree,
it follows that
\begin{equation}\label{equ:hochheben}
\zeta_G(s) = \sum_{\rho \in \widehat{G}} \rho(1)^{-s} = \sum_{\theta
  \in \widehat{N}} \theta(1)^{-s} \cdot \lvert G : I_G(\theta) \rvert^{-1-s}
\zeta_{G,  \theta}(s).
\end{equation}
\begin{rem}\label{rem:translation_factors}
In our applications of this equation, e.g.\ in Sections
\ref{subsec:abscissa_inner} and \ref{subsec:abscissa_outer}, the terms
$\lvert G : I_G(\theta) \rvert^{-1-s} \zeta_{G,  \theta}(s)$
appearing on the right hand side of \eqref{equ:hochheben} will be
referred to as `translation factors'.
\end{rem}

In the special case where $\theta$ extends to an irreducible character
$\hat \theta$ of $I_G(\theta)$, there is an effective description of
the elements $\psi \in \Irr(I_G(\theta) , \theta)$: according to
\cite[Theorem~19.6]{Hu98} one has
$$
\Irr(I_G(\theta) , \theta) = \left\{ \hat \theta \phi \mid \phi \in
\Irr(I_G(\theta)/N) \right\} \quad \text{and} \quad \zeta_{G ,
\theta}(s) = \zeta_{I_G(\theta)/N}(s).
$$
There are several basic sufficient criteria for the extensibility
of $\theta$.  For instance, if $N$ is a finite $p$-group it suffices
that a Sylow-$p$ subgroup of $I_G(\theta)/N$ is cyclic; see
\cite[Theorem~19.13]{Hu98}.

\begin{lem}\label{lem:translation-factor_neu}
  Let $G$ be a finite group, and let $N$ be a normal subgroup of $G$.
  Let $\theta \in \Irr(N)$ and set $I := I_G(\theta)$.  Let $q$ be a
  power of $p$.  Suppose that $N$ is a $p$-group and that $\theta(1)$
  is a power of $q$.  Let $P/N$ be a Sylow $p$-subgroup of $I/N$.
  \begin{enumerate}
  \item Suppose that $\theta$ extends to an irreducible character of
    $P$.  Then $\theta$ extends to $I$ and $\zeta_{G,  \theta}(s)
    = \zeta_{I/N}(s)$.
  \item Suppose that $\theta$ does not extend to $I$ and that the
    character degrees of $P$ are powers of $q$.  Then $\lvert P : N
    \rvert \geq q^2$ and $\zeta_{G,  \theta}(s) \ll \lfloor q^{-2}
    \lvert I:N \rvert \rfloor q^{-s}$.
  \end{enumerate}
\end{lem}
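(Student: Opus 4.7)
For part~(1), the plan is to first upgrade the extendibility of $\theta$ from $P$ to all of $I$, after which the formula $\zeta_{G,\theta}(s) = \zeta_{I/N}(s)$ will fall out of the preceding discussion. The extension criterion to invoke is the standard one (cf.\ [Hu98]): a $G$-invariant character $\theta$ extends to $G$ if and only if for every prime $\ell$ dividing $|G/N|$ it extends to the full preimage of some Sylow $\ell$-subgroup of $G/N$. For $\ell = p$ the hypothesis supplies this. For $\ell \neq p$, pick $Q \leq I$ with $Q/N$ Sylow $\ell$; since $|Q:N|$ is an $\ell$-power and $|N|$ is a $p$-power, $\gcd(|Q:N|,|N|)=1$, and a $Q$-invariant character of $N$ always extends to $Q$ in this coprime situation. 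Hence $\theta$ extends to $I$, say to $\hat{\theta} \in \Irr(I)$. By [Hu98, Theorem~19.6], $\Irr(I,\theta) = \{\hat{\theta}\phi \mid \phi \in \Irr(I/N)\}$ with $(\hat{\theta}\phi)(1) = \theta(1)\phi(1)$, whence $\sum_{\psi \in \Irr(I,\theta)} \psi(1)^{-s} = \theta(1)^{-s} \zeta_{I/N}(s)$; substituting into the definition of $\zeta_{G,\theta}(s)$ gives the stated equality.

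For part~(2), the contrapositive of~(1) already forces $\theta$ not to extend to $P$. I would then take an arbitrary $\psi \in \Irr(I,\theta)$ and any irreducible constituent $\tilde{\psi}$ of $\res_P^I(\psi)$: as $\theta$ is $I$-invariant, $\tilde{\psi}$ lies above $\theta$, so $\tilde{\psi} \in \Irr(P,\theta)$, and the non-extendibility of $\theta$ to $P$ yields $\tilde{\psi}(1) > \theta(1)$. Since $\tilde{\psi}(1)$ is by hypothesis a power of~$q$ and $\theta(1)$ is also a power of $q$, the ratio $\tilde{\psi}(1)/\theta(1)$ is a power of~$q$ strictly exceeding $1$, hence is at least~$q$. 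Therefore $\psi(1) \geq \tilde{\psi}(1) \geq q\theta(1)$. Applying the same argument with $I$ replaced by $P$ itself and using the standard Clifford identity $\sum_{\tilde{\psi} \in \Irr(P,\theta)} \tilde{\psi}(1)^2 = \theta(1)^2 |P:N|$ gives $q^2\theta(1)^2 \leq \theta(1)^2|P:N|$, i.e.\ $|P:N| \geq q^2$.

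The bound on $\zeta_{G,\theta}(s)$ then comes from the standard identity $\sum_{\psi \in \Irr(I,\theta)} \psi(1)^2 = \theta(1)^2 |I:N|$ (Frobenius reciprocity applied to $\ind_N^I(\theta)$). Combined with $\psi(1) \geq q\theta(1)$, this gives $|\Irr(I,\theta)| \leq \lfloor q^{-2}|I:N| \rfloor$. Rewriting $\zeta_{G,\theta}(s) = \sum_\psi (\psi(1)/\theta(1))^{-s}$ and noting each ratio $\psi(1)/\theta(1)$ is at least $q$, every partial sum of Dirichlet coefficients is bounded by $\lfloor q^{-2}|I:N| \rfloor$, which in the ordering of Section~\ref{subsec:dirichlet} reads exactly $\zeta_{G,\theta}(s) \ll \lfloor q^{-2}|I:N| \rfloor q^{-s}$.

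The main obstacle is the Sylow-by-Sylow extension argument in part~(1): one must correctly combine the given $p$-local hypothesis with the coprime extension result at all other primes, via the cohomological extension criterion. Once this is set up, part~(2) reduces to clean counting via Clifford's formula.
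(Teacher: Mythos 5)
Your proposal is correct and follows essentially the same route as the paper's proof. For part~(1), the paper cites [Hu98, Theorems 21.4 and 22.3] directly; your Sylow-by-Sylow reduction combined with the coprime extension theorem at primes $\ell\neq p$ is exactly the content of those two results, so you have simply unfolded the citation. For part~(2), your argument is the same as the paper's: the degree bound $\psi(1)\geq q\theta(1)$ is obtained via an irreducible constituent of $\res^I_P(\psi)$ lying over $\theta$, the count $\lvert\Irr(I,\theta)\rvert\leq\lfloor q^{-2}\lvert I:N\rvert\rfloor$ comes from the Clifford identity $\sum_\psi\psi(1)^2=\theta(1)^2\lvert I:N\rvert$ (the paper writes this as $\lvert I:N\rvert\theta(1)=\sum e_i\psi_i(1)$, but it is the same identity after dividing by $\theta(1)$), and $\lvert P:N\rvert\geq q^2$ follows by running the same argument with $I$ replaced by $P$. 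Both steps and the final translation into the $\ll$-ordering match the paper.
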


\begin{proof}
  (1) This follows from \cite[Theorems~21.4 and 22.3]{Hu98}.

  (2) We first prove the second statement. Let $\psi_1, \ldots,
  \psi_r$ denote the irreducible constituents of the induced character
  $\ind_N^I(\theta)$ so that $\ind_N^I(\theta) = \sum_{i=1}^r e_i
  \psi_i$.  We claim that (i) $r \leq q^{-2} \lvert I : N \rvert$ and
  (ii) $\psi_i(1) \geq q \theta(1)$ for all $i \in \{1,\ldots,r\}$.
  From this the claim about $\zeta_{G,  \theta}(s)$ follows
  directly.

  The proof of the inequality (ii) is straightforward.  Let $i \in
  \{1,\ldots,r\}$.  Since $\theta$ does not extend to $I$, it does not
  extend to $P$.  But $\phi := \res_P^I(\psi_i)$ admits an irreducible
  constituent whose restriction to $N$ involves $\theta$.  This shows
  that $\phi(1) > \theta(1)$.  Since $\phi(1)$ and $\theta(1)$ are
  powers of $q$, we deduce that $\psi_i(1) = \phi(1) \geq q
  \theta(1)$, as wanted.

  It remains to prove the inequality (i).  Observe that for each $i
  \in \{1,\ldots,r\}$ we have $\res_N^I(\psi_i) = e_i \theta$ so that
  $e_i = \psi_i(1)/\theta(1) \geq q$, using (ii).  Therefore
  $$
  \lvert I : N \rvert \theta(1) = (\ind_N^I(\theta))(1) =
  \sum_{i=1}^r e_i \psi_i(1) \geq r q^2 \theta(1),
  $$
  again using (ii).  Cancelling $\theta(1)$ and rearranging terms
  yields (i).

  The claim $\lvert P : N \rvert \geq q^2$ follows from (i), if we
  replace both $G$ and $I$ by $P$.

%  To prove that $\lvert S : N \rvert \geq q^2$, let $\phi_1, \ldots,
%  \phi_t$ denote the irreducible constituents of the induced character
%  $\ind_N^S(\theta)$, so that $\ind_N^S(\theta) = \sum_{j=1}^t f_j
%  \phi_j$. Arguing as above, we note that $ \lvert S : N \rvert
%  \theta(1) = (\ind_N^S(\theta))(1) = \sum_{j=1}^t f_j \phi_j(1) \geq
%  t q^2 \theta(1).  $

 % and, for each $j\in\{1,\dots,t\}$ we have
 % $\res^S_N(\phi_j)=f_j\theta$, with $f_j>1$. Moreover,
 % $$\phi_j(1) = (\res^S_N(\phi_j))(1)=f_j\theta(1).$$
 %By assumption, these quantities are all powers of $q$, hence $f_j\geq
 %q$ for all $j$. Thus
 %$$|S:N|\theta(1)\geq \sum_{j=1}^t q^2 \theta(1)\geq t q^2 \theta(1),$$
 %which yields the desired inequality upon cancellation of $\theta(1)$.

\end{proof}

\subsubsection{}\label{subsec:commuting_diagram}
Now suppose that $G$ is a FAb compact $p$-adic analytic group and $N$
is a saturable normal pro-$p$ subgroup of $G$, whose irreducible
characters can be described by the Kirillov orbit method.  A central
step in connecting $\widehat{G} = \Irr(G)$ with $\widehat{N} =
\Irr(N)$ consists in computing the inertia groups $I_G(\theta)$ for
$\theta \in \widehat{N}$.  Let $\mfn$ be the saturable $\Z_p$-Lie
lattice associated to~$N$.  Then each $\theta \in \widehat{N}$
corresponds to an orbit $\Omega = \omega^N$ in the co-adjoint action
of $N$ on $\widehat{\mfn} = \Irr(\mfn)$.  Thus $I_G(\theta)$ is the
set-wise stabiliser of $\Omega$ under the co-adjoint action of $G$ on
$\widehat{\mfn}$, and
\begin{equation} \label{equ:intertia} I_G(\theta) =
  \textup{N}_G(\Omega) = \Cen_G(\omega)N.
\end{equation}

We now specialise to the situation which is most interesting to us.
Let $\gfi$ be an algebraic number field with ring of integers $\gri$,
and let $\Lambda$ be an $\gri$-Lie lattice such that $\gfi
\otimes_\smallgri \Lambda$ is a finite dimensional, semisimple
$\gfi$-Lie algebra.  Let $\lri = \gri_v$ be the completion of $\gri$
at a non-archimedean place $v$ -- to be restricted further during our
discussion -- with maximal ideal $\mfp = \pi \lri$, lying above the
rational prime $p$.  Put $\mfg := \lri \otimes_\smallgri \Lambda$, and
suppose that $\mfn$ is the $\lri$-Lie lattice $\mfg^m = \mfp^m \mfg$
for some $m \in \N_0$.

Observe that multiplication by $\pi^{-m}$ provides a $G$-equivariant
isomorphism of $\lri$-modules $\mfn \rightarrow \mfg$, and a
corresponding $G$-equivariant bijection $\widehat{\mfg} \rightarrow
\widehat{\mfn}$.  By means of these maps we may describe elements of
$\widehat{N}$ by orbits in the co-adjoint action of $N$ on
$\widehat{\mfg}$ rather than $\widehat{\mfn}$.  From a computational
point of view, it is convenient to work with the adjoint action of $G$
on $\mfg$ rather than the co-adjoint action on $\widehat{\mfg}$.  In
general, the necessary translation can be made by means of the Killing
form $\kappa$ associated to the $\gri$-Lie lattice~$\Lambda$.  For
simplicity we shall assume that $\Lambda = \gri \otimes_\Z \Lambda_0$
arises from a $\Z$-Lie lattice $\Lambda_0$ of Chevalley type,
associated to a simply-connected simple group scheme over~$\Z$.  Then
the transition from the co-adjoint to the adjoint action can be
carried out more effectively by using the normalised Killing form
$$
\kappa_0 := (2h^\vee)^{-1} \kappa: \Lambda \times \Lambda \rightarrow
\gri,
$$
where $h^\vee$ denotes the dual Coxeter number.  Writing $d :=
\dim_\gfi(\gfi \otimes_\smallgri \Lambda)$, the bilinear form
$\kappa_0$ can be represented by a structure matrix $B \in \Mat_d(\Z)$
with respect to an $\gri$-basis of $\Lambda$ obtained from a
$\Z$-basis of $\Lambda_0$; \cite[Section~5]{GrNe04}.  The determinant
of $B$ is a non-zero integer.  For instance, for $\Lambda_0 = \spl_n$,
the determinant of $B$ is equal to $n$.  Excluding from our discussion
those places $v$ which lie above primes $p$ dividing the determinant
of $B$, the form $\kappa_0$ extends to a `non-degenerate' form on the
local $\lri$-Lie lattice $\mfg$: for every $x \in \mfg^*$ there exists
$y \in \mfg$ such that $\kappa_0(x,y) \in \lri^*$.

Thus $\kappa_0$ induces a natural $G$-equivariant isomorphism of
$\lri$-modules
$$
\iota_0: \mfg \rightarrow \Hom_{\lri}(\mfg,\lri)^*, \quad
% \mfg_\lri^\vee
x \mapsto \kappa_0(x,\cdot).
$$ For each $n \in \N$ there is a natural surjective map from
$\Hom_{\lri}(\mfg,\lri)^*$ onto $\Irr_n(\mfg) \cong
\Hom_\lri(\mfg,\lri/\mfp^n)^*$, and these sets partition $\Irr(\mfg)$;
cf.\ Lemma~\ref{lem:dual}.  Thus the normalised Killing form gives
rise to a $G$-equivariant commutative diagram
\begin{equation}\label{equ:diagram}
\begin{CD}
  \mfg^* @>{\cong}>>  \Hom_{\lri}(\mfg,\lri)^* \\
  @VVV @VVV \\
  (\mfg / \mfp^n \mfg)^* @>{\cong}>> \Hom_\lri(\mfg/\mfp^n\mfg,\lri /
  \mfp^n)^* @>{\cong}>> \{ \omega \in \widehat{\mfg} \mid
  \lev_\lri(\omega) = n \} \\
  @VVV @VVV \\
  (\F_q \otimes_\Z \Lambda_0)^* @>{\cong}>> \Hom_{\F_q}(\F_q
  \otimes_\Z \Lambda_0, \F_q)^*
\end{CD}
\end{equation}
where the last row is obtained by reduction modulo $\mfp$ and we have
used the isomorphism $\lri/\mfp \cong \F_q$.

Let us now restrict further to the case $m=1$, i.e.\ $\mfn = \mfg^1 =
\mfp \mfg$.  Consider a non-trivial character $\theta \in \Irr(N)$,
represented by an orbit $\omega^N$ in $\widehat{\mfg} \cong
\widehat{\mfn}$.  Put $n := \lev_\lri(\omega)$, and make use of the
diagram~\eqref{equ:diagram}.  We choose $w \in
\Hom_{\lri}(\mfg,\lri)^*$
% $(\mfg_\lri^\vee)^*$
such that its image $w \vert_n$ in $\Hom_\lri(\mfg/\mfp^n\mfg,\lri /
\mfp^n)^*$ corresponds to $\omega$, and we write $\overline{w}$ for
the image of $w$ in $\Hom_{\F_q}(\F_q \otimes_\Z \Lambda_0,\F_q)^*$.
By means of the horizontal isomorphisms, induced by the isomorphism
$\iota_0$, we find $x$, $x \vert_n = x + \mfp^n \mfg$, $\overline{x}$
corresponding to $w$, $w \vert_n$, $\overline{w}$ respectively.

Since all the maps in \eqref{equ:diagram} are $G$-equivariant, we
conclude that $\Cen_G(\omega) = \Cen_G(x + \mfp^n \mfg)$.  In general,
$\Cen_G(x + \mfp^n \mfg) \leq \Cen_G(\overline{x})$ and this inclusion
provides an upper bound for the size of the inertia group quotient.
Under certain circumstances, one can prove that $\Cen_G(x) N =
\Cen_G(x + \mfp^n \mfg) N = \Cen_G(\overline{x})$, hence
$$
I_G(\theta) = \Cen_G(\overline{x}) \qquad \text{and} \qquad
I_G(\theta)/N \cong \Cen_{G/N}(\overline{x}).
$$
The quotient group $G/N$ is typically (a subgroup of) a finite group
of Lie type, and linear algebra allows one to describe the various
possibilities for the centraliser $C := \Cen_{G/N}(\overline{x})$.  In
the special cases of interest to us in the current paper, such an
analysis is carried in Appendices~\ref{sec:aux_sl3} and
\ref{sec:aux_su3}.

\subsubsection{} Let $n \in \N$.  We conclude the
Section~\ref{subsec:clifford} by recording some specific results for
the case where the completion of $\Lambda$ leads to one of the
$\lri$-Lie lattices $\spl_n(\lri)$ and $\su_n(\Lri,\lri)$.  Here and
in the following we use the notation introduced at the beginning of
Section~\ref{sec:explicit_cong_sub}.  We fix an unramified quadratic
extension $\Lri \vert \lri$ of compact discrete valuation rings of
characteristic $0$.  The non-trivial Galois automorphism of the
corresponding extension $\Lfi \vert \lfi$ of fields of fractions is
denoted by $\sigma$.  The standard involution on $\Mat_n(\Lfi)$ is
denoted by $\circ$.  As $\Lri$ is unramified over $\lri$, the
uniformiser $\pi$ of $\lri$ also uniformises~$\Lri$.

The results in this section allow one to pin down various inertia
group quotients in the context of Clifford theory for the group
extensions $\SL_n^1(\lri) \unlhd \SL_n(\lri)$ and $\SU_n^1(\Lri,\lri)
\unlhd \SU_n(\Lri,\lri)$; for $n=3$ such an analysis is carried out
explicitly in Sections~\ref{subsec:abscissa_inner} and
\ref{subsec:abscissa_outer}.

\begin{lem}\label{lem:lift_general}
  Let $\mathbf{a}$ be an element of the $\lri$-order $\Mat_n(\lri)$ of
  the central simple $\lfi$-algebra $\Mat_n(\lfi)$.  Writing
  $\overline{\phantom{x}}$ for reduction modulo $\mfp$, suppose that
  the minimum polynomial of $\overline{\mathbf{a}}$ over $\F_q =
  \overline{\lri}$ is equal to the characteristic polynomial of
  $\overline{\mathbf{a}}$ in $\Mat_n(\F_q)$.  Then one has
  $$
  \Cen_{\Mat_n(\F_q)}(\overline{\mathbf{a}}) =
  \F_q[\overline{\mathbf{a}}] \qquad \text{and} \qquad
  \Cen_{\Mat_n(\lri)}(\mathbf{a}) = \lri[\mathbf{a}];
  $$
  consequently, $\Cen_{\Mat_n(\F_q)}(\overline{\mathbf{a}}) =
  \overline{\Cen_{\Mat_n(\lri)}(\mathbf{a})}$.
\end{lem}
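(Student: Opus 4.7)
The first identity $\Cen_{\Mat_n(\F_q)}(\overline{\mathbf{a}}) = \F_q[\overline{\mathbf{a}}]$ is a standard fact from linear algebra: a matrix over a field whose minimal polynomial agrees with its characteristic polynomial is non-derogatory, hence cyclic, and its centraliser in the full matrix algebra coincides with the subalgebra it generates. In particular $\dim_{\F_q} \Cen_{\Mat_n(\F_q)}(\overline{\mathbf{a}}) = n$, and $1, \overline{\mathbf{a}}, \ldots, \overline{\mathbf{a}}^{n-1}$ is an $\F_q$-basis.

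To obtain the second identity, the plan is to compute the structure of $\Cen_{\Mat_n(\lri)}(\mathbf{a})$ as an $\lri$-module and then apply Nakayama's lemma. Consider the $\lri$-linear map $\Phi: \Mat_n(\lri) \to \Mat_n(\lri)$, $\mathbf{b} \mapsto [\mathbf{a},\mathbf{b}]$, whose kernel is $\Cen_{\Mat_n(\lri)}(\mathbf{a})$. Since $\lri$ is a discrete valuation ring, $\operatorname{image}(\Phi)$ is a submodule of the free $\lri$-module $\Mat_n(\lri)$ and hence itself free. Tensoring the defining short exact sequence with $\F_q = \lri/\mfp$, the $\Tor^\lri_1$-term vanishes by freeness of $\operatorname{image}(\Phi)$, yielding a canonical $\F_q$-linear injection
\begin{equation*}
\Cen_{\Mat_n(\lri)}(\mathbf{a}) \otimes_\lri \F_q \hookrightarrow \Cen_{\Mat_n(\F_q)}(\overline{\mathbf{a}}).
\end{equation*}
In fact this map is an isomorphism: any $\overline{\mathbf{b}}$ commuting with $\overline{\mathbf{a}}$ lies in the image of $\Phi$'s cokernel computation, so the induced map is surjective as well. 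Consequently $\Cen_{\Mat_n(\lri)}(\mathbf{a})$ is a free $\lri$-module of rank $n$.

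Now $\lri[\mathbf{a}] \subseteq \Cen_{\Mat_n(\lri)}(\mathbf{a})$ trivially, and the residues $1, \overline{\mathbf{a}}, \ldots, \overline{\mathbf{a}}^{n-1}$ of $1, \mathbf{a}, \ldots, \mathbf{a}^{n-1}$ form an $\F_q$-basis of $\Cen_{\Mat_n(\F_q)}(\overline{\mathbf{a}}) \cong \Cen_{\Mat_n(\lri)}(\mathbf{a})\otimes_\lri \F_q$. By Nakayama's lemma, $1, \mathbf{a}, \ldots, \mathbf{a}^{n-1}$ generate the $\lri$-module $\Cen_{\Mat_n(\lri)}(\mathbf{a})$, so $\lri[\mathbf{a}] = \Cen_{\Mat_n(\lri)}(\mathbf{a})$. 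The final assertion then follows by reducing this equality modulo $\mfp$: $\overline{\Cen_{\Mat_n(\lri)}(\mathbf{a})} = \overline{\lri[\mathbf{a}]} = \F_q[\overline{\mathbf{a}}] = \Cen_{\Mat_n(\F_q)}(\overline{\mathbf{a}})$. The only subtle point — and the one requiring the hypothesis — is ensuring the cokernel map computation above gives exactly dimension $n$ on both sides; this is precisely guaranteed by the non-derogatory assumption, without which the centraliser dimension over $\F_q$ can jump above $n$ and the containment $\lri[\mathbf{a}] \subseteq \Cen_{\Mat_n(\lri)}(\mathbf{a})$ would be strict.
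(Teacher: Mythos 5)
Your overall strategy is genuinely different from the paper's and, once one step is cleaned up, it works. The paper treats $V = \lri^n$ as a module over the commutative ring $\lri[\mathbf{a}]$: using the hypothesis it shows $V$ is free of rank~$1$ over $\lri[\mathbf{a}]$ (lifting a cyclic generator of $\overline{V}$ via Nakayama and observing that $\lri[\mathbf{a}]\cong\lri[X]/(f)$ is free of $\lri$-rank $n$), then identifies the centralisers with $\operatorname{End}_{\lri[\mathbf{a}]}(V)\cong\lri[\mathbf{a}]$ and $\operatorname{End}_{\F_q[\overline{\mathbf{a}}]}(\overline{V})\cong\F_q[\overline{\mathbf{a}}]$. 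You instead work directly with the commutator map $\Phi = [\mathbf{a},\,\cdot\,]$ and argue homologically that $\ker(\Phi)\otimes_\lri\F_q$ coincides with $\ker(\overline{\Phi})$, finishing with Nakayama applied to the inclusion $\lri[\mathbf{a}]\subseteq\Cen_{\Mat_n(\lri)}(\mathbf{a})$. The paper's route is slightly shorter because the module-theoretic reformulation packages the dimension bookkeeping into the assertion that $V$ is free of rank $1$; your route makes the base-change computation explicit, which some may find more transparent.

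However, your justification of the crucial surjectivity is not a proof as written. The sentence ``any $\overline{\mathbf{b}}$ commuting with $\overline{\mathbf{a}}$ lies in the image of $\Phi$'s cokernel computation, so the induced map is surjective as well'' is circular or at best unintelligible: the question of whether $\ker(\Phi)\otimes_\lri\F_q\to\ker(\overline{\Phi})$ is onto is precisely what you are trying to establish, and it is \emph{not} automatic --- it holds if and only if $\operatorname{coker}(\Phi)$ is $\lri$-torsion-free, which is not obvious a priori. The correct way to close this is a dimension count, and the needed input is one you never explicitly record: the $\lri$-rank of $\ker(\Phi)$ equals $\dim_\lfi\Cen_{\Mat_n(\lfi)}(\mathbf{a})$, and this is always $\geq n$ (by the invariant-factor description of centraliser dimension, for \emph{any} matrix over a field). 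Combined with the injection $\ker(\Phi)\otimes_\lri\F_q\hookrightarrow\ker(\overline{\Phi})$ you already established and the equality $\dim_{\F_q}\ker(\overline{\Phi})=n$ from the non-derogatory hypothesis, this forces the rank of $\ker(\Phi)$ to equal $n$ exactly and the injection to be an isomorphism. (Alternatively, you could argue $\geq n$ by noting $\lri[\mathbf{a}]\subseteq\ker(\Phi)$ is already free of rank $n$, since the residues $1,\overline{\mathbf{a}},\dots,\overline{\mathbf{a}}^{n-1}$ are $\F_q$-independent.) Once this is spelled out, the Nakayama conclusion and the final reduction modulo $\mfp$ go through as you wrote them.
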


\begin{proof}
  Let $f \in \lri[X]$ denote the characteristic polynomial of
  $\mathbf{a}$ over $\lfi$.  Since $\overline{f}$ is equal to the
  minimum polynomial of $\overline{\mathbf{a}}$ over $\F_q$, the
  polynomial $f$ is equal to the minimum polynomial of $\mathbf{a}$
  over $\lfi$.  This implies that the natural
  $\F_q[\overline{\mathbf{a}}]$- and $\lri[\mathbf{a}]$-modules
  $\overline{V} = \F_q^n$ and $V = \lri^n$ are each free of rank $1$.
  Choose a free generator $x$ of $V$ mapping onto a free generator
  $\overline{x}$ of $\overline{V}$.  The centralisers featuring in our
  claim are identical to the endomorphism rings of the free modules
  $\overline{V}$ and $V$.  By considering the images of $\overline{x}$
  and~$x$, these endomorphism rings are seen to be equal to
  $\F_q[\overline{\mathbf{a}}]$ and $\lri[\mathbf{a}]$, respectively.
\end{proof}

\begin{cor}\label{cor:lift_spl}
  Let $\mathbf{x} \in \spl_n(\lri)^*$, and let $\overline{\mathbf{x}}
  \in \spl_n(\F_q)^*$ denote the reduction of $\mathbf{x}$
  modulo~$\mfp$.  Suppose that $p \nmid n$ and that the minimum
  polynomial of $\overline{\mathbf{x}}$ over $\F_q$ coincides with the
  characteristic polynomial of $\overline{\mathbf{x}}$ in
  $\Mat_n(\F_q)$.  Then one has
  $$
  \Cen_{\SL_n(\lri)}(\overline{\mathbf{x}}) =
  \Cen_{\SL_n(\lri)}(\mathbf{x}) \SL_n^1(\lri).
  $$
\end{cor}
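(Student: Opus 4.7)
The plan is to prove the two inclusions separately, noting that \textquotedblleft$\supseteq$\textquotedblright\ is immediate: any $h \in \Cen_{\SL_n(\lri)}(\mathbf{x})$ still satisfies $\overline{h} \, \overline{\mathbf{x}} = \overline{\mathbf{x}} \, \overline{h}$, and any $u \in \SL_n^1(\lri)$ reduces to the identity, so the product $hu$ centralises $\overline{\mathbf{x}}$ after reduction modulo~$\mfp$.

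For the non-trivial inclusion \textquotedblleft$\subseteq$\textquotedblright, I would start with $g \in \SL_n(\lri)$ such that $\overline{g} \in \Cen_{\SL_n(\F_q)}(\overline{\mathbf{x}})$. The key input is Lemma~\ref{lem:lift_general}, applied with $\mathbf{a} := \mathbf{x}$: it gives $\Cen_{\Mat_n(\F_q)}(\overline{\mathbf{x}}) = \overline{\Cen_{\Mat_n(\lri)}(\mathbf{x})}$ (both equal to $\F_q[\overline{\mathbf{x}}]$ and $\lri[\mathbf{x}]$, respectively), so I can lift $\overline{g}$ to some $h \in \Cen_{\Mat_n(\lri)}(\mathbf{x})$. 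Since $\overline{h} = \overline{g}$ is invertible over $\F_q$, its determinant is a unit in $\lri$, hence $h \in \Cen_{\GL_n(\lri)}(\mathbf{x})$; setting $u := h^{-1} g$ then yields a $\GL_n$-level factorisation $g = h u$ with $u \in \GL_n^1(\lri)$.

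The only remaining obstacle, and the step where the hypothesis $p \nmid n$ is used crucially, is to convert this into an $\SL_n$-level factorisation by a central rescaling. Set $\alpha := \det(h)$; since $\det(g) = 1$ and $\det(\overline{h}) = \det(\overline{g}) = 1$, both $\alpha$ and $\det(u) = \alpha^{-1}$ lie in $1 + \mfp$. Because $n$ is a unit in $\lri$, the $n$-th power map is a bijection on the pro-$p$ group $1 + \mfp$ (an application of Hensel's lemma to $f(c) = c^n - \alpha^{-1}$), so I can choose $c \in 1 + \mfp$ with $c^n = \alpha^{-1}$. I would then replace $h$ and $u$ by $h' := c h$ and $u' := c^{-1} u$: multiplication by the scalar $c$ preserves both the centraliser of $\mathbf{x}$ and the congruence condition modulo~$\mfp$, while a direct determinant computation gives $\det(h') = c^n \alpha = 1$ and $\det(u') = c^{-n} \alpha^{-1} = 1$. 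Therefore $h' \in \Cen_{\SL_n(\lri)}(\mathbf{x})$, $u' \in \SL_n^1(\lri)$, and $g = h' u'$, which finishes the proof. I expect no other subtleties; the entire argument reduces to Lemma~\ref{lem:lift_general} plus this final $n$-th root extraction.
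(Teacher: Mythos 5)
Your proof is correct and follows essentially the same route as the paper: lift $\overline g$ via Lemma~\ref{lem:lift_general} to some $h \in \Cen_{\Mat_n(\lri)}(\mathbf{x})$, observe $\det(h) \in 1 + \mfp$, and rescale by an $n$-th root of $\det(h)^{-1}$ (available since $p \nmid n$) to land in $\Cen_{\SL_n(\lri)}(\mathbf{x})$. The only cosmetic difference is that you explicitly form the factorisation $g = h'u'$, whereas the paper simply notes $\lambda h \equiv g$ modulo $\SL_n^1(\lri)$.
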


\begin{proof}
  Certainly, the group on the right hand side is contained in the
  group on the left hand side.  We prove the reverse inclusion.  Let
  $g \in \Cen_{\SL_n(\lri)}(\overline{\mathbf{x}})$.  Then
  Lemma~\ref{lem:lift_general} produces $h \in
  \Cen_{\Mat_n(\lri)}(\mathbf{x})$ such that $h \equiv g$ modulo
  $\mfp$.  Observe that $\det(h) \in 1 + \mfp$ is an $n$-th power,
  because $p \nmid n$.  Let $\lambda \in 1 + \mfp$ such that
  $\lambda^n = \det(h)^{-1}$.  Then $\lambda h \in
  \Cen_{\SL_n(\lri)}(\mathbf{x})$ such that $h \equiv g$ modulo $\mfp$
  or, equivalently, modulo $\SL_n^1(\lri)$, as wanted.
\end{proof}

% BK on Corollary 7.7: Not sure if p>2 necessary.
\begin{cor}\label{cor:lift_su}
  Let $\mathbf{x} \in \su_n(\Lri,\lri)^*$, and let
  $\overline{\mathbf{x}} \in \su_n(\F_{q^2},\F_q)^*$ denote the
  reduction of $\mathbf{x}$ modulo~$\mfP$.  Suppose that $p > 2$ with
  $p \nmid n$ and that the minimum polynomial of
  $\overline{\mathbf{x}}$ over $\F_{q^2}$ coincides with the
  characteristic polynomial of $\mathbf{x}$ in $\Mat_n(\F_{q^2})$.
  Then one has
  $$
  \Cen_{\SU_n(\Lri,\lri)}(\overline{\mathbf{x}}) =
  \Cen_{\SU_n(\Lri,\lri)}(\mathbf{x}) \SU_n^1(\Lri,\lri).
  $$
\end{cor}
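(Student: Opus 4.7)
The plan is to adapt the argument of Corollary~\ref{cor:lift_spl}, inserting an intermediate step that repairs a matrix lift so that it preserves the Hermitian form. The inclusion $\supseteq$ is immediate from reduction modulo~$\mfP$. For the reverse inclusion, fix $g \in \Cen_{\SU_n(\Lri,\lri)}(\overline{\mathbf{x}})$. The hypothesis on the minimum polynomial of $\overline{\mathbf{x}}$ permits an application of Lemma~\ref{lem:lift_general} over $\Lri$, producing $h \in \Cen_{\Mat_n(\Lri)}(\mathbf{x}) = \Lri[\mathbf{x}]$ with $h \equiv g \pmod{\mfP}$.

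The key structural input is that, because $\mathbf{x}^\circ = -\mathbf{x}$, the commutative $\Lri$-subalgebra $A := \Lri[\mathbf{x}]$ is stable under the involution $\circ$; indeed $\bigl(\sum c_i \mathbf{x}^i\bigr)^\circ = \sum c_i^\sigma (-\mathbf{x})^i \in A$. Consequently $c := h^\circ h$ lies in $A$, satisfies $c^\circ = c$, and, since $g^\circ g = 1$, reduces to $1$ modulo~$\mfP$; thus $c \in 1 + \mfP A$.

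Next I would construct an element $u \in A$ with $u^\circ = u$ and $u^2 = c^{-1}$ by evaluating a formal binomial series. Since $p > 2$, the coefficients $\binom{-1/2}{k}$ all lie in $\Z_p \subseteq \lri$, so the series $u := \sum_{k \geq 0} \binom{-1/2}{k}(c-1)^k$ converges $\mfP$-adically to an element of $1 + \mfP A$ satisfying $u^2 = c^{-1}$. As the coefficients are fixed by $\sigma$ and $c^\circ = c$, termwise application of $\circ$ yields $u^\circ = u$. Commutativity of $A$ then gives
\[
(uh)^\circ(uh) \;=\; u^\circ u \cdot h^\circ h \;=\; u^2 c \;=\; 1,
\]
so the element $h' := uh$ lies in $\Cen_{\Mat_n(\Lri)}(\mathbf{x})$, is congruent to $g$ modulo~$\mfP$, and preserves the Hermitian form.

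To conclude, I would repair the determinant by a scalar adjustment. By construction $\det(h') \in 1 + \mfP$ and $N_{\Lri\vert\lri}(\det(h')) = \det((h')^\circ h') = 1$, so $\det(h')$ belongs to the norm-one subgroup $U$ of $1 + \mfP$. Now $U$ is closed in the pro-$p$ group $1 + \mfP$ and is therefore itself pro-$p$, so under the hypothesis $p \nmid n$ the $n$-th power map is a bijection on~$U$. Hence there is a unique $\lambda \in U$ with $\lambda^n = \det(h')^{-1}$, and then $\lambda h' \in \Cen_{\SU_n(\Lri,\lri)}(\mathbf{x})$ is congruent to $g$ modulo $\SU_n^1(\Lri,\lri)$, as required. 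The main subtlety lies in keeping every correction inside the commutative, $\circ$-stable algebra $A$; this is precisely what the pairing of Lemma~\ref{lem:lift_general} with the identity $\mathbf{x}^\circ = -\mathbf{x}$ provides, and it is what separates this argument from the simpler linear case.
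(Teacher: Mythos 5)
Your proof is correct and follows the paper's overall strategy: lift $g$ into $\Cen_{\Mat_n(\Lri)}(\mathbf{x})$ via Lemma~\ref{lem:lift_general}, repair the lift to a unitary element of the centraliser, and then fix the determinant by a scalar in the norm-one subgroup of $1+\mfP$. Where you differ is in the unitary repair step. The paper runs a Newton--Hensel iteration $h_{i+1} = h_i - 2^{-1}(h_i^\circ)^{-1}(h_i^\circ h_i - 1)$ inside the $\circ$-stable centraliser; that iteration uses only $\circ$-stability, not commutativity. You instead observe that $A := \Lri[\mathbf{x}] = \Cen_{\Mat_n(\Lri)}(\mathbf{x})$ is a \emph{commutative} $\circ$-stable subalgebra, so $c := h^\circ h$ lies in $1 + \mfP A$ and is $\circ$-fixed, and the binomial series $u := \sum_{k\geq 0}\binom{-1/2}{k}(c-1)^k$ converges in the compact $\Lri$-module $A$ (its coefficients lie in $\Z_p$ because $p>2$) to a $\circ$-fixed square root of $c^{-1}$; commutativity of $A$ then gives $(uh)^\circ(uh) = u^2 c = 1$ in one stroke. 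This is a clean closed-form alternative to the iteration and leans more explicitly on the commutative structure supplied by Lemma~\ref{lem:lift_general}. Your justification of the scalar correction --- bijectivity of the $n$-th power map on the pro-$p$ norm-one subgroup of $1+\mfP$ when $p\nmid n$ --- is also slightly more explicit than the paper's appeal to $\lambda\in\GU_1^1(\Lri,\lri)$, but amounts to the same thing.
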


\begin{proof}
  Again, the group on the right hand side is clearly contained in the
  group on the left hand side, and it remains to prove the reverse
  inclusion.  Let $g \in
  \Cen_{\SU_n(\Lri,\lri)}(\overline{\mathbf{x}})$.  By
  Lemma~\ref{lem:lift_general}, we find $h_1 \in
  \Cen_{\Mat_n(\Lri)}(\mathbf{x})$ such that $h_1 \equiv g$ modulo
  $\mfP$.

  We observe that $h_1^\circ\, h_1 \equiv 1$ modulo $\mfP$, and we carry
  out a `Hensel's Lemma type' argument to manufacture, inductively, a
  sequence $h_i$, $i \in \N$, in
  $\Cen_{\Mat_n(\Lri)}(\overline{\mathbf{x}})$ such that
  $$
  h_i^\circ \, h_i \equiv 1 \quad \text{and} \quad h_{i+1} \equiv h_i
  \quad \text{modulo $\mfP^i$}
  $$
  which, in the limit, yields $h = \lim_{i \to \infty} h_i \in
  \Cen_{\GU_n(\Lri,\lri)}(\mathbf{x})$ such that $h \equiv g$ modulo
  $\mfP$.  Similarly as in the proof of Corollary~\ref{cor:lift_spl},
  a modification by a scalar $\lambda \in \GU_1^1(\Lri,\lri)$ such
  that $\lambda^n = \det(h)^{-1}$ then yields the desired element
  $\lambda h \in \Cen_{\SU_n(\Lri,\lri)}(\mathbf{x})$ such that $h
  \equiv g$ modulo $\mfP$ or, equivalently, modulo
  $\SU_n^1(\Lri,\lri)$.

  Let $i \in \N$ and suppose we have determined $h_1, \ldots, h_i \in
  \Cen_{\Mat_n(\Lri)}(\mathbf{x})$ with the desired properties.  Since
  $\mathbf{x}^\circ = -\mathbf{x}$, the centraliser
  $\Cen_{\Mat_n(\Lri)}(\mathbf{x})$ is invariant under the
  involution~$\circ$.  Recall that $\pi$ is a uniformiser for $\Lri$,
  and put $h_{i+1} := h_i + \pi^i y_i$ where
  $$
  y_i := - 2^{-1} \pi^{-i} (h_i^\circ)^{-1} (h_i^\circ \, h_i - 1) \in
  \Cen_{\Mat_n(\Lri)}(\mathbf{x}).
  $$
  Then $h_{i+1} \equiv h_i$ modulo $\mfP^i$, and modulo $\mfP^{i+1}$
  one has
  \begin{align*}
    h_{i+1}^\circ\, h_{i+1} -1 & = (h_i^\circ\, h_i + \pi^i (h_i^\circ
    \, y_i + y_i^\circ \, h_i) + \pi^{2i} y_i^\circ \, y_i) -1 \\
    & \equiv (h_i^\circ \, h_i - 1) + \pi^i \big( (h_i^\circ \, y_i) +
    (h_i^\circ \, y_i)^\circ \big) \\
    & \equiv 0.
  \end{align*}
\end{proof}

\begin{lem}\label{lem:q-power}
  Suppose that $p > 2$ with $p \nmid n$.  Let $H$ be a Sylow pro-$p$
  subgroup of $\SL_n(\lri)$ or $\SU_n(\Lri,\lri)$, and accordingly let
  $N$ be equal to $\SL_n^1(\lri)$ or $\SU_n^1(\Lri,\lri)$.  Suppose
  that $H$ is saturable and that $N$ is saturable and potent.  Let
  $\theta \in \Irr(N)$.

  Then $I_H(\theta)$ corresponds to an $\lri$-Lie sublattice of the
  $\lri$-Lie lattice associated to $H$.  In particular, $\lvert
  I_H(\theta):N \rvert$ is a power of $q = \lvert \lri: \mfp \rvert$.
\end{lem}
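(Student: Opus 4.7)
The plan is to combine the Kirillov orbit method for the saturable potent pro-$p$ group $N$ with the $\lri$-linear identifications from diagram~\eqref{equ:diagram} to produce an explicit $\lri$-stable description of $\log I_H(\theta)$ inside the Lie lattice of $H$.

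As a preliminary, I would verify that $H$ itself corresponds, under the Lie correspondence, to an $\lri$-Lie sublattice $\mfh$ of $\spl_n(\lri)$ (respectively, $\su_n(\Lri,\lri)$). Up to conjugation, a Sylow pro-$p$ subgroup of $\SL_n(\lri)$ is the preimage of the unipotent upper-triangular group of $\SL_n(\F_q)$ and therefore corresponds under $\log$ to $\mfh = \mathfrak{u}(\lri) + \mfp\,\spl_n(\lri)$, where $\mathfrak{u}(\lri)$ denotes the strictly upper-triangular matrices in $\spl_n(\lri)$; this is manifestly $\lri$-stable. The unitary case is analogous, with $\spl_n$ replaced by $\su_n(\Lri,\lri)$ and $\mathfrak{u}$ replaced by the Lie algebra of an $\F_q$-rational Borel of $\SU_n(\F_{q^2},\F_q)$.

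Via the Kirillov orbit method, the character $\theta$ corresponds to a co-adjoint $N$-orbit $\Omega \subseteq \widehat{\mfn}$, and by~\eqref{equ:intertia} we have $I_H(\theta) = \Cen_H(\omega)\,N$ for any $\omega \in \Omega$. Hence it suffices to show that $\log\Cen_H(\omega)$ is an $\lri$-submodule of $\mfh$. Imitating the classical Kirillov identity $\Cen_N(\omega) = \exp(\Rad(\omega))$, I would establish the key claim
\begin{equation*}
\log\Cen_H(\omega) \;=\; \Rad_\mfh(\omega) \;:=\; \{\,y\in\mfh : \omega([y,z]) = 1 \text{ for all } z\in\mfn\,\}.
\end{equation*}
Using the representation of $\omega$ by an $\lri$-linear functional $\tilde\omega:\mfn\to\lri/\mfp^{n_0}$ furnished by Lemma~\ref{lem:dual}, the right-hand side coincides with $\{y\in\mfh : \tilde\omega([y,z])=0 \text{ for all } z\in\mfn\}$, which is visibly an $\lri$-Lie sublattice of $\mfh$. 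Granting the claim, $\log I_H(\theta) = \Rad_\mfh(\omega) + \mfn$ is an $\lri$-Lie sublattice of $\mfh$; and because $\mfn = \mfp\mfg$ satisfies $\mfp\mfh\subseteq\mfn\subseteq\log I_H(\theta)\subseteq\mfh$, the quotient $\log I_H(\theta)/\mfn$ is an $\F_q$-subspace of the $\F_q$-vector space $\mfh/\mfn$, yielding the claimed $q$-power index.

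The inclusion $\Rad_\mfh(\omega)\subseteq\log\Cen_H(\omega)$ is straightforward: if $\tilde\omega([y,\mfn])=0$, then induction on $k$, using that $\mfn$ is an ideal in $\mfh$, gives $\tilde\omega(\ad(y)^k z) = 0$ for all $k\geq 1$ and $z\in\mfn$, so $\omega \circ (\exp(-\ad(y))-1)\vert_\mfn$ is trivial and $\exp(y)\in\Cen_H(\omega)$. The main obstacle is the reverse inclusion. To handle it, I would exploit that $\Cen_H(\omega)$, being a closed subgroup of the saturable group $H$, is closed under $\Zp$-power maps, so $\exp(ty)\in\Cen_H(\omega)$ for every $t\in\Zp$. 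For each fixed $z\in\mfn$, this yields the identity
\begin{equation*}
\tilde\omega\!\left(\sum_{k\geq 1} \tfrac{(-t)^k}{k!}\,\ad(y)^k\, z\right) \;=\; 0 \quad\text{in } \lri/\mfp^{n_0}\quad\text{for every } t\in\Zp.
\end{equation*}
Evaluating at $t=p^j$ for successive $j$ and comparing $\mfp$-adic valuations of the linear and higher-order contributions should force $\tilde\omega([y,z])$ to vanish modulo $\mfp^{n_0}$, giving $y\in\Rad_\mfh(\omega)$. The delicate point is balancing the finite precision $n_0$ against the $\mfp$-adic sizes of the higher Lie-bracket iterates; an induction on $n_0$, anchored by the classical identity $\Cen_N(\omega) = \exp(\Rad(\omega))$ inside $N$, appears to be the cleanest route.
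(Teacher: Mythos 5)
Your approach is genuinely different from the paper's, and in some respects it is more transparent. The paper translates via the normalised Killing form $\kappa_0$ (permissible because $p\nmid n$) to the \emph{adjoint} action of $H$ on $\mfn$, writes $I_H(\theta)=\Cen_H(x+\mfp^l\mfn)N$, and then invokes the Lazard correspondence to identify $\log\Cen_H(x+\mfp^l\mfn)$ with the Lie-level centraliser $\mfc:=\Cen_\mfh(x+\mfp^l\mfn)$; the $\lri$-linearity of $\mfc$ and of $\mfn$ inside $\mfh$ then give the $q$-power index. You instead stay entirely in the co-adjoint picture, define $\Rad_\mfh(\omega)=\{y\in\mfh:\omega([y,\mfn])=1\}$, and read the $\lri$-linearity directly off the $\lri$-functional $\tilde\omega$ supplied by Lemma~\ref{lem:dual}. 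This avoids the Killing form and makes the $\lri$-structure visible without detour, which is a nice simplification of the formal skeleton, and your explicit identification of $\mfh$ with $\mathfrak{u}(\lri)+\mfp\,\spl_n(\lri)$ is more concrete than the paper's bare assertion that $\log H$ is an $\lri$-lattice.

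However, your proof has a genuine gap at exactly the point you flag as ``delicate'': the reverse inclusion $\log\Cen_H(\omega)\subseteq\Rad_\mfh(\omega)$. The sketched strategy --- expand $\Ad(\exp(ty))$ for $t\in\Zp$ and compare $\mfp$-adic valuations at $t=p^j$ --- does not obviously close. The underlying difficulty is that for a $\Zp$-analytic map $\phi$ with $\phi(\Zp)\subseteq M$ for a submodule $M$, it is \emph{not} generally true that the first-order coefficient lies in $M$ (consider $\phi(t)=t^p-t$ with $M=p\Zp$), so knowing $\tilde\omega\bigl((\exp(t\ad y)-1)z\bigr)=0$ for all $t\in\Zp$ does not a priori isolate the linear term $\tilde\omega([y,z])$. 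An argument at this precision would need to exploit the specific valuation structure of the saturable lattice $\mfh$ (the growth $w(\ad(y)^k z)\ge kw(y)+w(z)$ against $v_p(k!)$), and you have not supplied it. The paper circumvents the issue by quoting, as part of the saturable Lie correspondence, that group stabilisers for the (linear) adjoint action on $\mfn$ correspond to Lie-lattice stabilisers; if you prefer to remain in the co-adjoint picture you would be best served by first reducing to the adjoint statement through the Killing form --- as the paper does --- or by a genuine induction on the level $n_0$ anchored at the known identity $\Cen_N(\omega)=\exp(\Rad_\mfn(\omega))$, which you mention but do not carry out.
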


\begin{proof}
  First suppose that $H$ is a Sylow pro-$p$ subgroup of $\SL_n(\lri)$
  and that $N = \SL_n^1(\lri)$.  Since $N$ is saturable and potent and
  since $p \nmid n$, we can apply the Kirillov orbit method and use
  the translation via the normalised Killing form in order to describe
  $I_H(\theta)$ in terms of the adjoint action.  Let $\mfn =
  \spl_n^1(\lri)$ denote the $\lri$-Lie lattice associated to $N$.  In
  contrast to the discussion in
  Section~\ref{subsec:commuting_diagram}, we consider directly the
  adjoint action on $\mfn$, without shifting by a factor $\pi^{-1}$.
  Let $\omega^N$ be the co-adjoint orbit representing~$\theta$.  Then
  we have $I_H(\theta) = \Cen_H(x + \mfp^l \mfn) N$, where $l =
  \lev_\lri(\omega)$, $x + \mfp^l \mfn$ corresponds to $\omega$ via
  the normalised Killing form, and the centraliser is formed with
  respect to the adjoint action.  Since $H$ is saturable, it also
  corresponds to an $\lri$-Lie lattice $\mathfrak{h}$, and the
  centraliser $\Cen_H(x + \mfp^l \mfn)$ at the group level corresponds
  to the centraliser $\mathfrak{c} := \Cen_\mathfrak{h}(x + \mfp^l
  \mfn)$ at the Lie lattice level.  Since $\mathfrak{c} + \mfn$ and
  $\mfn$ are $\lri$-Lie sublattices of $\mathfrak{h}$, it follows that
  $\lvert I_H(\theta):N \rvert = \lvert \mathfrak{c} + \mfn : \mfn
  \rvert$ is a power of $q$.

  The argument for the second case, i.e.\ $H$ a Sylow pro-$p$ subgroup
  of $\SU_n(\Lri,\lri)$ and $N = \SU_n^1(\Lri,\lri)$, is similar.  But
  we need to justify that $I_H(\theta)$ can be described in terms of
  the adjoint action of $N$ on the $\lri$-Lie lattice $\mfn =
  \su_n^1(\Lri,\lri)$.  For this purpose we consider
  $\su_n(\Lri,\lri)$ as the subset of the $\Lri$-Lie lattice
  $\spl_n(\Lri)$, consisting of all elements $\mathbf{x}$ such that
  $\mathbf{x}^\circ + \mathbf{x} = 0$.  We claim that the normalised
  Killing form $\kappa_0$ on the $\Lri$-Lie lattice $\spl_3(\Lri)$
  restricts to a non-degenerate bilinear form
  $$ \kappa_0 \vert_{\su_n(\Lri,\lri) \times \su_n(\Lri,\lri)} :
  \su_n(\Lri,\lri) \times \su_n(\Lri,\lri) \rightarrow \lri.
  $$ Indeed, for any $\mathbf{x}, \mathbf{y} \in \su_n(\Lri,\lri)$ one
computes $\kappa_0(\mathbf{x},\mathbf{y})^\sigma =
\kappa_0(\mathbf{x}^\circ,\mathbf{y}^\circ) =
\kappa_0(-\mathbf{x},-\mathbf{y}) = \kappa_0(\mathbf{x},\mathbf{y})$
which implies $\kappa_0(\mathbf{x},\mathbf{y}) \in \lri$.  This shows
that $\kappa_0 \vert_{\su_n(\Lri,\lri)}$ maps into $\lri$.  Of course,
$\kappa_0 \vert_{\su_n(\Lri,\lri) \times \su_n(\Lri,\lri)}$ is
$\lri$-bilinear, and it remains to show that it is non-degenerate over
$\lri$.  Let $\mathbf{x} \in \su_n(\Lri,\lri)^*$.  We need to produce
$\mathbf{y} \in \su_n(\Lri,\lri)$ such that
$\kappa_0(\mathbf{x},\mathbf{y}) \in \lri^*$.  Since $\kappa_0$ is
non-degenerate on the ambient $\Lri$-Lie lattice $\spl_n(\Lri)$, we
find $\mathbf{z} \in \spl_n(\Lri)$ such that $\lambda :=
\kappa_0(\mathbf{x},\mathbf{z}) \in \Lri^*$.  Multiplying $\mathbf{z}$
by a suitable scalar, if necessary, we can arrange for $\lambda$ to be
such that $\lambda + \lambda^\sigma \not \equiv 0$ modulo $\mfP$.
(This is equivalent to the condition that the reduction of $\lambda$
modulo $\mfP$ does not have trace $0$ in the residue field extension
$\F_{q^2} \vert \F_q$.)  Set $\mathbf{y} := \mathbf{z} -
\mathbf{z}^\circ$.  Then $\mathbf{y}^\circ + \mathbf{y} = 0$ so that
$\mathbf{y} \in \su_n(\Lri,\lri)$, and $- \mathbf{x} =
\mathbf{x}^\circ$ implies
  $$
  \kappa_0(\mathbf{x},\mathbf{y}) = \kappa_0(\mathbf{x},\mathbf{z}) -
  \kappa_0(\mathbf{x},\mathbf{z}^\circ) =
  \kappa_0(\mathbf{x},\mathbf{z}) +
  \kappa_0(\mathbf{x}^\circ,\mathbf{z}^\circ) = \lambda +
  \lambda^\sigma \not \equiv 0
  $$
  modulo $\mfp = \lri \cap \mfP$, as wanted.
\end{proof}

\subsection{Arithmetic groups of type ${}^1 \! A_2$}
\label{subsec:abscissa_inner}
In this section we prove the assertion of Theorem~\ref{thmABC:lalu}
for arithmetic subgroups pertaining to inner forms of $A_2$.  After a
short general setup, covering both inner and outer forms of $A_2$, we
specialise to inner forms from Section~\ref{subsubsec:7.3.1} onward.
Outer forms are then treated in Section~\ref{subsec:abscissa_outer} by
similar techniques.

Let $\gfi$ be a number field, with ring of integers $\gri$.  Let $S$
be a finite set of places of $\gfi$, containing all the archimedean
ones.  Let $\mathbf{G}$ be a connected, simply-connected simple,
$\gfi$-defined algebraic group of type $A_2$.  As explained in
Appendix~\ref{subsec:algebraic/arithmetic_groups}, the classification
of semisimple groups over number fields gives a concrete description
of the group $\mathbf{G}(\gfi)$ of $\gfi$-rational points in terms of
central simple algebras, possibly equipped with an involution.  We
consider an $S$-arithmetic subgroup $\Gamma$ of $\mathbf{G}(\gfi)$.
This means that $\Gamma$ is commensurable to $\mathbf{G}(\gri_S) :=
\mathbf{G}(\gfi) \cap \GL_N(\gri_S)$, where implicitly one chooses an
embedding of $\mathbf{G}$ into $\mathbf{GL}_N$ for some $N \in \N$.
Suppose that $\Gamma$ has the Congruence Subgroup Property, i.e.\ that
its congruence kernel is finite, so that $\Gamma$ has polynomial
representation growth.

We are to show that the abscissa of convergence $\alpha(\Gamma)$ of
$\zeta_\Gamma(s)$ is equal to $1$.  Since $\alpha(\Gamma)$ is in fact
an invariant of the commensurability class of $\Gamma$, we may -- for
simplicity -- suppose that $\Gamma = \mathbf{G}(\gri_S)$.  By
\cite[Proposition~4.6]{LaLu08}, the zeta function $\zeta_\Gamma(s)$
thus admits an `Euler product' decomposition
$$
\zeta_\Gamma(s) = \zeta_{\mathbf{G}(\C)}(s)^{\lvert k : \Q \rvert}
\cdot \prod_{v \not \in S} \zeta_{\Gamma_v}(s),
$$ where $\Gamma_v$ is an open subgroup of $\mathbf{G}(\gri_v)$ for
every non-archimedean place $v$ and equal to $\mathbf{G}(\gri_v)$ for
almost all~$v$.  Note that $\mathbf{G}(\C) \cong \SL_3(\C)$ as
$\mathbf{G}$ is of type $A_2$.  The abscissa of convergence of each
archimedean factor $\zeta_{\mathbf{G}(\C)}(s)$ is thus known to be
$2/3$; see \cite[Theorem~5.1]{LaLu08}.  For any non-archimedean place
$v$, there is a finite extension $\Lri$ of $\lri = \gri_v$ such that
$\mathbf{G}(\Lri)$ is commensurable to $\SL_3(\Lri)$; thus
Theorem~\ref{thmABC:poles}, in particular
\eqref{equ:comparison_abscissae}, and Theorem~\ref{thmABC:SL3} show
that the abscissa of convergence of the local factor
$\zeta_{\Gamma_v}(s)$ is at most $2/3$.  Therefore, in order to prove
that the abscissa of convergence $\alpha(\Gamma)$ of the global zeta
function $\zeta_\Gamma(s)$ is equal to $1$, it suffices to specify a
suitable finite set $T$ of places of $\gfi$, with $S \subseteq T$,
such that the abscissa of convergence of the product $\prod_{v \not
\in T} \zeta_{\Gamma_v}(s)$ is equal to $1$.

\subsubsection{}\label{subsubsec:7.3.1}
From now on we specialise to the case where $\mathbf{G}$, and hence
$\Gamma$, is of type ${}^1 \! A_2$.  Let $T$ be the set of all places
$v$ of $\gfi$ such that one of the following holds: (i) $v \in S$,
(ii) $v$ is dyadic or triadic, i.e.\ $\gri_v$ has residue field
characteristic $2$ or $3$, (iii) $\Gamma_v$ is not isomorphic to
$\SL_3(\gri_v)$, (iv) there is a pro-$p$ subgroup $H$ of
$\SL_3(\gri_v)$ containing $\SL_3^1(\gri_v)$ which fails to be
saturable or potent.  We need to check that $T$ is finite.  Clearly,
conditions (i) and (ii) only exclude finitely many places.  Standard
theorems from the theory of orders in central simple algebras over
number fields show that (iii) only excludes finitely many valuations.
Indeed, $\mathbf{G}(\gfi)$ can be thought of as the norm-$1$ group of
a central simple algebra $A$ over $\gfi$, and $\mathbf{G}(\gri_S)$ as
the set of norm-$1$ elements in an $\gri_S$-order $\Omega$ of $A$.
For almost all $v$, the completion $A_v$ is split, hence isomorphic to
$\Mat_3(\gfi_v)$, and the completion $\Omega_v$ is a maximal order of
$A_v \cong \Mat_3(\gfi_v)$, hence isomorphic to $\Mat_3(\gri_v)$;
e.g.\ see \cite[Corollary~10.4, Corollary~11.6, Theorem~17.3]{Re75}.
Finally, we turn our attention to the property (iv).  For almost all
$v$ the field $\gfi_v$ has absolute ramification index $1$, and hence
\cite[III (3.2.7)]{La65} shows that the Sylow pro-$p$ subgroup of
$\SL_3(\gri_v)$ is saturable and therefore torsion-free.  Since
$\dim(\SL_3(\gri_v))$ is uniformly bounded by $8 \lvert k : \Q
\rvert$, Gonz\'alez-S\'anchez and Klopsch's work \cite{GoKl09} on
$p$-adic analytic groups of small dimension shows that the property
(iv) can be avoided by excluding finitely many $v$.

We claim that the abscissa of convergence of the product
$$
\phi(s) := \prod_{v \not \in T} \zeta_{\SL_3(\smallgri_v)}(s)
$$
is equal to $1$.  As shown in \cite{AvKlOnVo_pre_a},
%AKOV3: characters extend
one can compute via Clifford theory, for each $v \not \in T$, an
explicit formula for $\zeta_{\SL_3(\smallgri_v)}(s)$, starting from
our explicit formula \eqref{eq:starting_point} for the local
representation zeta function of the first principal congruence
subgroup $\SL_3^1(\gri_v)$.  As explained in
Remark~\ref{rem:other_paper}, we take a different approach and
manufacture, without relying on \cite{AvKlOnVo_pre_a}, %AKOV3: ???
sufficient approximations $\psi_v(s)$ of
$\zeta_{\SL_3(\smallgri_v)}(s)$ so that
\begin{itemize}
\item the abscissa of convergence of $\psi(s) := \prod_{v \not \in T}
  \psi_v(s)$ is the same as the one of $\phi(s)$,
\item this abscissa of convergence can be shown to
  be equal to $1$.
\end{itemize}
% This approach is preferable, if we regard it as a model for more
% general situations where results like \cite[Theorem~??]{AvKlOnVo_pre_a}
% are not available.\footnote{BK: Should make this point only once.}

Indeed, for each $v \not \in T$ there is a natural decomposition
\begin{equation}\label{equ:zeta_decomp}
  \zeta_{\SL_3(\smallgri_v)}(s) = \left( 1 +
    \zeta_{\SL_3(\smallgri_v)}^{\textup{triv}}(s) \right) +
  \zeta_{\SL_3(\smallgri_v)}^{\textup{reg}}(s) +
  \zeta_{\SL_3(\smallgri_v)}^{\textup{irreg}}(s)
\end{equation}
involving three `smaller' Dirichlet generating functions.  These
components enumerate representations which, after restriction to
$\SL_3^1(\gri_v)$, involve only the trivial representation, only
regular and only irregular representations respectively; recall the
terminology introduced in Section~\ref{subsec:sl3princ}.  First we
will prove that the abscissa of convergence of the product $\prod_{v
\not \in T} (1 + \zeta_{\SL_3(\smallgri_v)}^{\textup{triv}}(s))$ is
equal to $1$.  Lemma~\ref{lem:generating_series} and the trivial
observation
\begin{multline}\label{equ:zeta_prod_decom}
  1 + \zeta_{\SL_3(\smallgri_v}^{\textup{triv}}(s) +
  \zeta_{\SL_3(\smallgri_v)}^{\textup{reg}}(s) +
  \zeta_{\SL_3(\smallgri_v)}^{\textup{irreg}}(s) \ll \\ \left( 1 +
    \zeta_{\SL_3(\smallgri_v)}^{\textup{triv}}(s)
    \vphantom{\zeta_{\SL_3(\smallgri_v)}^{\textup{reg}}(s)} \right)
  \left( 1 + \zeta_{\SL_3(\smallgri_v)}^{\textup{reg}}(s) +
    \zeta_{\SL_3(\smallgri_v)}^{\textup{irreg}}(s) \right)
\end{multline}
then show that it suffices to prove that the abscissa of convergence
of the product $\prod_{v \in \mathcal{V}(\gfi) \setminus T} (1 +
\zeta_{\SL_3(\smallgri_v)}^{\textup{reg}}(s) +
\zeta_{\SL_3(\smallgri_v)}^{\textup{irreg}}(s))$ is less than or equal
to $1$.  For this purpose we will approximate the Dirichlet series
$\zeta_{\SL_3(\smallgri_v)}^{\textup{reg}}(s)$ and
$\zeta_{\SL_3(\smallgri_v)}^{\textup{irreg}}(s)$ by suitable rational
functions $\psi_v^{\textup{reg}}(s)$ and $\psi_v^{\textup{irreg}}(s)$
in $q_v$, $q_v^{-s}$ and $2^{-s}$, where $q_v = \lvert \gri_v : \mfp_v
\rvert$ denotes the size of the residue field $\gri_v / \mfp_v$
at~$v$.

\subsubsection{} The first factor on the right hand side of
\eqref{equ:zeta_prod_decom}, viz.\ $1 +
\zeta_{\SL_3(\smallgri_v)}^{\textup{triv}}(s)$, arises from
representations of $\SL_3(\gri_v)$ which are trivial on
$\SL_3^1(\gri_v)$.  Thus $1 +
\zeta_{\SL_3(\smallgri_v)}^{\textup{triv}}(s) =
\zeta_{\SL_3(\F_{q_v})}(s)$, the zeta function of the finite classical
group $\SL_3(\F_{q_v})$.

\begin{pro}\label{pro:first_product}
  The abscissa of convergence of the Euler product
  $$
  \prod_{v \not \in T} \left( 1 +
    \zeta_{\SL_3(\smallgri_v)}^{\textup{triv}}(s) \right) = \prod_{v
    \not \in T} \zeta_{\SL_3(\F_{q_v})}(s)
  $$
  is equal to $1$.
\end{pro}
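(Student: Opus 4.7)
The plan is to reduce the claim to a convergence-divergence analysis of $\sum_{v \not\in T} \eta_v(s)$, where
$\eta_v(s) := \zeta_{\SL_3(\F_{q_v})}(s) - 1 = \sum_{\chi\in\Irr(\SL_3(\F_{q_v}))\setminus\{1\}} \chi(1)^{-s}$.
Since each Euler factor is a Dirichlet polynomial and $\eta_v(s) \to 0$ as $q_v \to \infty$ for $\real(s) > 0$, absolute convergence of the Euler product on a right half-plane is equivalent to convergence of $\sum_v \eta_v(s)$ there. The task therefore decouples into an upper bound (for convergence beyond $\real(s) = 1$) and a lower bound (for divergence at $s = 1$).

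For the upper bound, I would invoke the character table of $\SL_3(\F_q)$ (derivable via Deligne--Lusztig theory, with the precise shape depending only on the congruence class of $q$ modulo $3$, for which we excluded the triadic places by construction of~$T$). The nontrivial characters fall into finitely many polynomial families: a bounded number of ``small'' characters of degree $\Theta(q^2)$ (notably the $(2,1)$-unipotent of degree $q^2+q$ and relatives), and $\Theta(q^2)$ further characters of degree $\Theta(q^3)$, obtained from Deligne--Lusztig induction along the split, quasi-split and anisotropic maximal tori. Summing over families yields an estimate of the form
\[
\eta_v(s) \;=\; O\bigl(q_v^{\,2 - 3\real(s)}\bigr) \;+\; O\bigl(q_v^{\,-2\real(s)}\bigr),
\]
uniformly in $v \not\in T$. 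Using the standard fact that $\sum_v q_v^{-\sigma}$ converges exactly for $\sigma > 1$ (following from the pole at $s=1$ of the Dedekind zeta function $\zeta_k(s)$), the first term converges for $\real(s) > 1$ and the second for $\real(s) > 1/2$, giving convergence of the full Euler product on $\real(s) > 1$.

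For the lower bound, I would use that $\SL_3(\F_q)$ has $\Theta(q^2)$ irreducible characters, each of degree at most $O(q^3)$ (in fact the Steinberg character realises the maximum). Hence for real $s = 1$,
\[
\eta_v(1) \;\geq\; \bigl(|\Irr(\SL_3(\F_{q_v}))| - 1\bigr)\cdot \bigl(\max_\chi \chi(1)\bigr)^{-1} \;\gg\; q_v^{-1},
\]
with an implicit constant independent of $v$. Since $\sum_v q_v^{-1}$ diverges, the series $\sum_v \eta_v(1)$ diverges, and thus the abscissa of convergence is at least $1$. Together with the upper bound, this pins the abscissa at exactly $1$.

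The main technical point to verify is the uniform polynomial count that $\SL_3(\F_q)$ has $\Theta(q^2)$ irreducible characters, with only $O(1)$ of them having degree below $c q^3$. This is where one must be careful: the structure of the character table depends on the congruence class of $q$ mod $3$ (because of the behaviour of the centre of $\SL_3$), but on each such class the character degrees are given by fixed polynomials in~$q$, and the excluded finite set~$T$ already absorbs the triadic anomaly. With this uniformity in hand, the two estimates above close the argument.
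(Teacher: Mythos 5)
Your overall strategy coincides with the paper's: reduce the Euler product to the convergence behaviour of $\sum_{v\not\in T}\eta_v(s)$ with $\eta_v(s)=\zeta_{\SL_3(\F_{q_v})}(s)-1$, extract the dominant terms from the character theory of $\SL_3(\F_q)$, and compare against $\sum_v q_v^{-\sigma}$. The paper does this by recording the full character degree formulae (split by $q\bmod 3$) and reading off that $\eta_v$ is asymptotically governed by $q_v^{1-2s}+q_v^{2-3s}$; for the lower bound it invokes Chebotarev to produce enough degree-one places. Your lower bound, using the total class number $\Theta(q^2)$ divided by the maximal degree $\Theta(q^3)$ and the divergence of $\sum_v q_v^{-1}$ via the Dedekind zeta pole, is a clean and slightly more elementary route to the same estimate $\eta_v(1)\gg q_v^{-1}$.

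However, your upper-bound estimate contains a factual error. You claim there is only ``a bounded number'' of irreducible characters of $\SL_3(\F_q)$ of degree $\Theta(q^2)$, and hence that the contribution of this family is $O(q_v^{-2\real(s)})$, converging already for $\real(s)>1/2$. This is wrong: the explicit character table (and indeed the formulae in the paper's proof) shows that for $q\equiv_3 1$ there is one character of degree $q^2+q$ and $q-2$ characters of degree $q^2+q+1$, i.e.\ $\Theta(q)$ characters of quadratic degree, and similarly for $q\equiv_3 2$. The correct contribution from this family is therefore $O(q_v^{1-2\real(s)})$, not $O(q_v^{-2\real(s)})$, and its series over places converges precisely for $\real(s)>1$, not for $\real(s)>1/2$. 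This miscount happens not to affect your conclusion, because the other dominant family of $\Theta(q^2)$ characters of degree $\Theta(q^3)$ already forces $\real(s)>1$; but the displayed estimate and the stated convergence threshold for the second term are incorrect, and they understate the size of the subleading term by a factor of $q_v$. You should replace the claimed $O(q_v^{-2\real(s)})$ with $O(q_v^{1-2\real(s)})$ (matching the paper's $q^{1-2s}$) and note that both families give abscissa $1$.
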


\begin{proof}
  Let $q$ be a prime power, not divisible by $3$.  Explicit formulae
  for the zeta function of the finite classical group $\SL_3(\F_q)$
  can be computed, for instance, using Deligne-Lusztig Theory; cf.\
  \cite{AvKlOnVo_pre_a} and \cite{LuXX}.
%AKOV3: specify theorem number
One needs to separate the cases $q \equiv_3 1$ and $q \equiv_3 2$.
  Indeed, for $q \equiv_3 1$ one has
  \begin{align*}
    \zeta_{\SL_3(\F_q)}(s) = 1 & + 1 \cdot (q^2+q)^{-s} + (q-2) \cdot
    (q^2+q+1)^{-s} \\
    & + 6 \cdot ((q+1)(q-1)^2/3)^{-s} + 3 \cdot
    ((q^2+q+1)(q+1)/3)^{-s} \\
    & + 3^{-1} (q+2)(q-1) \cdot ((q+1)(q-1)^2)^{-s} +
    2^{-1} (q^2-q)  \cdot (q^3-1)^{-s} \\
    & + 1 \cdot q^{-3s} + (q-2) \cdot (q^3+q^2+q)^{-s} \\
    & + 6^{-1} (q-1)(q-4) \cdot ((q^2+q+1)(q+1))^{-s},
  \end{align*}
  and for $q \equiv_3 2$ one has
  \begin{align*}
    \zeta_{\SL_3(\F_q)}(s) = 1 & + 1 \cdot (q^2+q)^{-s} + (q-2) \cdot
    (q^2+q+1)^{-s} \\
    & + 3^{-1} (q^2+q) \cdot ((q+1)(q-1)^2)^{-s} + 2^{-1} (q^2-q)
    \cdot
    (q^3-1)^{-s} \\
    & + 1 \cdot q^{-3s} + (q-2) \cdot (q^3+q^2+q)^{-s} \\
    & + 6^{-1}(q-2)(q-3) \cdot ((q^2+q+1)(q+1))^{-s}.
  \end{align*}
%Write $t_0 := q^{-s}$
%and $t_i := (q^i-1)^{-s}$.  Then for $q \equiv_3 1$ one has
%\begin{align*}
%  \zeta_{\SL_3(\F_q)}(s) & = 1 + 1 \cdot t_2 t_0 t_1^{-1} + (q-2)
%  \cdot t_3 t_1^{-1} + 6 \cdot t_2 t_1
%  3^s + 3 \cdot t_3 t_2 t_1^{-2} 3^s \\
%  & \qquad + (q+2)(q-1) 3^{-1} \cdot t_2 t_1 +
%  q(q-1) 2^{-1} \cdot t_3 + 1 \cdot t_0^3 \\
%  & \qquad + (q-2) \cdot t_3 t_0 t_1^{-1} + (q-1)(q-4) 6^{-1} \cdot
%  t_3 t_2 t_1^{-2}.
%\end{align*}
%For $q \equiv_3 2$ one has
%\begin{align*}
%  \zeta_{\SL_3(\F_q)}(s) & = 1 + 1 \cdot t_2 t_0 t_1^{-1} + (q-2)
%  \cdot t_3 t_1^{-1} +
%  q(q+1)3^{-1} \cdot t_2 t_1 \\
%  & \qquad + q(q-1)2^{-1} \cdot t_3 + 1 \cdot t_0^3 + (q-2)
%  \cdot t_3 t_0 t_1^{-1} \\
%  & \qquad + (q-2)(q-3)6^{-1} \cdot t_3 t_2 t_1^{-2}.
%\end{align*}

%Switching back temporarily to the global perspective,
  Based on these formulae we can prove our assertion.  Indeed, the
  degree $\lvert k:\Q \rvert$ provides a uniform upper bound for the
  number of valuations $v \not \in T$ which prolong any fixed $p$-adic
  valuation on $\Q$.  Moreover, for each $v$ which prolongs the
  $p$-adic valuation associated to a prime $p$ we have $p \leq q_v =
  \lvert \gri_v:\mfp_v \rvert$, and the Chebotarev Density Theorem
  guarantees that for a positive proportion of primes $p$, i.e.\ for a
  set of primes $p$ of positive Dirichlet density, there exists a
  prolongation $v$ such that $p = q_v$.  In view of the explicit
  formulae for $\zeta_{\SL_3(\F_q)}(s)$ recorded above, this implies
  that the abscissa of convergence of the product $\prod_{v \not \in
  T} \zeta_{\SL_3(\F_{q_v})}(s)$ is equal to the abscissa of
  convergence of the sum $\sum_p (p^{1-2s} + p^{2-3s})$, which is
  equal to~$1$.
\end{proof}

\subsubsection{}\label{subsubsec:7.3.3}
It remains to bound the abscissa of convergence of the second product
on the right hand side of \eqref{equ:zeta_prod_decom}.

\begin{pro}\label{pro:second_product}
  The abscissa of convergence of the Euler product
  $$
  \prod_{v \not \in T} \left( 1+
    \zeta_{\SL_3(\smallgri_v)}^{\textup{reg}}(s) +
    \zeta_{\SL_3(\smallgri_v)}^{\textup{irreg}}(s) \right)
  $$
  is less than or equal to $1$.
\end{pro}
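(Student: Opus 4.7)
The plan is to apply the Clifford-theoretic machinery of Section~\ref{subsec:clifford} to the pair $N_v := \SL_3^1(\gri_v) \unlhd G_v := \SL_3(\gri_v)$ for each $v \not\in T$, starting from the explicit formulae \eqref{equ:series-factor} for $\zeta_{\SL_3^1(\gri_v)}^{\textup{reg}}(s)$ and $\zeta_{\SL_3^1(\gri_v)}^{\textup{irreg}}(s)$. By \eqref{equ:hochheben}, the sum $1 + \zeta_{G_v}^{\textup{reg}}(s) + \zeta_{G_v}^{\textup{irreg}}(s)$ decomposes into translation factors $\theta(1)^{-s}\lvert G_v : I_{G_v}(\theta) \rvert^{-1-s} \zeta_{G_v,\theta}(s)$ indexed by $G_v$-orbits $\theta$ in $\widehat{N_v}$ whose representing functional has either trivial or non-trivial regular/irregular reduction; the trivial case contributes~$1$.

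First, I would stratify the non-trivial orbits according to the $\SL_3(\F_{q_v})$-conjugacy type of the reduction $\overline{x} \in \spl_3(\F_{q_v})^*$, obtained via the normalised Killing form as in Section~\ref{subsec:commuting_diagram}, with orbit counts and centralisers read off from Tables~\ref{table1} and~\ref{table2}. For a regular reduction, Corollary~\ref{cor:lift_spl} identifies $I_{G_v}(\theta)/N_v$ with $\Cen_{\SL_3(\F_{q_v})}(\overline{x})$, which is abelian, so by Lemma~\ref{lem:translation-factor_neu}(1) $\theta$ extends and $\zeta_{G_v,\theta}(s) = \zeta_{I_{G_v}(\theta)/N_v}(s)$, a finite sum with all character degrees $1$. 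For an irregular reduction, Lemma~\ref{lem:q-power} guarantees that $\lvert I_{G_v}(\theta):N_v \rvert$ is a power of $q_v$, and a case distinction with Lemma~\ref{lem:translation-factor_neu} bounds $\zeta_{G_v,\theta}(s)$ either by $\zeta_{I_{G_v}(\theta)/N_v}(s)$ when $\theta$ extends, or by $\lfloor q_v^{-2}\lvert I_{G_v}(\theta):N_v \rvert \rfloor\, q_v^{-s}$ otherwise.

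Feeding these bounds back into \eqref{equ:hochheben} and combining with the series factors from \eqref{equ:series-factor}, the result is a majorisation of $1 + \zeta_{G_v}^{\textup{reg}}(s) + \zeta_{G_v}^{\textup{irreg}}(s)$ by a rational function $\psi_v(s)$ in $q_v$, $q_v^{-s}$ and $2^{-s}$. Using Lemma~\ref{lem:generating_series}, it then suffices to bound the abscissa of $\prod_{v \not\in T} \psi_v(s)$. Each factor decomposes as a polynomial correction times $(1 - q_v^{1-2s})^{-c_1}(1 - q_v^{2-3s})^{-c_2}$ with bounded exponents $c_i$; no other poles can arise because the inertia indices and character degrees contributing to regular-type orbits are of order $q_v^3$ while $\theta(1)$ is of order $q_v$. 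The Chebotarev-density argument used in the proof of Proposition~\ref{pro:first_product} then reduces the global abscissa to that of $\sum_p (p^{1-2s} + p^{2-3s})$, which equals~$1$.

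The main obstacle is the irregular stratum, since neither the structure of $I_{G_v}(\theta)/N_v$ nor the extendibility of $\theta$ is explicit there: Corollary~\ref{cor:lift_spl} fails because the minimal polynomial of $\overline{x}$ need not equal its characteristic polynomial, and the series factor for the irregular part in \eqref{equ:series-factor} is of higher degree in $q_v^{-s}$ than the regular one. The way around this is to accept the crude but uniform bound from Lemma~\ref{lem:translation-factor_neu}(2), which introduces only a factor of $q_v^{-s}$ (and at worst a $2^{-s}$ from pairings of non-extending characters) beyond the Igusa-type denominator already present in \eqref{equ:series-factor}; this loses an explicit formula but preserves the inequality $\alpha \leq 1$, which is all that is required.
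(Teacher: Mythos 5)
Your overall strategy — stratify by the $\GL_3(\F_{q_v})$-type of $\overline{x}\in\spl_3(\F_{q_v})$ using Tables~\ref{table1} and~\ref{table2}, combine Clifford-theoretic translation factors with the series factors from~\eqref{equ:series-factor}, then invoke Lemma~\ref{lem:generating_series} and a density argument — is indeed the paper's. But there is a genuine gap in the treatment of type~3, and a smaller but real error for type~1.

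\textbf{Type 3 is the serious problem.} Here the centraliser in Table~\ref{table2} is $\GL_2(\F_{q_v})$, so the inertia index $\lvert I_{G_v}(\theta):N_v\rvert$ can be as large as $\approx q_v^4$. Your proposal takes the worst-case inertia datum (from Table~\ref{table2}) and multiplies by the full irregular series factor $(1+q_v^{4-2s})(1-q_v^{1-2s})^{-1}(1-q_v^{2-3s})^{-1}$. With $\lvert G_v:I_{G_v}(\theta)\rvert\approx q_v^4$ and $\zeta_{G_v,\theta}(s)\ll\zeta_{\GL_2(\F_{q_v})}(s)\approx q_v+q_v^{2-s}$ (or the comparable bound from Lemma~\ref{lem:translation-factor_neu}(2)), the contribution contains a term of order $q_v^{6-6s}$, indeed $q_v^{7-7s}$, divided by $(1-q_v^{1-2s})(1-q_v^{2-3s})$. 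Summing over primes $p=q_v$ then requires $\real(s)>7/6$ (resp.\ $8/7$), so this majorisation does \emph{not} yield $\alpha\leq 1$. The crude bound from Lemma~\ref{lem:translation-factor_neu}(2) alone cannot save this: the point is that pairing the worst-case inertia size with the worst-case series factor fails. The proof in Section~\ref{subsubsec:7.3.4} rests on an interplay you have missed: when $I_{G_v}(\theta)/N_v$ is large (isomorphic to $\GL_2(\F_{q_v})$), the representative $x+\mfp^n\mfn$ lies on the variety $\mathcal{W}_1$ modulo $\mfp^n$ at every level, so the relevant series factor collapses to $(1-q_v^{1-2s})^{-1}$ — the $q_v^{4-2s}$ term is absent; conversely, when the $q_v^{4-2s}$ term is present, the inertia quotient is bounded by $\approx q_v^2$. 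Establishing this requires the analysis via the dimension of the auxiliary space $V=(\Cen_A(x+\mfp^n\mfn)+\mfp A)/\mfp A$ and the elementary divisors of $\Cen_\mfn(x+\mfp^n\mfn)$; it is not a cosmetic refinement but the step that makes the estimate close.

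\textbf{Type 1 is a smaller gap.} You claim that for any regular reduction the abelianness of $I_{G_v}(\theta)/N_v$ forces $\theta$ to extend via Lemma~\ref{lem:translation-factor_neu}(1). That lemma, however, has extension to $P$ as a \emph{hypothesis}, and the standing criterion from~\cite[Theorem~19.13]{Hu98} requires a \emph{cyclic} Sylow $p$-subgroup, not merely an abelian quotient. For type~1 the Sylow $p$-subgroup of the inertia quotient is $C_p^r\times C_p^r$ (with $q=p^r$), which is not cyclic for $r>1$, and since $\lvert P:N\rvert=q^2$ the contrapositive of Lemma~\ref{lem:translation-factor_neu}(2) gives no information either. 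The paper accordingly treats both sub-cases 1(a) and 1(b), and your proposal must do the same. Your reasoning is sound for types 4a--c (tame, $P=N$) and type~5 (where $\lvert P:N\rvert=q<q^2$ forces extension by the contrapositive of Lemma~\ref{lem:translation-factor_neu}(2)).
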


The proof of this proposition will occupy the remainder of the current
Section~\ref{subsec:abscissa_inner}.  By
Lemma~\ref{lem:generating_series}, it suffices to find, for all $v
\not \in T$, approximations $\psi_v^{\textup{reg}}(s)$ and
$\psi_v^{\textup{irreg}}(s)$ so that, locally,
$$
\zeta_{\SL_3(\smallgri_v)}^{\textup{reg}}(s) \ll
\psi_v^{\textup{reg}}(s) \quad \text{and} \quad
\zeta_{\SL_3(\smallgri_v)}^{\textup{irreg}}(s) \ll
\psi_v^{\textup{irreg}}(s)
$$
and, globally, the abscissa of convergence of the sum $\sum_{v \not
  \in T} \left( \psi_v^{\textup{reg}}(s) +
  \psi_v^{\textup{irreg}}(s) \right)$ is at most $1$.

\begin{table}
  \centering
  \caption{Adjoint orbits in $\spl_3(\mathbb{F}_q)$ under
    the action of $\GL_3(\F_q)$}
  \label{table1}
  \begin{tabular}{|l|l|l|l|l|}
    \hline
    type &  & number of orbits & size of each orbit & total
    number \\
    \hline\hline
    0 & & $1$ & $1$ & $1$ \\
    1 & reg.\ & $1$ & $(q^3-1)(q^2-1)q$ & $\approx q^6$ \\
    2 & irreg.\ & $1$ & $(q^3-1)(q+1)$ & $\approx q^4$ \\
    3 & irreg.\ & $q-1$ & $(q^2+q+1)q^2$ & $\approx q^5$ \\
    4a & reg.\ & $(q-1)(q-2)/6$ & $(q^2+q+1)(q+1)q^3$ & $\approx q^8$ \\
    4b & reg.\ & $(q-1)q/2$ & $(q^3-1)q^3$ & $\approx q^8$ \\
    4c & reg.\ & $(q^2-1)/3$ & $(q+1)(q-1)^2q^3$ & $\approx q^8$ \\
    5 & reg.\ & $(q-1)$ & $(q^3-1)(q+1)q^2$ & $\approx q^7$ \\
    \hline
  \end{tabular}
\end{table}

\begin{table}
  \centering
  \caption{Centralisers in $\SL_3(\F_q)$ of elements of
    $\spl_3(\mathbb{F}_q)$ where $q = p^r$}
  \label{table2}
  \begin{tabular}{|l|l|l|}
    \hline
    type & & centraliser in $\SL_3(\F_q)$ \\
    \hline\hline
    0 & & $\SL_3(\F_q)$ \\
    1 & reg.\ & $\mu_3(\F_q) \times \F_q^+ \times \F_q^+ \cong
    C_{\gcd(q-1,3)} \times C_p^r \times C_p^r$ \\
    2 & irreg.\ & $\F_q^* \ltimes \mathcal{H}(\F_q) \cong C_{q-1}
    \ltimes \left( (C_p^r \times C_p^r) \ltimes C_p^r \right)$ \\
    3 & irreg.\ & $\GL_2(\F_q)$ \\
    4a & reg.\ & $\F_q^* \times \F_q^* \cong C_{q-1} \times C_{q-1}$ \\
    4b & reg.\ & $\F_{q^2}^* \cong C_{q^2 -1}$ \\
    4c & reg.\ & $\ker(N_{\F_{q^3} \vert \F_q}) \cong C_{q^2+q+1}$
    \\
    5 & reg.\ & $\F_q^* \times \F_q^+ \cong C_{q-1} \times C_p^r$ \\
    \hline
  \end{tabular}
\end{table}

In order to apply approximative Clifford theory, we require an
overview of the elements in $\spl_3(\mathbb{F}_q)$ up to the adjoint
action of the group $\GL_3(\F_q)$.  We distinguish eight different
types, labelled $0$, $1$, $2$, $3$, $4$a, $4$b, $4$c,~$5$.  The total
number of elements of each type and the isomorphism types of their
centralisers in $\SL_3(\F_q)$ are summarised in Tables~\ref{table1}
and~\ref{table2}; see Appendix~\ref{sec:aux_sl3} for a short
discussion.  In Table~\ref{table2}, $\mu_3(\F_q)$ is the group of
third roots of unity in $\F_q^*$, we denote by $\F_q^+$ the additive
group of the field $\F_q$, we write $\mathcal{H}(\F_q)$ for the
Heisenberg group over $\F_q$, and $\ker(N_{\F_{q^3} \vert \F_q})$ is
the multiplicative group of elements of norm $1$ in $\F_{q^3} \vert
\F_q$.

Let $v$ be a place of $\gfi$ with $v \not \in T$.  The series
$\psi_v^{\textup{reg}}(s)$ is to approximate the second summand in
\eqref{equ:zeta_decomp}.  It splits into five smaller parts
$$
\psi_v^{\textup{reg}}(s) = \psi_v^{\text{$1$}}(s) +
\psi_v^{\text{$4$a}}(s) + \psi_v^{\text{$4$b}}(s) +
\psi_v^{\text{$4$c}}(s) + \psi_v^{\text{$5$}}(s)
$$
which correspond -- via the diagram \eqref{equ:diagram} -- to elements
of types $1$, $4$a, $4$b, $4$c and $5$ in the finite Lie algebra
$\spl_3(\mathbb{F}_q)$.  Each of these summands will be a rational
function in $q_v$, $q_v^{-s}$ and $2^{-s}$, which we derive using
Clifford theory.  Corollary~\ref{cor:lift_spl} shows that for elements
of types $1$, $4$a, $4$b, $4$c and $5$ the centralisers in
$\SL_3(\F_{q_v})$ are isomorphic to the corresponding inertia group
quotients.  We also remark that types $4$a, $4$b and $4$c are somewhat
easier to deal with than the rest, since the corresponding inertia
group quotients are `tame'; they are cyclic of order coprime to $p$.
This implies that characters of these types extend from
$\SL_3^1(\gri_v)$ to their respective inertia groups; cf.\
Lemma~\ref{lem:translation-factor_neu}.

% As remarked above, this is also the case in the other cases
% (cf.~\cite[Theorem (char. extension to inertia)]{AvKlOnVo_pre_a}, but
% we do not rely on this fact here.

The series $\psi_v^{\textup{irreg}}(s)$ is to approximate the third
summand in \eqref{equ:zeta_decomp}.  It splits into two smaller parts
$$
\psi_v^{\textup{irreg}}(s) = \psi_v^{\text{$2$}}(s) +
\psi_v^{\text{$3$}}(s)
$$
which correspond -- via the diagram \eqref{equ:diagram} -- to elements
of types $2$ and $3$ in $\spl_3(\mathbb{F}_{q_v})$.  As before, each
of these summands will be a rational function in $q_v$, $q_v^{-s}$ and
$2^{-s}$, obtained by means of Clifford theory.
Corollary~\ref{cor:lift_spl} does not apply to elements of types $2$
and $3$ and the inertia group quotients are, in fact, frequently
smaller than the recorded centralisers in $\SL_3(\F_{q_v})$.

For every prime $p$ there are at most $\lvert \gfi : \Q \rvert$
prolongations $v$ to $\gfi$ of the $p$-adic valuation on $\Q$.
Moreover, for any prolongation $v$ of the $p$-adic valuation
associated to a prime $p$, the size $q_v$ of the corresponding residue
field satisfies $p \leq q_v \leq p^{\lvert \gfi : \Q \rvert}$.
Therefore, in order to show that the abscissa of convergence of the
sum $\sum_{v \not \in T} \left( \psi_v^{\textup{reg}}(s) +
\psi_v^{\textup{irreg}}(s) \right)$ is less than or equal to~$1$, it
will be enough to show that for each of the rational functions
$f(q,q^{-s}, 2^{-s})$ in $q$, $q^{-s}$ and $2^{-s}$ which we derive
below to bound the contributions from representations of the various
types and for each fixed exponent $t$ in the range $1 \leq t \leq
\lvert k:\Q \rvert$, the individual series $\sum_q f(q,q^{-s},2^{-s})$
converges for $\real(s) > 1$, where $q$ runs over all prime powers
$p^t$ with exponent $t$.  In practice, it will be enough to consider
the `worst' case $t=1$, i.e.\ $q$ running over all primes.

\subsubsection{}\label{subsubsec:7.3.4} We now work out the details
case by case.  Let $v$ be a place of $\gfi$ with $v \not \in T$.  In
considering the local situation at $v$, it is convenient to adopt a
simplified notation which suppresses explicit references to $v$.  We
write $\lri := \gri_v$, $\mfp := \mfp_v$ for the maximal ideal and
$\F_q := \lri / \mfp$ for the residue field, where $q := q_v$.
Furthermore, we write $\psi_\lri^{\textup{reg}}(s) :=
\psi_v^{\textup{reg}}(s)$, $\psi_\lri^{\textup{irreg}}(s) :=
\psi_v^{\textup{irreg}}(s)$, $\psi_\lri^{\text{$1$}}(s) :=
\psi_v^{\text{$1$}}(s)$, etc.

Set $G := \SL_3(\lri)$, $N := \SL_3^1(\lri)$ and $\gamma(q) := q^{-8}
\lvert G : N \rvert = (1-q^{-3})(1-q^{-2})$.  We use the generous
bound $\gamma(q) > 2^{-1}$.  In each case we need to multiply the
approximate contribution number from the last column of
Table~\ref{table1}, an approximation of the translation factor $\lvert
G:I_G(\theta)\rvert^{-1-s} \zeta_{G,  \theta}(s)$, in accordance
with \eqref{equ:hochheben}, and (an approximation of) the appropriate
series factor, appearing on one of the right hand sides of
\eqref{equ:series-factor}.  In the case of regular representations we
use the precise translation factor $(1- q^{2-3s})^{-1}$; for irregular
representations we use the generous estimate
$$ \frac{1 - q^{1-2s} - q^{2-3s} + q^{4-2s}}{(1 - q^{1-2s}) (1 -
  q^{2-3s})} \ll \frac{1 + q^{4-2s}}{(1 - q^{1-2s}) (1 - q^{2-3s})}.
$$
Lemma~\ref{lem:translation-factor_neu} plays a key role in deriving
a suitable estimate of the term $\zeta_{G,  \theta}(s)$.  We deal
with the different cases arising from Table~\ref{table1} in the
following order: $4$a, $4$b, $4$c, $5$, $2$, $1$, $3$.

\medskip

\noindent
\textit{Type $4$\textup{a}.}  According to Table~\ref{table1}, the
number of elements of type $4$a is bounded by~$q^8$.  Let $\theta \in
\Irr(N)$ be one of the representations of type $4$a.  Then, according
to Table~\ref{table2} and Corollary~\ref{cor:lift_spl}, the inertia
quotient is $I_G(\theta)/N \cong C_{q-1} \times C_{q-1}$.  Hence, by
Lemma~\ref{lem:translation-factor_neu}~(1), we can approximate
$\zeta_{G,  \theta} = \zeta_{I_G(\theta)/N}(s) = (q-1)^2$ by $q^2$
and bound $\lvert G : I_G(\theta) \rvert$ from below by $\gamma(q)q^6
> 2^{-1} q^6$.  In view of Remark~\ref{rem:translation_factors}, this
gives the approximate translation factor $q^2 \cdot (2^{-1}
q^6)^{-1-s} = 2^{1+s} q^{-4-6s}$.  The series factor is
$(1-q^{2-3s})^{-1}$.  Altogether elements of type $4$a contribute to
$\psi_\lri^{\textup{reg}}(s)$ with a summand
\begin{equation*}
  \psi_\lri^{\text{$4$a}}(s) = q^8 \cdot 2^{1+s} q^{-4-6s}
  \cdot (1-q^{2-3s})^{-1} = 2^{1+s} q^{4-6s} (1-q^{2-3s})^{-1}.
\end{equation*}
The sum $\sum_q 2^{1+s} q^{4-6s} (1-q^{2-3s})^{-1}$, where $q$ runs
over all primes, converges for $s \in \C$ with $\real(s) > 5/6$,
and hence for $s \in \C$ with $\real(s)>1$, as wanted.

\smallskip

\noindent
\textit{Types $4$\textup{b} and $4$\textup{c}.} The argument is very
similar to the one for type $4$\textup{a}.

\smallskip

\noindent
\textit{Type $5$.}  According to Table~\ref{table1}, the number of
elements of type $5$ is bounded by~$q^7$.  Let $\theta \in \Irr(N)$ be
one of the representations of type $5$.  Then according to
Table~\ref{table2} and Corollary~\ref{cor:lift_spl} the inertia group
quotient is $I_G(\theta)/N \cong C_{q-1} \times C_p^r$.  Let $P/N$
denote the Sylow-$p$ subgroup of $I_G(\theta)/N$, contained in a
Sylow-$p$ subgroup $H/N$ of $G/N$, and recall that our definition of
$T$ guarantees that the pro-$p$ groups $N$, $P = I_H(\theta)$ and $H$
are saturable and potent.  By Lemma~\ref{lem:q-power}, the saturable
Lie lattice corresponding to $P$ is an $\lri$-Lie
lattice. %BK on saturable Lie lattice: Someone should double-check this independently.}
Thus, by Corollary~\ref{cor:powers_of_q}, the character degrees of $P$
and $N$ are powers of $q$.  By Lemma~\ref{lem:translation-factor_neu},
$\theta$ extends to $P$ so that $\zeta_{G,\theta}(s) = (q-1)q \ll
q^2$.  We bound $\lvert G : I_G(\theta) \rvert$ from below by
$\gamma(q) q^6 > 2^{-1}q^6$.  In view of
Remark~\ref{rem:translation_factors}, this gives the approximate
translation factor $q^2 \cdot (2^{-1} q^6)^{-1-s} = 2^{1+s}
q^{-4-6s}$.  The series factor is $(1-q^{2-3s})^{-1}$.  Altogether
elements of type $5$ contribute to $\psi_\lri^{\textup{reg}}(s)$ with
a summand
\begin{equation*}
  \psi_\lri^{\text{$5$}}(s) = q^7 \cdot 2^{1+s} q^{-4-6s} \cdot
  (1-q^{2-3s})^{-1} = 2^{1+s} q^{3-6s} (1-q^{2-3s})^{-1}.
\end{equation*}
The sum $\sum_q 2^{1+s} q^{3-6s} (1-q^{2-3s})^{-1}$, where $q$ runs
over all primes, converges for $s \in \C$ with $\real(s) > 4/6 =
2/3$, and hence for $s \in \C$ with $\real(s) > 1$, as wanted.

\smallskip

\noindent \textit{Type $2$.} According to Table~\ref{table1} there are
less than $2q^4$ contributing elements of type $2$.  Let $\theta \in
\Irr(N)$ be one of the representations of type $2$.  Without
determining the precise inertia group quotient, it suffices for our
estimate to know that $I_G(\theta)/N$ is a subgroup of the centraliser
group modulo $\mfp$, as described in Table~\ref{table2}.  This
centraliser is an extension of the Heisenberg group
$\mathcal{H}(\F_q)$ over $\F_q$ by the multiplicative group $\F_q^*
\cong C_{q-1}$, where the top group acts with a kernel of size
$\gcd(q-1,3)$ and with orbits of equal length $(q-1)/\gcd(q-1,3)$ on
the non-trivial elements of
$\mathcal{H}(\F_q)/[\mathcal{H}(\F_q),\mathcal{H}(\F_q)]$; see
\eqref{equ:centr_type_2}.
% (Indeed, the abscissa of convergence of our approximated Euler product
% will go down if the inertia group quotient is smaller.)
In order to make the argument independent of whether $\theta$ extends
to $I_G(\theta)$ or not, we include both possibilities in our
approximation.

First suppose that $\theta$ extends to $I_G(\theta)$.  Then we compute
the approximation as follows.  If the inertia group quotient contains
the full Heisenberg group $\mathcal{H}(\F_q)$, then it is isomorphic
to an extension of the Heisenberg group by a cyclic group of order $m
\mid (q-1)$.  Put $m_0 := \gcd(m,3)$ and $m_1 := m/m_0$.  The zeta
function of the Heisenberg group is known to be
$\zeta_{\mathcal{H}(\F_q)}(s) = q^2 + (q-1) q^{-s}$; see, for
instance, \cite{Is07}. A straightforward application of Clifford
theory, in the spirit of \eqref{equ:hochheben}, thus yields the
following estimate for the zeta function of the inertia group
quotient:
$$
\zeta_{I_G(\theta)/N}(s) \ll m + m_0 m_1^{-1-s} (q^2-1) + m
(q-1)q^{-s}.
$$
We bound the index $\lvert G:I_G(\theta) \rvert$ from below by
$\gamma(q) q^5 m^{-1} > 2^{-1} q^5 m^{-1}$.  Noting that
\begin{align*}
  (m & + m_0 m_1^{-1-s} (q^2-1) + m (q-1)q^{-s}) \cdot
  2^{1+s} q^{-5-5s} m^{1+s}  \\
  & \ll 2^{1+s} q^{-5-5s} (m^{2+s} + m_0^{2+s} q^2 + m^{2+s} q^{1-s}) \\
  & \ll 2^{1+s} (q^{-3-4s} + 9 q^{-3-4s} + q^{-2-5s}) \\
  & = 2^{1+s} (10 q^{-3-4s} + q^{-2-5s})
\end{align*}
and approximating the required series factor by $(1+q^{4-2s})
(1-q^{1-2s})^{-1} (1-q^{2-3s})^{-1}$, we obtain the contributing
summand
\begin{align*}
  \psi_\lri^{\text{$2$(a)}}(s) & = 2q^4 \cdot 2^{1+s} (10
  q^{-3-4s} + q^{-2-5s}) \cdot (1 + q^{4-2s}) (1-q^{1-2s})^{-1} (1-q^{2-3s})^{-1} \\
  & = 2^{2+s} (10 q^{1-4s} + 10 q^{5-6s} + q^{2-5s} + q^{6-7s}) (1-q^{1-2s})^{-1}
  (1-q^{2-3s})^{-1}.
\end{align*}
The sums $\sum_q 2^{2+s} q^{a-bs} (1-q^{1-2s})^{-1}
(1-q^{2-3s})^{-1}$, $(a,b) \in \{(1,4),(5,6),(2,5),(6,7)\}$, where $q$
runs over all primes, converge for $s \in \C$ with $\real(s) > 1$, as
wanted.

Now suppose that the inertia group quotient does not contain the full
Heisenberg group $\mathcal{H}(\F_q)$.  From Lemma~\ref{lem:q-power} we
conclude that $\lvert I_G(\theta):N \rvert \leq q^3$ and we
approximate $\zeta_{I_G(\theta)/N}(s)$ generously by $q^3$.  Bounding
$\lvert G : I_G(\theta) \rvert$ from below by $\gamma(q) q^5 > 2^{-1}
q^5$ and approximating the required series factor as before, we obtain
the contributing summand
\begin{align*}
  \psi_\lri^{\text{$2$(b)}}(s) & = 2q^4 \cdot q^3 \cdot 2^{1+s}
  q^{-5-5s}
  \cdot (1 + q^{4-2s})(1-q^{1-2s})^{-1} (1-q^{2-3s})^{-1} \\
  & = 2^{2+s} (q^{2-5s} + q^{6-7s}) (1-q^{1-2s})^{-1} (1-q^{2-3s})^{-1}.
\end{align*}
The sums $\sum_q 2^{2+s} q^{a-bs} (1-q^{1-2s})^{-1}
(1-q^{2-3s})^{-1}$, $(a,b) \in \{(2,5),(6,7)\}$, where $q$ runs over
all primes, converges for $s \in \C$ with $\real(s) > 1$, as wanted.

Next suppose that $\theta$ does not extend to $I_G(\theta)$.  We use
Lemma~\ref{lem:translation-factor_neu}(2), together with the estimate
$q^4$ for $\lvert I_G(\theta) : N \rvert$, the lower bound
$\gamma(q)q^4>2^{-1}q^4$ for $\lvert G : I_G(\theta) \rvert$ and the
same approximation of the required series factor as before, to obtain
the approximation
\begin{align*}
  \psi_\lri^{\text{$2$(c)}}(s) & = 2q^4 \cdot q^4 \cdot
  q^{-2-s} \cdot 2^{1+s} q^{-4-4s} \cdot (1+q^{4-2s}) (1-q^{1-2s})^{-1}
  (1-q^{2-3s})^{-1} \\
  & = 2^{2+s} (q^{2-5s}+q^{6-7s}) (1-q^{1-2s})^{-1} (1-q^{2-3s})^{-1}.
\end{align*}
The sums $\sum_q 2^{2+s} q^{a-bs} (1-q^{1-2s})^{-1}
(1-q^{2-3s})^{-1}$, $(a,b) \in \{(2,5),(6,7)\}$, where $q$ runs over
all primes, converge for $s \in\C$ with $\real(s) > 1$, as wanted.

\smallskip

\noindent
\textit{Type $1$.}  According to Table~\ref{table1}, the number of
elements of type~$1$ is bounded by $q^6$.  Let $\theta \in \Irr(N)$ be
one of the representations of type $1$.

First suppose that $\theta$ extends to $I_G(\theta)$.  Then, according
to Table~\ref{table2} and Corollary~\ref{cor:lift_spl}, the inertia
group quotient is $I_G(\theta)/N \cong C_{\gcd(q-1,3)} \times C_p^r
\times C_p^r$.  Hence we can approximate $\zeta_{I_G(\theta)/N}(s) =
\gcd(q-1,3) q^2$ by $2^2 q^2$ and bound $\lvert G : I_G(\theta)
\rvert$ from below by $\gamma(q)q^6 > 2^{-3}q^6$.  This gives the
approximate translation factor $2^2 q^2 \cdot (2^{-3} q^6)^{-1-s} =
2^{5+3s} q^{-4-6s}$.  The series factor is $(1-q^{2-3s})^{-1}$.
Altogether this gives a contributing summand
\begin{equation*}
  \psi_\lri^{\text{$1$(a)}}(s) = q^6 \cdot 2^{5+3s} q^{-4-6s}
  \cdot (1-q^{2-3s})^{-1} = 2^{5+3s} q^{2-6s} (1-q^{2-3s})^{-1}.
\end{equation*}
The sum $\sum_q 2^{5+3s} q^{2-6s} (1-q^{2-3s})^{-1}$, where $q$ runs
over all primes, converges for $s \in \C$ with $\real(s) > 2/3$, hence
for $s \in \C$ with $\real(s) > 1$, as wanted.

Next suppose that $\theta$ does not extend to $I_G(\theta)$.  Let
$P/N$ denote the Sylow-$p$ subgroup of $I_G(\theta)/N$, contained in a
Sylow-$p$ subgroup $H/N$ of $G/N$, and recall that our definition of
$T$ guarantees that the pro-$p$ groups $N$, $P = I_H(\theta)$ and $H$
are saturable and potent.  By Lemma~\ref{lem:q-power}, the saturable
Lie lattice corresponding to $P$ is an $\lri$-Lie
lattice. %BK on saturable Lie lattice: Someone should double-check this independently.}
% Let $P/N$ denote the Sylow-$p$ subgroup of $I_G(\theta)/N$, and recall
% that our definition of the set of exceptional places $T$ guarantees
% that the pro-$p$ group $P$ is saturable and potent.  Note that our
% explicit description of $P$ via \eqref{equ:centr_type_1} shows that
% the saturable Lie lattice corresponding to $P$ is an $\lri$-Lie
% lattice.
Thus, by Corollary~\ref{cor:powers_of_q}, the character degrees of $P$
and $N$ are powers of $q$.  Therefore, according to
Lemma~\ref{lem:translation-factor_neu}(2), we can use similar
estimates as above to approximate the contribution by
\begin{equation*}
  \psi_\lri^{\text{$1$(b)}}(s) = q^6 \cdot 2^2 q^2 \cdot q^{-2-s} \cdot
  2^{3+3s} q^{-6-6s} \cdot (1-q^{2-3s})^{-1}
  = 2^{5+3s} q^{-7s} (1-q^{2-3s})^{-1}.
\end{equation*}
The sum $\sum_q 2^{5+3s} q^{-7s} (1-q^{2-3s})^{-1}$, where $q$ runs
over all primes, converges for $s \in \C$ with $\real(s) > 2/3$,
hence for $s \in \C$ with $\real(s)>1$, as wanted.

\smallskip

\noindent
\textit{Type $3$.} According to Table~\ref{table1}, the number of
elements of type $3$ is bounded by~$q^5$.  The difficulty in this case
is that the precise inertia group quotient is typically significantly
smaller than $\GL_2(\F_q)$, the supergroup given in
Table~\ref{table2}.  Recall that $G = \SL_3(\lri)$ and write $\mfg :=
\spl_3(\lri)$.  There is no direct Lie correspondence between the
compact $p$-adic analytic group $G$ and the $\lri$-Lie lattice $\mfg$,
but we will use an approximate correspondence.  An irreducible
character $\theta$ of $N = \SL_3^1(\lri)$ is represented by a Kirillov
orbit $\Omega = \omega^N$ in $\widehat{\mfn}$, where $\mfn :=
\spl_3^1(\lri) = \mfp \mfg$.  By means of the normalised Killing form,
the inertia group $I_G(\theta)$ can be described in terms of the
centraliser $\Cen_G(x + \mfp^n \mfn)$, formed with respect to the
adjoint action of $G$, for a suitable $x \in \mfn^*$ and level $n \in
\N$.  The groups and Lie lattices involved, i.e.\ $G$, $\mfg$, their
respective subobjects and the relevant actions, naturally embed into
the $\lri$-order $A := \Mat_3(\lri)$.  We are to determine
$$
I_G(\theta)/N = \Cen_G(x + \mfp^n \mfn) N/N,
$$
equivalently its image $H$ under the natural inclusion into the
$\F_q$-vector space
$$
V := (\Cen_A (x + \mfp^n \mfn) + \mfp A) / \mfp A.
$$
As we are interested in the case where $\theta$ is of type~$3$, we
may assume without loss of generality that
\begin{equation}\label{equ:representative_mod_p}
  x + \mfp \mfn =
  \begin{pmatrix}
    \lambda & & \\
    & \lambda & \\
    & & -2\lambda
  \end{pmatrix}
  + \mfp \mfn \qquad \text{where $\lambda \in \mfp \setminus \mfp^2$};
\end{equation}
cf.\ \eqref{equ:centr_type_3}.  In this situation $V$ is a subspace of
the $5$-dimensional $\F_q$-vector space
$$
\left\{
  \begin{pmatrix}
    B &   \\
    & a
  \end{pmatrix}
  \mid B \in \Mat_2(\F_q) \text{ and } a \in \F_q \right\}.
$$
Recalling that the characteristic of the residue field $\F_q$ is
not $3$, we observe that
\begin{itemize}
\item the $\F_q$-subspace $V_0 := (\Cen_\mfg(x + \mfp^n \mfn) + \mfp
  A)/\mfp A$ of $V$, consisting of cosets of matrices of trace~$0$,
  has co-dimension $1$;
\item the relevant subset $H$ of $V$, consisting of cosets of matrices
  of determinant~$1$, has index at least $(q-1)/\gcd(q-1,3) \geq
  \lfloor (q-1)/3 \rfloor$ in the group formed by all cosets of
  invertible matrices in $V$.
\end{itemize}

First suppose that $V$ has $\F_q$-dimension at least $4$.  Then
$\dim_{\F_q}(V_0) \geq 3$ and, considering the image of $V_0 \cong
(\Cen_\mfg(x + \mfp^n \mfn) + \mfp \mfg) / \mfp \mfg$ under
multiplication by the uniformiser $\pi$ in $(\Cen_\mfn(x + \mfp^n
\mfn) + \mfp \mfn) / \mfp \mfn$, we conclude that $\Cen_\mfn(x +
\mfp^n \mfn)$, regarded as an $\lri$-submodule of $\mfn$, has
elementary divisors
$(\lri,\lri,\lri,\lri,\mfp^n,\mfp^n,\mfp^n,\mfp^n)$.  Thus $\lvert
\mfn : \Cen_\mfn(x + \mfp^n \mfn) \rvert = q^{4n}$.

Our analysis in Section~\ref{subsec:sl3princ} shows that $x + \mfp^n
\mfn$ corresponds to a point modulo $\mfp^n$ on the variety
$\mathcal{W}_1$ defined by the set $F_3(\mathbf{Y})$ of
polynomials which controls the integrand of \eqref{equ:sl3
integral}.  These are rare; if we fix $\lambda$ in
\eqref{equ:representative_mod_p} there are $q^{5n}$ lifts modulo
$\mfp^n$.  The resulting `series factor' is $\sum_{n = 0}^\infty
q^{5n} q^{-4n(s+2)/2} = 1/(1-q^{1-2s})$, and the inertia group
quotient $I_G(\theta)/N$ is isomorphic to $\GL_2(\F_q)$ so that
$$
\lvert G : I_G(\theta) \rvert = \lvert \SL_3(\F_q) \rvert \lvert
\GL_2(\F_q) \rvert^{-1} = (q^2+q+1) q^2 \geq q^4.
$$
Let $P/N$ denote the Sylow-$p$ subgroup of $I_G(\theta)/N$, contained
in a Sylow-$p$ subgroup $H/N$ of $G/N$. The group $P/N$ is isomorphic
to the additive group $\F_q^+$.  Recall that our definition of $T$
guarantees that the pro-$p$ groups $N$, $P = I_H(\theta)$ and $H$ are
saturable and potent.  By Lemma~\ref{lem:q-power}, the saturable Lie
lattice corresponding to $P$ is an $\lri$-Lie lattice.
%BK on saturable Lie lattice: Someone should double-check this independently.
% and recall that our definition of $T$ guarantees that the pro-$p$
% group $P$ is saturable and potent.  Note that the explicit description
% of $P$ via \eqref{equ:centr_type_3} shows that the saturable Lie
% lattice corresponding to $P$ is an $\lri$-Lie lattice.
Thus, by Corollary~\ref{cor:powers_of_q}, the character degrees of $P$
and $N$ are powers of $q$.  By
Lemma~~\ref{lem:translation-factor_neu}, the character $\theta$
extends to $P$ so that
\begin{equation}\label{equ:GL2}
  \begin{split}
    \zeta_{G,\theta}(s) & = \zeta_{\GL_2(\F_q)}(s) \\
    & = (q-1) \left( 1 + q^{-s} + (q-2)/2 \cdot (q+1)^{-s} + q/2 \cdot
      (q-1)^{-s} \right) \\
    & \ll q + 2^s q^{2-s}.
  \end{split}
\end{equation}
Thus the contributing summand is
\begin{align*}
  \psi_\lri^{\text{$3$(a)}}(s) & = q^5 \cdot (q + 2^s q^{2-s}) \cdot
  q^{-4-4s} \cdot (1-q^{1-2s})^{-1} \\ & = (q^{2-4s} + 2^s q^{3-5s})
  (1-q^{1-2s})^{-1}.
\end{align*}
The sums $\sum_q q^{2-4s} (1-q^{1-2s})^{-1}$ and $\sum_q 2^s
q^{3-5s}(1-q^{1-2s})^{-1}$, where $q$ runs over all primes, converge
for $s \in \C$ with $\real(s) > 4/5$, hence for $s \in \C$ with
$\real(s)>1$ as wanted.

Now suppose that the dimension of $V$ is at most $3$.  Then
$$
\lvert I_G(\theta) : N \rvert = \lvert H \rvert \leq (q^3-1) \lfloor
(q-1)/3 \rfloor^{-1} \leq 6 q^2,
$$
and we approximate $\zeta_{I_G(\theta)/N}(s)$ generously by $2^3 q^2$.
Bounding $\lvert G : I_G(\theta) \rvert$ from below by $\gamma(q)
2^{-3} q^6 > 2^{-4} q^6$ and approximating the required series factor
by $(1+q^{4-2s}) (1-q^{1-2s})^{-1} (1-q^{2-3s})^{-1}$, we obtain the
contribution
\begin{align*}
  \psi_\lri^{\text{$3$(b)}}(s) & = q^5 \cdot 2^3 q^2 \cdot 2^{4+4s}
  q^{-6-6s} \cdot (1+q^{4-2s}) (1-q^{1-2s})^{-1} (1-q^{2-3s})^{-1}
  \\ & = 2^{7+4s} (q^{1-6s} + q^{5-8s})(1-q^{1-2s})^{-1}
  (1-q^{2-3s})^{-1}.
\end{align*}
The sums $\sum_q 2^{7+4s} q^{a-bs} (1-q^{1-2s})^{-1}
(1-q^{2-3s})^{-1}$, $(a,b) \in \{(1,6),(5,8)\}$, where $q$ runs over
all primes, converge for $s \in \C$ with $\real(s) > 3/4$, hence for
$s \in \C$ with $\real(s) > 1$ as wanted.

\medskip

This finishes the proof of Proposition~\ref{pro:second_product}.
Together with Proposition~\ref{pro:first_product} and the reductions
leading up to these two propositions, this also completes the proof of
Theorem~\ref{thmABC:lalu} in the case of inner forms.

%First suppose that the inertia group quotient is $\GL(2,p)$ after all.  The
%number of elements of type~3 is bounded by $p^5$.  As $\lvert G :
%I_G(\theta) \rvert \approx p^4$, the approximate translation factor is
%\begin{equation*}
%  \begin{split}
%    \zeta_{\GL(2,p)}(s) \cdot p^{-4-4s} & = (p-1) \left( 1 +
%      p^{-s} + (p-2)/2 \cdot (p+1)^{-s} + p/2 \cdot (p-1)^{-s} \right)
%    p^{-4-4s} \\
%    & \approx \underline{p^{-3-4s}} + p^{-3-5s} + 2 p^{-2-5s}.
%  \end{split}
%\end{equation*}
%The approximate series factor is $1 + \underline{p^{4-2s}}$.  A moment
%of thought shows that the underlined terms yield the crucial
%contribution: $p^5 p^{-3-4s} p^{4-2s} = p^{6 - 6s}$.  As the infinite
%product $\prod_p (1 + p^{6 - 6s})$ has abscissa of convergence $1$,
%we conclude that the type 3 contribution to $\zeta_G(s)$ has abscissa
%of convergence $1$.  Our aim is to show that all the other
%contributions give a smaller abscissa of convergence so that $\alpha =
%1$ as wanted.

%%%%%

\subsection{Arithmetic groups of type ${}^2
  \! A_2$}\label{subsec:abscissa_outer}

In this section we prove the assertion of Theorem~\ref{thmABC:lalu}
for arithmetic subgroups pertaining to outer forms of $A_2$.
We work in the same setup as described at the beginning of
Section~\ref{subsec:abscissa_inner}, except that we now assume that
the $S$-arithmetic group $\Gamma = \mathbf{G}(\gri_S)$ is an outer
form, i.e.\ of type ${}^2 \! A_2$.  Again, it suffices to specify a
suitable finite set $T$ of places of $\gfi$, with $S \subseteq T$,
such that the abscissa of convergence of the product $\prod_{v \not
\in T} \zeta_{\Gamma_v}(s)$ is equal to $1$.  The argument is going to
be very similar to the one given for inner forms.

\subsubsection{} In this section, it is convenient to write
$\SU_3(\gri_v)$ for the group $\SU_3(\Lri,\gri_v)$, where $\Lri$ is an
unramified quadratic extension of $\gri_v$.  Similarly, we write
$\SU_3(\F_q)$ for the finite group $\SU_3(\F_{q^2},\F_q)$ and
$\su_3(\F_q)$ for the finite Lie algebra $\su_3(\F_{q^2},\F_q)$ over a
residue field $\F_q$, etc.  Let $T$ be the set of all places $v$ of
$\gfi$ such that one of the following holds: (i) $v \in S$, (ii) $v$
is dyadic or triadic, i.e.\ $\gri_v$ has residue field characteristic
$2$ or $3$, (iii) $\Gamma_v$ is not isomorphic to $\SL_3(\gri_v)$ or
$\SU_3(\gri_v)$, (iv) there is a pro-$p$ subgroup $H$ of
$\SL_3(\gri_v)$ containing $\SL_3^1(\gri_v)$, or of $\SU_3(\gri_v)$
containing $\SU_3^1(\gri_v)$, which fails to be saturable or potent.
Then, similarly as in Section~\ref{subsubsec:7.3.1}, one checks
readily that $T$ is finite.  In particular, the discussion in
Appendix~\ref{subsec:algebraic/arithmetic_groups} shows that (iii)
singles out only finitely many places and, furthermore, that
$V_\textup{SL} := \{ v \not \in T \mid \Gamma_v \cong \SL_3(\gri_v)
\}$ and $V_\textup{SU} := \{ v \not \in T \mid \Gamma_v \cong
\SU_3(\gri_v) \}$ have positive Dirichlet density within the set of
all places of $\gfi$.

We claim that the abscissa of convergence of the product
$$
\prod_{v \not \in T} \zeta_{\Gamma_v}(s) = \prod_{v \in V_\textup{SL}}
\zeta_{\SL_3(\smallgri_v)}(s) \prod_{v \in V_\textup{SU}}
\zeta_{\SU_3(\smallgri_v)}(s)
$$
is equal to $1$.  Again we take an approximative approach.  The first
product on the right hand side has abscissa of convergence equal to
$1$ by the arguments given in Section~\ref{subsec:abscissa_inner}.
(For this we make use of the fact that for a positive proportion of
primes $p$ there exists a prolongation $v \in V_\textup{SL}$ such that
$p = q_v$.  This follows from the Chebotarev Density Theorem, applied
to the quadratic extension $\Gfi$ of $\gfi$ which appears in the
description of $\mathbf{G}(\gfi)$ in
Appendix~\ref{subsec:algebraic/arithmetic_groups}.)  It suffices to
show that the abscissa of convergence of the product
$$
\phi(s) := \prod_{v \in V_\textup{SU}} \zeta_{\SU_3(\smallgri_v)}(s)
$$
is less than or equal to $1$.

Similarly to \eqref{equ:zeta_decomp}, for each $v \in V_\textup{SU}$
there is a natural decomposition
\begin{equation}\label{equ:zeta_decomp_SU}
  \zeta_{\SU_3(\smallgri_v)}(s) = \left( 1 +
    \zeta_{\SU_3(\smallgri_v)}^{\textup{triv}}(s) \right) +
  \zeta_{\SU_3(\smallgri_v)}^{\textup{reg}}(s) +
  \zeta_{\SU_3(\smallgri_v)}^{\textup{irreg}}(s)
\end{equation}
involving three `smaller' Dirichlet generating functions.  These
components enumerate representations which, after restriction to
$\SU_3^1(\gri_v)$, involve only the trivial, only regular and only
irregular representations respectively; recall the terminology
introduced in Section~\ref{subsec:sl3princ}.  Similarly to
Sections~\ref{subsubsec:7.3.3} and \ref{subsubsec:7.3.4}, we will
approximate the Dirichlet series
$\zeta_{\SU_3(\smallgri_v)}^{\textup{reg}}(s)$ and
$\zeta_{\SU_3(\smallgri_v)}^{\textup{irreg}}(s)$ by suitable rational
functions $\psi_v^{\textup{reg}}(s)$ and $\psi_v^{\textup{irreg}}(s)$
in $q_v$, $q_v^{-s}$ and $2^{-s}$, where $q_v = \lvert \gri_v : \mfp_v
\rvert$ denotes the size of the residue field $\gri_v / \mfp_v$ at
$v$.  Based on these approximations, an argument akin to the one given
in Section~\ref{subsec:abscissa_inner} shows that the abscissa of
convergence of $\phi(s)$ is less than or equal to~$1$.  This in turn
will yield a proof of Theorem~\ref{thmABC:lalu} for outer forms.

\subsubsection{} The first summand in \eqref{equ:zeta_decomp_SU},
viz.\ $1 + \zeta_{\SU_3(\smallgri_v)}^{\textup{triv}}(s)$, arises from
representations of $\SU_3(\gri_v)$ which are trivial on
$\SU_3^1(\gri_v)$.  Thus $1 +
\zeta_{\SU_3(\smallgri_v)}^{\textup{triv}}(s) =
\zeta_{\SU_3(\F_{q_v})}(s)$, the zeta function of the finite classical
group $\SU_3(\F_{q_v})$.

\begin{pro}\label{pro:first_product_SU}
  The abscissa of convergence of the Euler product
  $$
  \prod_{v \in V_\textup{SU}} \left( 1 +
    \zeta_{\SU_3(\smallgri_v)}^{\textup{triv}}(s) \right) = \prod_{v
    \in V_\textup{SU}} \zeta_{\SU_3(\F_{q_v})}(s)
  $$
  is less than or equal to $1$.
\end{pro}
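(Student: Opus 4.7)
The plan is to follow closely the template of the proof of Proposition~\ref{pro:first_product}, replacing the finite special linear group by its unitary counterpart. First I would write down an explicit formula for $\zeta_{\SU_3(\F_q)}(s)$, where $q$ is a prime power with $3 \nmid q$. One route is to invoke Deligne--Lusztig theory directly (cf.~\cite{LuXX}); a conceptually more economical route is Ennola duality, which produces the character degrees of $\SU_n(\F_q)$ from those of $\SL_n(\F_q)$ by the formal substitution $q \mapsto -q$ with the obvious sign changes in the multiplicities (cf.~\cite{Ka85} and the remark following Theorem~\ref{thmABC:SL3}). As in the inner case, the two congruence classes of $q$ modulo $3$ are treated separately, reflecting the order $\gcd(q+1,3)$ of the centre of $\SU_3(\F_q)$.

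With such a formula in hand, a term-by-term inspection shows that the dominant contributions to $\zeta_{\SU_3(\F_q)}(s) - 1$ as $q \to \infty$ come from two families: the approximately $q$ characters of degree approximately $q^2$ (such as those of degree $q^2-q+1$), contributing $O(q^{1-2\real(s)})$, and the approximately $q^2$ characters of degree approximately $q^3$ (such as those of degree $q^3+1$ or $(q^3+1)(q-1)/\gcd(q+1,3)$), contributing $O(q^{2-3\real(s)})$. All remaining summands are of strictly smaller order in the half-plane of interest. This yields a uniform estimate
\[
 \zeta_{\SU_3(\F_q)}(s) \,\leq\, 1 + C_{\sigma_0} \bigl( q^{1-2\real(s)} + q^{2-3\real(s)} \bigr)
\]
valid for all $s \in \C$ with $\real(s) \geq \sigma_0$, any fixed $\sigma_0 > 2/3$, with $C_{\sigma_0}$ independent of~$q$.

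The proof would then conclude by a routine Euler-product estimate. Using $\log(1+x) \leq x$ for $x \geq 0$, the convergence of $\prod_{v \in V_\textup{SU}} \zeta_{\SU_3(\F_{q_v})}(s)$ is reduced to that of the series $\sum_{v \in V_\textup{SU}} \bigl( q_v^{1-2\real(s)} + q_v^{2-3\real(s)} \bigr)$. Since every rational prime $p$ admits at most $\lvert \gfi : \Q \rvert$ prolongations $v$ to $\gfi$ and $q_v \geq p$ for every such $v$, this last sum is dominated by $\lvert \gfi : \Q \rvert \sum_p \bigl( p^{1-2\real(s)} + p^{2-3\real(s)} \bigr)$, which converges precisely for $\real(s) > 1$. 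Hence the Euler product converges absolutely on $\{\real(s)>1\}$, giving the desired upper bound on the abscissa of convergence.

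I anticipate no genuine obstacle. The only non-routine ingredient is step~1, namely the explicit description of the character degrees of $\SU_3(\F_q)$ with their multiplicities; but these are classical, and the Ennola correspondence reduces the work to the formulae already displayed in the proof of Proposition~\ref{pro:first_product}. Note moreover that, in contrast to the inner case, only the upper bound is required here, so no lower-bound Chebotarev density argument on $V_\textup{SU}$ enters the proof.
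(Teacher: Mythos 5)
Your proposal is correct and follows essentially the same route as the paper: both rest on the explicit character-degree data for $\SU_3(\F_q)$, identify $q^{1-2s}$ and $q^{2-3s}$ as the dominant contributions, and convert the Euler product into a Dirichlet sum over rational primes via the bound $\lvert k:\Q\rvert$ on prolongations and $q_v \geq p$. The one genuine (if minor) streamlining in your argument is the observation that the Chebotarev density step is superfluous here because only an upper bound on the abscissa is required; the paper proves equality of the abscissa for the product over all places $v$ with $3 \nmid q_v$ (for which the density argument supplies the lower bound) and then restricts to $V_\textup{SU}$, whereas you go directly to the upper bound via $\log(1+x) \leq x$, which is cleaner.
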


\begin{proof}
  Let $q$ be a prime power, not divisible by $3$.  Explicit formulae
  for the zeta function of the finite classical group $\SU_3(\F_q)$
  can be computed, for instance, using Deligne-Lusztig Theory;
  cf.\ \cite{LuXX}.  One needs to separate the cases $q \equiv_3 2$
  and $q \equiv_3 1$.  Indeed, for $q \equiv_3 2$ one has
  \begin{align*}
    \zeta_{\SU_3(\F_q)}(s) & = 1 + 1 \cdot
    (q^2-q)^{-s} + q \cdot (q^2-q+1)^{-s} \\
    & \qquad + 6 \cdot
    ((q-1)(q+1)^2/3)^{-s} + 3 \cdot ((q^2-q+1)(q-1)/3)^{-s} \\
    & \qquad + 3^{-1}(q+1)(q-2) \cdot ((q-1)(q+1)^2)^{-s} +
    2^{-1}(q+1)(q-2) \cdot
    (q^3+1)^{-s} \\
    & \qquad + 1 \cdot q^{-3s} + q \cdot (q^3-q^2+q))^{-s} \\
    & \qquad + 6^{-1}(q+1)(q-2) \cdot ((q^2-q+1)(q-1))^{-s}
  \end{align*}
  and for $q \equiv_3 1$ one has
  \begin{align*}
    \zeta_{\SU_3(\F_q)}(s) & = 1 + 1 \cdot
    (q^2-q)^{-s} + q \cdot (q^2-q+1)^{-s} \\
    & \qquad + 3^{-1}(q^2 - q) \cdot ((q-1)(q+1)^2)^{-s} +
    2^{-1}(q+1)(q-2) \cdot (q^3 +1)^{-s} \\
    & \qquad + 1 \cdot q^{-3s}
    + q \cdot (q^3-q^2+q)^{-s} \\
    & \qquad + 6^{-1} (q^2 -q) \cdot ((q^2-q+1)(q-1))^{-s}.
  \end{align*}

  Based on these formulae we can prove our assertion.  Indeed, $\lvert
  k:\Q \rvert$ provides a uniform upper bound for the number of
  valuations $v \not \in T$ which prolong any fixed $p$-adic valuation
  on $\Q$.  Moreover, for each $v$ which prolongs the $p$-adic
  valuation associated to a prime $p$ we have $p \leq q_v = \lvert
  \gri_v:\mfp_v \rvert$.  The Chebotarev Density Theorem guarantees
  that for a positive proportion of primes $p$ there exists a
  prolongation $v$ such that $p = q_v$.  In view of the explicit
  formulae for $\zeta_{\SU_3(\F_q)}(s)$ recorded above, this implies
  that the abscissa of convergence of the product $\prod_{3 \nmid v}
  \zeta_{\SU_3(\F_{q_v})}(s)$ is equal to the abscissa of convergence
  of the sum $\sum_p (p^{1-2s} + p^{2-3s})$, which is equal to~$1$.
  This gives an upper bound for the abscissa of convergence of the
  partial product $\prod_{v \in V_\textup{SU}}
  \zeta_{\SU_3(\F_{q_v})}(s)$.
\end{proof}

\subsubsection{} It remains to bound the abscissa of convergence of
the factor arising from the last two summands in
\eqref{equ:zeta_decomp_SU}.

\begin{pro}\label{pro:second_product_SU}
  The abscissa of convergence of the Euler product
  $$
  \prod_{v \in V_\textup{SU}} \left( 1+
    \zeta_{\SU_3(\smallgri_v)}^{\textup{reg}}(s) +
    \zeta_{\SU_3(\smallgri_v)}^{\textup{irreg}}(s) \right)
  $$
  is less than or equal to $1$.
\end{pro}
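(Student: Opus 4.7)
The plan is to mimic the strategy used in the proof of Proposition~\ref{pro:second_product}, replacing the inner-form data by their unitary analogues. By Lemma~\ref{lem:generating_series} it is enough to produce, for every $v \in V_\textup{SU}$, rational approximations $\psi_v^{\textup{reg}}(s)$ and $\psi_v^{\textup{irreg}}(s)$ in $q_v$, $q_v^{-s}$ and $2^{-s}$ with $\zeta_{\SU_3(\smallgri_v)}^{\textup{reg}}(s) \ll \psi_v^{\textup{reg}}(s)$ and $\zeta_{\SU_3(\smallgri_v)}^{\textup{irreg}}(s) \ll \psi_v^{\textup{irreg}}(s)$, such that the global sum $\sum_{v\in V_\textup{SU}}(\psi_v^{\textup{reg}}(s)+\psi_v^{\textup{irreg}}(s))$ converges for $\real(s)>1$. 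Writing $\lri=\gri_v$, $q=q_v$, the starting point is the decomposition obtained at the end of Section~\ref{subsec:su3princ}: for $m=1$ permissible, $\zeta_{\SU_3^1(\lri)}(s)$ splits into a regular and an irregular contribution with the same shape as \eqref{equ:series-factor}, the only change being that the combinatorial factor $(q^2+q+1)^2(q-1)$ arising from $\lvert\overline{\mathcal{V}_1}(\F_q)\rvert-1$ is replaced by $(q^4+q^2+1)(q-1)$, as established in Proposition~\ref{pro:mcD_1}.

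First I would fix the classification of adjoint orbits of $\GU_3(\F_{q^2},\F_q)$ on $\su_3(\F_{q^2},\F_q)$ (recorded in Tables~\ref{table3} and \ref{table4}), together with the corresponding centralisers in $\SU_3(\F_{q^2},\F_q)$. The cases are the unitary analogues of types $0,1,2,3,4\text{a},4\text{b},4\text{c},5$ considered in Section~\ref{subsubsec:7.3.3}: the regular semisimple tori are the split torus $C_{q^2-1}$, the product $C_{q+1}\times C_{q+1}$ and the non-split cyclic torus of order $q^2-q+1$; the regular unipotent centraliser has the structure $C_{q+1}\times C_p^r$; the subregular irregular type has a Heisenberg-like centraliser; and the minimal nilpotent/irregular semisimple type has centraliser a unitary Levi isomorphic to $\GU_2(\F_{q^2},\F_q)$. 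Each case gives a bound on the number of contributing elements which is $O(q^{\dim\mathcal{O}})$.

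The main body of the proof will then be a case-by-case construction of local approximations. For the regular types (analogues of types $1$, $4\text{a}$, $4\text{b}$, $4\text{c}$, $5$) I would invoke Corollary~\ref{cor:lift_su} to identify the inertia quotient $I_{\SU_3(\lri)}(\theta)/\SU_3^1(\lri)$ with the corresponding centraliser in $\SU_3(\F_q)$, combine with Lemma~\ref{lem:translation-factor_neu}(1) in the tame cases and with Lemma~\ref{lem:q-power} together with Lemma~\ref{lem:translation-factor_neu}(1) in the wild cases (using the $2$-adic and $3$-adic restrictions built into~$T$ and the fact that $m=1$ is permissible), and pair the translation factor with the regular series factor $(1-q^{2-3s})^{-1}$. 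For the two irregular types I would argue as in Section~\ref{subsubsec:7.3.4} for types $2$ and $3$: the analogue of type $2$ is handled by splitting according to whether $\theta$ extends to $I_G(\theta)$ and whether the inertia quotient contains the full unitary Heisenberg subgroup (using the unitary analogue of $\zeta_{\mathcal{H}(\F_q)}(s)$); the analogue of type $3$, where the centraliser is $\GU_2$-like, is handled by distinguishing whether the $\F_q$-dimension of the image $V$ of $I_G(\theta)/N$ in the reduced centraliser algebra is $\geq 4$ or $\leq 3$. In the first subcase the stratum $\mcW_1(\lri)$ of Section~\ref{sec:semisimple} is entered at a rate $q^{5n}$ and the series factor is $(1-q^{1-2s})^{-1}$, while Lemma~\ref{lem:q-power}, Corollary~\ref{cor:powers_of_q} and Lemma~\ref{lem:translation-factor_neu}(1) yield $\zeta_{G,\theta}(s)=\zeta_{\GU_2(\F_{q^2},\F_q)}(s)\ll q+2^s q^{2-s}$; in the second subcase, $\lvert I_G(\theta):N\rvert\leq 6q^2$ so the generous estimate already used for the $\SL_3$ analogue applies, with the irregular series factor bounded by $(1+q^{4-2s})(1-q^{1-2s})^{-1}(1-q^{2-3s})^{-1}$.

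I expect the two genuine obstacles to be essentially the same as in the split case. The first is verifying the zeta function estimate $\zeta_{\GU_2(\F_{q^2},\F_q)}(s)\ll q + 2^s q^{2-s}$ that replaces \eqref{equ:GL2}; this is a finite Deligne--Lusztig style calculation, and the bound is modest enough to go through. The second, and genuinely more delicate, step is the treatment of the unitary Heisenberg case (type~$2$): one must either check that enough characters extend from $\SU_3^1(\lri)$ to their inertia group or use Lemma~\ref{lem:translation-factor_neu}(2) to absorb the $\theta$-non-extending contributions into an additional factor $q^{-2-s}$, which requires knowing that the Sylow pro-$p$ subgroup $P$ of $I_G(\theta)$ is saturable, potent and has character degrees which are powers of~$q$. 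Both properties are guaranteed once $v\notin T$ by Lemma~\ref{lem:q-power} and Corollary~\ref{cor:powers_of_q}, so the argument closes. Summing the resulting approximations over all $v\in V_\textup{SU}$ and bounding by a sum over rational primes, exactly as in Section~\ref{subsubsec:7.3.4}, shows that every individual term yields a series convergent for $\real(s)>1$, and the proposition follows.
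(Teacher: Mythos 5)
Your proposed plan follows the paper's structure faithfully in most respects: decompose into types via Tables~\ref{table3} and~\ref{table4}, use Corollary~\ref{cor:lift_su} for the regular types, Lemma~\ref{lem:translation-factor_neu} and Lemma~\ref{lem:q-power} for the wild parts, and the Heisenberg case distinction for type~$2$. However, there is a genuine gap in your treatment of type~$3$, and it is precisely the point where the outer-form proof diverges from the inner-form one.

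In the $\SL_3$ argument (Section~\ref{subsubsec:7.3.4}, type~$3$), the key device is the embedding of both $\SL_3(\lri)$ and $\spl_3(\lri)$ into the single $\lri$-order $A = \Mat_3(\lri)$; this is what allows one to form the $\F_q$-vector space $V = (\Cen_A(x + \mfp^n\mfn) + \mfp A)/\mfp A$, embed $I_G(\theta)/N$ into it, and read off the elementary divisors of $\Cen_{\mfn}(x + \mfp^n\mfn)$. You write that the unitary type~$3$ ``is handled by distinguishing whether the $\F_q$-dimension of the image $V$ of $I_G(\theta)/N$ in the reduced centraliser algebra is $\geq 4$ or $\leq 3$'', but for $G = \SU_3(\lri)$ and $\mfg = \su_3(\lri)$ there is no associative $\lri$-algebra playing the role of $A$: both sit inside $\Mat_3(\Lri)$, which is only an $\Lri$-order, and the required vector space $V$ cannot be formed in the same way. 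The paper's proof gets around this by introducing the Cayley maps $\Cay$ and $\cay$ between $\GU_3(\lri)_{\textup{gen}}$ and $\gu_3(\lri)_{\textup{gen}}$ and proving, via a covering argument on the defect sets, the two-sided estimate
$$
4^{-1} \ \leq\ \frac{\lvert \Cen_{\SU_3(\lri)}(x + \mfp^n\mfn)N/N\rvert}{\lvert(\Cen_{\su_3(\lri)}(x + \mfp^n\mfn)+\mfn)/\mfn\rvert}\ \leq\ 12.
$$
This is not cosmetic: without it one cannot justify that $\lvert I_G(\theta):N\rvert$ is controlled by a power of $q$ (up to a bounded constant), which is exactly what you need in the second subcase of type~$3$. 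Consequently your bound $\lvert I_G(\theta):N\rvert \leq 6q^2$ is too optimistic; the factor $12$ from the Cayley estimates turns it into $\leq 12 q^2$, absorbed generously as $\leq 2^4 q^2$ in the paper.

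Two further, smaller slips. First, the $\GU_2(\F_q)$ estimate should read
$$
\zeta_{\GU_2(\F_q)}(s) = (q+1)\left(1 + q^{-s} + \tfrac{q-2}{2}(q+1)^{-s} + \tfrac{q}{2}(q-1)^{-s}\right) \ll 2q + 2^{1+s}q^{2-s},
$$
with an extra factor of $2$ compared to the $\GL_2$ bound, coming from $q+1$ versus $q-1$ in front; your bound $\ll q + 2^s q^{2-s}$ drops this. Second, in the first subcase of type~$3$ you invoke Lemma~\ref{lem:translation-factor_neu}(1), which requires knowing that $\theta$ extends to a Sylow pro-$p$ subgroup of $I_G(\theta)$; this is justified in the paper, but in your write-up you should make explicit that the $\lri$-Lie-lattice structure of that Sylow pro-$p$ subgroup, guaranteed by Lemma~\ref{lem:q-power}, is what ensures the character degrees are powers of $q$ and hence lets the extendability argument go through. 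Once you insert the Cayley map machinery for type~$3$ and correct the $\GU_2$ and index constants, your proof closes along the same lines as the paper's.
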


The proof of this proposition is based on a similar analysis to the
one in Section~\ref{subsubsec:7.3.4}.  By
Lemma~\ref{lem:generating_series}, it suffices to find, for all $v \in
V_\textup{SU}$, approximations $\psi_v^{\textup{reg}}(s)$ and
$\psi_v^{\textup{irreg}}(s)$ so that, locally,
$$
\zeta_{\SU_3(\gri_v)}^{\textup{reg}}(s) \ll \psi_v^{\textup{reg}}(s) \quad
\text{and} \quad \zeta_{\SU_3(\gri_v)}^{\textup{irreg}}(s) \ll
\psi_v^{\textup{irreg}}(s)
$$
and, globally, the abscissa of convergence of the sum $\sum_{v \in
  V_\textup{SU}} \left( \psi_v^{\textup{reg}}(s) +
  \psi_v^{\textup{irreg}}(s) \right)$ is at most $1$.

\begin{table}
  \centering
  \caption{Adjoint orbits in $\su_3(\F_q)$ under the action of
    $\GU_3(\F_q)$}
  \label{table3}
  \begin{tabular}{|l|l|l|l|l|}
    \hline
    type &  & number of orbits & size of each orbit & total
    number \\
    \hline\hline
    0 & & $1$ & $1$ & $1$ \\
    1 & reg.\ & $1$ & $(q^3+1)(q^2-1)q$ & $\approx q^6$ \\
    2 & irreg.\ & $1$ & $(q^3+1)(q-1)$ & $\approx q^4$ \\
    3 & irreg.\ & $q-1$ & $(q^2-q+1)q^2$ & $\approx q^5$ \\
    4a & reg.\ & $(q-1)(q-2)/6$ & $(q^2-q+1)(q-1)q^3$ & $\approx q^8$ \\
    4b & reg.\ & $(q-1)q/2$ & $(q^3+1)q^3$ & $\approx q^8$ \\
    4c & reg.\ & $(q^2-1)/3$ & $(q^2-1)(q+1)q^3$ & $\approx q^8$ \\
    5 & reg.\ & $(q-1)$ & $(q^3+1)(q-1)q^2$ & $\approx q^7$ \\
    \hline
  \end{tabular}
\end{table}

\begin{table}
  \centering
  \caption{Centralisers in $\SU_3(\F_q)$ of elements of
    $\su_3(\mathbb{F}_q)$ where $q = p^r$}
  \label{table4}
  \begin{tabular}{|l|l|l|}
    \hline
    type & & centraliser in $\SU_3(\F_q)$ \\
    \hline\hline
    0 & & $\SU_3(\F_q)$ \\
    1 & reg.\ & $( \mu_3(\F_{q^2}) \cap \ker(N_{\F_{q^2} \vert \F_q})
    ) \times \F_q^+ \times \F_q^+ \cong
    C_{\gcd(q+1,3)} \times (C_p^r \times C_p^r)$ \\
    2 & irreg.\ & $\ker(N_{\F_{q^2} \vert \F_q}) \ltimes
    \mathcal{H}(\F_q) \cong C_{q+1} \ltimes (C_p^r \times
    C_p^r) \ltimes C_p^r$ \\
    3 & irreg.\ & $\GU_2(\F_q)$ \\
    4a & reg.\ & $\ker(N_{\F_{q^2} \vert \F_q}) \times
    \ker(N_{\F_{q^2} \vert \F_q}) \cong C_{q+1} \times C_{q+1}$ \\
    4b & reg.\ & $\F_{q^2}^* \cong C_{q^2 -1}$ \\
    4c & reg.\ & $\ker(N_{\F_{q^6}
      \vert \F_{q^3}}) \cap \ker(N_{\F_{q^6} \vert \F_{q^2}}) \cong C_{q^2-q+1}$ \\
    5 & reg.\ & $\ker(N_{\F_{q^2} \vert \F_q}) \ltimes \F_q^+ \cong
    C_{q+1} \times C_p^r$ \\
    \hline
  \end{tabular}
\end{table}

In order to apply approximative Clifford theory, we require an
overview of the elements in $\su_3(\F_q)$ up to the adjoint action of
the group $\GU_3(\F_q)$.  We distinguish eight different types,
labelled $0$, $1$, $2$, $3$, $4$a, $4$b, $4$c,~$5$.  The total number
of elements of each type and the isomorphism types of their
centralisers in $\SU_3(\F_q)$ are summarised in Tables~\ref{table3}
and \ref{table4}; see Appendix~\ref{sec:aux_su3} for a short
discussion.  The notation used in Table~\ref{table4} is similar to the
one in Table~\ref{table2}: $\mu_3(\F_{q^2})$ is the group of third
roots of unity in $\F_{q^2}^*$, we denote by $\mathcal{H}(\F_q)$ the
Heisenberg group over $\F_q$, etc.

Let $v$ be a place of $\gfi$ with $v \in V_\textup{SU}$.  The series
$\psi_v^{\textup{reg}}(s)$ is to approximate the second summand in
\eqref{equ:zeta_decomp_SU}.  It splits into smaller parts
\begin{align*}
  \psi_v^{\textup{reg}}(s) & = \psi_v^{\text{$1$}}(s) +
  \psi_v^{\text{$4$a}}(s) + \psi_v^{\text{$4$b}}(s) +
  \psi_v^{\text{$4$c}}(s) + \psi_v^{\text{$5$}}(s) \\
  & = \psi_v^{\text{$1$(a)}}(s) + \psi_v^{\text{$1$(b)}}(s) +
  \psi_v^{\text{$4$a}}(s) + \psi_v^{\text{$4$b}}(s) +
  \psi_v^{\text{$4$c}}(s) + \psi_v^{\text{$5$}}(s),
\end{align*}
which correspond to elements of types $1$, $4$a, $4$b, $4$c and $5$ in
the finite Lie algebra $\su_3(\F_q)$ and where
$\psi_v^{\text{$1$}}(s)$ consists of two summands, following a case
distinction similar to the analogous one in
Section~\ref{subsubsec:7.3.4}.  Each of the summands involved will be
a rational function in $q_v$, $q_v^{-s}$ and $2^{-s}$, which we derive
using Clifford theory.  The series $\psi_v^{\textup{irreg}}(s)$ is to
approximate the third summand in \eqref{equ:zeta_decomp_SU}.  It
splits into smaller parts
\begin{align*}
  \psi_v^{\textup{irreg}}(s) & = \psi_v^{\text{$2$}}(s) +
  \psi_v^{\text{$3$}}(s) \\
  & = \psi_v^{\text{$2$(a)}}(s) + \psi_v^{\text{$2$(b)}}(s) +
  \psi_v^{\text{$2$(c)}}(s) + \psi_v^{\text{$3$(a)}}(s) +
  \psi_v^{\text{$3$(b)}}(s),
\end{align*}
which correspond to elements of types $2$ and $3$ in $\su_3(\F_{q_v})$
and where $\psi_v^{\text{$2$}}(s)$ and $\psi_v^{\text{$3$}}(s)$ divide
into smaller summands, following a case distinction similar to the
analogous one in Section~\ref{subsubsec:7.3.4}.  Again, each of these
summands will be a rational function in $q_v$, $q_v^{-s}$ and
$2^{-s}$, which can be obtained by means of Clifford theory.

\subsubsection{} We now work out the details case by case.  Let $v$ be
a place of $\gfi$ with $v \in V_\textup{SU}$.  In describing the local
situation at $v$, it is convenient to adopt, as in
Section~\ref{subsubsec:7.3.4}, a simplified notation which suppresses
explicit references to $v$.  We write $\lri := \gri_v$, $\mfp :=
\mfp_v$ for the maximal ideal and $\F_q := \lri / \mfp$ for the
residue field, where $q := q_v$.  We denote by $\Lri$ an unramified
quadratic extension of $\lri$, with non-trivial automorphism $\sigma$.
Furthermore, we write $\psi_\lri^{\text{$1$}}(s) :=
\psi_v^{\text{$1$}}(s)$, etc.  We also continue to use the simplified
notation $\GU_3(\lri) := \GU_3(\Lri,\lri)$, etc.

One sets $G := \SU_3(\lri)$, $N := \SU_3^1(\lri)$ and $\gamma(q) :=
q^{-8} \lvert G : N \rvert = (1+q^{-3})(1-q^{-2})$.  We use the
generous bound $\gamma(q) > 2^{-1}$.  In each case we need to multiply
the approximate contribution number from the last column of
Table~\ref{table3}, an approximation of the translation factor $\lvert
G:I_G(\theta)\rvert^{-1-s} \zeta_{G,  \theta}(s)$, in accordance
with \eqref{equ:hochheben}, and (an approximation of) the appropriate
series factor, appearing on one of the right hand sides of
\eqref{equ:series-factor}.  As in Section~\ref{subsubsec:7.3.4}, in
the case of regular representations we use the precise translation
factor $(1- q^{2-3s})^{-1}$; for irregular representations we use the
generous approximative factor $(1+q^{4-2s}) (1 - q^{1-2s})^{-1} (1 -
q^{2-3s})^{-1}$.  As the detailed analysis is very similar to the one
carried out for inner forms, we only give a short indication of the
necessary modifications and we list the resulting approximative
Dirichlet generating functions.  The most involved case is once more
the one of type $3$; here our adaptation makes use of the Cayley map.
We deal with the different cases arising from Table~\ref{table3} in
the following order: $4$a, $4$b, $4$c, $5$, $2$, $1$, $3$.

\medskip

\noindent
\textit{Types $4$\textup{a}, $4$\textup{b}, $4$\textup{c}.}  The
changes for type $4$ are easy to make.  Using
Corollary~\ref{cor:lift_su}, instead of Corollary~\ref{cor:lift_spl},
we compute
$$
\psi_\lri^{\text{$4$a}}(s) = q^8 \cdot 2^{3+2s} q^{-4-6s} \cdot
(1-q^{2-3s})^{-1} = 2^{3+2s} q^{4-6s} (1-q^{2-3s})^{-1},
$$
to control the contribution from elements of type $4$a, and one uses
similar estimates for types $4$b and $4$c.

\smallskip

\noindent
\textit{Type $5$.} Also the changes for type $5$ are straightforward.  Using
Corollary~\ref{cor:lift_su}, instead of Corollary~\ref{cor:lift_spl},
we compute
$$
\psi_\lri^{\text{$5$}}(s) = q^7 \cdot 2^{3+2s} q^{-4-6s} \cdot
(1-q^{2-3s})^{-1} = 2^{3+2s} q^{3-6s} (1-q^{2-3s})^{-1}
$$
to control the contribution from elements of type $5$.

\smallskip

\noindent
\textit{Type $2$.} We explain briefly how to adopt the argument for
elements of type $2$.  According to Table~\ref{table4}, the inertia
quotient $T_G(\theta)/N$ embeds into the semidirect product
$\ker(N_{\F_{q^2} \vert \F_q}) \ltimes \mathcal{H}(\F_q)$, and the
analysis beginning at \eqref{equ:su_centr_type_2} shows that the top
group $\ker(N_{\F_{q^2} \vert \F_q}) \cong C_{q+1}$ acts with kernel
of size $\gcd(q+1,3)$ and with orbits of equal length
$(q+1)/\gcd(q+1,3)$ on the non-trivial elements of
$\mathcal{H}(\F_q)/[\mathcal{H}(\F_q),\mathcal{H}(\F_q)]$.  Arguing as
before, we compute
\begin{align*}
  \psi_\lri^{\text{$2$(a)}}(s) & = q^4 \cdot 2^{3+2s} (10
  q^{-3-4s} + q^{-2-5s}) \cdot (1+q^{4-2s}) (1-q^{1-2s})^{-1} (1-q^{2-3s})^{-1} \\
  & = 2^{3+2s} (10 q^{1-4s} + 10 q^{5-6s} + q^{2-5s} + q^{6-7s})
  (1-q^{1-2s})^{-1}
  (1-q^{2-3s})^{-1}, \\
  \psi_\lri^{\text{$2$(b)}}(s) & = q^4 \cdot 2 q^3 \cdot 2^{2+2s}
  q^{-5-5s} \cdot (1+ q^{4-2s}) (1-q^{1-2s})^{-1} (1-q^{2-3s})^{-1} \\
  & = 2^{3+2s} (q^{2-5s} + q^{6-7s}) (1-q^{1-2s})^{-1} (1-q^{2-3s})^{-1}, \\
  \psi_\lri^{\text{$2$(c)}}(s) & = q^4 \cdot 2q^4 \cdot q^{-2-s} \cdot
  2^{2+2s} q^{-4-4s} \cdot (1+ q^{4-2s}) (1-q^{1-2s})^{-1}
  (1-q^{2-3s})^{-1} \\
  & = 2^{3+2s} (q^{2-5s} + q^{6-7s}) (1-q^{1-2s})^{-1}
  (1-q^{2-3s})^{-1}
\end{align*}
to control the contribution from elements of type $2$.

\smallskip

\noindent
\textit{Type $1$.}  The changes for type $1$ are easy to make.  Using
Corollary~\ref{cor:lift_su}, instead of Corollary~\ref{cor:lift_spl},
we compute the same approximative Dirichlet generating functions
\begin{align*}
  \psi_\lri^{\text{$1$(a)}}(s) & = q^6 \cdot 2^{3+s} q^{-4-6s}
  \cdot (1-q^{2-3s})^{-1} = 2^{3+s} q^{2-6s} (1-q^{2-3s})^{-1}, \\
  \psi_\lri^{\text{$1$(b)}}(s) & = q^6 \cdot 4 q^2 \cdot q^{-2-s}
  \cdot 2^{1+s} q^{-6-6s} \cdot (1-q^{2-3s})^{-1} = 2^{3+s} q^{-7s}
  (1-q^{2-3s})^{-1}.
\end{align*}
to control the contribution from elements of type $1$.

\smallskip

\noindent
\textit{Type $3$.} The argument for type~$3$ requires more substantial
modifications.  The number of elements is
bounded by $q^5$, and the difficulty is that the precise inertia group
quotient is typically significantly smaller than $\GU_2(\F_q)$, the
supergroup supplied by Table~\ref{table4}.  In
Section~\ref{subsubsec:7.3.4} where we dealt with inner forms, the
associative algebra $\Mat_3(\lri)$ was used to translate between the
compact analytic group $\SL_3(\lri)$ and the $\lri$-Lie lattice
$\spl_3(\lri)$.  In the situation at hand, there is no associative
algebra which connects $G = \SU_3(\lri)$ and $\mfg = \su_3(\lri)$ at
the level of $\lri$, but one can argue via the Cayley maps;
see~\cite[II.10 and VI.2]{We97}.  For any subset $Y \subseteq
\Mat_3(\Lri)$, we denote by $Y_\textup{gen}$ the set of elements of
$Y$ which do not have an eigenvalue congruent to $-1$ modulo~$\mfp$.
The Cayley maps $\Cay: \GU_3(\lri)_\textup{gen} \rightarrow
\gu_3(\lri)_\textup{gen}$ and $\cay: \gu_3(\lri)_\textup{gen}
\rightarrow \GU_3(\lri)_\textup{gen}$ which are each defined by the
mapping rule
\begin{equation}\label{equ:cayley}
y \mapsto (1-y)(1+y)^{-1} = (1+y)^{-1} (1-y)
\end{equation}
are easily seen to be mutual inverses of each other; cf.\
Appendix~\ref{sec:Cayley}.

As in Section~\ref{subsubsec:7.3.4} fix $x \in \mfn^*$ and a level $n
\in \N$ such that $I_G(\theta) = \Cen_G(x + \mfp^n \mfn) N$.  From the
definition of the Cayley maps it is clear that we have inclusions
$$
\begin{CD}
  \Cen_{\GU_3(\lri)}(x + \mfp^n \mfn)_\textup{gen} @>{\Cay}>>
  \Cen_{\gu_3(\lri)}(x + \mfp^n \mfn)_\textup{gen} @>\cay>>
  \Cen_{\GU_3(\lri)}(x + \mfp^n \mfn)_\textup{gen}.
\end{CD}
$$
We need to translate these inclusions into a quantitative assertion
about the cardinalities of the inertia group quotient $I_G(\theta)/N =
\Cen_{\SU_3(\lri)}(x + \mfp^n \mfn) N/N$ and the Lie lattice quotient
$(\Cen_{\su_3(\lri)}(x + \mfp^n \mfn) + \mfn)/\mfn$.

Elementary considerations, using $q \geq 5$, show that there exists
$\lambda \in \Lri^*$ with $\lambda + \lambda^\sigma = 0$ such that
$\gu_3(\lri) \subseteq \gu_3(\lri)_\textup{gen} \cup (\gu_3(\lri) +
\lambda \Id)_\textup{gen}$.  This implies that
$$
\lvert (\Cen_{\gu_3(\lri)}(x + \mfp^n \mfn) + \gu_3^1(\lri)) :
\gu_3^1(\lri) \rvert \leq 2 \cdot \lvert \Cen_{\GU_3(\lri)}(x + \mfp^n
\mfn) \GU_3^1(\lri) : \GU_3^1(\lri) \rvert.
$$
Similarly, there exist $\lambda_1, \ldots, \lambda_4 \in \Lri^*$ with
$\lambda_i \lambda_i^\sigma = 1$ such that $$\GU_3(\lri) \subseteq
\bigcup_{i=1}^4 (\lambda_i \GU_3(\lri))_\textup{gen}.$$  This implies
that
$$
4 \cdot \lvert \Cen_{\gu_3(\lri)}(x + \mfp^n \mfn) + \gu_3^1(\lri)) :
\gu_3^1(\lri) \rvert \geq \lvert \Cen_{\GU_3(\lri)}(x + \mfp^n \mfn)
\GU_3^1(\lri) : \GU_3^1(\lri) \rvert.
$$
At the same time we have
$$
  \lvert \Cen_{\gu_3(\lri)}(x + \mfp^n \mfn) + \gu_3^1(\lri) :
  \Cen_{\su_3(\lri)}(x + \mfp^n \mfn) + \gu_3^1(\lri)
  \rvert = q
$$
and
$$
q+1 \geq \lvert \Cen_{\GU_3(\lri)}(x + \mfp^n \mfn) \GU_3^1(\lri) :
\Cen_{\SU_3(\lri)}(x + \mfp^n \mfn) \GU_3^1(\lri) \rvert \geq
\frac{q+1}{\gcd(q+1,3)}.
$$
Altogether we thus obtain the generous estimates
$$
4^{-1} \leq \frac{\lvert \Cen_{\SU_3(\lri)}(x + \mfp^n \mfn) N/N
  \rvert}{\lvert (\Cen_{\su_3(\lri)}(x + \mfp^n \mfn) + \mfn)/\mfn \rvert}
\leq 12.
$$

With this preparation, we now distinguish two cases, as for elements
of type $3$ in the setting of inner forms.  In the first case, $x +
\mfp^n \mfn$ corresponds to a point modulo $\mfp^n$ on the variety
$\mathcal{W}_1$ defined by the set $F_3(\mathbf{Y})$ of
polynomials which controls the integrand.  These are rare and lead to
the effective `series factor' $1/(1 - q^{1-2s})$.  The inertia group
quotient is isomorphic to $\GU_2(\F_q)$ so that $\lvert G :
I_G(\theta) \rvert = q^2 (q^2 - q + 1) \geq 2^{-1} q^4$.  A similar
argument as before shows that $\theta$ extends to $I_G(\theta)$ so
that
\begin{equation}\label{equ:GU2}
  \begin{split}
    \zeta_{G,\theta}(s) & = \zeta_{\GU_2(\F_q)}(s) \\
    & = (q+1) \left( 1 + q^{-s} + (q-2)/2 \cdot (q+1)^{-s} + q/2 \cdot
      (q-1)^{-s} \right) \\
    & \ll 2q + 2^{1+s} q^{2-s}.
  \end{split}
\end{equation}
Thus the contributing summand is
\begin{align*}
  \psi_\lri^{\text{$3$(a)}}(s) & = q^5 \cdot (2q + 2^{1+s} q^{2-s})
  \cdot
  2^{1+s} q^{-4-4s} \cdot (1-q^{1-2s})^{-1} \\
  & = (q^{2-4s} + 2^{1+s} q^{3-5s}) (1-q^{1-2s})^{-1}.
\end{align*}

In the second case,
$$
\lvert I_G(\theta) : N \rvert \leq 12 \cdot \lvert
(\Cen_{\su_3(\lri)}(x + \mfp^n \mfn) + \mfn) : \mfn \rvert \leq 12
q^2,
$$
and we approximate $\zeta_{I_G(\theta)/N}(s)$ generously by $2^4 q^2$.
Bounding $\lvert G : I_G(\theta) \rvert$ from below by $\gamma(q)
2^{-4} q^6 > 2^{-5} q^6$ and approximating the required series factor
by $(1+q^{4-2s}) (1-q^{1-2s})^{-1} (1-q^{2-3s})^{-1}$, we obtain the
contribution
\begin{align*}
  \psi_\lri^{\text{$3$(b)}}(s) & = q^5 \cdot 2^4 q^2 \cdot 2^{5+5s}
  q^{-6-6s} \cdot (1+q^{4-2s}) (1-q^{1-2s})^{-1} (1-q^{2-3s})^{-1} \\
  & = 2^{9+5s} (q^{1-6s} + q^{5-8s}) (1-q^{1-2s})^{-1}
  (1-q^{2-3s})^{-1}.
\end{align*}

\medskip

As in in Section~\ref{subsubsec:7.3.4}, direct inspection shows that
the estimates provided for the contributions of the various types
justify Proposition~\ref{pro:second_product_SU}.  Together with
Proposition~\ref{pro:first_product_SU} and the reductions leading up
to these two propositions, this completes the proof of
Theorem~\ref{thmABC:lalu} in the case of outer forms.

\part*{Appendices}

\appendix

\section{Algebraic groups of type $A_2$}
\label{subsec:algebraic/arithmetic_groups}

Let $n \in \N$.  We are interested in arithmetic subgroups $\Gamma$ of
semisimple algebraic groups, in particular simple algebraic groups of
type $A_2$.  These groups arise in the following way.  Let
$\mathbf{G}$ be a connected, simply-connected semisimple algebraic
group defined over a number field $\gfi$, together with a fixed
$\gfi$-embedding into $\GL_N$ for some $N \in \N$.  Let $\gri_S$
denote the ring of $S$-integers in~$\gfi$, for a finite set $S$ of
places of $\gfi$ including all the archimedean ones.  We consider
groups $\Gamma$ which are commensurable to $\mathbf{G}(\gri_S) =
\mathbf{G}(\gfi) \cap \GL_n(\gri_S)$.

In this paper we are concerned with groups $\mathbf{G}$ which are
simple of type $A_n$, for the special case $n=2$.  These groups are
subdivided into groups of types ${}^1 \! A_n$ (pertaining to inner
forms) and ${}^2 \!  A_n$ (pertaining to outer forms) over $\gfi$.
Based on \cite[Propositions~2.17 and 2.18]{PlRa94}, we extract from
the general classification of simple algebraic groups of type $A_n$
% \begin{pro}
%   Let $n \in \N$ and let $\gfi$ be a field.
%
%   \textup{(1)} Let $\mathbf{G}$ be a connected, simply-connected
%   simple, $\gfi$-defined algebraic group of type ${}^1 \! A_n$.  Then
%   $\mathbf{G}(\gfi) = \SL_m(D)$, where $n+1 = md$ and $D$ is a finite
%   dimensional central division algebra of index $d$ over $\gfi$.  Here
%   $$
%   \SL_m(D) := \{ g \in \GL_m(D) \mid \textup{Nrd}_{\Mat_m(D) \vert
%     k}(g) = 1 \}
%   $$ is the group of invertible elements of the central simple algebra
%   $\Mat_m(D)$ of reduced norm $1$.
%
%   \textup{(2)} Let $\mathbf{G}$ be a connected, simply-connected
%   simple, $\gfi$-defined algebraic group of type ${}^2 \! A_n$.  Then
%   $\mathbf{G}(\gfi) = \SU_m(D,f)$, where $n+1 = md$, $K \vert k$ is a
%   quadratic field extension, $D$ is a finite dimensional central
%   division algebra of index $d$ over $\gfi$ admitting a $K \vert
%   k$-involution $\tau$, i.e.\ $k = \textup{Fix}_K(\tau)$, and $f$ is
%   a non-degenerate Hermitian sesquilinear form on the right $D$-vector
%   space $V=D^m$.  Here
%   $$
%   \SU_m(D,f) = \{ g \in \GL_m(D) \mid \sigma(g) g = 1 \text{ and }
%   \textup{Nrd}_{A \vert K}(g) = 1 \}
%   $$
%   where $A = \Mat_m(D)$ admits the $K \vert k$-involution
%   ${}^*(a_{ij})_{ij} = (\tau(a_{ji}))_{ij}$, $S = ( f(e_i,e_j) )_{ij}
%   \in \Mat_m(D)$ is the structure matrix of $f$ with respect to the
%   standard basis $e_1, \ldots, e_m$ of $V$, and $\sigma : A
%   \rightarrow A$, $a \mapsto S^{-1} \, {}^* a \, S$ is a $K \vert
%   k$-involution.
% \end{pro}
the following information relevant to the special case of interest to
us.

The classification allows for the following groups of type ${}^1 \!
A_2$ over the number field $\gfi$:
\begin{enumerate}
\item $\mathbf{G}(\gfi) = \SL_3(\gfi)$,
\item $\mathbf{G}(\gfi) = \SL_1(D)$ where $D$ is a central division
  algebra of index $3$ over $\gfi$.
\end{enumerate}
If $\Gamma = \mathbf{G}(\gri_S)$ for either one of these
$\gfi$-defined algebraic groups, then for almost all places $v$, with
$v \not \in S$, the local group $\Gamma_v = \mathbf{G}(\gri_v)$ is
isomorphic to $\SL_3(\gri_v)$.  For the finitely many exceptional
places $v$ the local group $\Gamma_v$ is commensurable to either
$\SL_3(\gri_v)$ or $\SL_1(\Ldr)$ where $\Ldr$ is the (unique) division
algebra of index $2$ over $\gfi_v$.

Similarly, the classification allows for the following groups of type
${}^2 \!  A_2$ over the number field $\gfi$:
\begin{enumerate}
\item $\SU_3(\Gfi,f) = \{ g \in \GL_3(\Gfi) \mid \det(g) = 1 \text{
    and } g^\circ H g = H \}$ where $\Gfi = \gfi(\sqrt{\delta})$,
  $\delta \in \gri$ not a square, is a quadratic field extension with
  non-trivial Galois automorphism $\sigma$, the operation $\circ$ is
  the $\Gfi \vert \gfi$-involution `conjugate transpose' of the
  central algebra $A = \Mat_3(K)$ over $\Gfi$, given by
  $\mathbf{a}^\circ = (\mathbf{a}^\sigma)^\textup{t}$, and $H \in
  \Mat_3(\Gfi)$ is the structure matrix of a non-degenerate Hermitian
  sesquilinear form $f$ on $V = \Gfi^3$,
\item $\SU_1(D,f) = \{ g \in \GL_1(D) \mid \textup{Nrd}_{D \vert
    \Gfi}(g) = 1 \text{ and } g^\tau \eta g = \eta \}$ where $\Gfi =
  \gfi(\sqrt{\delta})$, $\delta \in \gri$ not a square, is a quadratic
  field extension, $A = D$ is a central division algebra of index $3$
  over $\Gfi$ with a $\Gfi \vert \gfi$-involution $\tau$, and $\eta
  \in D$ with $\eta^\tau = \eta$.
\end{enumerate}

Suppose that $\Gamma = \mathbf{G}(\gri_S)$ for either one of these
$\gfi$-defined algebraic groups, defined in terms of a central simple
algebra $A$ with $\Gfi \vert \gfi$-involution.  By \cite[Remark~6.6.3
and Theorem~10.2.5]{Sc85} there are essentially two types of
completions $A_v$ of $A$, as $v$ runs over places of $\gfi$ not in
$S$.  For places $v \not \in S$ of $\gfi$ which are non-decomposed in
$\Gfi \vert \gfi$, the completion of the $\gfi$-algebra $\Gfi$ at $v$
yields a quadratic field extension $\Gfi_v \vert \gfi_v$ and, apart
from finitely many exceptional places, $A_v$ is isomorphic to
$\Mat_3(\Gfi_v)$ equipped with the standard $\Gfi_v \vert
\gfi_v$-involution.  In this situation, excluding again finitely many
places, we have $\Gamma_v \cong \SU_3(\gri_v)$, where $\SU_3(\gri_v)
:= \SU_3(\Lri,\gri_v)$ with $\Lri$ denoting an unramified quadratic
extension of $\gri_v$.  For places $v \not \in S$ of $\gfi$ which are
decomposed in $\Gfi \vert \gfi$ the completion $\Gfi_v$ is isomorphic
to $\gfi_v \times \gfi_v$ and, apart from finitely many exceptions,
$A$ is isomorphic to $\Mat_3(\gfi_v) \times \Mat_3(\gfi_v)$ with the
involution $(X,Y) \to (Y^\textup{t},X^\textup{t})$.  In this
situation, excluding again finitely many places, we have $\Gamma_v
\cong \SL_3(\gri_v)$.

For $v \not \in S$, let $q_v$ denote the residue field cardinality of
$\gri_v$, and define the sets of rational primes $P_\textup{SL} := \{
p \mid \exists v \not \in S : \Gamma_v \cong \SL_3(\gri_v) \text{ and
} p = q_v \text{ is prime}\}$ and $P_\textup{SU} := \{ p \mid \exists
v \not \in S : \Gamma_v \cong \SU_3(\gri_v) \text{ and } p = q_v
\text{ is prime} \}$.  In connection with our approach toward outer
forms in Section~\ref{subsec:abscissa_outer}, e.g.\ in the proof of
Proposition~\ref{pro:first_product_SU}, the following lemma is
noteworthy.

\begin{lem}
  Each of the sets $P_\textup{SL}$ and $P_\textup{SU}$ has positive
  Dirichlet density within the set of all primes.
\end{lem}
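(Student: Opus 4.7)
The plan is to deduce the claim from the Chebotarev Density Theorem applied to the Galois closure of $\Gfi$ over $\Q$. Let $M$ denote this Galois closure, and set $G := \mathrm{Gal}(M/\Q)$, $H := \mathrm{Gal}(M/\gfi)$, and $H' := \mathrm{Gal}(M/\Gfi)$; since $\Gfi \vert \gfi$ is quadratic, one has $[H:H']=2$. For any rational prime $p$ unramified in $M$, the Frobenius conjugacy class $\mathrm{Frob}_p \subseteq G$ is well-defined, the set of places of $\gfi$ above $p$ is in bijection with the orbits of a chosen representative of $\mathrm{Frob}_p$ on $G/H$, and the residue degree of the place equals the length of the corresponding orbit. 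In particular, a place $v$ above $p$ satisfies $f(v \vert p)=1$ if and only if some representative of $\mathrm{Frob}_p$ lies in $H$; moreover such a $v$ splits in $\Gfi\vert\gfi$ if this representative lies in $H'$, and is inert otherwise.

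For $P_\textup{SL}$ it then suffices to apply Chebotarev to the trivial conjugacy class in $G$: the set of rational primes $p$ that split completely in $M$ has positive density $1/|G|$, and for each such $p$ every place $v$ of $\gfi$ above $p$ satisfies $q_v = p$ and splits in $\Gfi\vert\gfi$. For $P_\textup{SU}$, pick any $g \in H \setminus H'$, which is non-empty as $[H:H']=2$; by Chebotarev the set of rational primes $p$ whose Frobenius class equals the $G$-conjugacy class $g^G$ has positive density $|g^G|/|G|$, and for each such $p$ one can choose a representative of $\mathrm{Frob}_p$ in $H \setminus H'$, producing a place $v$ of $\gfi$ above $p$ with $q_v = p$ that is inert in $\Gfi\vert\gfi$.

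The final step is to pass from the splitting data to the required local structure of $\Gamma_v$. Here I invoke the classification discussion preceding the lemma, which asserts that the places $v \not\in S$ for which $\Gamma_v$ fails to have the expected form (namely $\SL_3(\gri_v)$ when $v$ splits in $\Gfi\vert\gfi$, or $\SU_3(\gri_v)$ when $v$ is unramified and inert) form a finite set, and hence correspond to only finitely many rational primes $p$. Discarding these from the density arguments above preserves positivity, yielding the lemma. The main conceptual step is the Chebotarev argument itself, which is essentially immediate once the double-coset dictionary between primes and the Galois group is in place; the only mildly delicate point is the bookkeeping needed to reconcile the conditions defining $P_\textup{SL}$ and $P_\textup{SU}$ with the exceptional-set analysis of Platonov--Rapinchuk recalled at the start of the appendix.
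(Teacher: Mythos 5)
Your proof is correct and follows essentially the same Chebotarev-theoretic approach as the paper: locate a Frobenius conjugacy class producing a degree-one place of $\gfi$ that is inert (resp.\ split) in $\Gfi\vert\gfi$, and discard the finitely many exceptional primes. By working throughout in the Galois closure of $\Gfi$ over $\Q$ you avoid the paper's case distinction between $\Gfi \subseteq N$ and $\Gfi \not\subseteq N$ (where $N$ is the normal closure of $\gfi$), which is a mild streamlining of the same argument rather than a different route.
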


\begin{proof}
  In both cases the assertion is a consequence of the Chebotarev
  Density Theorem.  In the case of $P_\textup{SL}$ the argument is
  straightforward; the set of primes $p$ which are totally split in
  $\Gfi$ has positive density.

  In order to show that $P_\textup{SU}$ has positive density, it
  suffices to supply a positive proportion of primes $p$ which admit a
  prolongation $v$ to $\gfi$ such that $\gfi_v$ has residue field of
  size $q_v = p$ and such that $v$ is inert in the quadratic extension
  $\Gfi | \gfi$.  For simplicity, we consider all number fields to be
  subfields of a fixed algebraic closure of $\Q$, e.g.\ within $\C$.

  We distinguish two cases.  First suppose that $\Gfi$ is not
  contained in the normal closure $N$ of $\gfi$.  Then the normal
  closure of $\Gfi$ has degree $2$ over $N$, and the Chebotarev
  Density Theorem shows that the proportion of primes $p$ which are
  totally split in $\gfi$, viz.\ $\lvert N : \Q \rvert^{-1}$, is
  positive and twice as much as the proportion of primes $p$ which are
  totally split in $\Gfi$.  Apart from finitely many exceptions, these
  primes are of the desired kind.

  We now turn to the more interesting case that $\Gfi$ is contained in
  the normal closure $N$ of $\gfi$.  In the Galois group $G =
  \textup{Gal}(N \vert \Q)$, let $H_\gfi$ denote the subgroup
  consisting of automorphisms which fix element-wise $\gfi$, and let
  $H_\Gfi$ be the subgroup consisting of automorphisms which fix
  element-wise $\Gfi$.  Then $H_\Gfi \trianglelefteq H_\gfi$ with
  $\lvert H_\gfi : H_\Gfi \rvert = 2$.  Choose $h \in H_\gfi$ such
  that $H_\gfi = \langle h \rangle H_\Gfi$, and let $C$ denote the
  conjugacy class of $h$ in $G$.  The Chebotarev Density Theorem
  supplies a positive proportion of primes $p$, unramified in $N \vert
  \Q$, such that the Frobenius conjugacy class of $p$ in $G$ is equal
  to $C$.  Fix one of these primes $p$, let $w = w_N$ be a
  prolongation of $p$ to $N$, and write $w_\Gfi := w \vert_\Gfi$ and
  $v := w \vert_\gfi$ for the restrictions of $w$ to $\Gfi$ and
  $\gfi$.  Then the decomposition group $G_w$ of $w$ maps
  isomorphically onto the cyclic Galois group of the unramified
  extension $N_w \vert \Q_p$.  Without loss of generality let us
  assume that $G_w = \langle h \rangle$.  As $h$ acts as the identity
  on $\gfi$, we have $\lvert \gfi_v : \Q_p \rvert = 1$, i.e.\ $q_v =
  p$, and as $h$ does not act as the identity on $\Gfi$, we must have
  $\lvert \Gfi_{w_\Gfi} : \gfi_v \rvert = 2$.  Thus $p$ is of the
  desired kind.
%   let
%   $\Gri_N$ denote the ring of integers of $N$, and choose a prime
%   ideal $\wp$ of $\Gri_N$ above $p$.  Then the decomposition group
%   $G_\wp$ of $\wp$ maps isomorphically onto the Galois group of the
%   cyclic extension $\Gri_N / \wp$ over $\Z / p\Z$.  Without loss of
%   generality let us assume that $G_\wp = \langle h \rangle$.  As $h$
%   acts as the identity on $\gfi$, we have $\Gri_\gfi / \wp_\gfi = \Z
%   /p\Z$, and as $h$ does not act trivially on $\Gfi$, we must have
%   $\lvert \Gri_\Gfi / \wp_\Gfi : \Gri_\gfi / \wp_\gfi \rvert = 2$.
%   (Here we write $O_k$ for the ring of integers in $k$ and $P_k$ the
%   intersection of $P$ with $k$, etc.)  Thus we again obtain the
%   desired $v$.
\end{proof}

\section{Adjoint action  of $\GL_3(\F_q)$ on $\spl_3(\F_q)$}
\label{sec:aux_sl3}
Let $\F_q$ be a finite field of characteristic not equal to $3$.  In
Section~\ref{subsec:abscissa_inner}, we require an overview of the
elements in $\spl_3(\F_q)$ up to conjugacy under the group
$\GL_3(\F_q)$.  We distinguish eight different types, labelled $0$,
$1$, $2$, $3$, $4$a, $4$b, $4$c,~$5$.  The total number of elements of
each type and the isomorphism types of their centralisers in
$\SL_3(\F_q)$ are summarised in Tables~\ref{table1} and \ref{table2}.
We briefly discuss the eight different types.

Type $0$ consists of the zero matrix, which does not feature in our
calculation but is shown for completeness.  Its centraliser is the
entire group $\SL_3(\F_q)$.

Type $1$ consists of nilpotent matrices with minimal polynomial equal
to $X^3$ over~$\F_q$.  The centraliser of a typical element is
\begin{equation}\label{equ:centr_type_1}
  \Cen_{\SL_3(\F_q)} \left( \Bigl(\begin{smallmatrix} 0 & 1 & 0 \\ 0 & 0 & 1
      \\ 0 & 0 & 0 \end{smallmatrix} \Bigr) \right) = \left\{
    \Bigl( \begin{smallmatrix} a & b & c \\ 0 & a & b \\ 0 & 0 &
      a \end{smallmatrix} \Bigr) \in \GL_3(\F_q) \mid  a^3 = 1 \right\}
\end{equation}
and matrices of type $1$ are regular.

Type $2$ consists of nilpotent matrices with minimal polynomial equal
to $X^2$ over~$\F_q$.  The centraliser of a typical element is
\begin{equation}\label{equ:centr_type_2}
  \Cen_{\SL_3(\F_q)} \left( \Bigl(\begin{smallmatrix} 0 & 1 & 0 \\ 0 & 0 & 0
      \\ 0 & 0 & 0 \end{smallmatrix} \Bigr) \right) = \left\{
    \Bigl( \begin{smallmatrix} a & b & c \\ 0 & a & 0 \\ 0 & d &
      e \end{smallmatrix} \Bigr) \in \GL_3(\F_q) \mid a^2 e = 1 \right\}
\end{equation}
and matrices of type $2$ are irregular.

Type $3$ consists of semisimple matrices with eigenvalues $\lambda \in
\F_q \setminus \{0\}$ of multiplicity $2$ and $\mu := -2\lambda$.  The
minimal polynomial of such elements over $\F_q$ is equal to
$(X-\lambda)(X-\mu)$.  The centraliser of a typical element is
\begin{equation}\label{equ:centr_type_3}
  \Cen_{\SL_3(\F_q)} \left( \Bigl(\begin{smallmatrix} \lambda & 0 & 0 \\ 0 &
      \lambda & 0 \\ 0 & 0 & \mu \end{smallmatrix} \Bigr) \right) =
  \left\{ \Bigl( \begin{smallmatrix} a & b & 0 \\ c & d & 0 \\ 0 & 0 &
      e \end{smallmatrix} \Bigr) \in \GL_3(\F_q) \mid (ad-bc) e = 1
  \right\}
\end{equation}
and matrices of type $3$ are irregular.

Type $4$a consists of semisimple matrices with distinct eigenvalues
$\lambda,\mu,\nu := -\lambda-\mu \in \F_q \setminus \{0\}$.  The
minimal polynomial of such elements over $\F_q$ is equal to
$(X-\lambda)(X-\mu)(X-\nu)$.  The centraliser of a typical element is
\begin{equation}\label{equ:centr_type_4a}
\Cen_{\SL_3(\F_q)} \left( \Bigl(\begin{smallmatrix} \lambda & 0 & 0 \\
    0 & \mu & 0 \\ 0 & 0 & \nu \end{smallmatrix} \Bigr) \right) =
\left\{ \Bigl( \begin{smallmatrix} a & 0 & 0 \\ 0 & b & 0 \\ 0 & 0 &
    c \end{smallmatrix} \Bigr) \in \GL_3(\F_q) \mid abc = 1 \right\}
\end{equation}
and matrices of type $4$a are regular.

Type $4$b consists of semisimple matrices with eigenvalues
$\lambda,\mu := \lambda^q \in \F_{q^2} \setminus \F_q$ and $\nu :=
-\lambda-\mu \in \F_q$.  The minimal polynomial of such elements over
$\F_q$ is equal to $(X-\lambda)(X-\mu)(X-\nu)$.  The centraliser of a
typical element is isomorphic to the multiplicative group of the field
$\F_{q^2}$ and matrices of type $4$b are regular.

Type $4$c consists of semisimple matrices with eigenvalues
$\lambda,\mu := \lambda^q, \nu := \lambda^{q^2} \in \F_{q^3} \setminus
\F_q$ with $\lambda + \mu + \nu = 0$.  The minimal polynomial of such
elements over $\F_q$ is equal to $(X-\lambda)(X-\mu)(X-\nu)$.  The
centraliser of a typical element is isomorphic to the group of
elements of norm $1$ in the field $\F_{q^3}$ and matrices of type $4$c
are regular.

Type $5$ consists of matrices with eigenvalues $\lambda \in \F_q
\setminus \{0\}$ of multiplicity $2$ and $\mu := -2\lambda$.  The
minimal polynomial of such elements over $\F_q$ is equal to
$(X-\lambda)^2(X-\mu)$.  The centraliser of a typical element is
\begin{equation}\label{equ:centr_type_5}
  \Cen_{\SL_3(\F_q)} \left( \Bigl(\begin{smallmatrix} \lambda & 1 & 0 \\ 0 &
      \lambda & 0 \\ 0 & 0 & \mu \end{smallmatrix} \Bigr) \right) =
  \left\{ \Bigl( \begin{smallmatrix} a & b & 0 \\ 0 & a & 0 \\ 0 & 0 &
      c \end{smallmatrix} \Bigr) \in \GL_3(\F_q) \mid a^2 c = 1 \right\}
\end{equation}
and matrices of type $5$ are regular.

%%%%%

\section{Adjoint action of $\GU_3(\F_{q^2},\F_q)$ on
  $\su_3(\F_{q^2},\F_q)$}
\label{sec:aux_su3}

In Section~\ref{subsec:abscissa_inner}, we require an overview of the
elements in $\su_3(\F_{q^2},\F_q)$ up to conjugacy under the group
$\GU_3(\F_{q^2},\F_q)$.  We assume that the characteristic of $\F_q$
is at least $5$, and for the purpose of explicit computations we write
$\F_{q^2} = \F_q(\sqrt{\rho})$, where $\rho \in \F_q$ is not a square.
We distinguish eight different types, labelled $0$, $1$, $2$, $3$,
$4$a, $4$b, $4$c,~$5$.  The total number of elements of each type and
the isomorphism types of their centralisers in $\SU_3(\F_{q^2},\F_q)$
are summarised in Tables~\ref{table3} and \ref{table4}.  After
recalling some preliminary tools to investigate the adjoint orbits, we
discuss the eight different types.

\subsection{}\label{sec:Cayley} Our computations are based on
classical results of Ennola and on the Cayley map, which allows us to
translate between conjugacy classes in $G := \GU_3(\F_{q^2},\F_q)$ and
adjoint orbits in $\mfg := \gu_3(\F_{q^2},\F_q)$.  We refer to
\cite[II.10 and VI.2]{We97} for basic properties of the Cayley map.
The group $G$ and the Lie algebra $\mfg$ are naturally embedded in the
associative $\F_q$-algebra $\Mat_3(\F_{q^2})$ with $\F_{q^2} \vert
\F_q$-involution $\circ$ given by $x^\circ = \overline{x}^\textup{t}$,
i.e.\ one obtains $x^\circ$ by applying the non-trivial Galois
automorphism of $\F_{q^2} \vert \F_q$ to each entry of $x$ and then
taking the transpose.  For any subset $M$ of $\Mat_3(\F_{q^2})$ let
$M_\textup{gen}$ denote the set of elements of $M$ which do not have
$-1$ as an eigenvalue.  The Cayley maps $\Cay: G_\textup{gen}
\rightarrow \mfg_\textup{gen}$ and $\cay: \mfg_\textup{gen}
\rightarrow G_\textup{gen}$, which are each defined by the mapping
rule \eqref{equ:cayley}, are easily seen to be mutual inverses of each
other, and they are compatible with conjugation by elements of $G$.

Put $u(q) := \lvert \GU_3(\F_{q^2},\F_q) \rvert = q^3
(q+1)(q^2-1)(q^3+1)$.  According to \cite{En62}, two elements $X,Y \in
\GU_3(\F_{q^2},\F_q)$ are conjugate in $\GU_3(\F_{q^2},\F_q)$ if and
only if they are similar, i.e.\ conjugate in $\GL_3(\F_{q^2})$.
Moreover, the size of the conjugacy class in $\GU_3(\F_{q^2},\F_q)$
consisting of matrices which are similar to a given $X \in
\GL_3(\F_{q^2})$ is $\gamma_G(X) u(q) c(X)^{-1}$, where
\begin{align*}
  \gamma_G(X) & := \lvert \{ \Gamma \in \Mat_3(\F_{q^2}) \mid
  \Gamma^\circ
  = \Gamma \text{ non-singular, } X^\circ \Gamma X = \Gamma \} \rvert, \\
  c(X) & := \lvert \Cen_{\GL_3(\F_{q^2})}(X) \lvert.
\end{align*}

Suppose that $x \in \mfg$ does not have $-1$ as an eigenvalue and put
$X := \cay(x)$.  A short computation shows that for all $\Gamma \in
\Mat_3(\F_{q^2})$ with $\Gamma^\circ = \Gamma$ one has
$$
X^\circ \Gamma X = \Gamma \quad \text{if and only if} \quad x^\circ
 \Gamma + \Gamma x = 0
$$
If follows that the adjoint orbit of $x$, viz.\
$$
\{ x^g \mid g \in G \} = \Cay (\{ X^g \mid g \in G \}),
$$
has size $\gamma_\mfg(x) u(q) c(x)^{-1}$, where
\begin{align*}
  \gamma_\mfg(x) & := \lvert \{ \Gamma \in \Mat_3(\F_{q^2}) \mid
  \Gamma^\circ =
  \Gamma \text{ non-singular, } x^\circ \Gamma + \Gamma x = 0 \} \rvert, \\
  c(x) & := \lvert \Cen_{\GL_3(\F_{q^2})}(x) \lvert.
\end{align*}

Next suppose that $x \in \mfg$ has $-1$ as an eigenvalue.  Since $q
\geq 5$ we find $\lambda \in \F_{q^2}$ with $\Tr_{\F_{q^2} \vert
  \F_q}(\lambda) = 0$ such that $x_0 := x + \lambda \Id_3 \in \mfg$
does not have eigenvalue $-1$.  Clearly, the adjoint orbits of $x$ and
$x_0$ have the same size, $c(x) = c(x_0)$ and $\gamma_\mfg(x) =
\gamma_\mfg(x_0)$.  Hence the size of the adjoint orbit of $x$ is
still given by the term $\gamma_\mfg(x) u(q) c(x)^{-1}$.  More
generally, for any $x \in \gl_3(\F_{q^2})$, the same formula gives the
size of the adjoint orbit in $\gu_3(\F_{q^2},\F_q)$ consisting of
matrices which are similar to $x$.

\subsection{} We are now ready to discuss the eight different types.
Type $0$ consists of the zero matrix, which does not feature in our
calculation but is shown for completeness.  Its centraliser is the
entire group $\SU_3(\F_{q^2},\F_q)$.

Type $1$ consists of nilpotent matrices with minimal polynomial equal
to $X^3$ over~$\F_{q^2}$.  Let $x := \left( \begin{smallmatrix} 0 & 1
    & \\ & 0 & 1 \\ & & 0 \end{smallmatrix} \right)$.  Then the
non-singular matrices $\Gamma \in \Mat_3(\F_{q^2})$ with $\Gamma^\circ
= \Gamma$ and $x^\circ \Gamma + \Gamma x = 0$ are the matrices over
$\F_{q^2}$ of the form
$$
\Gamma =
\begin{pmatrix}
  0 & 0 & \gamma \\
  0 & -\gamma & \beta \\
  \overline{\gamma} & \overline{\beta} & \alpha
\end{pmatrix}, \quad \text{where $\overline{\alpha} = \alpha$,
  $\overline{\beta} = -\beta$ and $\overline{\gamma} = \gamma \not =
  0$.}
$$
This shows that $\gamma_\mfg(x) = (q-1) q^2$.  From
\eqref{equ:centr_type_1} we gather that $c(x) = (q^2-1) q^4 $ so that
$\gamma_\mfg(x) u(q) c(x)^{-1} = (q^3+1)(q^2-1)q$.

Put $\Gamma_0 := \left( \begin{smallmatrix} 0 & 0 & -1 \\ 0 & 1 & 0 \\
  -1 & 0 & 0 \end{smallmatrix} \right)$.  The centraliser of a typical
element of type $1$ in $\SU_3(\F_{q^2},\F_q)$ is isomorphic to
\begin{multline}\label{equ:su_centr_type_1}
  \Cen_{\SL_3(\F_{q^2})} (x) \cap \{ Y \in \GL_3(\F_{q^2}) \mid
  Y^\circ  \Gamma_0 Y = \Gamma_0 \} \\
  = \left\{ \Bigl( \begin{smallmatrix} a & b & c \\ 0 & a & b \\ 0 & 0
      & a \end{smallmatrix} \Bigr) \in \GL_3(\F_{q^2}) \mid a^3 = a
    \overline{a} = 1, \overline{a} b - a \overline{b} = b \overline{b}
    - \overline{a} c - a \overline{c} = 0 \right\}.
\end{multline}
The elements of this group can be conveniently parameterised in terms
of $(a,a\overline{b},a\overline{c})$ and a short computation yields
that the group is isomorphic to $(\mu_3(\F_{q^2}) \cap
\ker(N_{\F_{q^2} \vert \F_q})) \times \F_q^+ \times \F_q^+$.  Matrices
of type $1$ are regular.

Type $2$ consists of nilpotent matrices with minimal polynomial equal
to $X^2$ over~$\F_{q^2}$.  Let $x := \left( \begin{smallmatrix} 0 & 1
    & 0\\ 0 & 0 & 0\\ 0 & 0 & 0 \end{smallmatrix} \right)$.  Then the
non-singular matrices $\Gamma \in \Mat_3(\F_{q^2})$ with $\Gamma^\circ
= \Gamma$ and $x^\circ \Gamma + \Gamma x = 0$ are the matrices over
$\F_{q^2}$ of the form
$$
\Gamma =
\begin{pmatrix}
  0 & \beta & 0 \\
  \overline{\beta} & \alpha & \gamma \\
  0 & \overline{\gamma} & \delta
\end{pmatrix}, \quad \text{where $\overline{\alpha} = \alpha$,
  $\overline{\beta} = -\beta \not = 0$ and $\overline{\delta} = \delta
  \not = 0$.}
$$
This shows that $\gamma_\mfg(x) = (q-1)^2 q^3$.  From
\eqref{equ:centr_type_2} we gather that $c(x) = (q^2-1)^2 q^6$ so that
$\gamma_\mfg(x) u(q) c(x)^{-1} = (q^3+1)(q-1)$.

Put $\Gamma_0 := \left( \begin{smallmatrix} 0 & \sqrt{\rho} & 0 \\
  -\sqrt{\rho} & 0 & 0 \\ 0 & 0 & 1 \end{smallmatrix} \right)$.  Then the
centraliser of a typical element of type $2$ in $\SU_3(\F_{q^2},\F_q)$
is isomorphic to
\begin{multline}\label{equ:su_centr_type_2}
  \Cen_{\SL_3(\F_{q^2})} (x) \cap \{ Y \in \GL_3(\F_{q^2}) \mid
  Y^\circ
  \Gamma_0 Y = \Gamma_0 \} \\
  = \left\{ \Bigl( \begin{smallmatrix} a & b & c \\ 0 & a & 0 \\ 0 & d
      & e \end{smallmatrix} \Bigr) \in \GL_3(\F_{q^2}) \mid a^2 e = a
    \overline{a} = 1, a\overline{b}\sqrt{\rho} -
    \overline{a}b\sqrt{\rho} + d \overline{d} =
    a\overline{c}\sqrt{\rho} + d\overline{e} = 0 \right\}.
\end{multline}
The elements of this group can be parameterised in terms of
$(a,a\overline{b}\sqrt{\rho},d\overline{e},a\overline{c}\sqrt{\rho})$
and a short computation yields that the group is isomorphic to
$\ker(N_{\F_{q^2} \vert \F_q}) \ltimes \mathcal{H}(\F_q)$.  Indeed,
conjugation by the monomial matrix $\left( \begin{smallmatrix}
\sqrt{\rho} & 0 & 0 \\ 0 & 0 & 1 \\ 0 & 1 & 0 \end{smallmatrix}
\right)^{-1}$ maps the group described in \eqref{equ:su_centr_type_2}
isomorphically onto
$$
\left\{ \Bigl( \begin{smallmatrix} a & 0 & 0 \\ 0 & a^{-2} & 0 \\
    0 & 0 & a \end{smallmatrix} \Bigr) \in \GL_3(\F_{q^2}) \mid a
  \overline{a} = 1 \right\} \ltimes
\left\{ \Bigl( \begin{smallmatrix} 1 & \overline{d} & b_0 \\ 0 & 1 & d \\
    0 & 0 & 1 \end{smallmatrix} \Bigr) \in \GL_3(\F_{q^2}) \mid b_0 +
  \overline{b_0} = d \overline{d} \right\}.
$$
The normal factor in this semi-direct product is seen to be isomorphic
to the Heisenberg group $\mathcal{H}(\F_q)$ by `formally setting
$\sqrt{\rho} = 1$', i.e.\ via the map
$$
\Bigl( \begin{smallmatrix} 1 & \overline{d} & b_0 \\ 0 & 1 & d \\
  0 & 0 & 1 \end{smallmatrix} \Bigr) \mapsto
\Bigl( \begin{smallmatrix} 1 & \phi(\overline{d}) & \phi(b_0) \\ 0 & 1
  & \phi(d) \\ 0 & 0 & 1 \end{smallmatrix} \Bigr), \qquad \text{where
  $\phi(z) := (z + \overline{z})/2 + (z -
  \overline{z})/(2\sqrt{\rho})$.}
$$
Matrices of type $2$ are irregular.

Type $3$ consists of semisimple matrices with eigenvalues $\lambda \in
\F_{q^2} \setminus \{0\}$ of multiplicity $2$ and $\mu := -2\lambda$
of multiplicity $1$.  The condition $\overline{\lambda} = -\lambda$
implies that $\lambda \in \F_q \sqrt{\rho} \setminus \{0\}$ so that
there are $q-1$ choices for $\lambda$ and $q-1$ corresponding orbits.
The minimal polynomial of such elements over $\F_{q^2}$ is equal to
$(X-\lambda)(X-\mu)$.

Let $x := \left( \begin{smallmatrix} \lambda & & \\ & \lambda & \\ & &
    \mu \end{smallmatrix} \right)$.  Then the non-singular matrices
$\Gamma \in \Mat_3(\F_{q^2})$ with $\Gamma^\circ = \Gamma$ and
$x^\circ \Gamma + \Gamma x = 0$ are the matrices over $\F_{q^2}$ of
the form
$$
\Gamma =
\begin{pmatrix}
  \alpha & \beta & 0 \\
  \overline{\beta} & \gamma & 0 \\
  0 & 0 & \delta
\end{pmatrix}, \quad \text{where $\overline{\alpha} = \alpha$,
  $\overline{\gamma} = \gamma$, $\alpha \gamma - \beta
  \overline{\beta} \not = 0$ and $\overline{\delta} = \delta \not =
  0$.}
$$
This shows that $\gamma_\mfg(x) = (q^2+1) (q-1) q$.  From
\eqref{equ:centr_type_3} we gather that $c(x) = (q^4-1) (q^2-1)^2 q^2$ so that
the size of each orbit is $\gamma_\mfg(x) u(q) c(x)^{-1} =
(q^2-q+1)q^2$.

Taking $\Gamma_0 := \Id_3$, the centraliser of the typical element $x$
of type $3$ in $\SU_3(\F_{q^2},\F_q)$ is
\begin{multline}\label{equ:su_centr_type_3}
  \Cen_{\SU_3(\F_{q^2},\F_q)} (x) \\
  = \left\{ \Bigl( \begin{smallmatrix} a & b & 0 \\ c & d & 0 \\ 0 & 0
      & e \end{smallmatrix} \Bigr) \in \GL_3(\F_{q^2}) \mid a
    \overline{a} + c \overline{c} = b \overline{b} + d \overline{d} =
    (ad - bc)e = 1, a \overline{b} + c \overline{d} = 0 \right\}.
\end{multline}
Inspection shows that this group is isomorphic to
$\GU_2(\F_{q^2},\F_q)$, and matrices of type $3$ are irregular.

Types $4$a, $4$b and $4$c classify semisimple matrices with distinct
non-zero eigenvalues $\lambda,\mu,\nu := -\lambda-\mu$ in an extension
of $\F_{q^2}$.  The minimal polynomial of such elements over
$\F_{q^2}$ is equal to $(X-\lambda)(X-\mu)(X-\nu)$.  The condition $\{
\overline{\lambda}, \overline{\mu}, -\overline{\lambda} -
\overline{\mu} \} = \{-\lambda, -\mu, \lambda+\mu \}$ allows for three
possibilities.

Type $4$a consists of those semisimple matrices where
$\overline{\lambda} = -\lambda$ and $\overline{\mu} = -\overline{\mu}$
so that, in particular, $\lambda, \mu \in \F_{q^2} \setminus \{0\}$.
The condition that $\lambda, \mu, -\lambda-\mu$ are distinct means
that $\mu \not \in \{\lambda, -2 \lambda, -\lambda/2 \}$.  A short
computation shows that there are $(q-1)(q-2)/6$ possibilities for $\{
\lambda, \mu, \nu \}$ and hence a corresponding number of orbits.

Let $x := \left( \begin{smallmatrix} \lambda &
    & \\ & \mu & \\
    & & \nu \end{smallmatrix} \right)$.  Then the non-singular
matrices $\Gamma \in \Mat_3(\F_{q^2})$ with $\Gamma^\circ = \Gamma$
and $x^\circ \Gamma + \Gamma x = 0$ are the matrices over $\F_{q^2}$
of the form
$$
\Gamma =
\begin{pmatrix}
  \alpha & 0 & 0 \\
  0 & \beta & 0 \\
  0 & 0 & \gamma
\end{pmatrix}, \quad \text{where $\overline{\alpha} = \alpha \not =
  0$, $\overline{\beta} = \beta \not = 0$ and $\overline{\gamma} =
  \gamma \not = 0$.}
$$
This shows that $\gamma_\mfg(x) = (q-1)^3$.  From
\eqref{equ:centr_type_4a} we gather that $c(x) = (q^2-1)^3$ so that
the size of each orbit is $\gamma_\mfg(x) u(q) c(x)^{-1} =
(q^2-q+1)(q-1)q^3$.

Taking $\Gamma_0 := \Id_3$, the centraliser of the typical element $x$
of type $4a$ in $\SU_3(\F_{q^2},\F_q)$ is
\begin{equation}\label{equ:su_centr_type_4a}
  \Cen_{\SU_3(\F_{q^2},\F_q)} (x) \\
  = \left\{ \Bigl( \begin{smallmatrix} a & 0 & 0 \\ 0 & b & 0 \\ 0 & 0
      & c \end{smallmatrix} \Bigr) \in \GL_3(\F_{q^2}) \mid a
    \overline{a} = b \overline{b} = abc = 1 \right\}.
\end{equation}
Inspection shows that this group is isomorphic to $\ker(N_{\F_{q^2}
  \vert \F_q}) \times \ker(N_{\F_{q^2} \vert \F_q})$, and matrices of
type $4$a are regular.

Type $4$b consists of those semisimple matrices where
$\overline{\lambda} = -\mu$ and $\overline{\mu} = -\lambda$ so that,
in particular, $\lambda, \mu \in \F_{q^2} \setminus \{0\}$.  The
condition that $\lambda, \mu = -\overline{\lambda}, \nu =
\overline{\lambda}-\lambda$ are distinct means that $\lambda \not \in
\{ -\overline{\lambda}, 2 \overline{\lambda}, \overline{\lambda}/2 \}$.
Thus there are $q(q-1)/2$ possibilities for $\{ \lambda, \mu, \nu
\}$ and a corresponding number of orbits.

Let $x := \left( \begin{smallmatrix} \lambda &
    & \\ & \mu & \\
    & & \nu \end{smallmatrix} \right)$.  Then the non-singular
matrices $\Gamma \in \Mat_3(\F_{q^2})$ with $\Gamma^\circ = \Gamma$
and $x^\circ \Gamma + \Gamma x = 0$ are the matrices over $\F_{q^2}$
of the form
$$
\Gamma =
\begin{pmatrix}
  0 & \alpha & 0 \\
  \overline{\alpha} & 0 & 0 \\
  0 & 0 & \beta
\end{pmatrix}, \quad \text{where $\alpha \not = 0$ and
$\overline{\beta} = \beta \not = 0$.}
$$
This shows that $\gamma_\mfg(x) = (q+1)(q-1)^2$.  From
\eqref{equ:centr_type_4a} we gather that $c(x) = (q^2-1)^3$ so that
the size of each orbit is $\gamma_\mfg(x) u(q) c(x)^{-1} =
(q^3+1)q^3$.

Put $\Gamma_0 := \left( \begin{smallmatrix} 0 & 1 & 0 \\
    1 & 0 & 0 \\ 0 & 0 & 1 \end{smallmatrix} \right)$.
Then the centraliser of a typical element of type $4$b in
$\SU_3(\F_{q^2},\F_q)$ is isomorphic to
\begin{multline}\label{equ:su_centr_type_4}
  \Cen_{\SL_3(\F_{q^2})} (x) \cap \{ Y \in \GL_3(\F_{q^2}) \mid
  Y^\circ
  \Gamma_0 Y = \Gamma_0 \} \\
  = \left\{ \Bigl( \begin{smallmatrix} a & 0 & 0 \\ 0 & b & 0 \\ 0 & 0
      & c \end{smallmatrix} \Bigr) \in \GL_3(\F_{q^2}) \mid a b c = a
    \overline{b} = 1 \right\}.
\end{multline}
The elements of this group can be conveniently parameterised in terms
of $a$ and the group is isomorphic to $\F_{q^2}^*$.  Matrices of
type $4$b are regular.

Type $4$c consists of those semisimple matrices where
$\overline{\lambda} = -\mu$, $\overline{\mu} = -\nu$ and, consequently
$\overline{\nu} = -\lambda$.  Then $\nu = -\lambda -\mu$ implies that
$\overline{\overline{\lambda}} - \overline{\lambda} + \lambda = 0$.  A
short computation yields that the relevant values for $\lambda$ are
$\lambda = \lambda_0 \sqrt{\rho}$, where $\lambda_0 \in F_{q^3}
\setminus \{0\}$ with $\Tr_{\F_{q^3} \vert \F_q}(\lambda_0) = 0$.  In
particular, it follows that $\F_q(\lambda) = \F_{q^6}$.  There are
$(q^2-1)/3$ possibilities for $\{ \lambda, \mu, \nu \}$ and a
corresponding number of orbits.

Let $x$ be an element of type $4$c.  Then the non-singular matrices
$\Gamma \in \Mat_3(\F_{q^2})$ with $\Gamma^\circ = \Gamma$ and $x^\circ
\Gamma + \Gamma x = 0$ are in one-to-one correspondence to non-zero
elements of $\gu_1(\F_{q^6},\F_{q^3})$.  This shows that
$\gamma_\mfg(x) = q^3-1$.  The centraliser of $x$ in $\GL_3(\F_{q^2})$
is isomorphic to $\F_{q^6}^*$ so that $c(x) = q^6-1$.  Therefore the
size of each orbit is $\gamma_\mfg(x) u(q) c(x)^{-1} =
(q^2-1)(q+1)q^3$.

The centraliser of a typical element of type $4$c in
$\GU_3(\F_{q^2},\F_q)$ is isomorphic to $\GU_1(\F_{q^6},\F_{q^3})
\cong \ker(N_{\F_{q^6} \vert \F_{q^3}})$.  The centraliser of such
an element in $\SU_3(\F_{q^2},\F_q)$ is isomorphic to
$\ker(N_{\F_{q^6}
  \vert \F_{q^3}}) \cap \ker(N_{\F_{q^6} \vert \F_{q^2}})$.  Matrices
of type $4$c are regular.

Type $5$ consists of semisimple matrices with eigenvalues $\lambda \in
\F_{q^2} \setminus \{0\}$ of multiplicity $2$ and $\mu := -2\lambda$
of multiplicity $1$.  The condition $\overline{\lambda} = -\lambda$
implies that $\lambda \in \F_q \sqrt{\rho} \setminus \{0\}$ so that
there are $q-1$ choices for $\lambda$ and $q-1$ corresponding orbits.
The minimal polynomial of such elements over $\F_{q^2}$ is equal to
$(X-\lambda)^2(X-\mu)$.

Let $x := \left( \begin{smallmatrix} \lambda & 1
    & \\ & \lambda & \\
    & & \mu \end{smallmatrix} \right)$.  Then the non-singular
matrices $\Gamma \in \Mat_3(\F_{q^2})$ with $\Gamma^\circ = \Gamma$
and $x^\circ \Gamma + \Gamma x = 0$ are the matrices over $\F_{q^2}$
of the form
$$
\Gamma =
\begin{pmatrix}
  0 & \beta & 0 \\
  \overline{\beta} & \alpha & 0 \\
  0 & 0 & \gamma
\end{pmatrix}, \quad \text{where $\overline{\alpha} = \alpha$,
  $\overline{\beta} = -\beta \not = 0$ and $\overline{\gamma} = \gamma
  \not = 0$.}
$$
This shows that $\gamma_\mfg(x) = (q-1)^2 q$.  From
\eqref{equ:centr_type_5} we gather that $c(x) = (q^2-1)^2 q^2$ so that
the size of each orbit is $\gamma_\mfg(x) u(q) c(x)^{-1} =
(q^3+1)(q-1)q^2$.

Put $\Gamma_0 := \left( \begin{smallmatrix} 0 & \sqrt{\rho} & 0 \\
    -\sqrt{\rho} & 0 & 0 \\ 0 & 0 & 1 \end{smallmatrix} \right)$.
Then the centraliser of a typical element of type $5$ in
$\SU_3(\F_{q^2},\F_q)$ is isomorphic to
\begin{multline}\label{equ:su_centr_type_5}
  \Cen_{\SL_3(\F_{q^2})} (x) \cap \{ Y \in \GL_3(\F_{q^2}) \mid
  Y^\circ \Gamma_0 Y = \Gamma_0 \} \\
  = \left\{ \Bigl( \begin{smallmatrix} a & b & 0 \\ 0 & a & 0 \\ 0 & 0
      & c \end{smallmatrix} \Bigr) \in \GL_3(\F_{q^2}) \mid a^2 c = a
    \overline{a} = 1, a\overline{b} - \overline{a}b = 0 \right\}.
\end{multline}
The elements of this group can be conveniently parameterised in terms
of $(a,a\overline{b})$ and a short computation yields that the group
is isomorphic to $\ker(N_{\F_{q^2} \vert \F_q}) \ltimes \F_q^+$.
Matrices of type $5$ are regular.

\bigskip

\begin{acknowledgements}
The authors would like to thank Alexander Lubotzky as well as the
 following institutions: the Batsheva de Rothschild Fund for the
 Advancement of Science, the EPSRC, the Mathematisches
 Forschungsinstitut Oberwolfach, the National Science Foundation and
 the Nuffield Foundation.
\end{acknowledgements}
%%%%%

%Benayadi, Said(F-METZ-MI)
%A new characterization of semisimple Lie algebras. (English summary)
%Proc. Amer. Math. Soc. 125 (1997), no. 3, 685--688.
%
% uses term quadratic Lie algebra

%Block, Richard E.; Zassenhaus, Hans
%The Lie algebras with a nondegenerate trace form.
%Illinois J. Math. 8 1964 543--549.

%Favre, G.; Santharoubane, L. J. Symmetric, invariant, nondegenerate
%bilinear form on a Lie algebra.  J. Algebra 105 (1987), no. 2,
%451--464.

\end{document}